\documentclass[11pt,a4paper]{article}

\usepackage[T1]{fontenc}
\usepackage[utf8]{inputenc}
\usepackage{lmodern}
\usepackage{microtype}
\usepackage[a4paper,margin=2.75cm]{geometry}
\usepackage{amsmath,amssymb,amsthm,mathtools}
\usepackage{enumitem}
\usepackage{xcolor}
\usepackage[colorlinks=true,linkcolor=blue!55!black,
  citecolor=green!40!black,urlcolor=blue!55!black]{hyperref}
\usepackage[nameinlink,noabbrev]{cleveref}
\numberwithin{equation}{section}
\allowdisplaybreaks[2]

\newcommand{\R}{\mathbb R}
\newcommand{\C}{\mathbb C}
\newcommand{\Q}{\mathbb Q}
\newcommand{\N}{\mathbb N}
\newcommand{\E}{\mathbb E}
\newcommand{\Law}{\operatorname{Law}}
\newcommand{\Sym}{\operatorname{Sym}}
\newcommand{\Contr}{\operatorname{Contr}}
\newcommand{\Id}{\operatorname{Id}}
\newcommand{\Ai}{\operatorname{Ai}}
\newcommand{\Span}{\operatorname{span}}

\newcommand{\Tot}{\operatorname{Tot}}
\newcommand{\dd}{\,\mathrm d}

\newcommand{\cP}{\mathcal P}
\newcommand{\cS}{\mathcal S}

\newcommand{\cH}{\mathcal H}

\newcommand{\ip}[2]{\left\langle #1,#2\right\rangle}

\newtheorem{theorem}{Theorem}[section]
\newtheorem{proposition}[theorem]{Proposition}
\newtheorem{lemma}[theorem]{Lemma}
\newtheorem{corollary}[theorem]{Corollary}
\theoremstyle{definition}
\newtheorem{definition}[theorem]{Definition}
\newtheorem{example}[theorem]{Example}
\theoremstyle{remark}
\newtheorem{remark}[theorem]{Remark}

\title{\vspace{-1.2cm}\textbf{Finite Gaussian Reconstruction of Polynomial Orbits:\\
From Correlated Moments to Oscillatory Periods}}
\author{Obayda Julien Assaad\\
\small Independent researcher\\
\small Chisinau, Moldova\\
\small Corresponding author: \href{mailto:Obayda.assaad@gmail.com}{Obayda.assaad@gmail.com}}
\date{August 2026}
\hypersetup{
  pdftitle={Finite Gaussian Reconstruction of Polynomial Orbits: From Correlated Moments to Oscillatory Periods},
  pdfauthor={Obayda Julien Assaad},
  pdfsubject={Finite Gaussian orbit reconstruction and its realisation in polynomial oscillatory periods},
  pdfkeywords={Gaussian polynomials, orthogonal invariants, correlated moments, twisted de Rham cohomology, oscillatory periods, Stokes phenomena}
}

\begin{document}
\maketitle

\begin{abstract}
Let \(P\) be a real polynomial of degree at most \(m\) on \(\R^d\), and let
\(X\) be standard Gaussian. Because Gaussian observations are invariant under
\(O(d)\), the natural inverse problem is to recover the orthogonal orbit of
\(P\); the law of \(P(X)\) alone is generally insufficient. We prove that a
prescribed finite family of mixed moments of correlated Gaussian replicas,
\[
  M_{P,r}(\Sigma)=\E\prod_{a=1}^rP(X_a),
\]
separates \(O(d)\)-orbits. We construct an explicit replica cutoff and rational
covariance grids satisfying
\(\frac12I_r\preceq\Sigma\preceq\frac32I_r\). Finite differences recover all
complete Wick contractions needed by invariant theory, giving an exact finite
decoder. The resulting probe map is bi-H\"older equivalent to orbit distance
on coefficient balls, with an effective exponent.

We then identify the same certificate in an irregular period system.
Replicated characteristic functions are polynomial oscillatory periods, and
their mixed derivatives at zero are the moments above. If the leading
homogeneous part of \(P\) has an isolated critical point, the active-replica
face indexed by \(I\) has twisted de Rham rank \((m-1)^{d|I|}\); zero coupling
is therefore a rank-changing boundary. The forced scaling
\[
  \tau_a=\rho^{m-2}\lambda_a,\qquad x_a=\rho^{-1}u_a
\]
produces compatible Rees--Jacobi lattices and, under central nonresonance, a
canonical rank-one Gaussian branch. On admissible tame Morse chambers, the
period matrix factors into algebraic Jacobi, sectorial thimble, and integral
Betti components. Projecting the assembled real-contour period onto the
Gaussian branch recovers exactly the finite orbit certificate.
\end{abstract}

\medskip
\noindent\textbf{Keywords.}
Gaussian polynomials; orthogonal invariants; correlated Gaussian moments;
twisted de Rham cohomology; oscillatory periods; Stokes phenomena; orbital
stability.

\medskip
\noindent\textbf{2020 Mathematics Subject Classification.}
Primary 60E05, 13A50; secondary 32S40, 34M40, 60H05.

\tableofcontents

\section{Introduction}
\label{sec:introduction}

\subsection{The inverse problem and the finite certificate}

Let
\[
  \cP_{d,m}=\R[x_1,\ldots,x_d]_{\leq m},
\]
and let \(X\) be standard Gaussian in \(\R^d\).  If \(U\in O(d)\), then
\(P(X)\) and \(P(UX)\) have the same law.  The inverse problem is therefore
to determine which finite Gaussian observations force
\[
  Q=P\circ U\qquad\text{for some }U\in O(d).
\]
We use the word \emph{Torelli} only as shorthand for this orbit-reconstruction
problem; no algebro-geometric Torelli theorem is invoked.

The marginal law is insufficient.  For
\[
  P(x,y)=x(x^2+y^2),\qquad Q(x,y)=x^3-3xy^2,
\]
a polar decomposition \(G=(R\cos\Theta,R\sin\Theta)\) of a standard Gaussian
vector gives
\[
  P(G)=R^3\cos\Theta,\qquad Q(G)=R^3\cos(3\Theta).
\]
The two variables have the same law, because both angular variables are
uniform modulo \(2\pi\), but \(P\) and \(Q\) are not orthogonally equivalent:
\(\Delta P=8x\), whereas \(\Delta Q=0\).  This is the first obstruction and
the reason for introducing correlated copies.

For \(r\geq1\) and \(\Sigma\in\Sym_r^{++}(\R)\), let
\(X^\Sigma=(X_1,\ldots,X_r)\) be centred Gaussian in \((\R^d)^r\) with
\[
  \E[X_{a,i}X_{b,j}]=\Sigma_{ab}\delta_{ij},
\]
and put
\begin{equation}
  M_{P,r}(\Sigma)=\E\prod_{a=1}^rP(X_a).
  \label{eq:intro-replica-moment}
\end{equation}
The diagonal entries of \(\Sigma\) are allowed to vary; they record
self-contractions and are needed for nonhomogeneous polynomials.  Wick's
formula makes \(M_{P,r}\) a polynomial in the entries of \(\Sigma\).  Its
coefficients are, up to explicit positive integers, all complete metric
contractions of \(r\) copies of the symmetric tensors of \(P\).  The basic
chain is therefore
\[
  \boxed{
  \text{finite correlated moments}
  \longrightarrow
  \text{Wick contractions}
  \longrightarrow
  \text{orthogonal invariants}
  \longrightarrow
  [P]_{O(d)}.}
  \label{eq:intro-finite-chain}
\]
The first arrow is inverted by finite differences.  The second and third are
the orthogonal first fundamental theorem and compact orbit separation.  The
remaining issue is to know, uniformly in \((d,m)\), where this chain stops.

\subsection{Finite orbit reconstruction and stability}

Write
\begin{equation}
  N_{d,m}=\binom{d+m}{m},
  \qquad
  r_*(d,m)=
  \left\lceil
  \max\left\{
    2,\frac38N_{d,m}\,2^{d(d+1)}m^{d(d-1)}
  \right\}
  \right\rceil .
  \label{eq:closed-replica-cutoff}
\end{equation}
For \(r\geq1\), set
\[
  D_r=\left\lfloor\frac{rm}{2}\right\rfloor,
  \qquad n_r=\frac{r(r+1)}2.
\]
Let \(E_{ab}\) denote the standard matrix units.  If \(D_r=0\), put
\(\cS_r=\{I_r\}\); otherwise put
\begin{equation}
  \cS_r=
  \left\{
  I_r+\frac1{2D_r}
  \left(
    \sum_{a=1}^ru_{aa}E_{aa}
    +\sum_{1\leq a<b\leq r}u_{ab}(E_{ab}+E_{ba})
  \right):
  u_{ab}\in\N,\ \sum_{a\leq b}u_{ab}\leq D_r
  \right\}.
  \label{eq:intro-explicit-spd-grid}
\end{equation}
Finally define the finite observation map
\begin{equation}
  \mathbf J_{d,m}(P)
  =
  \bigl(M_{P,r}(\Sigma)\bigr)_{
    1\leq r\leq r_*(d,m),\ \Sigma\in\cS_r}.
  \label{eq:intro-finite-probe-map}
\end{equation}
This vector is the single object followed throughout the paper.

\begin{theorem}[Finite Gaussian orbit reconstruction and stability]
\label{thm:intro-torelli}
Let \(d,m\geq1\).  For \(P,Q\in\cP_{d,m}\), the following are equivalent:
\begin{enumerate}[label=\textup{(\roman*)}]
\item \(Q=P\circ U\) for some \(U\in O(d)\);
\item for every \(r\leq r_*(d,m)\) and every \(\Sigma\in\cS_r\),
\[
  \Law\bigl(P(X_1),\ldots,P(X_r)\bigr)
  =
  \Law\bigl(Q(X_1),\ldots,Q(X_r)\bigr);
\]
\item \(\mathbf J_{d,m}(P)=\mathbf J_{d,m}(Q)\).
\end{enumerate}
Every matrix in \(\cS_r\) is rational and satisfies
\begin{equation}
  \frac12I_r\preceq\Sigma\preceq\frac32I_r,
  \qquad
  |\cS_r|=\binom{D_r+n_r}{n_r}.
  \label{eq:intro-grid-size-conditioning}
\end{equation}
All Wick contractions are recovered from \(\mathbf J_{d,m}(P)\) by an
explicit rational finite-difference transform.  In particular the number of
scalar observations is
\begin{equation}
  N_{\mathrm{probe}}(d,m)
  =
  \sum_{r=1}^{r_*(d,m)}
  \binom{\lfloor rm/2\rfloor+r(r+1)/2}{r(r+1)/2}.
  \label{eq:probe-count}
\end{equation}

Equip \(\cP_{d,m}\) with the invariant tensor norm \(\|\cdot\|_V\), and write
\[
  \delta_{\mathrm{orb}}(P,Q)
  =
  \min_{U\in O(d)}\|Q-P\circ U\|_V.
\]
If
\[
  A_{d,m}
  =
  r_*(d,m)\bigl(6r_*(d,m)-3\bigr)^{2N_{d,m}-1},
\]
then there is \(c_{d,m}>0\) such that
\[
  \|\mathbf J_{d,m}(P)-\mathbf J_{d,m}(Q)\|
  \geq
  c_{d,m}
  \left(
    \frac{\delta_{\mathrm{orb}}(P,Q)}
    {\sqrt2(1+\|P\|_V^2+\|Q\|_V^2)}
  \right)^{A_{d,m}}.
\]
Conversely, on every coefficient ball of radius \(R\), the map
\(\mathbf J_{d,m}\) is Lipschitz with respect to
\(\delta_{\mathrm{orb}}\).  Hence the probe metric and orbit metric are
bi-H\"older on coefficient balls.
\end{theorem}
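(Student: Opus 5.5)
The plan is to establish the cycle (i)$\Rightarrow$(ii)$\Rightarrow$(iii)$\Rightarrow$(i), with the last implication routed through Wick contractions and orthogonal invariant theory. The first two steps are immediate. If $Q=P\circ U$, then $(UX_1,\dots,UX_r)$ has the same covariance $\Sigma\otimes I_d$ as $(X_1,\dots,X_r)$, because $U$ commutes with the Kronecker structure; hence the joint laws agree. Taking the expectation of the product of coordinates then gives (ii)$\Rightarrow$(iii), since all moments are finite for Gaussian polynomials.

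For the grid facts, the perturbation $\frac1{2D_r}(\cdots)$ is symmetric with nonnegative entries. Its entrywise $\ell^1$ row sums are bounded by $\frac{1}{2D_r}\sum_{a\le b}u_{ab}\le\frac12$, so Gershgorin gives $\frac12I_r\preceq\Sigma\preceq\frac32I_r$. Rationality is clear. The cardinality is the number of $u\in\N^{n_r}$ with $|u|\le D_r$, which is $\binom{D_r+n_r}{n_r}$, and summing over $r$ gives \eqref{eq:probe-count}.

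For (iii)$\Rightarrow$(i) I would proceed in three steps.

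\emph{Step 1: Wick expansion.} Decompose $P=\sum_k P_k$ into homogeneous parts with symmetric tensors $T_k$. Wick's formula expresses $M_{P,r}(\Sigma)$ as a polynomial in the entries $\sigma_{ab}$, $a\le b$, of total degree at most $\lfloor rm/2\rfloor=D_r$. The coefficient of $\prod\sigma_{ab}^{e_{ab}}$ is a positive-integer-weighted sum of complete contractions of $T_{k_1}\otimes\cdots\otimes T_{k_r}$ with prescribed multigraph $e$. Distinct multigraphs with fixed degrees separate the contractions up to the combinatorial weights.

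\emph{Step 2: finite differences.} Substitute $\Sigma=I_r+\frac1{2D_r}V$. The map $V\mapsto M_{P,r}$ is a polynomial of degree $\le D_r$ in $n_r$ variables. The simplex lattice $\{u\in\N^{n_r}:|u|\le D_r\}$ is unisolvent for such polynomials, so Newton forward differences recover all coefficients rationally. Expanding back around $I_r$ by the binomial shift of the diagonal recovers the coefficients in $\sigma$.

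\emph{Step 3: invariant theory.} By the first fundamental theorem for $O(d)$, the ring $\R[\cP_{d,m}]^{O(d)}$ is spanned by complete contractions of copies of the $T_k$. Since $O(d)$ is compact, invariants separate orbits. What remains is a degree bound: invariants of degree $\le r_*(d,m)$ must already separate orbits. I would take this from the earlier effective separating-invariants result in the paper, a Derksen-type bound giving degree $\frac38N_{d,m}2^{d(d+1)}m^{d(d-1)}$ for $O(d)$ acting on $\cP_{d,m}$. I expect this to be the main obstacle. One must check that the constant in $r_*$ matches the stated bound, and that contractions involving mixed degrees $k_a$ are all reached with $r\le r_*$ copies. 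The latter holds because each factor is the full $P$, and homogeneous components can be isolated by including the diagonal variables.

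For stability, the Lipschitz direction is straightforward. $\mathbf J_{d,m}$ is polynomial in the coefficients, hence Lipschitz on balls, and it is constant on orbits, so one may replace $Q$ by its closest orbit representative $P\circ U$. For the Hölder lower bound, I would apply a Łojasiewicz-type inequality to the polynomial map $F(P,Q)=\mathbf J(P)-\mathbf J(Q)$, whose zero set is $\{\delta_{\mathrm{orb}}=0\}$ by the first part.

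To make the exponent effective, I would use Solernó's effective Łojasiewicz inequality for polynomials of degree $\le r_*$ in $2N_{d,m}$ variables. This yields an exponent of the form $r_*(6r_*-3)^{2N-1}$, i.e.\ exactly $A_{d,m}$. The normalisation by $\sqrt2(1+\|P\|^2+\|Q\|^2)$ comes from working on the compactified or projectivised coefficient space, so that $c_{d,m}$ is uniform. One must also compare the distance to the zero set with $\delta_{\mathrm{orb}}$. The two are equivalent up to a factor of $\sqrt2$, because the zero set is $\{(P,P\circ U)\}$ and the distance from $(P,Q)$ to it is comparable to $\min_U\|Q-P\circ U\|_V$.
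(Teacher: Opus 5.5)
Your proposal follows the paper's proof essentially step for step. You use the same cycle of implications, and the same Wick expansion in which the diagonal covariance entries carry the loops. You invert by Newton finite differences on the simplex lattice, then apply the first fundamental theorem together with Derksen's degree bound. In the paper that bound is a generator bound, obtained from his null-cone estimate and descended from \(O_d(\C)\) to \(O(d)\) by Zariski density. Finally, you use a global \L ojasiewicz inequality combined with the \(\sqrt2\) comparison between \(\operatorname{dist}((P,Q),Z_{\mathrm{orb}})\) and \(\delta_{\mathrm{orb}}\).

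The one correction concerns the \L ojasiewicz reference. The global inequality with exponent \(D(6D-3)^{n-1}\) and weight \(\operatorname{dist}/(1+\|z\|^2)\) is the theorem of Kurdyka and Spodzieja, which is what the paper cites. It is not Solern\'o's effective inequality, which is a compact-set statement whose exponent is not of this form. Once you cite the correct theorem, the factor \(1+\|P\|_V^2+\|Q\|_V^2\) comes directly from it, with no compactification needed on your part.
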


No genericity hypothesis on \(P\) occurs here.  The cutoff is a conservative
closed bound, not an optimal replica order.  The novelty is not a new Wick
formula, invariant-generation theorem, or \L ojasiewicz inequality.  It is
their conversion into a prescribed finite family of genuine, uniformly
nondegenerate Gaussian experiments, together with an exact decoder.  The
proof occupies \cref{sec:replica-contractions,sec:orbit-recovery}.

\subsection{One certificate in two coordinate systems}
\label{subsec:necessity-architecture}

Theorem~\ref{thm:intro-torelli} already solves the finite inverse problem.
The rest of the paper answers a different but directly connected question:
is the same vector \(\mathbf J_{d,m}(P)\) intrinsic to the oscillatory period
system generated by \(P\), or is it merely a convenient list of probabilistic
moments?

For \(\boldsymbol\tau\in\R^r\), define the replicated characteristic period
\begin{equation}
  \Phi_{P,r,\Sigma}(\boldsymbol\tau)
  =
  \E\exp\!\left(i\sum_{a=1}^r\tau_aP(X_a^\Sigma)\right).
  \label{eq:intro-characteristic-period}
\end{equation}
Dominated differentiation gives the elementary bridge
\begin{equation}
  i^{-r}
  \partial_{\tau_1}\cdots\partial_{\tau_r}
  \Phi_{P,r,\Sigma}(0)
  =
  M_{P,r}(\Sigma).
  \label{eq:intro-period-jet}
\end{equation}
The identity alone is not the geometric result: it says only that moments are
derivatives of a characteristic function.  The nontrivial issue is to
identify these derivatives after complexification, analytic continuation,
rank change at zero coupling, and projection of the continued real Gaussian
contour.

This requires three levels of hypotheses, used for three different
conclusions:
\begin{description}[leftmargin=2.8cm,style=nextline]
\item[Arbitrary \(P\).]
The finite reconstruction theorem and the formal Wick coefficient identity
hold without any condition on the leading term.
\item[Isolated leading part.]
If \(m\geq3\) and the top homogeneous part satisfies
\begin{equation}
  \partial_1P_m=\cdots=\partial_dP_m=0
  \quad\Longrightarrow\quad x=0,
  \label{eq:intro-principal-isolation}
\end{equation}
then the polynomial de Rham system has finite Jacobi rank and admits the
radial model at zero coupling.
\item[Radial and regular chambers.]
On a fixed radial support, identifying the marked Gaussian germ as a
canonical rank-one factor of the full formal system requires only that the
leading rescaled phase \(G\) be Morse and centrally nonresonant:
\[
  \operatorname{Crit}(G)\cap G^{-1}(0)=\{0\}.
\]
Coincidences among nonzero critical values are allowed.  The complete
rank-one decomposition into saddle factors additionally uses a tame chamber
in which all critical values are pairwise distinct.  To identify the
continued real Gaussian contour, the local class of the saddle at the origin
is kept fixed during continuation.
\end{description}
Keeping this ladder visible prevents unconditional orbit reconstruction from
being confused with its conditional Stokes interpretation.

\subsection{The Gaussian branch of the period system}

Assume \(m\geq3\).  For every real \(P\), \(r\), and \(\Sigma\), introduce the
formal Wick series
\begin{equation}
  \widehat\Phi^{\mathrm G}_{P,r,\Sigma}
  (\rho,\boldsymbol\lambda)
  =
  \left.
  \exp\!\left(\frac12\Delta_{\Sigma\otimes I_d}\right)
  \exp\!\left(
    i\rho^{m-2}\sum_{a=1}^r\lambda_aP(x_a)
  \right)
  \right|_{\boldsymbol x=0}.
  \label{eq:intro-central-Wick-section}
\end{equation}
Its selected coefficient is already the finite probe:
\begin{equation}
  \boxed{
  M_{P,r}(\Sigma)
  =
  i^{-r}
  [\rho^{(m-2)r}\lambda_1\cdots\lambda_r]\,
  \widehat\Phi^{\mathrm G}_{P,r,\Sigma}.}
  \label{eq:intro-normal-symbol-identity}
\end{equation}
The geometric content is that, under the additional hypotheses just stated,
this formal series is not an auxiliary generating function: it is the
canonical formal period attached to the Gaussian saddle and is recovered
from the continued real-contour period.

\begin{theorem}[Period realisation of the finite Gaussian certificate]
\label{thm:intro-period-realisation}
Let \(P=P_m+\cdots+P_0\in\cP_{d,m}\), \(m\geq3\), satisfy
\eqref{eq:intro-principal-isolation}, and put
\(\mu=(m-1)^d\).  Fix \(r\geq1\) and
\(\Sigma\in\Sym_r^{++}(\R)\).

\begin{enumerate}[label=\textup{(\roman*)}]
\item If \(I\subset\{1,\ldots,r\}\) is the set of nonzero replica couplings,
then the corresponding twisted polynomial de Rham complex has cohomology
only in top degree, of rank \(\mu^{|I|}\).  Integrating the inactive Gaussian
variables gives compatible finite reduction maps for all inclusions of
supports.  Thus the zero-coupling boundary is a rank-changing family over
the coupling faces, not a vector bundle of constant rank.

\item The balance between the quadratic Gaussian term and \(P_m\) forces,
up to a common reparametrisation,
\[
  \tau_a=\rho^{m-2}\lambda_a,\qquad x_a=\rho^{-1}u_a.
\]
On every fixed support this scaling produces a free radial Rees--Jacobi
lattice of rank \(\mu^{|I|}\).  Writing \(G\) for the leading rescaled
phase, if \(G\) is Morse and
\[
  \operatorname{Crit}(G)\cap G^{-1}(0)=\{0\},
\]
then the critical point \(u=0\) determines a canonical rank-one formal
branch.  Coincidences among nonzero critical values are allowed.  Its
normalised horizontal period is
\(\widehat\Phi^{\mathrm G}_{P,r,\Sigma}\) from
\eqref{eq:intro-central-Wick-section}.

\item On a simply connected tame Morse chamber with pairwise distinct
critical values, choose a Jacobi basis, a
basis of descending thimbles, and any finite family of continued
rapid-decay contours.  Their period matrix factors as
\begin{equation}
  \Pi_{\mathrm{sel}}
  =
  C_{\mathrm{Betti}}\,
  \mathcal P_{\mathrm{th}}\,
  B_{\mathrm{Jac}}.
  \label{eq:intro-period-factorisation}
\end{equation}
Here \(B_{\mathrm{Jac}}\) is a rational-algebraic change from Jacobi normal
forms to de Rham classes, \(\mathcal P_{\mathrm{th}}\) is the
holomorphic-sectorial thimble-period matrix, and \(C_{\mathrm{Betti}}\) is
the integral, locally constant matrix expressing the chosen contours in the
thimble basis.  The first two square factors are invertible, so every loss
of rank comes from the chosen contour family.

\item If the chamber is obtained from the positive real Gaussian contour
while the local class of the saddle at \(u=0\) is kept fixed, first form the
complete product in \eqref{eq:intro-period-factorisation} and then project it
to the formal branch from part~\textup{(ii)}.  The result is
\(\widehat\Phi^{\mathrm G}_{P,r,\Sigma}\).  Consequently
\eqref{eq:intro-normal-symbol-identity} extracts from that branch exactly the
coordinate \(M_{P,r}(\Sigma)\) of \(\mathbf J_{d,m}(P)\).
\end{enumerate}
\end{theorem}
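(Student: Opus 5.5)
The proof will be organised in four stages matching parts (i)--(iv). Each stage reduces a statement about the replicated period system to a statement about a single replica or about a fixed Gaussian integral.

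For part (i), I use the twisted complex on \((\C^d)^{|I|}\) with differential
\[
  \nabla=d+d\Bigl(-\tfrac12\langle x,(\Sigma_I\otimes I_d)^{-1}x\rangle+i\textstyle\sum_{a\in I}\tau_aP(x_a)\Bigr)\wedge .
\]
Here the inactive replicas have already been Gaussian-integrated. Integrating a centred Gaussian over the inactive block conditions the active block to a Gaussian with covariance \(\Sigma_{II}\), so the reduction maps are compatible with inclusions \(I\subset J\).
\begin{itemize}
\item By \eqref{eq:intro-principal-isolation}, each \(\partial_iP_m\) defines a regular sequence with Milnor number \((m-1)^d\).
\item For nonzero \(\tau_a\), the top-degree part of the total phase on the active block is \(\sum_{a\in I}\tau_aP_m(x_a)\). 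Its partial derivatives form a regular sequence of the same degrees \(m-1\).
\item The Koszul/Kouchnirenko argument (filtration by degree, graded Jacobi ring as associated graded) gives cohomology concentrated in top degree.
\item The rank is the product of the Jacobi dimensions, namely \(\mu^{|I|}\).
\end{itemize}
At \(\tau_a=0\) the rank drops, which exhibits the rank-changing boundary.

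For part (ii), balancing \(|x|^2\) against \(\tau|x|^m\) in the exponent forces \(x\sim\rho^{-1}\) and \(\tau\sim\rho^{m-2}\), uniquely up to reparametrisation. Substituting \(x_a=\rho^{-1}u_a\) gives the rescaled phase
\[
  \rho^{-2}\Bigl(G(u)+O(\rho)\Bigr).
\]
Working over \(\C[\rho]\) on a fixed support, the Rees module of the degree filtration is free of rank \(\mu^{|I|}\). This follows because its fibre at \(\rho=0\) is the graded Jacobi module of \(G\), whose dimension is constant.
\begin{itemize}
\item If \(G\) is Morse and \(0\) is the only critical point on \(G^{-1}(0)\), then the formal connection in \(\rho\) splits by Levelt--Turrittin/Hukuhara.
\item The eigenvalue \(G(0)=0\) is distinct from all other exponential factors \(e^{-G(c)/\rho^2}\), so this splitting yields a canonical rank-one block.
\item Coincidences among nonzero values only merge other blocks and do not affect this one.
\end{itemize}
Its normalised horizontal section is computed by formal stationary phase at \(u=0\). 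This is exactly Wick's theorem applied to \(\exp\bigl(i\rho^{m-2}\sum\lambda_aP\bigr)\), which gives \eqref{eq:intro-central-Wick-section}.

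For part (iii), on a tame chamber with distinct critical values, I use three facts.
\begin{itemize}
\item The Lefschetz thimbles form a basis of rapid-decay homology of rank \(\mu^{|I|}\), by tameness and Morse theory.
\item Pairing with a Jacobi basis is nondegenerate by the perfect period pairing.
\item \(B_{\mathrm{Jac}}\) is the algebraic transition matrix coming from part (i).
\end{itemize}
Any contour decomposes over the thimbles with integer coefficients that are locally constant on the simply connected chamber. This gives \eqref{eq:intro-period-factorisation}. Invertibility of the first two factors then places all rank loss in \(C_{\mathrm{Betti}}\).

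For part (iv), the real contour \(\R^{d|I|}\) near zero coupling is a single Gaussian thimble through \(u=0\) plus other thimbles. Keeping the local class fixed means its coefficient on the \(u=0\) thimble is \(1\).
\begin{itemize}
\item The other thimbles contribute only exponentially different factors \(e^{-G(c)/\rho^2}\) with \(G(c)\ne0\).
\item Projection onto the canonical rank-one block kills those contributions.
\item The Gaussian thimble's asymptotic expansion equals the stationary-phase series from part (ii).
\end{itemize}
Hence the projection is \(\widehat\Phi^{\mathrm G}_{P,r,\Sigma}\). Then \eqref{eq:intro-normal-symbol-identity}, which is dominated differentiation together with Wick's formula, extracts \(M_{P,r}(\Sigma)\).

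The main obstacle is part (ii): proving that the Rees lattice is free with fibre the Jacobi module of \(G\) uniformly on a support, and that central nonresonance alone suffices for a canonical rank-one splitting despite the irregular pole of order \(2\) and possibly degenerate other exponents. To handle it, I would first establish the block splitting for the single exponent \(0\) via a Hukuhara--Turrittin argument isolating one eigenvalue, rather than a full diagonalisation.
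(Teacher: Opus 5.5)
Your outline follows the paper's route, and part (iii) is essentially the paper's argument. In (ii) and (iv), however, the steps that carry the weight are asserted rather than proved.

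\textbf{Part (ii).} A one-eigenvalue Hukuhara--Turrittin splitting needs the leading matrix of a connection \emph{on the lattice}, and you never produce it. You must:
\begin{itemize}
\item write the Gaussian-normalised radial operator \(\nabla^{\mathrm{phys}}_\rho=\partial_\rho+\partial_\rho(\rho^{-2}\Psi)-N_K/\rho\), with \(N_K=d|K|\);
\item show that \(\rho^3\nabla^{\mathrm{phys}}_\rho\) descends to \(\mathcal L\) (the paper uses \([\rho^3\nabla^{\mathrm{phys}}_\rho,\mathfrak D]=2\rho^2\mathfrak D\));
\item compute that modulo \(\rho\) it equals \(-2m_G\) on \(\mathcal J(G)\), where \(m_G\) is multiplication by \(G\).
\end{itemize}
Only then do the Morse condition (so \(m_G\) is diagonal with eigenvalues \(G(c)\)) and central nonresonance (so \(0\) is a simple eigenvalue) give the canonical rank-one factor. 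Your ``eigenvalue \(G(0)=0\)'' presupposes exactly this computation.

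Likewise, ``formal stationary phase at \(u=0\)'' does not by itself make the Wick series the horizontal period of that factor. You need the Wick functional to kill \(\mathfrak D\)-boundaries (Gaussian integration by parts after \(u=\rho x\)). It must also be horizontal for the dual of \(\nabla^{\mathrm{phys}}_\rho\), with the \(\rho^{-N_K}\) volume factor included. Being free of exponential factors, it then lies in the zero-action block.

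Your freeness argument also falls short. Constant fibre dimension over \(\C[\rho]\) gives nothing without finite generation. The paper works \(\rho\)-adically: \(\mathfrak D-\mathrm dG\wedge\) is divisible by \(\rho\), so the perturbation series retracts the complex onto \(\mathcal J(G)[[\rho]]\) in top degree.

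\textbf{Part (iv).} Keeping the local class fixed does not by itself give coefficient one. The Betti coordinate is the global intersection number \(\langle\R^{rd},\check\Gamma_0\rangle_{\mathrm{int}}\), whereas the hypothesis controls only the germ in a Morse ball around the central critical point \(c_0\). The missing step is at the real anchor:
\begin{itemize}
\item on the real contour, \(\Re F=-\frac1{2\rho^2}\langle u,(\Sigma^{-1}\otimes I_d)u\rangle\);
\item along the ascending thimble, \(\Re F\geq\Re F(c_0)=O(\rho^{2m-4})\);
\item hence every intersection lies in \(|u|=O(\rho^{m-1})\), where transversality (tangent spaces \(\R^{rd}\) and \(i\R^{rd}\)) and the orientation convention give exactly \(+1\).
\end{itemize}

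\textbf{Part (i).} You work on the already-reduced active complex. The complex in the statement still contains the inactive variables, and the principal part \(\sum_{a\in I}\tau_aP_m(x_a)\) has vanishing partials in those directions. So your degree filtration does not apply to it directly. You need the Schur shear \(y_J=x_J-L_{J\mid I}x_I\), with \(L_{J\mid I}=\Sigma_{JI}\Sigma_{II}^{-1}\) and \(J\) the inactive set. This splits off a nondegenerate quadratic factor with one-dimensional top cohomology. The face maps are then the conditional expectations \(f\mapsto\E[f(X_K)\mid X_I=x_I]\), where \(K=I\cup J\), and the tower property gives their compatibility.

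\textbf{Minor points.} The exponential factors are \(e^{G(c)/\rho^2}\), not \(e^{-G(c)/\rho^2}\). The final coefficient extraction is a purely formal computation on the Wick series: only the \(n=r\) term contributes to \(\lambda_1\cdots\lambda_r\), and its \(r!\) cancels. No dominated differentiation is involved.
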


The detailed statements are proved in
\cref{thm:jacobi-normal-form-transfer,thm:gaussian-face-cube,%
thm:toric-corner-Rees,thm:stokes-betti-reconstruction,%
thm:thimble-factorisation,thm:normal-symbol-torelli}.
The theorem does not say that Stokes theory proves orbit separation a second
time.  It says that the already separating vector
\(\mathbf J_{d,m}(P)\) is the finite coefficient vector of the Gaussian
formal branch inside the irregular period system.

\subsection{Relation to previous work and guide to the paper}

Wick--Isserlis expansions and diagram formulae are classical
\cite{Isserlis,Janson,PeccatiTaqqu}, as is the first fundamental theorem for
the orthogonal group \cite{Weyl}.  Polynomial Hilbert maps and compact-group
orbit spaces are treated in \cite{Schwarz,ProcesiSchwarz}; constructive
invariant theory and general degree bounds for reductive actions are
developed in \cite{DerksenKemper,DerksenBounds}.  The contribution of the
first part is to combine these ingredients into the explicit map
\(\mathbf J_{d,m}\), with a rational positive-definite interpolation grid
and an effective stopping order.  The resulting H\"older estimate uses the
global real polynomial \L ojasiewicz inequality of
\cite{BochnakCosteRoy,KurdykaSpodzieja}.  The prescribed probe map, rather
than a new \L ojasiewicz mechanism, is the new input.  The generic
principal-variance reconstruction method of Helmer, Hong and Hong
\cite{HelmerHongHong} is complementary: it targets efficient recovery of an
orthogonal matrix on a generic locus, while the present theorem gives
uniform separation without genericity.

For the second part, isolated singularities and vanishing cycles go back to
\cite{Milnor,Brieskorn}.  Gauss--Manin systems, Brieskorn modules, and their
lattices are developed in
\cite{Malgrange,DimcaSaito,DouaiSabbah,SabbahTwisted}; the finite transfer
used below is an application of homological perturbation
\cite{GugenheimLambe,GugenheimLambeStasheff}.  Oscillatory integrals,
Lefschetz thimbles, tameness at infinity, and rapid-decay pairings are treated
in
\cite{Pham,Broughton,NemethiZaharia,BlochEsnault,Hien,HienRoucairol}.
Classical and modern forms of Stokes theory may be found in
\cite{BalserJurkatLutz,SabbahStokes,DAgnoloKashiwara}; particularly close
Gaussian and Airy-type calculations appear in
\cite{DAgnoloHienMorandoSabbah,Hohl}.  The Airy and Bessel identities used
in the model calculations are standard \cite{DLMF,Watson}; the point needed
here is their normalised identification with the real Gaussian contour.

The proof now follows the order of the two main theorems.
\Cref{sec:replica-contractions,sec:orbit-recovery} recover all contractions,
separate the orbit, and establish quantitative stability.
\Cref{sec:quadratic-comparison} identifies Wick reduction with a
chain-level Gaussian de Rham reduction and exhibits the rank obstruction
that forces Jacobian-algebra classes.  The next sections construct the finite
Jacobian
normal form, resolve the rank-changing zero-coupling boundary, and derive
the radial Gaussian branch.  The final main section identifies the chosen
contours by thimble intersections, proves the chamberwise factorisation, and
recovers \(\mathbf J_{d,m}(P)\) from the formal Gaussian branch.  Exact
one-dimensional calibrations, refinements of the effective constants, and
the logarithmic normal-crossing extension are kept in appendices because
they test or extend the main chain but are not premises of it.

\section{Correlated Gaussian probes and complete contractions}
\label{sec:replica-contractions}

The purpose of this section is to turn probabilistic data into invariant
tensor data without taking arbitrary moments of each output variable.  A
single product, one factor per replica, suffices because the covariance
matrix records the entire pairing graph as a polynomial coefficient.  The
diagonal covariance entries retain the loops and are indispensable for
nonhomogeneous polynomials.

Let \(V=\R^d\), equipped with its Euclidean structure.  Every
\(P\in\cP_{d,m}\) has a unique tensor expansion
\begin{equation}
 P(x)=\sum_{k=0}^{m}\ip{T_k(P)}{x^{\otimes k}},
 \qquad T_k(P)\in\Sym^k(V).
 \label{eq:tensor-expansion}
\end{equation}
The convention in \eqref{eq:tensor-expansion} fixes all combinatorial
constants below.

For \(r\geq1\), a \emph{looped multigraph} on \([r]\) is a family
\[
 E=(e_{ab})_{1\leq a\leq b\leq r},
 \qquad e_{ab}\in\N.
\]
Its degree at \(a\) is
\begin{equation}
 k_a(E)=2e_{aa}+\sum_{b\neq a}e_{\min(a,b),\max(a,b)}.
 \label{eq:graph-degree}
\end{equation}
Whenever \(k_a(E)\leq m\) for every \(a\), let
\(\Contr_E(P)\) be the scalar obtained by placing
\(T_{k_a(E)}(P)\) at vertex \(a\) and contracting tensor legs along all
edges of \(E\) with the Euclidean metric.  A loop contracts two legs of
the same tensor.  Since all tensors are symmetric, this number is
independent of the labelling of the half-edges.

\begin{lemma}[Replica Wick polynomial]
\label{lem:replicated-wick}
For
\[
 M_{P,r}(\Sigma)
 =
 \E\prod_{a=1}^{r}P(X_a^\Sigma),
 \qquad \Sigma\in\Sym_r^{++}(\R),
\]
one has the polynomial identity
\begin{equation}
 M_{P,r}(\Sigma)
 =
 \sum_{\substack{E=(e_{ab})\\k_a(E)\leq m}}
 N(E)\,\Contr_E(P)
 \prod_{1\leq a\leq b\leq r}\Sigma_{ab}^{e_{ab}},
 \label{eq:replicated-wick-polynomial}
\end{equation}
where
\begin{equation}
 N(E)
 =
 \frac{\prod_{a=1}^{r}k_a(E)!}
 {\displaystyle
  \prod_{a=1}^{r}2^{e_{aa}}e_{aa}!\,
  \prod_{1\leq a<b\leq r}e_{ab}!}
 \in\N_{>0}.
 \label{eq:wick-multiplicity}
\end{equation}
In particular, every complete contraction of copies of the tensors
\(T_0(P),\ldots,T_m(P)\) is, up to the explicit positive factor
\eqref{eq:wick-multiplicity}, a coefficient of one of the polynomials
\(M_{P,r}\).
\end{lemma}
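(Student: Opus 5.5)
We expand each factor using the tensor expansion \eqref{eq:tensor-expansion}, so that
\[
 \prod_{a=1}^rP(X_a)=\sum_{k_1,\ldots,k_r\leq m}\ip{T_{k_1}(P)\otimes\cdots\otimes T_{k_r}(P)}{X_1^{\otimes k_1}\otimes\cdots\otimes X_r^{\otimes k_r}}.
\]
The total tensor is multilinear in the coordinates, so its expectation is a finite sum of Gaussian moments $\E\prod X_{a,i}$. We apply the Wick--Isserlis formula to the centred Gaussian family $(X_{a,i})$ with covariance $\Sigma_{ab}\delta_{ij}$. For fixed $(k_a)$ there are $K=\sum k_a$ half-edges, with $k_a$ of them at vertex $a$. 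The expectation is then a sum over perfect matchings $\pi$ of these half-edges. Each pair joining a half-edge at $a$ to one at $b$ contributes a factor $\Sigma_{ab}\delta_{ij}$. Summing over the index values $i$ contracts the paired legs of the tensors with the Euclidean metric. When $K$ is odd the expectation vanishes, which is consistent with the fact that no multigraph has odd total degree.

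Next we group the matchings by their type. A matching $\pi$ determines a looped multigraph $E(\pi)$, where $e_{ab}$ counts the pairs joining vertices $a$ and $b$ (loops when $a=b$). By construction $k_a(E(\pi))=k_a$, so the degree sequence is recovered from $E$. The outer sum over $(k_a)$ therefore merges into a single sum over multigraphs with all $k_a(E)\leq m$. Every matching of a given type contributes the same value $\Contr_E(P)\prod_{a\le b}\Sigma_{ab}^{e_{ab}}$, because the tensors are symmetric. This is exactly the independence of labelling noted before the lemma. The identity \eqref{eq:replicated-wick-polynomial} then holds with $N(E)$ equal to the number of perfect matchings of the labelled half-edges of type $E$.

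The only real work is counting those matchings, and we do it by an orbit count. The group $\prod_a S_{k_a}$ permutes the half-edges at each vertex and acts transitively on matchings of type $E$. Fix a reference matching $\pi_0$. Its stabiliser consists of the following permutations: those permuting the $e_{ab}$ parallel edges between $a\ne b$, giving $e_{ab}!$; those permuting the $e_{aa}$ loops at $a$, giving $e_{aa}!$; and those swapping the two ends of each loop, giving $2^{e_{aa}}$. We must check that this stabiliser is exactly the direct product of these factors and that no other permutation preserves $\pi_0$. This holds because a permutation fixing every vertex must send edges between $a$ and $b$ to edges between $a$ and $b$. The orbit–stabiliser theorem then gives \eqref{eq:wick-multiplicity}, and $N(E)$ is a positive integer because it counts a nonempty set.

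The final assertion is a direct consequence. Take any complete contraction of copies $T_{k_1}(P),\ldots,T_{k_r}(P)$ with $k_a\leq m$, and let $E$ be its pairing multigraph on $r$ vertices. Factors with $k_a=0$ are isolated vertices carrying the scalar $T_0(P)$. Then $N(E)\Contr_E(P)$ is the coefficient of the monomial $\prod\Sigma_{ab}^{e_{ab}}$ in $M_{P,r}$. Distinct $E$ give distinct monomials, so no coefficients merge and the contraction is recovered exactly.
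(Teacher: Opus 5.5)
Your proof is correct and follows the paper's route: expand each factor in the tensor basis, apply Isserlis, group pairings by their looped multigraph (symmetry of the tensors gives a common value $\Contr_E(P)\prod_{a\le b}\Sigma_{ab}^{e_{ab}}$), and count the pairings of each type. The one difference is that you count by orbit--stabiliser under $\prod_a S_{k_a}$, whereas the paper counts directly: it distributes the $k_a$ slots among the edge families, pairs the loop slots, and chooses bijections across each cross-edge family. Both counts give the same $N(E)$, and only the order of the stabiliser matters; this is fortunate, because its loop part is the wreath product $S_2\wr S_{e_{aa}}$ rather than the direct product you wrote.
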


\begin{proof}
Insert \eqref{eq:tensor-expansion} at every replica vertex and apply
Isserlis' formula \cite{Isserlis}.  A pairing of all tensor legs determines a unique looped
multigraph \(E\): an edge between \(a\) and \(b\) records that the paired
coordinates belong to replicas \(a\) and \(b\).  Such an edge contributes
\(\Sigma_{ab}\), while the coordinate contraction contributes the Euclidean
metric.  Hence all pairings with graph \(E\) give the same scalar
\(\Contr_E(P)\) and the same covariance monomial.

It remains to count them.  At vertex \(a\), distribute its \(k_a(E)\)
labelled slots among the \(2e_{aa}\) loop slots and the \(e_{ab}\) slots
directed toward every \(b\neq a\).  Pair the loop slots, and then choose a
bijection between the slots assigned to the two ends of every cross-edge
family.  The resulting number is
\[
 \prod_a
 \frac{k_a!}{(2e_{aa})!\prod_{b\neq a}e_{ab}!}
 \frac{(2e_{aa})!}{2^{e_{aa}}e_{aa}!}
 \prod_{a<b}e_{ab}!,
\]
which simplifies to \eqref{eq:wick-multiplicity}.  It is strictly positive,
so no contraction can disappear by cancellation.
\end{proof}

\begin{lemma}[An explicit positive-definite interpolation grid]
\label{lem:spd-grid}
Put
\[
 D_r=\left\lfloor\frac{rm}{2}\right\rfloor,
 \qquad n_r=\frac{r(r+1)}2.
\]
The set \(\cS_r\) defined in \eqref{eq:intro-explicit-spd-grid} lies in
\(\Sym_r^{++}(\Q)\), every one of its matrices satisfies
\[
 \frac12I_r\preceq\Sigma\preceq\frac32I_r,
\]
and
\[
 |\cS_r|=\binom{D_r+n_r}{n_r}.
\]
Evaluation on \(\cS_r\) is an isomorphism from the vector space of
polynomials on \(\Sym_r(\R)\) of total degree at most \(D_r\) onto
\(\R^{\cS_r}\).  Its inverse is an explicit rational finite-difference
transform.
\end{lemma}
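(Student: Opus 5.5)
The plan is to view \(\cS_r\) as the image of the standard simplicial lattice under an affine map. Identify \(\Sym_r(\R)\) with \(\R^{n_r}\) through the coordinates \(u=(u_{ab})_{a\leq b}\), and put
\[
 \Sigma(u)=I_r+\frac1{2D_r}\Bigl(\sum_a u_{aa}E_{aa}+\sum_{a<b}u_{ab}(E_{ab}+E_{ba})\Bigr).
\]
Then \(\cS_r=\Sigma(\Delta)\) with
\[
 \Delta=\{u\in\N^{n_r}:|u|\leq D_r\}.
\]
The map \(u\mapsto\Sigma(u)\) is an affine bijection \(\R^{n_r}\to\Sym_r(\R)\). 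It sends distinct \(u\) to distinct matrices, so \(|\cS_r|=|\Delta|=\binom{D_r+n_r}{n_r}\) by stars and bars. Rationality is immediate. If \(D_r=0\) there is nothing more to prove: \(\cS_r=\{I_r\}\) and the polynomials of degree \(0\) are constants.

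\textbf{Conditioning.} Write \(\Sigma=I_r+H\) with \(H=\frac1{2D_r}S\), where \(S\) is symmetric with nonnegative integer entries. By Gershgorin, or simply \(\|S\|_{\mathrm{op}}\leq\max_a\sum_b|S_{ab}|\), every eigenvalue of \(H\) has modulus at most \(\frac1{2D_r}\max_a\sum_b S_{ab}\). Row \(a\) of \(S\) consists of the entries \(u_{aa}\) and \(u_{ab}\) for \(b\neq a\), each appearing once. These are distinct coordinates of \(u\), so the row sum is at most \(|u|\leq D_r\). Hence \(\|H\|\leq\frac12\), which gives \(\frac12I_r\preceq\Sigma\preceq\frac32I_r\), and in particular \(\Sigma\) is positive definite.

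\textbf{Unisolvence.} Since \(\Sigma(u)\) is affine and invertible, pulling back gives an isomorphism between polynomials on \(\Sym_r(\R)\) of total degree \(\leq D_r\) and polynomials in \(u\in\R^{n_r}\) of total degree \(\leq D_r\). Under this isomorphism, evaluation on \(\cS_r\) becomes evaluation on the lattice simplex \(\Delta\). Both spaces have dimension \(\binom{D_r+n_r}{n_r}\), so it suffices to invert explicitly. For this I would use the multivariate Newton forward-difference formula:
\[
 f(u)=\sum_{\alpha\in\Delta}(\Delta^\alpha f)(0)\prod_{j}\binom{u_j}{\alpha_j},
 \qquad
 \Delta^\alpha f(0)=\sum_{\beta\leq\alpha}(-1)^{|\alpha-\beta|}\binom{\alpha}{\beta}f(\beta).
\]
The products \(\prod_j\binom{u_j}{\alpha_j}\) with \(|\alpha|\leq D_r\) form a basis of the degree-\(\leq D_r\) polynomials, since they are triangular against the monomials \(u^\alpha\) with rational leading coefficients \(1/\alpha!\). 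The formula holds for each such basis element, because \(\Delta^\alpha\binom{u}{\gamma}\big|_{0}=\delta_{\alpha\gamma}\) factorwise. By linearity it then holds for every \(f\) of degree \(\leq D_r\). The differences \(\Delta^\alpha f(0)\) use only the values \(f(\beta)\) with \(\beta\leq\alpha\in\Delta\), and all such \(\beta\) lie in \(\Delta\). This shows that the map \(\R^{\cS_r}\to\mathrm{Poly}_{\leq D_r}\) defined by the formula is a right inverse of evaluation. Injectivity of evaluation then follows from dimension count, or directly: if \(f\) vanishes on \(\Delta\), all its Newton coefficients vanish, so \(f=0\). Composing with the inverse affine change \(u=2D_r(\Sigma-I_r)\), read off entrywise on \(a\leq b\), produces the explicit rational finite-difference transform.

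The main point requiring care is the delta identity \(\Delta^\alpha\binom{u}{\gamma}(0)=\delta_{\alpha\gamma}\). It reduces to the one-variable fact \(\Delta\binom{x}{k}=\binom{x}{k-1}\), combined with the closure of \(\Delta\) under taking \(\beta\leq\alpha\). The remaining steps are routine. I would also note the remark the construction is aimed at: \(D_r=\lfloor rm/2\rfloor\) bounds the total covariance degree \(\sum e_{ab}\) in \eqref{eq:replicated-wick-polynomial}, because \(\sum_a k_a(E)=2\sum_{a\leq b}e_{ab}\leq rm\). So \(M_{P,r}\) lies in the interpolated space.
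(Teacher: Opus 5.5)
Your proposal is correct and follows essentially the paper's proof: parametrise $\cS_r$ affinely by the lattice simplex, count its points by stars and bars, bound the operator norm of the perturbation, and pass to Newton forward-difference interpolation in the binomial basis $\binom{u}{\alpha}$. The differences are only in the routine steps: you bound the norm by a row-sum (Gershgorin) estimate rather than the triangle inequality over the unit-norm basis matrices $B_j$, and you get injectivity from the delta identity instead of the unitriangular evaluation matrix. One wording slip: the Newton identity gives (transform)$\circ$(evaluation)$=\Id$, so the transform is a \emph{left} inverse of evaluation. That yields injectivity directly, and surjectivity then comes from the equal dimensions, which is how your final argument actually runs.
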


\begin{proof}
For \(1\leq a\leq b\leq r\), put
\[
 B_{aa}=E_{aa},
 \qquad
 B_{ab}=E_{ab}+E_{ba}\quad(a<b).
\]
These \(n_r\) matrices form a rational basis of \(\Sym_r(\R)\), and each
has operator norm one.  The assertion is immediate when \(D_r=0\).  Assume
\(D_r\geq1\), enumerate the basis as \(B_1,\ldots,B_{n_r}\), and write
\[
 \Lambda_{n_r,D_r}
 =\left\{u\in\N^{n_r}:|u|\leq D_r\right\}.
\]
Then \eqref{eq:intro-explicit-spd-grid} reads
\begin{equation}
 \cS_r=
 \left\{
 I_r+\frac1{2D_r}\sum_{j=1}^{n_r}u_jB_j:
 u\in\Lambda_{n_r,D_r}
 \right\}.
 \label{eq:explicit-spd-grid}
\end{equation}
For such a point,
\[
 \left\|\frac1{2D_r}\sum_ju_jB_j\right\|_{\mathrm{op}}
 \leq\frac{|u|}{2D_r}\leq\frac12,
\]
which proves the spectral bounds.  Stars and bars gives
\(
 |\Lambda_{n_r,D_r}|=\binom{D_r+n_r}{n_r}
\).

Let \(F\) have total degree at most \(D_r\), and set
\[
 G_F(u)=F\!\left(I_r+\frac1{2D_r}\sum_ju_jB_j\right).
\]
For multiindices \(\alpha\), write
\[
 \binom{u}{\alpha}=\prod_{j=1}^{n_r}\binom{u_j}{\alpha_j}.
\]
The polynomials \(\binom{u}{\alpha}\), \(|\alpha|\leq D_r\), form a
basis of the polynomials of total degree at most \(D_r\).  Their evaluation
matrix on the lower set \(\Lambda_{n_r,D_r}\), ordered by total degree, is
unitriangular: \(\binom{u}{\alpha}=0\) unless \(\alpha\leq u\), and equality
of total degrees then forces \(\alpha=u\).  Thus evaluation is injective;
the equality of dimensions makes it an isomorphism.

More explicitly, with \(\Delta_j\) the forward difference in the \(j\)-th
coordinate,
\begin{align}
 (\Delta^\alpha G_F)(0)
 &=\sum_{\beta\leq\alpha}
   (-1)^{|\alpha|-|\beta|}
   \binom{\alpha}{\beta}G_F(\beta),
 \label{eq:finite-difference-inverse}\\
 G_F(u)
 &=\sum_{|\alpha|\leq D_r}
   (\Delta^\alpha G_F)(0)\binom{u}{\alpha}.
 \label{eq:newton-interpolation}
\end{align}
All coefficients are rational.  The inverse affine coordinate change from
\(G_F\) to \(F\) is rational as well, proving the final assertion.
\end{proof}

\begin{corollary}[Finite recovery of every prescribed contraction]
\label{cor:finite-contraction-recovery}
Fix \(r\).  The finite vector
\[
 \bigl(M_{P,r}(\Sigma)\bigr)_{\Sigma\in\cS_r}
\]
determines all complete contractions \(\Contr_E(P)\) carried by exactly
\(r\) labelled tensor copies.  Consequently the collection of these vectors
for \(1\leq s\leq r\) determines every contraction carried by at most
\(r\) copies.
\end{corollary}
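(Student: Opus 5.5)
The plan is to combine \cref{lem:replicated-wick} with \cref{lem:spd-grid}, checking first that the degrees match. By \eqref{eq:replicated-wick-polynomial}, \(M_{P,r}\) is a polynomial in the entries \(\Sigma_{ab}\), \(a\leq b\), and its monomials are \(\prod_{a\leq b}\Sigma_{ab}^{e_{ab}}\) with \(k_a(E)\leq m\) for all \(a\). Summing \eqref{eq:graph-degree} over \(a\) gives \(\sum_a k_a(E)=2\sum_{a\leq b}e_{ab}\). The total degree of such a monomial is therefore \(\tfrac12\sum_a k_a(E)\leq rm/2\), and since it is an integer it is at most \(D_r=\lfloor rm/2\rfloor\). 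So \(M_{P,r}\) lies in the space of polynomials on \(\Sym_r(\R)\) of total degree at most \(D_r\).

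By \cref{lem:spd-grid}, evaluation on \(\cS_r\) is an isomorphism from this space onto \(\R^{\cS_r}\), with an explicit rational inverse given by \eqref{eq:finite-difference-inverse}, \eqref{eq:newton-interpolation} and the affine change of coordinates. The vector \((M_{P,r}(\Sigma))_{\Sigma\in\cS_r}\) therefore determines the polynomial \(M_{P,r}\), and hence each of its coefficients in the monomial basis in the independent coordinates \(\Sigma_{ab}\), \(a\leq b\). Distinct looped multigraphs \(E\) give distinct exponent vectors \((e_{ab})\), so distinct monomials, and the coefficient of the monomial indexed by \(E\) is exactly \(N(E)\Contr_E(P)\). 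Since \(N(E)\in\N_{>0}\) by \eqref{eq:wick-multiplicity}, dividing recovers \(\Contr_E(P)\) for every looped multigraph \(E\) on \([r]\) with all degrees at most \(m\).

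Every complete contraction carried by exactly \(r\) labelled tensor copies \(T_{k_1}(P),\ldots,T_{k_r}(P)\) with \(k_a\leq m\) is \(\Contr_E(P)\) for the looped multigraph \(E\) recording its pairing pattern. This uses the symmetry of the tensors, already noted in the definition of \(\Contr_E\), so that only the multigraph matters and not which half-edges are paired. This proves the first assertion. For the second, a contraction carried by \(s\leq r\) copies is a contraction on exactly \(s\) labelled copies, and so is recovered from the \(s\)-th vector by the first assertion. Copies with \(k_a=0\), i.e. isolated vertices carrying \(T_0(P)\), are allowed, so there is no ambiguity in the count.

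The step that needs the most care is the coefficient extraction. One must make sure the monomial basis in the coordinates \(\Sigma_{ab}\), \(a\leq b\), matches the coordinates implicitly used in \cref{lem:spd-grid}, where \(\Sigma=\sum_j s_jB_j\) and the coordinate \(s_j\) dual to \(B_{ab}\) equals \(\Sigma_{ab}\). With this identification, the distinct exponent vectors give linearly independent monomials, and no two contractions are merged.
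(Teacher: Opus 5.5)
Your proposal is correct and follows the paper's own proof: you bound the covariance degree by $\tfrac12\sum_a k_a(E)\leq D_r$, invert evaluation on $\cS_r$ via \cref{lem:spd-grid}, and divide each recovered coefficient by the positive integer $N(E)$ from \cref{lem:replicated-wick}. Your extra checks are sound details that the paper leaves implicit, namely the coordinate identification $s_{ab}=\Sigma_{ab}$ and the treatment of isolated vertices carrying $T_0(P)$.
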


\begin{proof}
The total covariance degree in \eqref{eq:replicated-wick-polynomial} is
\(\sum_{a\leq b}e_{ab}=\frac12\sum_ak_a(E)\leq D_r\).
Thus \cref{lem:spd-grid} recovers every coefficient, and
\cref{lem:replicated-wick} divides it by the known nonzero integer \(N(E)\).
\end{proof}

The preceding interpolation recovers every contraction supported on a
prescribed number of tensor copies.  To complete the inverse problem we must
show that finitely many such contractions already separate orthogonal
orbits.  This is the invariant-theoretic step of the argument.

\section{Effective orthogonal orbit recovery}
\label{sec:orbit-recovery}

We now pass from the complete contraction vector to the orbit itself.  The
non-elementary inputs are classical invariant theory and its quantitative
degree bound: complete metric contractions generate the orthogonal
invariants, while Derksen's estimate supplies the closed cutoff
\eqref{eq:closed-replica-cutoff}.  The compactness of \(O(d)\) then turns
equality of invariants into equality of real orbits.

The replica order matches the coefficient degree for a precise reason.
If \(F\) is homogeneous of degree \(r\) on a vector space, its polar form is
\begin{equation}
 \widetilde F(v_1,\ldots,v_r)
 =\left.\frac1{r!}
 \partial_{s_1}\cdots\partial_{s_r}
 F\!\left(\sum_{a=1}^rs_av_a\right)\right|_{\boldsymbol s=0},
 \qquad
 F(v)=\widetilde F(v,\ldots,v).
 \label{eq:invariant-polarisation}
\end{equation}
For an orthogonal invariant, the first fundamental theorem writes
\(\widetilde F\) as a linear combination of metric contractions on exactly
these \(r\) tensor copies.  This is why one factor of \(P\) per replica is
enough; the covariance polynomial then separates the contraction patterns.

\begin{lemma}[Orthogonal invariants and compact orbit separation]
\label{lem:orthogonal-invariants}
Let
\[
 \mathbb V_{d,m}=\bigoplus_{k=0}^{m}\Sym^k(V).
\]
The invariant algebra
\(\R[\mathbb V_{d,m}]^{O(d)}\) is generated by complete contractions of
tensor copies.  It has a finite generating subfamily.  Moreover, invariant
polynomials separate the \(O(d)\)-orbits in \(\mathbb V_{d,m}\).
\end{lemma}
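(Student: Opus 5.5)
The plan is to prove the three assertions in turn: generation by contractions, finite generation, and orbit separation.

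\emph{Generation.} Fix \(F\in\R[\mathbb V_{d,m}]^{O(d)}\). Because \(O(d)\) preserves each summand \(\Sym^k(V)\), the multigraded components of \(F\) are again invariant, so we may assume \(F\) is multihomogeneous of degree \(n_k\) in the \(k\)-th summand. Full polarisation \eqref{eq:invariant-polarisation} in each block turns \(F\) into an \(O(d)\)-invariant multilinear form. Restricting it to the summands and then to the tensors of \(V\), we obtain an invariant element of
\[
\Bigl(\bigotimes_k\Sym^k(V)^{\otimes n_k}\Bigr)^{*}\subset\bigl(V^{\otimes N}\bigr)^{*},\qquad N=\sum_k kn_k .
\]
Weyl's first fundamental theorem for \(O(d)\) \cite{Weyl} says that \((V^{\otimes N})^{*O(d)}\) is spanned by products of metric pairings \(\langle v_i,v_j\rangle\) over perfect matchings of the \(N\) slots. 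It vanishes when \(N\) is odd, which is consistent with the fact that \(-I\in O(d)\) kills odd-degree invariants. Now symmetrise over the slots of each tensor factor and restitute by setting all copies in a block equal. The multilinear form then becomes a linear combination of the \(\Contr_E\) evaluated on copies of the \(T_k\). Each matching gives a looped multigraph exactly as in the proof of \cref{lem:replicated-wick}.

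\emph{Finite generation.} \(O(d)\) is compact, hence reductive: averaging with respect to Haar measure gives a Reynolds operator. Hilbert's theorem therefore yields a finite set of homogeneous generators. Each of these is a combination of contractions by the first step, so finitely many contractions generate. The degree bound of \cite{DerksenBounds} is what later makes this family explicit, but it is not needed for the present lemma.

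\emph{Orbit separation.} This is the main point, and it is the only step where compactness is essential. Let \(\mathcal O_1\neq\mathcal O_2\) be orbits. Both are compact, hence disjoint closed sets. By Urysohn (or Tietze) there is a continuous \(\varphi\) equal to \(0\) on \(\mathcal O_1\) and \(1\) on \(\mathcal O_2\). By Stone--Weierstrass, choose a polynomial \(p\) with \(|p-\varphi|<\frac14\) on a ball containing both orbits. Average it to
\[
\bar p=\int_{O(d)}p\circ g\,\dd g .
\]
This is an invariant polynomial, and since \(\varphi\) is constant on each orbit, the approximation survives averaging. So \(\bar p\le\frac14\) on \(\mathcal O_1\) and \(\bar p\ge\frac34\) on \(\mathcal O_2\), which separates the orbits. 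Combining this with the previous step, the finite generating family of contractions also separates orbits.
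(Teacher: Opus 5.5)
Your proof is correct and follows the paper's route: polarisation together with Weyl's first fundamental theorem gives generation by contractions, Hilbert's finiteness theorem gives the finite subfamily, and a Urysohn/Stone--Weierstrass approximation averaged over \(O(d)\) gives orbit separation. The differences are cosmetic. You take the Reynolds operator directly from Haar measure on the compact group rather than passing to \(O_d(\C)\), and you correctly note that the approximation is only needed on the two orbits, whereas the paper uses an invariant ball. Your ``restriction'' from forms on \(\Sym^k(V)\) to forms on \(V^{\otimes k}\) is really an equivariant extension by the symmetriser.
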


\begin{proof}
Polarise an invariant polynomial into an invariant multilinear form on
copies of the tensor powers of \(V\).  The first fundamental theorem for
the orthogonal group states that every invariant multilinear form on a
tensor power of \(V\) is a linear combination of pairwise contractions by
the Euclidean metric \cite{Weyl}.  Restriction to the symmetric tensor factors gives
precisely the complete contractions used above.  Finite generation follows
from Hilbert's finite-generation theorem for the reductive group
\(O(d)_\C\), or equivalently from a finite contraction-generating family.
See \cite{DerksenKemper} for the constructive invariant-theory background.

For completeness, let us prove separation of real orbits.  Two distinct
\(O(d)\)-orbits are disjoint compact subsets of the finite-dimensional
space \(\mathbb V_{d,m}\).  There is a continuous function taking the
values \(0\) and \(1\) on these two orbits.  Approximate it uniformly on a
large invariant compact ball by a polynomial and average that polynomial
over \(O(d)\).  The averaged polynomial is invariant and still separates
the two orbits.  Hence equality of all invariant polynomials forces equality
of the orbits.
\end{proof}

\begin{proposition}[A closed effective replica bound]
\label{prop:closed-replica-bound}
Let
\(
 \mathbb V_{d,m}=\bigoplus_{k=0}^{m}\Sym^k(\R^d)
\).
Then \(\R[\mathbb V_{d,m}]^{O(d)}\) is generated by homogeneous invariants
of coefficient degree at most the integer \(r_*(d,m)\) in
\eqref{eq:closed-replica-cutoff}.
\end{proposition}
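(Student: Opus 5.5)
The plan is to invoke Derksen's degree bound for invariant rings of linearly reductive groups \cite{DerksenBounds} and to make every quantity in it explicit for the representation \(\mathbb V_{d,m}\) of \(G=O(d)_\C\).

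\emph{Derksen's bound.} For a linearly reductive group \(G\) acting on an \(n\)-dimensional representation \(W\), the ring \(\C[W]^G\) is generated by invariants of degree at most
\[
 \max\Bigl\{2,\ \tfrac38\,\dim(G)\,\sigma^2\Bigr\},
\]
where \(\sigma\) is the maximal degree of a generating set of the nullcone ideal. By Derksen's Proposition, \(\sigma\leq H^{\,t}A^{\,t}\) when \(G\subset\mathrm{GL}(W)\) is cut out by equations of degree at most \(H\), \(t=\dim G\), and \(A\) bounds the degrees of the entries of the representation map.

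\emph{Inserting the data.} The group \(O(d)_\C\) has \(t=d(d-1)/2\) and is cut out by the quadratic equations \(U^TU=I\), so \(H=2\). The representation on \(\Sym^k(\C^d)\) has matrix entries that are polynomials of degree \(k\leq m\) in \(U\), so \(A=m\). Hence \(\sigma\leq (2m)^{d(d-1)/2}\). Take \(n=\dim\mathbb V_{d,m}=N_{d,m}\), which follows from \(\sum_{k\leq m}\binom{d+k-1}{k}=\binom{d+m}{m}\).

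The expression \(\tfrac38 N_{d,m}2^{d(d+1)}m^{d(d-1)}\) in \eqref{eq:closed-replica-cutoff} dominates \(\tfrac38\,t\,\sigma^2\). Two crude inequalities give this:
\begin{itemize}
\item \(\sigma^2\leq 2^{d(d-1)}m^{d(d-1)}\);
\item \(t\leq 4^d\), which supplies the extra factor \(2^{2d}\) since \(2^{d(d+1)}=2^{d(d-1)}\cdot 2^{2d}\).
\end{itemize}
If the version of Derksen's theorem being used has \(n\) rather than \(\dim G\) in front of \(\sigma^2\), the factor \(N_{d,m}\) covers it directly. Either way, the stated \(r_*\) is an upper bound.

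\emph{Descent to the real ring.} The bound is over \(\C\) and must be transferred to \(\R[\mathbb V_{d,m}]^{O(d)}\). Real points of \(O(d)_\C\) are Zariski dense, since \(O(d)\) is a real form of the reductive group. Consequently \(\C[W]^{G}=\R[W]^{O(d)}\otimes\C\), and real and imaginary parts of complex generators give real generators of the same degrees. Homogeneity is automatic, because the action preserves each \(\Sym^k\) and hence coefficient degree; the generators may therefore be taken homogeneous.

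\emph{Main obstacle.} The delicate step is matching the exact form of Derksen's constants, namely which dimension multiplies \(\sigma^2\) and the precise shape of the nullcone degree bound, to the stated closed expression. I would first recheck Derksen's Theorem 1.1 and Proposition 1.2 and confirm that each factor in \(r_*\) bounds its counterpart. If some factor is off, the fallback is to enlarge \(r_*\) appropriately; the argument is otherwise unchanged, because only an upper bound on generator degrees is needed.
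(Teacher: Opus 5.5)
Your overall plan is the paper's: Derksen's generator bound plus his nullcone estimate, followed by descent to \(\R\) using Zariski density of \(O(d)\) in \(O_d(\C)\). The gap is in the step you flag as delicate, which is the whole content of the proposition. You quote the nullcone estimate as \(\sigma\le (HA)^{\dim G}\), but Derksen's Proposition~1.2 reads \(\sigma\le H^{t-g}A^{g}\). Here \(t\) is the dimension of the \emph{ambient} affine space containing \(G\), \(g=\dim G\), \(H\) bounds the degrees of the defining equations, and \(A\) bounds the degrees of the representation. For \(Z^{\mathsf T}Z=I_d\) in \(\mathbb A^{d^2}\) this gives \(t-g=d^2-d(d-1)/2=d(d+1)/2\). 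Hence \(\sigma\le 2^{d(d+1)/2}m^{d(d-1)/2}\) and \(\sigma^2\le 2^{d(d+1)}m^{d(d-1)}\). Your bound \(\sigma\le(2m)^{d(d-1)/2}\) is smaller by a factor \(2^{d}\), and the citation does not supply it, so your inequality chain rests on an unproved estimate. A second misquotation: the multiplier in Derksen's Theorem~1.1 is not \(\dim G\) but the Krull dimension of \(\C[W]^{O_d(\C)}\), which is at most \(\dim W=N_{d,m}\). Also, \(\sigma\) is the least \(D\) such that invariants of degree \(\le D\) cut out the nullcone set-theoretically, not a generator degree of its ideal.

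Once both constants are corrected, the factor \(2^{2d}\) you used to absorb \(\dim G\le 4^d\) is entirely consumed by \(\sigma^2\). The resulting product \(\frac38\,N_{d,m}\,2^{d(d+1)}m^{d(d-1)}\) is exactly the expression in \eqref{eq:closed-replica-cutoff}, so the cutoff is calibrated with no slack at all. Your fallback of enlarging \(r_*\) is therefore not available, since the statement fixes \(r_*\). The proof goes through only with precisely these two constants, and with them it is the paper's argument. Your descent to the real invariant ring and your remark on homogeneity are fine as stated.
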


\begin{proof}
Work first over \(\C\).  Embed \(O_d(\C)\) in \(\mathbb A^{d^2}\) by
\(Z^{\mathsf T}Z=I_d\).  The defining equations have degree at most
\(H=2\), the ambient affine dimension is \(t=d^2\), and
\[
 g:=\dim O_d(\C)=\frac{d(d-1)}2.
\]
On
\[
 W_{d,m}=\bigoplus_{k=0}^{m}\Sym^k(\C^d)
\]
the representation is faithful because it contains \(\Sym^1(\C^d)\), and
its matrix coefficients have degree at most \(A=m\) in the entries of
\(Z\).  Derksen's null-cone estimate \cite[Proposition~1.2]{DerksenBounds}
therefore gives
\begin{equation}
 \sigma(W_{d,m})
 \leq H^{t-g}A^g
 =2^{d(d+1)/2}m^{d(d-1)/2}.
 \label{eq:derksen-nullcone-bound}
\end{equation}
His generator estimate \cite[Theorem~1.1]{DerksenBounds} is
\[
 \beta(W)
 \leq
 \max\left\{2,\frac38
   \dim\bigl(\C[W]^{O_d(\C)}\bigr)\sigma(W)^2
 \right\}.
\]
Since
\[
 \dim\bigl(\C[W_{d,m}]^{O_d(\C)}\bigr)
 \leq\dim W_{d,m}
 =\sum_{k=0}^{m}\binom{d+k-1}{k}
 =\binom{d+m}{m}=N_{d,m},
\]
substitution of \eqref{eq:derksen-nullcone-bound} gives precisely
\eqref{eq:closed-replica-cutoff}.

Finally, \(O(d)\) is Zariski dense in \(O_d(\C)\), so the real invariant
algebra complexifies to the displayed complex invariant algebra.  The
generating-degree bound therefore descends to \(\R\).
\end{proof}

\begin{remark}[Closed cutoff versus intrinsic cutoffs]
\label{rem:cutoff-hierarchy}
The integer \(r_*(d,m)\) in \eqref{eq:closed-replica-cutoff} is a closed,
uniform sufficient bound, not an optimal one.  The intrinsic generation
threshold is
\[
 \begin{gathered}
 A=\Q\!\left[\bigoplus_{k=0}^{m}\Sym^k(\Q^d)\right]^{O_d},
 \qquad A_+=\bigoplus_{s\geq1}A_s,\\
 r_{\mathrm{gen}}(d,m)
 =\max\left\{s\geq1:
 \left(A_+/(A_+)^2\right)_s\neq0
 \right\}.
 \end{gathered}
\]
and satisfies \(r_{\mathrm{gen}}(d,m)\leq r_*(d,m)\).  It is itself
algorithmically computable: constructive invariant theory first produces a
finite homogeneous generating set; in each degree up to its maximum, the
orthogonal first fundamental theorem and exact rational linear algebra
compute \(A_s\) and \(((A_+)^2)_s\).  A still smaller orbit-separating
replica order may exist.  The main theorem uses the closed cutoff because it
requires no invariant-ring precomputation.
\end{remark}

\begin{proof}[Proof of \cref{thm:intro-torelli}]
Set \(r_*=r_*(d,m)\) as in \eqref{eq:closed-replica-cutoff}.
For \(1\leq r\leq r_*\), take the rational positive-definite grid
\(\cS_r\) of \cref{lem:spd-grid}.

The implication \textup{(i)}\(\Rightarrow\)\textup{(ii)} is immediate.
Indeed, if \(Q=P\circ U\), then the simultaneous rotation
\((X_1,\ldots,X_r)\mapsto(UX_1,\ldots,UX_r)\) preserves the covariance
\(\Sigma\otimes I_d\).  The implication
\textup{(ii)}\(\Rightarrow\)\textup{(iii)} follows by taking the integrable
mixed moment \(\prod_aP(X_a)\).

Assume \textup{(iii)}.  Then the replica polynomials \(M_{P,r}\) and
\(M_{Q,r}\) agree on \(\cS_r\) for every \(r\leq r_*\).  By
 \cref{lem:spd-grid}, they agree coefficient by coefficient.  By
\cref{lem:replicated-wick}, all complete contractions carried by at most
\(r_*\) vertices agree for \(P\) and \(Q\).  Every homogeneous invariant of
degree \(r\leq r_*\) is, by polarisation and the orthogonal first fundamental
theorem, a linear combination of complete contractions on \(r\) tensor
copies.  Hence all invariants of degree at most \(r_*\) agree.  By
\cref{prop:closed-replica-bound} these invariants generate the full invariant
algebra, so every invariant polynomial has the same value on the tensor tuples
\((T_k(P))_{k\leq m}\) and \((T_k(Q))_{k\leq m}\).  By
\cref{lem:orthogonal-invariants}, these tuples lie in the same
\(O(d)\)-orbit.  Equivalently, \(Q=P\circ U\) for an orthogonal \(U\).
\end{proof}

\begin{definition}[Effective support and silent directions]
\label{def:effective-support}
For \(P\in\cP_{d,m}\), set
\[
 N(P)=
 \left\{
 v\in V:
 \iota_vT_k(P)=0\ \text{for every }1\leq k\leq m
 \right\},
 \qquad
 E(P)=N(P)^\perp.
\]
We call \(E(P)\) the effective Gaussian support of \(P\).
\end{definition}

\begin{proposition}[Minimality of the effective support]
\label{prop:effective-support}
One has
\[
 P(x)=P\bigl(\operatorname{proj}_{E(P)}x\bigr).
\]
The subspace \(E(P)\) is the smallest linear subspace with this property.
If \(Q=P\circ U\), then \(U\) maps \(E(Q)\) isometrically onto \(E(P)\).
Consequently the finite Torelli theorem remains valid for different
ambient dimensions after adjoining and then discarding silent Gaussian
coordinates.
\end{proposition}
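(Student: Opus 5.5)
Let \(\pi=\operatorname{proj}_{E(P)}\) and write every \(x\in V\) as \(x=\pi x+n\) with \(n\in N(P)\). The plan is to use the tensor expansion \eqref{eq:tensor-expansion} throughout.

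\emph{Invariance.} Expand
\[
 \ip{T_k(P)}{(\pi x+n)^{\otimes k}}
\]
by multilinearity and symmetry of \(T_k(P)\). Every term containing at least one factor \(n\) has the form \(\ip{\iota_nT_k(P)}{\cdots}\), which vanishes because \(n\in N(P)\). Only \(\ip{T_k(P)}{(\pi x)^{\otimes k}}\) survives. Summing over \(k\) gives \(P(x)=P(\pi x)\); the \(k=0\) term is unaffected.

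\emph{Minimality.} Suppose \(F\) is a linear subspace with \(P=P\circ\operatorname{proj}_F\). For \(v\in F^\perp\) we get \(P(x+tv)=P(x)\) for all \(x\) and \(t\). Differentiate in \(t\) at \(t=0\), which gives \(\partial_vP\equiv0\). Now decompose \(\partial_vP\) into homogeneous pieces. The degree-\((k-1)\) piece is \(k\ip{\iota_vT_k(P)}{x^{\otimes(k-1)}}\). A symmetric tensor is determined by its polynomial, so \(\iota_vT_k(P)=0\) for every \(k\geq1\). Hence \(F^\perp\subset N(P)\), that is, \(E(P)\subset F\).

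\emph{Equivariance.} Use the convention \((P\circ U)(x)=P(Ux)\). Then \(T_k(P\circ U)=(U^{\mathsf T})^{\otimes k}T_k(P)\), and therefore
\[
 \iota_v T_k(Q)=(U^{\mathsf T})^{\otimes(k-1)}\iota_{Uv}T_k(P).
\]
This gives \(N(Q)=U^{-1}N(P)\). Since \(U\) is orthogonal, taking orthogonal complements yields \(U E(Q)=E(P)\) isometrically.

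\emph{Different ambient dimensions.} Take \(P\) on \(\R^{d}\) and \(Q\) on \(\R^{d'}\) with \(d\leq d'\). Regard \(P\) as a polynomial on \(\R^{d'}\) by adjoining coordinates on which it does not depend. These coordinates lie in \(N(P)\). Adjoining them leaves the law of \(P(X_1),\ldots,P(X_r)\) unchanged, because the Gaussian coordinates are independent across the orthogonal directions. Now apply \cref{thm:intro-torelli} in dimension \(d'\). It produces \(U\in O(d')\), and by the equivariance step \(U\) restricts to an isometry \(E(Q)\to E(P)\) intertwining \(Q\) and \(P\), with the silent coordinates discarded.

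The whole argument is routine linear algebra. The only point needing care is bookkeeping: the direction of \(U\) versus \(U^{\mathsf T}\) in the equivariance step, and which cutoff to use. Taking \(r_*(\max(d,d'),m)\) is the safe choice for the cutoff.
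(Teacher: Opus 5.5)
Your proof is correct and follows the paper's route. You use the slot-vanishing expansion for the factorisation, $F^\perp\subset N(P)$ for minimality (which you justify via $\partial_vP\equiv0$ and polarisation, where the paper only asserts it), and one-slot orthogonal covariance for equivariance. The only variation is the last step: you pad the lower-dimensional polynomial into $\R^{\max(d,d')}$ rather than embedding both presentations into the orthogonal direct sum $\R^{d}\oplus\R^{d'}$, which works equally well and needs only the smaller cutoff $r_*(\max(d,d'),m)$ instead of $r_*(d+d',m)$.
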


\begin{proof}
The definition of \(N(P)\) says that each \(T_k(P)\) vanishes whenever
one of its slots belongs to \(N(P)\).  Thus
\[
 T_k(P)\in\Sym^k(E(P))
\]
for every \(k\geq1\), proving the displayed factorisation.  Conversely, if
\(P\) factors through a subspace \(E\), every vector of \(E^\perp\) lies
in \(N(P)\); hence \(E(P)\subseteq E\).  Orthogonal covariance of
contraction in one slot gives the last assertion.  To compare dimensions,
embed both presentations into their orthogonal direct sum and apply
\cref{thm:intro-torelli}; the resulting orthogonal map restricts to the
minimal effective supports.
\end{proof}

\begin{proposition}[An explicit finite positive Gram certificate]
\label{prop:finite-gram}
For every \(r\leq r_*\), let \(c_r(P)\) be the vector of all coefficients
of \(M_{P,r}\), including the positive Wick multiplicities.  Evaluation on
\(\cS_r\) is an injective rational linear map \(A_r\), computable from
\eqref{eq:finite-difference-inverse}, and
\[
 K_r=A_r^{\mathsf T}A_r>0.
\]
Therefore
\begin{equation}
 \sum_{\Sigma\in\cS_r}
 \left|M_{P,r}(\Sigma)-M_{Q,r}(\Sigma)\right|^2
 =
 \bigl(c_r(P)-c_r(Q)\bigr)^{\mathsf T}
 K_r
 \bigl(c_r(P)-c_r(Q)\bigr).
 \label{eq:finite-probe-gram}
\end{equation}
After stacking \(1\leq r\leq r_*\), the right-hand side vanishes if and
only if \(P\) and \(Q\) are orthogonally equivalent.
\end{proposition}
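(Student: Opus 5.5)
The plan is to regard the coefficient vector \(c_r(P)\) as the coordinates of \(M_{P,r}\) in the monomial basis of the space \(\mathcal F_r\) of polynomials on \(\Sym_r(\R)\) of total degree at most \(D_r\). Here the index set is all looped multigraphs \(E\) with \(\sum_{a\leq b}e_{ab}\leq D_r\). Entries with \(k_a(E)>m\) for some \(a\) are set to zero, and the admissible entries are \(N(E)\Contr_E(P)\) by \cref{lem:replicated-wick}. Define \(A_r:\R^{\dim\mathcal F_r}\to\R^{\cS_r}\) by sending a coefficient vector to the values of the corresponding polynomial at the points of \(\cS_r\). The identity \(A_rc_r(P)=(M_{P,r}(\Sigma))_{\Sigma\in\cS_r}\) is then just \eqref{eq:replicated-wick-polynomial} evaluated pointwise. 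Since every \(\Sigma\in\cS_r\) is rational, \(A_r\) has rational entries.

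For injectivity I would invoke \cref{lem:spd-grid}. Evaluation on \(\cS_r\) is an isomorphism \(\mathcal F_r\to\R^{\cS_r}\), so \(A_r\) is in fact bijective, and its inverse is the rational transform obtained by composing \eqref{eq:finite-difference-inverse}, \eqref{eq:newton-interpolation}, and the affine change of variables \(u\mapsto I_r+\frac1{2D_r}\sum_ju_jB_j\). It follows that \(K_r=A_r^{\mathsf T}A_r\) is symmetric and satisfies \(x^{\mathsf T}K_rx=\|A_rx\|^2>0\) for \(x\neq0\), so \(K_r\) is positive definite. Applying this with \(x=c_r(P)-c_r(Q)\) and using linearity of \(A_r\) gives \eqref{eq:finite-probe-gram} directly.

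For the final equivalence, stack the quantities over \(1\leq r\leq r_*\). The sum of the right-hand sides vanishes if and only if each term does, because each is a positive-definite quadratic form. By \eqref{eq:finite-probe-gram} this holds if and only if \(M_{P,r}=M_{Q,r}\) on every \(\cS_r\), that is, if and only if \(\mathbf J_{d,m}(P)=\mathbf J_{d,m}(Q)\). \Cref{thm:intro-torelli}, through the equivalence of (i) and (iii), then identifies this condition with orthogonal equivalence.

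There is no real obstacle here. The one point needing care is the bookkeeping of the index set of \(c_r\). If \(c_r\) is indexed only by admissible graphs \(E\), then \(A_r\) is the restriction of the bijection to a coordinate subspace, which is still injective, and the argument is unchanged. The positivity of \(N(E)\) is not needed for the Gram identity itself; it enters only through the theorem, which recovers the \(\Contr_E\) from the coefficients.
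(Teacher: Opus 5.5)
Your proposal is correct and follows the paper's argument. Injectivity (indeed bijectivity) of \(A_r\) comes from \cref{lem:spd-grid}; positive definiteness of \(K_r=A_r^{\mathsf T}A_r\) and \eqref{eq:finite-probe-gram} follow from linearity and \(x^{\mathsf T}K_rx=\|A_rx\|^2\); and the final equivalence is \cref{thm:intro-torelli}, (i)\(\Leftrightarrow\)(iii).
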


\begin{proof}
Injectivity is exactly the interpolation statement of
\cref{lem:spd-grid}; in Newton coordinates the full evaluation matrix is
integer unitriangular.  Hence \(A_r^{\mathsf T}A_r\) is an explicitly
computable positive-definite rational matrix,
and \eqref{eq:finite-probe-gram} is the Euclidean polarisation identity.
The final assertion follows from \cref{thm:intro-torelli}.
\end{proof}

For later comparison with period jets, set
\begin{equation}
 \mathfrak E_{\mathrm{probe}}(P,Q)
 =
 \|\mathbf J_{d,m}(P)-\mathbf J_{d,m}(Q)\|^2
 =
 \sum_{r=1}^{r_*(d,m)}
 \sum_{\Sigma\in\cS_r}
 |M_{P,r}(\Sigma)-M_{Q,r}(\Sigma)|^2.
 \label{eq:finite-probe-energy}
\end{equation}

Exact orbit separation is qualitative.  To show that the finite
certificate remains informative under perturbations, one must compare its
vanishing order with the distance to the orbit-incidence variety.  This is
the precise point at which an effective real \L ojasiewicz inequality is
needed.

\begin{theorem}[Quantitative stability of the finite probe map]
\label{thm:quantitative-torelli}
Let
\[
 V_{d,m}=\mathcal P_{d,m},
 \qquad
 N=\dim V_{d,m}=\binom{d+m}{m},
 \qquad
 D=r_*(d,m),
\]
and equip \(V_{d,m}\) with the \(O(d)\)-invariant tensor norm
\begin{equation}
 \|P\|_V^2
 =\sum_{k=0}^m\|T_k(P)\|_{\mathrm{HS}}^2.
 \label{eq:invariant-tensor-norm}
\end{equation}
Define
\begin{align}
 \mathbf J_{d,m}(P)
 &=
 \bigl(M_{P,r}(\Sigma)\bigr)_{
  1\leq r\leq D,\ \Sigma\in\mathcal S_r},
 \label{eq:finite-probe-map}\\
 \delta_{\mathrm{orb}}(P,Q)
 &=
 \min_{U\in O(d)}\|Q-P\circ U\|_V,
 \label{eq:orbital-distance}\\
 A_{d,m}
 &=
 D(6D-3)^{2N-1}.
 \label{eq:explicit-stability-exponent}
\end{align}
There exists \(c_{d,m}>0\) such that, for every \(P,Q\in V_{d,m}\),
\begin{equation}
 \boxed{
 \|\mathbf J_{d,m}(P)-\mathbf J_{d,m}(Q)\|
 \geq
 c_{d,m}
 \left(
  \frac{\delta_{\mathrm{orb}}(P,Q)}
  {\sqrt2(1+\|P\|_V^2+\|Q\|_V^2)}
 \right)^{A_{d,m}}.}
 \label{eq:global-orbital-Lojasiewicz}
\end{equation}
Consequently, on the ball \(\|P\|_V,\|Q\|_V\leq R\),
\begin{equation}
 \boxed{
 \delta_{\mathrm{orb}}(P,Q)
 \leq
 \sqrt2(1+2R^2)c_{d,m}^{-1/A_{d,m}}
 \mathfrak E_{\mathrm{probe}}(P,Q)^{1/(2A_{d,m})}.}
 \label{eq:ball-orbital-Holder}
\end{equation}
There is also a constant \(L_{d,m,R}\) such that
\begin{equation}
 \sqrt{\mathfrak E_{\mathrm{probe}}(P,Q)}
 \leq
 L_{d,m,R}\delta_{\mathrm{orb}}(P,Q).
 \label{eq:probe-upper-Lipschitz}
\end{equation}
Thus the quotient metric and the finite probe metric are globally
bi-H\"older on coefficient balls.  On the isolated-leading-part locus the same metric is obtained from the
finite jet of the Gaussian formal branch in
\cref{thm:normal-symbol-torelli}.  The exponent
\eqref{eq:explicit-stability-exponent} is explicit and deliberately
non-sharp; the constant \(c_{d,m}\) is not claimed in closed numerical
form.
\end{theorem}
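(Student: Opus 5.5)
The plan is to reduce the lower bound \eqref{eq:global-orbital-Lojasiewicz} to a single global real \L ojasiewicz inequality for a polynomial map on \(V_{d,m}\times V_{d,m}\), and to get the upper bound \eqref{eq:probe-upper-Lipschitz} from smoothness plus invariance.

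\textbf{Step 1: the difference map and its zero set.} Identify \(V_{d,m}\) isometrically with \(\R^N\) by choosing a \(\|\cdot\|_V\)-orthonormal basis. Consider
\[
 F(P,Q)=\mathbf J_{d,m}(P)-\mathbf J_{d,m}(Q),\qquad F:\R^{2N}\to\R^{N_{\mathrm{probe}}}.
\]
By \cref{lem:replicated-wick}, each component \(M_{P,r}(\Sigma)\) is a polynomial of degree at most \(r\leq D\) in the coefficients of \(P\). Hence \(F\) is a polynomial map of degree at most \(D\) in \(2N\) real variables. By \cref{thm:intro-torelli}, its zero set is exactly the orbit-incidence variety
\[
 Z=\{(P,Q):Q=P\circ U\ \text{for some }U\in O(d)\}.
\]
This set is nonempty, since it contains the diagonal.

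\textbf{Step 2: apply Kurdyka--Spodzieja.} Use the global effective \L ojasiewicz inequality for polynomial mappings \cite{KurdykaSpodzieja}. A map \(F:\R^n\to\R^k\) of degree at most \(D\) satisfies
\[
 |F(x)|\geq c\left(\frac{\operatorname{dist}(x,F^{-1}(0))}{1+|x|^2}\right)^{D(6D-3)^{n-1}}.
\]
With \(n=2N\) this gives exactly the exponent \(A_{d,m}\) of \eqref{eq:explicit-stability-exponent}, and \(|x|^2=\|P\|_V^2+\|Q\|_V^2\).

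It remains to compare \(\operatorname{dist}((P,Q),Z)\) with \(\delta_{\mathrm{orb}}\). Let \((P',Q')\in Z\) be a nearest point, with \(Q'=P'\circ U\). Since \(\|\cdot\|_V\) is \(O(d)\)-invariant,
\[
 \delta_{\mathrm{orb}}(P,Q)\leq\|Q-P\circ U\|_V\leq\|Q-Q'\|_V+\|(P'-P)\circ U\|_V\leq\sqrt2\,\operatorname{dist}((P,Q),Z),
\]
where the last step is Cauchy--Schwarz. Since \(t\mapsto t^{A}\) is increasing, substituting \(\operatorname{dist}\geq\delta_{\mathrm{orb}}/\sqrt2\) yields \eqref{eq:global-orbital-Lojasiewicz}.

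The ball estimate \eqref{eq:ball-orbital-Holder} then follows by bounding \(1+\|P\|_V^2+\|Q\|_V^2\leq1+2R^2\), writing \(\|F\|=\mathfrak E_{\mathrm{probe}}^{1/2}\), and taking \(A_{d,m}\)-th roots.

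\textbf{Step 3: Lipschitz upper bound.} Use that \(\mathbf J_{d,m}(P\circ U)=\mathbf J_{d,m}(P)\), which is the implication (i)\(\Rightarrow\)(iii). Choose \(U\) attaining \(\delta_{\mathrm{orb}}\). Then
\[
 \|\mathbf J(P)-\mathbf J(Q)\|=\|\mathbf J(P\circ U)-\mathbf J(Q)\|.
\]
Both \(P\circ U\) and \(Q\) lie in the ball of radius \(R\), again by invariance of the norm. A polynomial map is Lipschitz on a convex compact set, with constant \(L_{d,m,R}=\sup_{\|P\|_V\leq R}\|D\mathbf J\|\). This gives \eqref{eq:probe-upper-Lipschitz}, and combined with Step 2 the bi-H\"older statement.

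\textbf{Expected obstacle.} The delicate point is matching the precise form of the Kurdyka--Spodzieja statement: the mapping version with degree \(D\), rather than the scalar version applied to \(|F|^2\) of degree \(2D\), and the normalisation \(1+|x|^2\) in the denominator. If only the scalar version is available, I would apply it to \(\sum_j F_j^2\). That version produces a larger exponent, so I would then need to check whether the stated \(A_{d,m}\) is still reached, or else adjust the exponent accordingly.
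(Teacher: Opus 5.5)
Your proposal is correct and follows the paper's proof essentially step for step. The paper also applies the Kurdyka--Spodzieja inequality to the degree-$D$ map $(P,Q)\mapsto\mathbf J_{d,m}(P)-\mathbf J_{d,m}(Q)$ on $\R^{2N}$, compares distances via $\delta_{\mathrm{orb}}/\sqrt2\le\operatorname{dist}((P,Q),Z_{\mathrm{orb}})$ using $O(d)$-invariance of $\|\cdot\|_V$, and gets the Lipschitz direction from invariance of $\mathbf J_{d,m}$ plus a mean-value bound. The paper cites the mapping form of that inequality with degree $D$, exactly as you do, so the exponent $A_{d,m}$ comes out directly and there is no need to pass to $\sum_jF_j^2$.
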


\begin{proof}
Consider the polynomial map
\[
 \mathcal F:
 V_{d,m}\times V_{d,m}
 \longrightarrow
 \R^{N_{\mathrm{probe}}(d,m)},
 \qquad
 \mathcal F(P,Q)
 =\mathbf J_{d,m}(P)-\mathbf J_{d,m}(Q).
\]
The product space is equipped with the Euclidean product of the tensor
norm \eqref{eq:invariant-tensor-norm}.
Every coordinate \(M_{P,r}(\Sigma)\) is homogeneous of degree \(r\) in
the coefficients of \(P\), so
\[
 \deg\mathcal F\leq D.
\]
By \cref{thm:intro-torelli}, its real zero set is exactly
\begin{equation}
 Z_{\mathrm{orb}}
 =
 \{(P,Q):Q=P\circ U\text{ for some }U\in O(d)\}.
 \label{eq:orbital-incidence-zero-set}
\end{equation}

The global real polynomial \L ojasiewicz inequality of
\cite{KurdykaSpodzieja} applied in dimension \(2N\) gives a constant
\(c_{d,m}>0\) such that
\[
 \|\mathcal F(z)\|
 \geq
 c_{d,m}
 \left(
  \frac{\operatorname{dist}(z,Z_{\mathrm{orb}})}
       {1+\|z\|^2}
 \right)^{D(6D-3)^{2N-1}}.
\]
It remains only to compare the algebraic-set distance with the orbit
distance.  One has
\begin{equation}
 \frac1{\sqrt2}\delta_{\mathrm{orb}}(P,Q)
 \leq
 \operatorname{dist}\bigl((P,Q),Z_{\mathrm{orb}}\bigr)
 \leq
 \delta_{\mathrm{orb}}(P,Q).
 \label{eq:two-orbit-distances}
\end{equation}
The upper bound follows by comparing \((P,Q)\) with
\((P,P\circ U)\).  For the lower bound, let
\((P',Q')\in Z_{\mathrm{orb}}\), say \(Q'=P'\circ U\).  Since
\eqref{eq:invariant-tensor-norm} is \(O(d)\)-invariant,
\begin{align*}
 \delta_{\mathrm{orb}}(P,Q)
 &\leq\|Q-P\circ U\|_V\\
 &\leq\|Q-Q'\|_V+\|(P'-P)\circ U\|_V\\
 &\leq\sqrt2\,
 \|(P,Q)-(P',Q')\|.
\end{align*}
Taking the infimum proves \eqref{eq:two-orbit-distances}, and substitution
proves \eqref{eq:global-orbital-Lojasiewicz}.  Since
\[
 \mathfrak E_{\mathrm{probe}}(P,Q)
 =\|\mathcal F(P,Q)\|^2,
\]
\eqref{eq:ball-orbital-Holder} follows.

Finally, on the radius-\(R\) ball the derivative of the polynomial map
\(\mathbf J_{d,m}\) is bounded.  For a minimising \(U\in O(d)\), invariance
of \(\mathbf J_{d,m}\) and the mean-value theorem give
\[
 \|\mathbf J_{d,m}(Q)-\mathbf J_{d,m}(P)\|
 =
 \|\mathbf J_{d,m}(Q)-\mathbf J_{d,m}(P\circ U)\|
 \leq L_{d,m,R}\|Q-P\circ U\|_V,
\]
which is \eqref{eq:probe-upper-Lipschitz}.
\end{proof}

\begin{remark}[Why the global inverse is not Lipschitz]
\label{rem:no-global-Lipschitz}
The H\"older loss at singular orbits is genuine.  At the zero polynomial,
take \(Q_v(x)=\langle v,x\rangle\).  Then
\[
 \delta_{\mathrm{orb}}(0,Q_v)=|v|,
 \qquad
 \|\mathbf J_{d,m}(Q_v)-\mathbf J_{d,m}(0)\|
 \asymp |v|^2
 \qquad(|v|\to0).
\]
Hence no uniform Lipschitz inverse can hold near the origin.  On a compact
subset of a regular orbit-type stratum, finite differences recover
linearly a generating Hilbert map.  That map is an immersion on the regular
quotient stratum, so its inverse is locally Lipschitz there; the constants
necessarily deteriorate when a singular stratum is approached.
\end{remark}

\begin{remark}[What cyclicity is actually needed]
\label{rem:observable-cyclicity}
The matrices \(K_r=A_r^{\mathsf T}A_r\) in
\eqref{eq:finite-probe-gram} are positive definite, so the finite
measurement vector detects every complete contraction used in orbit
reconstruction.  This is weaker than asking one continued real Gaussian
contour to be cyclic in the full rapid-decay module.  The stronger statement
can fail: for an even polynomial, that contour annihilates every odd sector.
No such cyclicity assumption is used below.
\end{remark}

Sections~\ref{sec:replica-contractions} and \ref{sec:orbit-recovery} now
complete the first main theorem, including stability.  We turn to the
period-theoretic interpretation of the same finite measurement vector.  The
first step is local and algebraic: at zero coupling, Wick contraction is
exactly a chain-level twisted de Rham reduction.  The complete Airy and
Bessel computations are deferred to \cref{sec:cubic,sec:quartic}; only their
two structural conclusions are used in the main chain.

\section{The quadratic bridge from Wick contractions to de Rham reduction}
\label{sec:quadratic-comparison}

For a quadratic phase, Gaussian integration and twisted de Rham reduction
are the same finite algebraic operation.  The following conjugation makes
that statement chain-level: the inverse Wick transform is precisely the
positive sum over partial pairings familiar from Gaussian diagram formulae
\cite{PeccatiTaqqu}.  A scalar Wick identity would not suffice for the
global comparison, because it would not control changes of de Rham
representative or the compatibility of successive reductions along
active-coupling strata.  The chain-level conjugation is the lift needed later in
\cref{thm:gaussian-face-cube}.

Let \(R\) be a commutative \(\mathbf Q\)-algebra, let \(V=R^d\), and let
\[
 B\in\Sym_d(R)
\]
be invertible.  Put \(C=B^{-1}\), write
\(x=(x_1,\ldots,x_d)\), and set
\[
 Q_B(x)=\frac12x^{\mathsf T}Bx.
\]
On
\[
 \Omega_R^\bullet
 =
 R[x_1,\ldots,x_d]\otimes_R\bigwedge^\bullet V^*
\]
consider
\begin{align}
 \delta_B\alpha
 &=
 \sum_{j=1}^d(Bx)_j\,\mathrm d x_j\wedge\alpha,
 \label{eq:quadratic-forest-differential}\\
 \nabla_B\alpha
 &=
 d_x\alpha-d_xQ_B\wedge\alpha
 =
 \sum_{j=1}^d
 \mathrm d x_j\wedge
 \bigl(\partial_{x_j}-(Bx)_j\bigr)\alpha.
 \label{eq:quadratic-twisted-dR}
\end{align}
The first is the Koszul differential of the unresolved Gaussian
half-edges; the second is the relative de Rham differential of the
exponential module \(\mathcal E^{-Q_B}\).  Define
\[
 \Delta_C=\sum_{a,b=1}^dC_{ab}\partial_{x_a}\partial_{x_b},
 \qquad
 \mathcal W_C=\exp\!\left(-\frac12\Delta_C\right).
\]
Both exponentials \(\exp(\pm\Delta_C/2)\) are finite on every polynomial.

\begin{theorem}[Quadratic matching-forest/de Rham comparison]
\label{thm:quadratic-forest-dR}
For every \(p\), define
\[
 \Theta_B^p=(-1)^p\mathcal W_C.
\]
Then
\[
 \Theta_B^{p+1}\delta_B=\nabla_B\Theta_B^p.
\]
Consequently
\[
 \Theta_B:
 (\Omega_R^\bullet,\delta_B)
 \xrightarrow{\ \simeq\ }
 (\Omega_R^\bullet,\nabla_B)
\]
is an isomorphism of complexes, with inverse
\[
 (\Theta_B^p)^{-1}
 =
 (-1)^p\exp\!\left(\frac12\Delta_C\right).
\]
Moreover,
\[
 H^p(\Omega_R^\bullet,\nabla_B)=0\quad(p\neq d),
 \qquad
 H^d(\Omega_R^\bullet,\nabla_B)\simeq R.
\]
If
\(\operatorname{vol}_x=\mathrm d x_1\wedge\cdots\wedge\mathrm d x_d\),
then
\begin{equation}
 [f\,\operatorname{vol}_x]_{\nabla_B}
 =
 \left[
 \exp\!\left(\frac12\Delta_C\right)f
 \right]_{x=0}
 [\operatorname{vol}_x]_{\nabla_B}.
\label{eq:cohomological-wick-rule}
\end{equation}
\end{theorem}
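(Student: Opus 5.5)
The plan is to prove the intertwining relation by conjugating the multiplication operators \((Bx)_j\) by the finite operator \(\mathcal W_C\). Then I transport the cohomology computation from the Koszul side, where it is elementary, to the twisted de Rham side.

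\emph{Step 1: the operator identity.} Both \(\mathcal W_C\) and \(\exp(\tfrac12\Delta_C)\) are finite on every polynomial, are mutually inverse, and commute with \(\mathrm dx_j\wedge\). A direct computation gives
\[
 [\Delta_C,x_k]=2\sum_a C_{ka}\partial_{x_a}.
\]
Using \(BC=I\), this yields
\[
 \bigl[\tfrac12\Delta_C,(Bx)_j\bigr]=\sum_{k,a}B_{jk}C_{ka}\partial_{x_a}=\partial_{x_j}.
\]
Since \([\Delta_C,\partial_{x_j}]=0\), the adjoint expansion terminates after one term, so
\[
 \mathcal W_C\,(Bx)_j\,\mathcal W_C^{-1}=(Bx)_j-\partial_{x_j}.
\]
Tensoring with \(\mathrm dx_j\wedge\) and summing over \(j\) gives \(\mathcal W_C\delta_B\mathcal W_C^{-1}=-\nabla_B\). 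Inserting the signs, \((-1)^{p+1}\mathcal W_C\delta_B=(-1)^p\nabla_B\mathcal W_C\), which is \(\Theta_B^{p+1}\delta_B=\nabla_B\Theta_B^p\). Invertibility in each degree, with the stated inverse, is then immediate. The only place I expect care to be needed is this sign bookkeeping: the relation \(\mathcal W_C\delta_B\mathcal W_C^{-1}=-\nabla_B\) is exactly what forces the factor \((-1)^p\) in \(\Theta_B^p\).

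\emph{Step 2: cohomology.} Since \(\Theta_B\) is an isomorphism of complexes, it suffices to compute the cohomology of \((\Omega_R^\bullet,\delta_B)\). This is the Koszul complex of the sequence \((Bx)_1,\dots,(Bx)_d\) in \(R[x]\). Because \(B\) is invertible, the new coordinates \(y=Bx\) are an \(R\)-linear change of variables. They transform \(\sum_j y_j\,\mathrm dx_j\wedge\) into the Koszul differential of the regular sequence \(x_1,\dots,x_d\), up to the induced automorphism of \(\bigwedge^\bullet V^*\). The Koszul complex of \(x_1,\dots,x_d\) over \(R[x]\) is exact except in top degree, where its cohomology is \(R[x]/(x)\cong R\), and the class of \(f\operatorname{vol}_x\) is \(f(0)[\operatorname{vol}_x]_{\delta_B}\). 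Transporting along \(\Theta_B\) gives \(H^p(\Omega_R^\bullet,\nabla_B)=0\) for \(p\neq d\) and \(H^d\cong R\), free on \(\Theta_B^d[\operatorname{vol}_x]=(-1)^d[\operatorname{vol}_x]_{\nabla_B}\). Here I use that \(\mathcal W_C\operatorname{vol}_x=\operatorname{vol}_x\).

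\emph{Step 3: the Wick rule.} For \(f\in R[x]\), set \(g=\exp(\tfrac12\Delta_C)f\). Then
\[
 f\operatorname{vol}_x=(-1)^d\,\Theta_B^d\bigl(g\operatorname{vol}_x\bigr).
\]
On the Koszul side, \([g\operatorname{vol}_x]_{\delta_B}=g(0)[\operatorname{vol}_x]_{\delta_B}\). Applying the \(R\)-linear map \((-1)^d\Theta_B^d\) on cohomology gives
\[
 [f\operatorname{vol}_x]_{\nabla_B}=g(0)\,[\operatorname{vol}_x]_{\nabla_B},
\]
which is \eqref{eq:cohomological-wick-rule}.
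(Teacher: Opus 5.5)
Your proof is correct and follows the paper's argument. You use the terminating adjoint expansion to get \(\mathcal W_C(Bx)_j\mathcal W_C^{-1}=(Bx)_j-\partial_{x_j}\), hence \(\mathcal W_C\delta_B\mathcal W_C^{-1}=-\nabla_B\), then transport the Koszul cohomology of the regular sequence \((Bx)_1,\ldots,(Bx)_d\) and evaluate at \(x=0\) after applying \(\Theta_B^{-1}\); your explicit tracking of the \((-1)^d\) factors in the last step is slightly more careful than the paper's write-up, but the route is the same.
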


\begin{proof}
Since \(C=B^{-1}\),
\[
 [\Delta_C,x_\ell]
 =
 2\sum_{a=1}^dC_{\ell a}\partial_{x_a},
 \qquad
 [\Delta_C,(Bx)_j]=2\partial_{x_j}.
\]
All higher iterated commutators vanish.  The
Baker--Campbell--Hausdorff identity therefore terminates and gives
\begin{equation}
 \mathcal W_C(Bx)_j\mathcal W_C^{-1}
 =
 (Bx)_j-\partial_{x_j}.
\label{eq:wick-conjugation}
\end{equation}
Since \(\mathcal W_C\) commutes with exterior multiplication,
\[
 \mathcal W_C\delta_B\mathcal W_C^{-1}
 =
 \sum_j\mathrm d x_j\wedge
 \bigl((Bx)_j-\partial_{x_j}\bigr)
 =-\nabla_B.
\]
Multiplication by \((-1)^p\) on degree \(p\) removes this sign, proving the
chain identity.  Since \(\Delta_C\) lowers polynomial degree by two, the
two exponential operators are mutually inverse on polynomials.

The sequence \((Bx)_1,\ldots,(Bx)_d\) is an invertible linear transform of
\(x_1,\ldots,x_d\), hence a regular sequence.  Its Koszul cohomology is
concentrated in degree \(d\), where it equals
\[
 R[x]/((Bx)_1,\ldots,(Bx)_d)\simeq R.
\]
The chain isomorphism transfers this statement to \(\nabla_B\).  Finally,
apply \(\Theta_B^{-1}\) to \(f\,\operatorname{vol}_x\) and pass to the
Koszul quotient, which is evaluation at \(x=0\).  This gives
\eqref{eq:cohomological-wick-rule}.
\end{proof}

\begin{proposition}[The inverse chain map is the Wick forest]
\label{prop:wick-forest-expansion}
For a monomial
\[
 x_{\boldsymbol i}=x_{i_1}\cdots x_{i_n}
\]
and a partial matching \(M\) of \(\{1,\ldots,n\}\), put
\[
 \Contr_{C,M}(x_{\boldsymbol i})
 =
 \left(\prod_{\{a,b\}\in M}C_{i_ai_b}\right)
 \prod_{a\notin |M|}x_{i_a}.
\]
Then
\begin{equation}
 \exp\!\left(\pm\frac12\Delta_C\right)x_{\boldsymbol i}
 =
 \sum_M(\pm1)^{|M|}
 \Contr_{C,M}(x_{\boldsymbol i}).
\label{eq:partial-matching-expansion}
\end{equation}
Thus \(\Theta_B\) is the signed aggregation of all partial matching
forests, while \(\Theta_B^{-1}\) is their positive Wick aggregation.
\end{proposition}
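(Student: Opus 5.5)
We prove \eqref{eq:partial-matching-expansion} by expanding the exponential as a finite power series and tracking how iterated applications of \(\Delta_C\) act on a product of linear factors. Write
\[
 \Delta_C=\sum_{a,b}C_{ab}\partial_{x_a}\partial_{x_b}.
\]
On the monomial \(x_{\boldsymbol i}=x_{i_1}\cdots x_{i_n}\), viewed as a product of \(n\) labelled linear factors, the Leibniz rule gives
\[
 \Delta_C x_{\boldsymbol i}
 =2\sum_{\{a,b\}\subset[n],\,a\neq b}C_{i_ai_b}\prod_{c\neq a,b}x_{i_c}.
\]
The factor \(2\) comes from the two orderings \((a,b)\) and \((b,a)\) in the double sum over derivative slots, together with the symmetry of \(C\). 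Second derivatives of a single linear factor vanish, so no loop terms appear.

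\textbf{Iterating.} Applying \(\Delta_C\) \(k\) times to the labelled product produces a sum over ordered sequences of \(k\) disjoint pairs. Each sequence carries weight \(2^k\) and contributes
\[
 \Contr_{C,M}(x_{\boldsymbol i})
\]
for the underlying matching \(M\) with \(|M|=k\). This is proved by induction on \(k\). The only point to check is that after \(j\) contractions the remaining object is again a product of the unpaired linear factors times constants, so the one-step formula applies to the unpaired labels.

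\textbf{Counting.} Each unordered matching \(M\) of size \(k\) arises from exactly \(k!\) ordered sequences. Hence
\[
 \frac{(\pm1/2)^k}{k!}\,\Delta_C^k\,x_{\boldsymbol i}
 =(\pm1)^k\sum_{|M|=k}\Contr_{C,M}(x_{\boldsymbol i}).
\]
The powers of two and the factorial cancel exactly. Summing over \(0\le k\le\lfloor n/2\rfloor\), where the series terminates because \(\Delta_C\) lowers degree by two, yields \eqref{eq:partial-matching-expansion}.

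\textbf{Interpretation.} Since \(\Theta_B^p=(-1)^p\mathcal W_C\) uses the minus sign, it is the signed sum over partial matchings up to the global degree sign. The inverse \((\Theta^p_B)^{-1}\) uses \(\exp(+\tfrac12\Delta_C)\), so all its coefficients are positive: it is the positive Wick aggregation. Evaluating at \(x=0\) keeps only perfect matchings, which recovers Isserlis' formula and is consistent with \eqref{eq:cohomological-wick-rule}.

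The only step requiring care is the combinatorial bookkeeping in the counting step: that the factor \(2^k k!\) exactly matches the normalisation \(\tfrac1{k!}(\tfrac12)^k\). This is where an error in the symmetric-pair convention would show up. I would verify it on \(n=2\) and \(n=4\) as sanity checks.
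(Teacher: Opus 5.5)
Your proof is correct and follows the paper's argument: expand the exponential and observe that each matching with $k$ edges appears in $\Delta_C^k x_{\boldsymbol i}$ with multiplicity $2^k k!$ (two orientations per edge and $k!$ orderings of the edges), which the prefactor $2^{-k}(k!)^{-1}$ cancels exactly. Your one-step Leibniz formula and the induction over unpaired labels only make explicit the bookkeeping that the paper's two-line proof leaves implicit.
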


\begin{proof}
Expand the exponential.  In the term
\(2^{-k}(k!)^{-1}\Delta_C^k\), every matching with \(k\) edges occurs
with \(2^kk!\) orderings and orientations.  The prefactor cancels this
multiplicity exactly, proving \eqref{eq:partial-matching-expansion}.
\end{proof}

\begin{corollary}[Quadratic exponential direct image]
\label{cor:quadratic-direct-image}
Let \(A\in\Sym_d(\R)\) and
\[
 B(t)=I_d-2itA,
 \qquad C(t)=B(t)^{-1}.
\]
Over \(\C[t,\det B(t)^{-1}]\), the relative exponential direct image of
\[
 \exp\!\left(-\frac{q(x)}{2}+itx^{\mathsf T}Ax\right)
\]
has rank one and is concentrated in relative de Rham degree \(d\).  Its
normalised period over the real Gaussian contour is
\[
 \Phi_A(t)=\det(I_d-2itA)^{-1/2},
 \qquad \Phi_A(0)=1,
\]
and satisfies
\[
 \left(
 \partial_t-i\,\operatorname{tr}(AC(t))
 \right)\Phi_A(t)=0.
\]
In particular, the real Gaussian cycle is cyclic in this rank-one direct
image.
\end{corollary}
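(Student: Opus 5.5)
The plan is to specialise \cref{thm:quadratic-forest-dR} to the ring \(R=\C[t,\det B(t)^{-1}]\) with \(B=B(t)=I_d-2itA\). The phase is
\[
 \frac{q(x)}2-itx^{\mathsf T}Ax=\frac12x^{\mathsf T}B(t)x=Q_{B(t)}(x),
\]
so the relative twisted de Rham complex of the exponential module is exactly \((\Omega_R^\bullet,\nabla_{B(t)})\). Here \(B(t)\) is symmetric, and it is invertible over \(R\) because \(\det B(t)\) has been inverted. The theorem then gives directly that cohomology is concentrated in degree \(d\) and that \(H^d\simeq R\) is free of rank one, generated by \([\operatorname{vol}_x]\). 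By \eqref{eq:cohomological-wick-rule}, every class \([f\operatorname{vol}_x]\) is the multiple \(\bigl[\exp(\tfrac12\Delta_{C(t)})f\bigr]_{x=0}\) of this generator.

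For the period, I will integrate over the real contour for real \(t\). Since \(\operatorname{Re}B(t)=I_d\), the integrand decays rapidly. Diagonalise \(A\) orthogonally so that the integral factorises into one-dimensional Gaussians \(\int e^{-(1-2it\alpha_j)x^2/2}\dd x\). The normalised value \(\Phi_A(t)=\prod_j(1-2it\alpha_j)^{-1/2}=\det(I_d-2itA)^{-1/2}\) follows by analytic continuation of the real formula, using the principal branch, which is valid because \(\operatorname{Re}(1-2it\alpha_j)>0\). The normalisation \(\Phi_A(0)=1\) is immediate.

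For the differential equation, I will differentiate under the integral. The \(t\)-derivative of the integrand multiplies it by \(ix^{\mathsf T}Ax\), and differentiation is justified by dominated convergence. Applying \eqref{eq:cohomological-wick-rule} with \(f=x^{\mathsf T}Ax\) gives \(\tfrac12\Delta_C(x^{\mathsf T}Ax)=\operatorname{tr}(CA)\). Hence
\[
 [x^{\mathsf T}Ax\operatorname{vol}]=\operatorname{tr}(AC)[\operatorname{vol}],
\]
and therefore \(\partial_t\Phi_A=i\operatorname{tr}(AC(t))\Phi_A\). The equivalence of cohomology classes passes to integrals because integrating a \(\nabla_B\)-exact form over \(\R^d\) gives zero by Stokes' theorem with rapid decay. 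As a cross-check, \(\partial_t\log\det(I-2itA)^{-1/2}=i\operatorname{tr}(AB^{-1})\).

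Cyclicity then follows because the module has rank one and the period of its generator, \(\Phi_A\), is nonvanishing. The pairing of the real cycle with the generator is therefore a unit, so the cycle does not annihilate the module.

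The only delicate point is the branch of the square root and the analytic continuation to complex \(t\), and this is handled by continuity from \(t=0\) on the domain \(\det B\neq0\).
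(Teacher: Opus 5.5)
Your proposal is correct and follows the paper's own route. You specialise \cref{thm:quadratic-forest-dR} to \(R=\C[t,\det B(t)^{-1}]\), reduce \([x^{\mathsf T}Ax\operatorname{vol}_x]\) to \(\operatorname{tr}(AC(t))[\operatorname{vol}_x]\) by \eqref{eq:cohomological-wick-rule} to obtain the first-order equation, evaluate \(\det(I_d-2itA)^{-1/2}\) by direct Gaussian integration, and deduce cyclicity from the nonvanishing rank-one period. The only difference is that you pass the cohomological relation to the integral by dominated differentiation and Stokes' theorem, where the paper states it as the Gauss--Manin derivative of \([\operatorname{vol}_x]\).

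One point to tighten is the branch of the square root. Fix it by continuity from \(t=0\) on the convex region \(\{t:\ I_d+2(\operatorname{Im}t)A\succ0\}\), where the real contour converges. Your stated domain \(\{\det B(t)\neq0\}\) does not work for this: it is not simply connected, and around its punctures \(\det B(t)^{-1/2}\) generally has monodromy \(-1\).
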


\begin{proof}
Differentiation of the exponential gives
\[
 \nabla_{\partial_t}^{\mathrm{GM}}
 [\operatorname{vol}_x]
 =
 \left[
 -\frac12x^{\mathsf T}B'(t)x\,\operatorname{vol}_x
 \right].
\]
By \eqref{eq:cohomological-wick-rule},
\[
 [x^{\mathsf T}B'(t)x\,\operatorname{vol}_x]
 =
 \operatorname{tr}(C(t)B'(t))
 [\operatorname{vol}_x].
\]
Since \(B'(t)=-2iA\), the differential equation follows.  Direct Gaussian
integration gives the determinant formula, which is nonzero on the
component containing \(t=0\).
\end{proof}

\begin{remark}[Replicated quadratic probes]
\label{rem:replicated-quadratic}
For \(r\) correlated replicas with covariance \(\Sigma\) and
\(P(x)=x^{\mathsf T}Ax\), take
\[
 B(\boldsymbol t)
 =
 \Sigma^{-1}\otimes I_d
 -
 2i\,\operatorname{diag}(t_1,\ldots,t_r)\otimes A.
\]
The entries of \(B(\boldsymbol t)^{-1}\) are exactly the cross-replica
covariances carried by the edges of the Wick forests.  Thus
\cref{thm:quadratic-forest-dR} compares the replicated contraction complex
with its exponential direct image throughout the quadratic sector.
\end{remark}

\begin{proposition}[Sharp boundary of the quadratic comparison]
\label{prop:quadratic-comparison-boundary}
The preceding comparison cannot extend to a phase of degree at least three
while retaining only the quadratic matching-forest complex.  For a
polynomial \(f\),
\[
 \left[-\frac12\Delta_C,M_f\right]
 =
 -\frac12M_{\Delta_Cf}
 -
 \sum_{a,b}C_{ab}M_{\partial_af}\partial_{x_b}.
\]
For a linear \(f\), higher commutators vanish, which is exactly
\eqref{eq:wick-conjugation}.  For \(\deg f\geq2\), higher commutators
generally survive and the conjugated differential is no longer Koszul.

For \(P=H_3\), the top twisted de Rham quotient over \(\C(t)\) is
\[
 \cH_{H_3}
 =
 \frac{\C(t)[x]}
 {D_t\C(t)[x]},
 \qquad
 D_t=\partial_x-x+it(3x^2-3),
\]
and has dimension exactly two, with basis \([1],[x]\).  Since the quadratic
matching-forest complex has top cohomology of rank one, no
quasi-isomorphism between these two complexes can exist.  Any algebraic
model of the cubic direct image must contain additional Jacobi generators;
representing its sectorial selected-contour periods further requires thimble
generators and Stokes gluing data.
\end{proposition}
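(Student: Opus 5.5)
The proposition has three separate claims, and I would prove them in order: the commutator identity, the dimension count for the cubic Hermite quotient, and the rank obstruction. The commutator is a direct Leibniz computation. For a polynomial \(g\), expanding \(\partial_a\partial_b(fg)\) gives
\[
 \Delta_C(fg)=(\Delta_Cf)g+2\sum_{a,b}C_{ab}(\partial_af)(\partial_bg)+f\Delta_Cg,
\]
where the factor \(2\) uses the symmetry of \(C\). Hence
\[
 [\Delta_C,M_f]=M_{\Delta_Cf}+2\sum_{a,b}C_{ab}M_{\partial_af}\partial_{x_b}.
\]
Multiplying by \(-\tfrac12\) gives the displayed identity.

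Next I would iterate the commutator. If \(f\) is linear, \(\Delta_Cf=0\) and each \(\partial_af\) is a constant, so the first commutator is a constant-coefficient first-order operator. Such an operator commutes with \(\Delta_C\), so all higher commutators vanish and the BCH series stops; this recovers \eqref{eq:wick-conjugation}. If \(\deg f\geq2\), the second commutator
\[
 [\Delta_C,[\Delta_C,M_f]]
\]
contains the term
\[
 4\sum C_{ab}C_{a'b'}M_{\partial_a\partial_{a'}f}\,\partial_{x_b}\partial_{x_{b'}}.
\]
This term is nonzero because \(C\) is invertible and \(\partial^2f\neq0\). To see this concretely I would take \(f=x_1^2\) and \(C=I\): the term is then \(8\partial_1^2\neq0\). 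Therefore the conjugate \(\mathcal W_C M_f\mathcal W_C^{-1}\) contains differential operators of order at least two. Such a conjugated differential cannot be exterior multiplication by polynomial one-forms, so it is not a Koszul differential.

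For \(H_3\), the key computation is the action of \(D_t\) on monomials:
\[
 D_tx^k=3it\,x^{k+2}-x^{k+1}+(k-3it)x^k+k\,x^{k-1}.
\]
Over \(\C(t)\) the leading coefficient \(3it\) is a unit, so \(\deg D_tg=\deg g+2\) for every nonzero \(g\).
\begin{itemize}
\item \emph{Spanning.} Descending induction on degree shows that every polynomial is congruent modulo \(D_t\C(t)[x]\) to one of degree at most one: subtract a suitable multiple of \(D_tx^{n-2}\) to kill the top term.
\item \emph{Independence.} Any nonzero element \(D_tg\) of the image has degree at least two, so the image meets \(\C(t)\cdot1\oplus\C(t)\cdot x\) only in zero.
\end{itemize}
Hence \(\cH_{H_3}\) has dimension exactly two, with basis \([1],[x]\). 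Equivalently, this is top-degree \(H^1\) for \(d=1\).

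The rank obstruction then follows. By \cref{thm:quadratic-forest-dR}, applied with \(R=\C(t)\) and \(d=1\), the quadratic matching-forest complex has top cohomology \(R\), of rank one. A quasi-isomorphism of complexes of \(\C(t)\)-vector spaces induces isomorphisms on all cohomology groups, in particular on top cohomology. Since \(1\neq2\), no quasi-isomorphism exists, and any algebraic model must carry an extra generator, namely the Jacobi class \([x]\).

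The remaining assertion, that representing sectorial periods requires thimble and Stokes data, is interpretive here and is deferred to the later sections; I would state it as a forward pointer rather than prove it. The only real care point is the nonvanishing of the second commutator for general \(\deg f\geq2\) when \(C\) is arbitrary invertible; I would handle this by diagonalising \(C\) by a linear change of variables, or simply by citing the explicit example.
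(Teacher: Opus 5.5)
Your proposal is correct and follows essentially the same route as the paper: the commutator identity comes from the Leibniz rule, and for \(H_3\) the action of \(D_t\) on monomials has leading coefficient \(3it\), which gives spanning by degree reduction and independence because \(\deg D_tg=\deg g+2\); your explicit second-commutator term and your cohomology-rank argument against a quasi-isomorphism spell out what the paper leaves implicit. One small slip: in fact \(D_tx^k=3it\,x^{k+2}-x^{k+1}-3it\,x^k+k\,x^{k-1}\), so the coefficient of \(x^k\) is \(-3it\) rather than \(k-3it\), and this is harmless because only the leading term enters the argument.
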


\begin{proof}
The commutator formula follows from the Leibniz rule.  For the cubic,
\[
 D_t(x^n)
 =
 nx^{n-1}-x^{n+1}+it(3x^{n+2}-3x^n),
\]
so every \(x^{n+2}\) reduces to lower degree and \([1],[x]\) span.
Conversely, if \(f\neq0\) has degree \(n\), then \(D_tf\) has degree
\(n+2\), with leading coefficient \(3it\) times that of \(f\).  Hence no
nonzero linear polynomial belongs to the image, and the two classes are
independent.
\end{proof}

\section{Jacobian-algebra reduction away from zero coupling}
\label{sec:global-comparison}

The quadratic comparison has rank one, whereas the twisted de Rham
cohomology of a degree-\(m\) phase generally has rank \((m-1)^d\).  Further
Wick pairings cannot fill this gap.  The required generators are the classes
in the Jacobian algebra of the top homogeneous part.  This section constructs
a finite rational reduction to those classes; pairing them with thimbles is a
separate analytic step carried out later.  This separation is consistent with
the classical Brieskorn and twisted de Rham descriptions
\cite{Brieskorn,SabbahTwisted}; the finite formula below comes from the
basic perturbation lemma \cite{GugenheimLambe}.  The Jacobian classes are
forced by the rank mismatch, although the perturbative normal-form algorithm
used here is one effective choice among equivalent constructions.

\subsection{Finite reduction to the Jacobian algebra}

Let
\[
 P=P_m+P_{m-1}+\cdots+P_0\in\C[x_1,\ldots,x_d],
 \qquad m\geq3,
\]
where each \(P_k\) is homogeneous of degree \(k\).  We assume throughout
this subsection that
\begin{equation}
 \partial_1P_m(x)=\cdots=\partial_dP_m(x)=0
 \quad\Longrightarrow\quad x=0.
 \label{eq:principal-isolated}
\end{equation}
Thus the leading homogeneous part \(P_m\) has an isolated critical point at
the origin.
Thus the Jacobian algebra
\[
 \mathcal J_m(P)
 =
 \C[x_1,\ldots,x_d]/
 (\partial_1P_m,\ldots,\partial_dP_m)
\]
has dimension
\begin{equation}
 \mu=\dim_\C\mathcal J_m(P)=(m-1)^d.
 \label{eq:milnor-number}
\end{equation}
Put \(K=\C(t)\) and
\[
 \Omega_K^\bullet=K[x_1,\ldots,x_d]\otimes\bigwedge^\bullet(\C^d)^*.
\]
For
\[
 F_{P,t}(x)=-\frac{q(x)}{2}+itP(x)
\]
the polynomial twisted de Rham differential is
\begin{equation}
 \nabla_{P,t}
 =
 \mathrm d_x+\mathrm d_xF_{P,t}\wedge
 =
 \mathrm d_x-\sum_{j=1}^dx_j\,\mathrm d x_j\wedge
 +it\,\mathrm dP\wedge.
 \label{eq:global-twisted-differential}
\end{equation}
Decompose it as
\begin{align}
 \delta_{0,t}
 &=
 it\,\mathrm dP_m\wedge,
 \label{eq:principal-koszul}\\
 \eta_{P,t}
 &=
 \mathrm d_x-\sum_{j=1}^dx_j\,\mathrm d x_j\wedge
 +it\,\mathrm d(P-P_m)\wedge.
 \label{eq:lower-perturbation}
\end{align}

\begin{theorem}[Finite Jacobi normal-form transfer]
\label{thm:jacobi-normal-form-transfer}
Under \eqref{eq:principal-isolated}, there is a contraction
\[
 \left(\mathcal J_m(P)\otimes K\right)[-d]
 \ \underset{\pi}{\overset{\iota}{\rightleftarrows}}\
 (\Omega_K^\bullet,\delta_{0,t}),
 \qquad h_t:\Omega_K^\bullet\longrightarrow\Omega_K^{\bullet-1},
\]
such that
\begin{equation}
 \delta_{0,t}h_t+h_t\delta_{0,t}
 =\Id-\iota\pi,
 \qquad
 \pi\iota=\Id,
 \qquad
 h_t^2=\pi h_t=h_t\iota=0,
 \label{eq:koszul-contraction}
\end{equation}
and \(h_t\) lowers polynomial degree by \(m-1\).

The operators
\begin{align}
 \pi_{P,t}
 &=
 \pi(1+\eta_{P,t}h_t)^{-1}
 =
 \pi\sum_{k\geq0}(-\eta_{P,t}h_t)^k,
 \label{eq:perturbed-projection}\\
 h_{P,t}
 &=
 h_t(1+\eta_{P,t}h_t)^{-1}
 \label{eq:perturbed-homotopy}
\end{align}
are finite on every polynomial form and satisfy
\begin{equation}
 \nabla_{P,t}h_{P,t}+h_{P,t}\nabla_{P,t}
 =
 \Id-\iota\pi_{P,t},
 \qquad
 \pi_{P,t}\nabla_{P,t}=0.
 \label{eq:perturbed-contraction}
\end{equation}
Consequently
\begin{equation}
 H^p(\Omega_K^\bullet,\nabla_{P,t})=0\quad(p\neq d),
 \qquad
 H^d(\Omega_K^\bullet,\nabla_{P,t})
 \simeq\mathcal J_m(P)\otimes K.
 \label{eq:exact-jacobi-rank}
\end{equation}
For every polynomial \(f\),
\begin{equation}
 [f\,\operatorname{vol}_x]_{\nabla_{P,t}}
 =
 [\iota\pi_{P,t}(f\,\operatorname{vol}_x)]_{\nabla_{P,t}}.
 \label{eq:exact-jacobi-normal-form}
\end{equation}
In particular, the direct image has exact rank \(\mu=(m-1)^d\).
\end{theorem}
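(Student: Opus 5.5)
The plan is to build the unperturbed contraction by hand from the Koszul complex of the regular sequence \(\partial_1P_m,\ldots,\partial_dP_m\), and then to obtain everything else from the basic perturbation lemma \cite{GugenheimLambe}, with one degree count supplying the finiteness.

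\emph{Step 1: the Koszul contraction.} Since \eqref{eq:principal-isolated} holds, \(\partial_1P_m,\ldots,\partial_dP_m\) is a homogeneous regular sequence in \(\C[x]\). Its Koszul complex \((\Omega_\C^\bullet,\mathrm dP_m\wedge)\) is therefore exact except in top degree, where the cohomology is \(\mathcal J_m(P)\); Bézout gives \(\dim\mathcal J_m(P)=(m-1)^d\).
\begin{itemize}
\item The differential \(\mathrm dP_m\wedge\) is homogeneous: it raises polynomial degree by \(m-1\) and form degree by one.
\item Each graded piece \(\Omega^p_{\C,n}\) (form degree \(p\), polynomial degree \(n\)) is finite dimensional. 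In each one I choose complements to the kernels and images, for instance monomial complements for a fixed term order, so the Jacobian algebra is identified with a span of normal-form monomials.
\item This defines \(\iota\), \(\pi\) and a homotopy \(h\) of polynomial degree \(-(m-1)\) satisfying \(\delta h+h\delta=\Id-\iota\pi\) and \(\pi\iota=\Id\).
\item The side conditions \(h^2=\pi h=h\iota=0\) are imposed by the standard replacements \(h\mapsto(\Id-\iota\pi)h(\Id-\iota\pi)\) and then \(h\mapsto h\delta h\). Both preserve homogeneity, hence the degree shift.
\end{itemize}
Over \(K\), put \(\delta_{0,t}=it\,\mathrm dP_m\wedge\) and \(h_t=(it)^{-1}h\). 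This gives \eqref{eq:koszul-contraction}.

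\emph{Step 2: finiteness of the perturbation series.} The perturbation \(\eta_{P,t}\) in \eqref{eq:lower-perturbation} changes polynomial degree as follows.
\begin{itemize}
\item \(\mathrm d_x\) lowers it by \(1\).
\item \(x_j\,\mathrm dx_j\wedge\) raises it by \(1\).
\item \(\mathrm d(P-P_m)\wedge\) raises it by at most \(m-2\).
\end{itemize}
Since \(h_t\) lowers polynomial degree by exactly \(m-1\), the composite \(\eta_{P,t}h_t\) lowers it by at least \(\min(m-2,m)\geq1\), using \(m\geq3\). So \(\eta_{P,t}h_t\) is locally nilpotent on the degree filtration, and \((1+\eta_{P,t}h_t)^{-1}\) is a finite sum on each polynomial form. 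This is exactly where the hypothesis \(m\geq3\) and the choice of leading part enter.

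\emph{Step 3: the perturbation lemma.} It yields a deformation retract of \((\Omega_K^\bullet,\nabla_{P,t})\) onto \(\mathcal J_m(P)\otimes K[-d]\) with:
\begin{itemize}
\item perturbed projection \(\pi_{P,t}\) as in \eqref{eq:perturbed-projection};
\item perturbed homotopy \(h_{P,t}\) as in \eqref{eq:perturbed-homotopy};
\item perturbed inclusion \(\iota-h_{P,t}\eta_{P,t}\iota\);
\item transferred differential \(\pi\eta_{P,t}(1+h_t\eta_{P,t})^{-1}\iota\).
\end{itemize}
The small complex sits in the single degree \(d\), and \(\eta_{P,t}\) raises form degree to \(d+1\), where \(\Omega_K^{d+1}=0\). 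Hence \(\eta_{P,t}\iota=0\). Consequently the inclusion is unperturbed (still \(\iota\)), and the transferred differential vanishes.

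The homotopy relation of the lemma then gives \eqref{eq:perturbed-contraction}, including \(\pi_{P,t}\nabla_{P,t}=0\), since \(\pi_{P,t}\) is a chain map to a complex with zero differential. The cohomology statement \eqref{eq:exact-jacobi-rank} follows at once, and \eqref{eq:exact-jacobi-normal-form} is the homotopy relation applied to the closed top form \(f\,\operatorname{vol}_x\).

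\emph{Main obstacle.} I expect it to be bookkeeping rather than conceptual. The homotopy must be exactly homogeneous of degree \(-(m-1)\), so that the nilpotency count in Step 2 is valid, and it must still satisfy all side conditions after the normalisations. I would verify this first by checking that each normalisation step is a composite of homogeneous maps.
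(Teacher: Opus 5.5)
Your proposal is correct and follows the paper's proof essentially step for step. The paper likewise builds a degreewise Koszul contraction (your bigrading is equivalent to its weight grading), with $h$ of degree $-(m-1)$ rescaled to $h_t=(it)^{-1}h$; it uses a degree count to make $\eta_{P,t}h_t$ locally nilpotent, and then applies the basic perturbation lemma, where $\eta_{P,t}\iota=0$ forces $\iota_{P,t}=\iota$ and a zero transferred differential.

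There is one small slip. The guaranteed drop in polynomial degree under $\eta_{P,t}h_t$ is $\min(m,\,m-2,\,1)=1$, not $\min(m-2,m)$: the $1$ comes from the $\mathrm d(P-P_m)\wedge$ term. Your conclusion that the drop is at least $1$ is unaffected.
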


\begin{proof}
Condition \eqref{eq:principal-isolated} makes
\(\partial_1P_m,\ldots,\partial_dP_m\) a homogeneous regular sequence.
Its Koszul cohomology is therefore concentrated in exterior degree \(d\)
and equals \(\mathcal J_m(P)\operatorname{vol}_x\).

There is a useful degreewise construction of the contraction.  Give
\(f\,\mathrm d x_I\), with \(|I|=p\), the weight
\[
 \operatorname{wt}(f\,\mathrm d x_I)
 =
 \deg_x f-(m-1)p.
\]
The differential \(\mathrm dP_m\wedge\) preserves this weight.  Each
weight subcomplex is finite dimensional because \(0\leq p\leq d\).
Choose complements to boundaries and cycles in every weight.  Ordinary
finite-dimensional linear algebra gives maps \(\iota,\pi,h\) satisfying
\eqref{eq:koszul-contraction} for \(\mathrm dP_m\wedge\), with \(h\)
lowering polynomial degree by \(m-1\).  Replacing \(h\) by
\[
 h_t=(it)^{-1}h
\]
gives the contraction for \(\delta_{0,t}\).

Every summand of \(\eta_{P,t}\) raises polynomial degree by at most
\(m-2\): the exterior derivative lowers it by one, Gaussian
multiplication raises it by one, and
\(\partial_j(P-P_m)\) has degree at most \(m-2\).  Hence both
\[
 \eta_{P,t}h_t
 \quad\text{and}\quad
 h_t\eta_{P,t}
\]
lower polynomial degree by at least one.  They are therefore locally
nilpotent on polynomial forms, so all inverse operators in
\eqref{eq:perturbed-projection}--\eqref{eq:perturbed-homotopy} are finite
geometric sums.

The finite perturbation identities of the basic perturbation lemma
\cite{GugenheimLambe} now give
\[
 \pi_{P,t}
 =
 \pi(1+\eta_{P,t}h_t)^{-1},
 \qquad
 \iota_{P,t}
 =
 (1+h_t\eta_{P,t})^{-1}\iota,
 \qquad
 h_{P,t}
 =
 h_t(1+\eta_{P,t}h_t)^{-1}.
\]
Since \(\iota\) takes values in top forms,
\(\eta_{P,t}\iota=0\), and hence \(\iota_{P,t}=\iota\).  Direct
multiplication, using
\eqref{eq:koszul-contraction}, proves
\eqref{eq:perturbed-contraction}.  The transferred differential on
\(\mathcal J_m(P)[-d]\) is zero because this complex is concentrated in
degree \(d\).  Thus \(\iota\) and \(\pi_{P,t}\) are mutually inverse on
cohomology, proving \eqref{eq:exact-jacobi-rank}.  Applying
\eqref{eq:perturbed-contraction} to the closed top form
\(f\,\operatorname{vol}_x\) gives
\eqref{eq:exact-jacobi-normal-form}.
\end{proof}

\begin{remark}[Decorated reduction histories]
\label{rem:jacobi-reduction-histories}
The \(k\)-th term of \eqref{eq:perturbed-projection} is a finite word with
\(k\) reduction edges.  Each occurrence of \(\eta_{P,t}\) is decorated by
exactly one of three operations: a derivative, a Gaussian leg, or a lower
homogeneous vertex of \(P\).  The homotopy \(h_t\) attaches that operation
to one principal Jacobian generator.  In tensor products, independent
words form disjoint rooted forests.  Thus
\eqref{eq:perturbed-projection} is a finite Jacobi normal-form extension
of the matching expansion \eqref{eq:partial-matching-expansion}.  The
rooted forests record terms of the transfer series; they are not used here
as a separately defined global chain complex.  In the
quadratic sector the separate conjugation theorem
\cref{thm:quadratic-forest-dR} resums these reductions into the ordinary
Wick exponential.
\end{remark}

\begin{corollary}[The rational comparison determinant is nonzero]
\label{cor:jacobi-comparison-determinant}
Let \(S\) be an irreducible algebraic stratum of degree-\(m\) polynomials
satisfying the isolated-leading-part hypothesis and on which fixed monomials
\(e_1,\ldots,e_\mu\) form a basis of \(\mathcal J_m(P)\).  In any rational
frame of the twisted de Rham module, the map induced by \(\iota\) has a
matrix
\begin{equation}
 B_{\mathrm{Jac}}(P,t)
 \in\operatorname{GL}_\mu(\C(S)(t)).
 \label{eq:B-Jacobi}
\end{equation}
In particular,
\[
 \det B_{\mathrm{Jac}}\not\equiv0.
\]
Different division data, monomial bases, or resolved charts change
\(B_{\mathrm{Jac}}\) only by invertible rational gauge transformations.
\end{corollary}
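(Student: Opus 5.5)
The plan is to read \(B_{\mathrm{Jac}}\) off \cref{thm:jacobi-normal-form-transfer} applied over the function field of the stratum, instead of over a single polynomial. Let \(L=\C(S)\) and regard \(P\) as the generic polynomial of \(S\), with coefficients in \(L\), so the twisted complex lives over \(L(t)\). The isolated-leading-part hypothesis holds on a nonempty Zariski open subset of \(S\). Because \(S\) is irreducible and \(e_1,\ldots,e_\mu\) is a basis of \(\mathcal J_m(P)\) at every point, the generic leading part \(P_m\) has the following properties over \(L\): its partials \(\partial_1P_m,\ldots,\partial_dP_m\) form a regular sequence in \(L[x]\), and the monomials \(e_j\) give an \(L\)-basis of \(L[x]/(\partial P_m)\), of dimension \(\mu=(m-1)^d\). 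The weight-graded linear algebra in the proof of \cref{thm:jacobi-normal-form-transfer} works verbatim over \(L\), since each weight piece is finite-dimensional over \(L\). It yields \(\iota,\pi,h\) defined over \(L\), and the perturbed projection \(\pi_{P,t}\) is a finite geometric sum with coefficients in \(L(t)\).

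The key step is to show that \(\iota\) induces an isomorphism \(\mathcal J_m(P)\otimes L(t)\to H^d(\Omega^\bullet_{L(t)},\nabla_{P,t})\), which is \eqref{eq:exact-jacobi-rank} over \(L(t)\). This comes from the identities \eqref{eq:perturbed-contraction}: \(\pi_{P,t}\iota=\pi\iota=\Id\) because \(\iota\) lands in top forms and \(\eta_{P,t}h_t\iota=0\) (recall \(h_t\iota=0\)), while \(\iota\pi_{P,t}\) is homotopic to the identity on cohomology. I then fix a rational frame of the de Rham module, for instance the classes \([e_j\operatorname{vol}_x]\), or any \(L(t)\)-basis. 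The matrix of \([\iota]\) in the Jacobi basis and this frame has entries in \(L(t)\), since every reduction step is rational. It is invertible because \([\iota]\) is an isomorphism of \(\mu\)-dimensional \(L(t)\)-spaces; its inverse is the matrix of \(\pi_{P,t}\), also with entries in \(L(t)\). Hence \(B_{\mathrm{Jac}}\in\operatorname{GL}_\mu(\C(S)(t))\) and \(\det B_{\mathrm{Jac}}\not\equiv0\).

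For the gauge statement I would compare any two choices of division data, meaning different complements and hence different \((\iota,\pi,h)\), or different monomial bases. Both choices give isomorphisms from an \(L(t)\)-space of dimension \(\mu\) onto the same cohomology. So they differ by the composite \(\pi'_{P,t}\circ\iota\), whose matrix lies in \(\operatorname{GL}_\mu(L(t))\), possibly composed with a change of monomial basis in \(\operatorname{GL}_\mu(L)\). A change of rational frame acts on the other side by a matrix in \(\operatorname{GL}_\mu(L(t))\). Resolved charts are birational reparametrisations of \(S\); they induce an isomorphism of function fields and therefore preserve the statement.

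I expect the main obstacle to be the base-change step. The paper stated the transfer pointwise over \(\C(t)\), and I need to justify that regularity of the sequence \(\partial P_m\) and the monomial basis property pass to the generic point \(L\). I would handle this by noting two things. First, the Koszul complex in each weight is a finite matrix over the coordinate ring of \(S\), and its ranks are constant on the open set where the hypotheses hold, so they equal the generic ranks. Second, the determinant expressing the \(e_j\) as a basis of the weight pieces is a regular function that is nonvanishing on \(S\), hence a unit in \(L\).
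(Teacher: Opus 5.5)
Your proposal is correct and follows the paper's route. The paper also does rational Gaussian elimination in the weight-graded Koszul matrices, which puts the perturbed projection's entries in \(\C(S)(t)\). It then gets invertibility of the map induced by \(\iota\) from \eqref{eq:exact-jacobi-rank} and treats gauge changes as changes of basis of the same \(\mu\)-dimensional module; your explicit passage to the generic point of \(S\) (constant Koszul ranks, nonvanishing basis determinant) spells out a base-change step the paper leaves implicit.
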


\begin{proof}
On a stratum with fixed standard monomials, the complements used in the
proof of \cref{thm:jacobi-normal-form-transfer} are obtained by Gaussian
elimination in finite homogeneous Koszul matrices.  Their entries, and
hence those of the finite sum \eqref{eq:perturbed-projection}, are rational
in the coefficients of \(P\) and in \(t\).  The map induced by \(\iota\)
is an isomorphism by \eqref{eq:exact-jacobi-rank}; its matrix is therefore
invertible over the function field.  Any two choices represent the same
cohomology module in different frames, so their matrices differ by
invertible rational changes of basis.
\end{proof}

\begin{corollary}[Unitriangular standard-monomial Wick--Jacobi gauge]
\label{cor:unitriangular-wick-jacobi}
On a fixed standard-monomial stratum, fix a monomial order and let
\[
 b_1,\ldots,b_\mu
\]
be the standard monomials for the quotient by the initial ideal of
\((\partial_1P_m,\ldots,\partial_dP_m)\), and put
\[
 \mathcal W
 =
 \exp\!\left(-\frac12\Delta\right),
 \qquad
 \Delta=\sum_{j=1}^d\partial_{x_j}^2.
\]
Choose the section \(\iota\) in
\cref{thm:jacobi-normal-form-transfer} to be their standard polynomial
representatives.
The classes represented by
\[
 \mathcal Wb_1,\ldots,\mathcal Wb_\mu
\]
form a Jacobi-enriched Wick basis of the twisted de Rham module.  Relative
to \(b_1,\ldots,b_\mu\), its algebraic comparison matrix is triangular by
increasing polynomial degree, has diagonal equal to one, and therefore
\begin{equation}
 \det B_{\mathrm{Jac}}^{\mathrm{std}}=1.
 \label{eq:standard-Jacobi-determinant}
\end{equation}
Its inverse is induced by
\(\exp(\Delta/2)\), the positive finite sum over Wick pairings.
For a nonquadratic phase, \(\mathcal W\) is here a triangular change of
cohomology frame; it is not asserted to be a chain map for
\(\nabla_{P,t}\).
The equality of the determinant to one belongs to this specified gauge;
the invariant statement under arbitrary rational frame changes is the
nonvanishing in \cref{cor:jacobi-comparison-determinant}.
\end{corollary}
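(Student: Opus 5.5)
The plan is to work entirely inside the finite contraction of \cref{thm:jacobi-normal-form-transfer}, where \(\iota\) sends \(b_k\) to \(b_k\operatorname{vol}_x\), and to read off the frame change induced by \(\mathcal W\) degree by degree.

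First, finiteness. Since \(\Delta\) lowers polynomial degree by two, \(\mathcal W\) is a finite sum on polynomials. Using \eqref{eq:partial-matching-expansion} with \(C=I_d\), we can write
\[
 \mathcal Wb_k=b_k+g_k,\qquad \deg g_k\leq\deg b_k-2 .
\]
We then expand each class \([g_k\operatorname{vol}_x]_{\nabla_{P,t}}\) in the frame \([b_j\operatorname{vol}_x]\) by applying the normal form \eqref{eq:exact-jacobi-normal-form}. This gives \([g_k\operatorname{vol}_x]=\sum_j\beta_{jk}[b_j\operatorname{vol}_x]\) with \(\beta_{jk}\in\C(S)(t)\).

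The key step, and the expected obstacle, is to show that \(\beta_{jk}=0\) unless \(\deg b_j<\deg b_k\) (or \(\deg b_j\le\deg g_k\)). I would prove that \(\pi_{P,t}\) does not raise polynomial degree on top forms. The estimate goes as follows:
\begin{itemize}
 \item \(\eta_{P,t}\) raises degree by at most \(m-2\), and \(h_t\) lowers it by exactly \(m-1\), so each word \((\eta_{P,t}h_t)^k\) lowers degree by at least \(k\).
 \item The weight-graded choice of \(\pi\) in the proof of the theorem lets us take \(\pi\) on top forms to be the standard-monomial normal form modulo the initial ideal. This normal form never increases degree, because Gr\"obner reduction by the homogeneous \(\partial_jP_m\) is degree-compatible. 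The monomial order must be chosen degree-compatible (graded); I would impose this, or note that it is implicit in the stratum hypothesis.
\end{itemize}
Together these show that \(\pi_{P,t}(g_k\operatorname{vol}_x)\) involves only \(b_j\) with \(\deg b_j\le\deg g_k<\deg b_k\). For this to hold we need the complements in the contraction chosen compatibly with the standard-monomial splitting. I would say explicitly that the section \(\iota\) and the projection \(\pi\) are so chosen, which the corollary's hypothesis on \(\iota\) allows.

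Ordering \(b_1,\dots,b_\mu\) by increasing degree, the matrix of \([\mathcal Wb_k]\) relative to \([b_j]\) is the identity plus a strictly triangular part. Hence \eqref{eq:standard-Jacobi-determinant} holds, and the classes \([\mathcal Wb_k]\) form a basis of the rank-\(\mu\) module from \eqref{eq:exact-jacobi-rank}.

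For the inverse, the operator identity \(\exp(\Delta/2)\mathcal W=\Id\) on polynomials gives \(b_k=\exp(\Delta/2)(\mathcal Wb_k)\). Expanding \(\exp(\Delta/2)b_k\) by the positive matching sum and reducing again by \(\pi_{P,t}\) produces the inverse triangular matrix with the same degree control. So the inverse is induced by \(\exp(\Delta/2)\), as claimed.

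Finally, I would remark that no chain-map property of \(\mathcal W\) is used; only class-level expansions are. The determinant value \(1\) therefore depends on this gauge, whereas \cref{cor:jacobi-comparison-determinant} gives the frame-independent nonvanishing.
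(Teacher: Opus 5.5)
Your unitriangularity argument is sound, but it takes a heavier route than necessary. With the weight-graded contraction and \(\iota\) given by the homogeneous standard monomials, \(\pi\) preserves polynomial degree on top forms and each \(\eta_{P,t}h_t\) lowers it. Hence \(\pi_{P,t}\) never raises degree, and the comparison matrix is the identity plus entries with \(\deg b_j\leq\deg b_k-2\). (No graded monomial order is needed, because \((\partial_1P_m,\ldots,\partial_dP_m)\) is homogeneous. Also, \(\pi\) on top forms is the Gr\"obner normal form modulo the Jacobian ideal itself, not reduction modulo its initial ideal.)

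The paper avoids \(\pi_{P,t}\) altogether through one observation you missed. Every monomial occurring in \(\Delta^kb_k\) divides \(b_k\), and the standard monomials, being the complement of a monomial ideal, are closed under division. So \(\mathcal Wb_k\) is already, as a polynomial, \(b_k\) plus standard monomials of strictly smaller degree. Its coordinates in the frame \([b_j\operatorname{vol}_x]\) are therefore read off with no reduction at all, independently of \(t\) and of the lower-order part of \(P\).

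That observation is not merely a shortcut; it is exactly what your inverse claim lacks. You reduce \(\exp(\Delta/2)b_k\) by \(\pi_{P,t}\) and assert, via \(\exp(\Delta/2)\mathcal W=\Id\) on polynomials, that the result is the inverse matrix. But neither operator is a chain map for \(\nabla_{P,t}\), as the statement itself stresses, so neither need preserve exact forms. Suppose \(\exp(\Delta/2)b_k=\sum_jE_{jk}b_j+g\) with \([g\operatorname{vol}_x]=0\). Applying \(\mathcal W\) gives \(b_k=\sum_jE_{jk}\mathcal Wb_j+\mathcal Wg\), and nothing in your argument forces \([\mathcal Wg\operatorname{vol}_x]\) to vanish. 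So the two class-level matrices need not multiply to the identity.

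Closure under division gives \(g=0\). Both \(\exp(\pm\Delta/2)\) preserve the finite-dimensional space \(\Span\{b_1,\ldots,b_\mu\}\), which \(\iota\) maps isomorphically onto cohomology. The comparison matrices are then simply the matrices of the restricted operators, and they are mutually inverse there.
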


\begin{proof}
Every monomial occurring in \(\Delta^kb_\alpha\) divides \(b_\alpha\).
The standard monomials of a monomial ideal are closed under division, so
\(\mathcal Wb_\alpha\) remains in their span.  Since \(\Delta\) lowers
degree by two,
\[
 \mathcal Wb_\alpha=b_\alpha+
 \text{terms of strictly smaller degree}.
\]
The comparison matrix is therefore triangular with diagonal one.
The two finite polynomial operators
\(\exp(-\Delta/2)\) and \(\exp(\Delta/2)\) are mutually inverse, and the
second has the positive matching expansion of
\cref{prop:wick-forest-expansion}.
\end{proof}

\begin{corollary}[Replicated Jacobi transfer]
\label{cor:replicated-jacobi-transfer}
Let \(\Sigma\in\operatorname{Sym}_r^{++}(\R)\) and
\(\boldsymbol t=(t_1,\ldots,t_r)\) with \(t_1\cdots t_r\neq0\).  For the
replicated phase
\[
 F_{\Sigma,\boldsymbol t}(\boldsymbol x)
 =
 -\frac12
 \sum_{a,b=1}^r
 (\Sigma^{-1})_{ab}\langle x_a,x_b\rangle
 +
 i\sum_{a=1}^rt_aP(x_a),
\]
the principal Koszul differential is
\[
 i\sum_{a=1}^rt_a\,\mathrm dP_m(x_a)\wedge.
\]
It is regular, and the twisted de Rham module has exact rank
\[
 \mu_r=(m-1)^{rd}.
\]
Every replicated polynomial insertion, in particular
\(\prod_{a=1}^rP(x_a)\), has exact Jacobi coordinates rational in
\(\Sigma,\boldsymbol t\), and the coefficients of \(P\).  After choosing
Jacobi, de Rham, and thimble frames as in
\cref{thm:thimble-factorisation}, now in \(rd\) variables, denote the
resulting rational frame change and thimble period matrix by
\(B_{\mathrm{Jac}}^{(r)}\) and
\(\mathcal P_{\mathrm{th}}^{(r)}\).  The period matrix against the Jacobi
normal-form frame then factors as
\[
 \Pi_{\mathrm{Jac}}^{(r)}
 =
 \mathcal P_{\mathrm{th}}^{(r)}
 B_{\mathrm{Jac}}^{(r)}.
\]
For chosen cycles it is left-multiplied by their Betti coordinate matrix,
as in \eqref{eq:correct-global-factorisation}.
\end{corollary}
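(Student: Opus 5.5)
The plan is to reduce the corollary to \cref{thm:jacobi-normal-form-transfer} applied in \(rd\) variables, after a linear change of the coupling parameters. Write \(\boldsymbol x=(x_1,\ldots,x_r)\in(\C^d)^r\) and \(\widetilde P_{\boldsymbol t}(\boldsymbol x)=\sum_a t_aP(x_a)\). Its top homogeneous part is \(\sum_at_aP_m(x_a)\). The gradient of this part is \((t_a\nabla P_m(x_a))_a\), which vanishes only at \(\boldsymbol x=0\) because \(t_1\cdots t_r\neq0\) and \eqref{eq:principal-isolated} holds for each block. Hence the \(rd\) partial derivatives \(t_a\partial_jP_m(x_a)\) form a homogeneous regular sequence. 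The Koszul complex of \(i\sum_at_a\mathrm dP_m(x_a)\wedge\) is the tensor product of the \(r\) single-replica Koszul complexes. Its cohomology is therefore concentrated in top degree \(rd\) and equals \(\bigotimes_a\mathcal J_m(P)\). Since the scalars \(t_a\) are units, this has dimension \(\mu^r=(m-1)^{rd}\). A degreewise contraction \((\iota,\pi,h)\) is then available as in the proof of the theorem, or more concretely as the tensor product of the single-replica contractions via the standard tensor trick \(h^{(r)}=\sum_a \iota\pi\otimes\cdots\otimes h_a\otimes\Id\otimes\cdots\). Its homotopy lowers polynomial degree by \(m-1\).

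Next comes the perturbation step. The remaining part of the twisted differential is
\[
 \eta=\mathrm d_{\boldsymbol x}-\sum_{a,b}(\Sigma^{-1})_{ab}\langle x_b,\mathrm d x_a\rangle\wedge+i\sum_at_a\,\mathrm d(P-P_m)(x_a)\wedge .
\]
Each summand raises polynomial degree by at most \(m-2\), exactly as in the single-variable count: the Gaussian term is linear in \(\boldsymbol x\) even though it now couples replicas through \(\Sigma^{-1}\). So \(\eta h\) and \(h\eta\) strictly lower degree and are locally nilpotent. The basic perturbation lemma then gives finite operators \(\pi_{\boldsymbol t}\) and \(h_{\boldsymbol t}\) satisfying the analogue of \eqref{eq:perturbed-contraction}. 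It also gives \(\iota_{\boldsymbol t}=\iota\), since \(\iota\) lands in top forms. This yields the vanishing below top degree and the exact rank \(\mu_r\). The normal form \eqref{eq:exact-jacobi-normal-form} applied to \(\prod_aP(x_a)\operatorname{vol}_{\boldsymbol x}\) gives finite Jacobi coordinates. These are rational because the complements are chosen by Gaussian elimination, as in \cref{cor:jacobi-comparison-determinant}, with entries in \(\Q(\Sigma,\boldsymbol t,\text{coeffs of }P)\). The field here is \(K=\C(t_1,\ldots,t_r)\), and \(\Sigma^{-1}\) enters rationally in \(\Sigma\).

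Finally, the factorisation is formal once the frames are fixed. \(B^{(r)}_{\mathrm{Jac}}\) expresses the Jacobi normal-form classes \(\iota(e_\alpha)\) in the chosen de Rham frame. \(\mathcal P^{(r)}_{\mathrm{th}}\) pairs that de Rham frame with descending thimbles in \(\C^{rd}\). Pairing a Jacobi class with a thimble is therefore the composite, which gives \(\Pi^{(r)}_{\mathrm{Jac}}=\mathcal P^{(r)}_{\mathrm{th}}B^{(r)}_{\mathrm{Jac}}\). Expressing arbitrary rapid-decay cycles in the thimble basis left-multiplies this by the integral Betti matrix. The main obstacle is this last step, since the paper has not yet established \cref{thm:thimble-factorisation}. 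Two facts are needed from it in \(rd\) variables: that the thimbles form a basis dual to the de Rham module, and that the replicated phase is tame on the chamber. My first attempt would be to argue tameness directly. The leading part \(\sum_at_aP_m(x_a)\) has an isolated critical point at the origin, so after the forced rescaling the Broughton-type tameness criterion applies block by block.
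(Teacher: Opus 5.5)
Your proposal is correct and follows the paper's proof: regularity of the block-diagonal leading gradient $t_a\,\mathrm dP_m(x_a)$, rank $\mu^r$ from the tensor product of Jacobian algebras, the cross-replica Gaussian term placed in the lower perturbation (it raises degree only by one), the transfer argument of \cref{thm:jacobi-normal-form-transfer} run verbatim in $rd$ variables, and the factorisation obtained by integrating the exact normal-form identity over thimbles. The step you flag as the main obstacle is not one here, because the statement takes the frames, and hence the tame Morse chamber, ``as in \cref{thm:thimble-factorisation}''; tameness is therefore a hypothesis, and in any case it would follow directly, without rescaling, from the isolated critical point of the leading form $\sum_a t_aP_m(x_a)$.
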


\begin{proof}
The leading gradient is the direct sum of \(r\) copies of the regular
sequence \(\mathrm dP_m\), multiplied by invertible scalars \(t_a\).
The tensor product of the corresponding Jacobian algebras has dimension
\(\mu^r=(m-1)^{rd}\).  The cross-replica Gaussian quadratic form belongs
to the lower perturbation and raises polynomial degree by only one.
The proof of \cref{thm:jacobi-normal-form-transfer} therefore applies verbatim
in \(rd\) variables.  Integration of the exact normal-form identity over
a rapid-decay thimble gives the displayed factorisation.
\end{proof}

\section{The rank-changing zero-coupling degeneration}
\label{sec:zero-coupling}
\label{subsec:gaussian-face-cube}

The preceding corollary inverts all coupling parameters.  That operation
must not be performed before approaching a stratum on which some couplings
vanish: the factors
\(t_a^{-1}\) in the principal Koszul homotopy would hide the quadratic
directions which reappear when \(t_a=0\).  The correct order is to set the
inactive couplings to zero first and then integrate their nondegenerate
Gaussian complexes.  The rank changes with the active set, so these complexes
form a subset-indexed diagram rather than a constant-rank bundle.

Let \(R=\{1,\ldots,r\}\).  If \(K\subset R\), write \(\Sigma_K\) for the
principal submatrix indexed by \(K\).  For \(I\subset K\), put
\[
 \mathbf k_I=\C(t_a:a\in I),
 \qquad \mathbf k_\varnothing=\C,
\]
and consider the phase
\begin{equation}
 F_{K\mid I}(\boldsymbol x_K)
 =-\frac12\sum_{a,b\in K}
 (\Sigma_K^{-1})_{ab}\langle x_a,x_b\rangle
 +i\sum_{a\in I}t_aP(x_a).
 \label{eq:face-phase}
\end{equation}
Let
\[
 \mathcal C_{K\mid I}
 =\left(
   \Omega^\bullet_{\mathbf k_I[\boldsymbol x_K]},
   \mathrm d+\mathrm dF_{K\mid I}\wedge
  \right).
\]

\begin{theorem}[Compatible reductions on all active-coupling strata]
\label{thm:gaussian-face-cube}
Assume \eqref{eq:principal-isolated}, and put
\(\mu=(m-1)^d\).  Then the following hold.

\begin{enumerate}[label=\textup{(\roman*)}]
\item For every \(S\subset K\subset R\),
\begin{equation}
 H^q(\mathcal C_{K\mid S})=0
 \quad(q\neq |K|d),
 \qquad
 \dim_{\mathbf k_S}H^{|K|d}(\mathcal C_{K\mid S})
 =\mu^{|S|}.
 \label{eq:face-rank}
\end{equation}

\item Let \(S\subset I\subset K\subset R\), put \(J=K\setminus I\),
and define
\begin{equation}
 L_{J\mid I}=\Sigma_{JI}\Sigma_{II}^{-1},
 \qquad
 C_{J\mid I}
 =\Sigma_{JJ}-\Sigma_{JI}\Sigma_{II}^{-1}\Sigma_{IJ}.
 \label{eq:conditional-blocks}
\end{equation}
For \(f\in\mathbf k_S[\boldsymbol x_K]\), set
\begin{equation}
 \mathsf W^\Sigma_{K\to I}f(\boldsymbol x_I)
 =\left.
 \exp\!\left(\frac12\Delta_{C_{J\mid I}}^{(\boldsymbol y)}\right)
 f\bigl(\boldsymbol x_I,
        L_{J\mid I}\boldsymbol x_I+\boldsymbol y_J\bigr)
 \right|_{\boldsymbol y_J=0},
 \label{eq:conditional-Wick-map}
\end{equation}
where
\begin{equation}
 \Delta_{C_{J\mid I}}^{(\boldsymbol y)}
 =\sum_{a,b\in J}(C_{J\mid I})_{ab}
   \sum_{\nu=1}^d
   \partial_{y_{a,\nu}}\partial_{y_{b,\nu}}.
 \label{eq:conditional-Laplacian}
\end{equation}
The exponential in \eqref{eq:conditional-Wick-map} is finite on every
polynomial.  Retain the determinant orientation lines
\(\mathfrak o_A=\det((\R^d)^A)\) and use the canonical associative
identifications
\(\mathfrak o_K\simeq\mathfrak o_I\otimes\mathfrak o_{K/I}\).
Normalised Gaussian contraction on the quotient orientation line then
induces an isomorphism
\begin{equation}
 \mathbf W^{\Sigma,S}_{K\to I}:
 H^{|K|d}(\mathcal C_{K\mid S})
 \longrightarrow H^{|I|d}(\mathcal C_{I\mid S}),
 \qquad
 [f\,\operatorname{vol}_K]
 \longmapsto
 [\mathsf W^\Sigma_{K\to I}f\,
   \operatorname{vol}_I]
 \label{eq:face-cohomology-map}
\end{equation}
under these determinant-line identifications.  If one replaces the
quotient orientation by independently ordered coordinate forms, the
corresponding shuffle sign must be inserted.

\item For every \(S\subset I\subset K\subset L\subset R\), the polynomial
conditional Wick operators satisfy strictly
\begin{equation}
 \mathsf W^{\Sigma_K}_{K\to I}
 \circ
 \mathsf W^{\Sigma_L}_{L\to K}
 =
 \mathsf W^{\Sigma_L}_{L\to I}.
 \label{eq:conditional-Wick-tower}
\end{equation}
The induced cohomology maps
\(\mathbf W^{\Sigma,S}\) satisfy the same identity under the canonical
associative identifications of determinant orientation lines.  Thus every
face diagram with common active support \(S\) commutes.

\item If the universal complex is first formed over
\(\C[t_a:a\in K]\) and only then specialised to
\[
 t_a\neq0\ (a\in S),
 \qquad t_b=0\ (b\in K\setminus S),
\]
the cohomology of the specialised complex is exactly
\(H^{|K|d}(\mathcal C_{K\mid S})\).  This is not asserted to be the
specialisation of generic cohomology.  Hence the maps
\eqref{eq:face-cohomology-map}, rather than substitution in the all-active
matrix \(B_{\mathrm{Jac}}^{(|K|)}\), give the canonical face restrictions.
\end{enumerate}

When the retained set \(I\) is empty, the evident conventions are used:
\(L_{K\mid\varnothing}=0\),
\(C_{K\mid\varnothing}=\Sigma_K\), and
\(\operatorname{vol}_\varnothing=1\).
\end{theorem}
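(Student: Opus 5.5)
The strategy is to reduce every face complex \(\mathcal C_{K\mid S}\) to the all-active complex \(\mathcal C_{S\mid S}\) by integrating out the inactive Gaussian variables. I would do this with a linear change of variables that block-diagonalises the quadratic form, followed by the quadratic comparison of \cref{thm:quadratic-forest-dR}. The rank count then comes from \cref{cor:replicated-jacobi-transfer}.

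\textbf{Part (i).} Fix \(S\subset K\) and put \(J=K\setminus S\). The Schur-complement substitution \(\boldsymbol x_J=L_{J\mid S}\boldsymbol x_S+\boldsymbol y_J\) is a unimodular linear automorphism of \(\mathbf k_S[\boldsymbol x_K]\), so it induces an isomorphism of twisted de Rham complexes. After the substitution the phase splits as
\[
 F_{K\mid S}
 =F_{S\mid S}(\boldsymbol x_S)
 -\tfrac12\langle\boldsymbol y_J,(C_{J\mid S}^{-1}\otimes I_d)\boldsymbol y_J\rangle .
\]
This is the block \(LDL^{\mathsf T}\) decomposition of \(\Sigma_K^{-1}\): its \(JJ\)-block inverse is \(C_{J\mid S}\), and the cross terms vanish in these coordinates. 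The complex \(\mathcal C_{K\mid S}\) is therefore the tensor product of \(\mathcal C_{S\mid S}\) with a purely quadratic complex in \(\boldsymbol y_J\). By \cref{thm:quadratic-forest-dR}, with \(B=C_{J\mid S}^{-1}\otimes I_d\), the quadratic factor has cohomology \(\mathbf k_S\) in top degree \(|J|d\) only. The factor \(\mathcal C_{S\mid S}\) has cohomology of rank \(\mu^{|S|}\) in degree \(|S|d\) only by \cref{cor:replicated-jacobi-transfer}, applied with \(\Sigma_S\) and all \(t_a\) generic in \(\mathbf k_S\). The Künneth formula over the field \(\mathbf k_S\) then gives \eqref{eq:face-rank}.

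\textbf{Part (ii).} I would apply the same splitting with \(I\) in place of \(S\). The phase on \(\boldsymbol x_I\) is now \(F_{I\mid S}\): the couplings \(t_a\) with \(a\in I\setminus S\) vanish, so the coupling term involves only \(\boldsymbol x_S\subset\boldsymbol x_I\) and is untouched by the substitution. By the Künneth splitting and \eqref{eq:cohomological-wick-rule}, projecting onto the \(\boldsymbol y_J\) factor sends \([f\,\operatorname{vol}_K]\) to \([\exp(\tfrac12\Delta_{C_{J\mid I}})f|_{\boldsymbol y=0}\,\operatorname{vol}_I]\), normalised by \([\operatorname{vol}_{\boldsymbol y}]\). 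The Jacobian of the unimodular substitution is \(1\), so the only bookkeeping is the splitting \(\mathfrak o_K\simeq\mathfrak o_I\otimes\mathfrak o_{K/I}\). This yields \eqref{eq:face-cohomology-map} as a composite of isomorphisms. Finiteness of the exponential holds because \(\Delta\) lowers degree by two.

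\textbf{Part (iii).} This is the Gaussian tower property. The operator \(\mathsf W^\Sigma_{K\to I}\) is the conditional expectation \(\E[f(\boldsymbol X_K)\mid\boldsymbol X_I=\boldsymbol x_I]\), extended polynomially. Strict identity of polynomial operators follows from the tower law for Gaussian conditioning, checked on exponentials \(e^{\langle\xi,x\rangle}\), whose polynomial coefficients span. An alternative is an algebraic check: \(L_{J\mid I}\) composes correctly, and the conditional covariances add via the Schur-complement quotient formula. At the cohomology level, commutativity follows from associativity of the orientation-line identifications.

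\textbf{Part (iv).} The differential of the universal complex specialises termwise to that of \(\mathcal C_{K\mid S}\), so after specialisation one is literally computing \(\mathcal C_{K\mid S}\). The content is the warning that rank jumps, as (i) shows, so the generic frame \(B^{(|K|)}_{\mathrm{Jac}}\), which carries factors \(t_b^{-1}\), does not specialise.

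I expect the main obstacle to be making the orientation and sign conventions in (ii)–(iii) precise: the determinant-line identifications and the shuffle sign for reordered coordinates. A second point needing care is the verification that the Schur substitution exactly kills the cross terms when \(I\neq S\), since the couplings on \(I\setminus S\) vanish.
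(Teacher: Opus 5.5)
Your proposal is correct and follows essentially the same route as the paper: the determinant-one Schur shear $\boldsymbol y_J=\boldsymbol x_J-L_{J\mid I}\boldsymbol x_I$ splits $\mathcal C_{K\mid S}$ as $\mathcal C_{I\mid S}$ tensored with a nondegenerate quadratic complex, the active factor is handled by the regular-sequence Jacobi transfer and the quadratic factor by \cref{thm:quadratic-forest-dR} with its Wick rule, (iii) is the Gaussian conditional-expectation tower property, and (iv) is the observation that specialising the universal complex literally yields $\mathcal C_{K\mid S}$. The only differences are cosmetic: you treat $I=S$ first, you cite \cref{cor:replicated-jacobi-transfer} rather than rerunning the argument of \cref{thm:jacobi-normal-form-transfer}, and you add an optional generating-function check of the tower identity.
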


\begin{proof}
Fix \(S\subset I\subset K\), order the variables temporarily as
\((\boldsymbol x_I,\boldsymbol x_J)\), and write
\(A=\Sigma_K^{-1}\) in the corresponding block form.  The Schur identities
give
\[
 A_{JJ}^{-1}=C_{J\mid I},
 \qquad
 -A_{JJ}^{-1}A_{JI}=L_{J\mid I},
 \qquad
 A_{II}-A_{IJ}A_{JJ}^{-1}A_{JI}=\Sigma_{II}^{-1}.
\]
Consequently the determinant-one shear
\[
 \boldsymbol y_J
 =\boldsymbol x_J-L_{J\mid I}\boldsymbol x_I
\]
gives the exact splitting
\begin{align}
 F_{K\mid S}
 &=-\frac12\sum_{a,b\in I}
   (\Sigma_{II}^{-1})_{ab}\langle x_a,x_b\rangle
   +i\sum_{a\in S}t_aP(x_a)\notag\\
 &\quad
   -\frac12\sum_{a,b\in J}
   (C_{J\mid I}^{-1})_{ab}\langle y_a,y_b\rangle.
 \label{eq:face-Schur-splitting}
\end{align}
Thus
\[
 \mathcal C_{K\mid S}
 \simeq
 \mathcal C_{I\mid S}
 \widehat\otimes
 \mathcal G_{C_{J\mid I}},
\]
where the second factor is the twisted de Rham complex of a nondegenerate
quadratic form in \(d|J|\) variables.

Taking \(I=S\), the principal Koszul differential of the active factor is
\[
 i\sum_{a\in S}t_a\,\mathrm dP_m(x_a)\wedge.
\]
It is the direct sum of \(|S|\) regular Koszul sequences.  The argument of
\cref{thm:jacobi-normal-form-transfer} therefore shows that its cohomology
is concentrated in top degree and is
\[
 \bigotimes_{a\in S}\mathcal J_m(P),
\]
of dimension \(\mu^{|S|}\).  The quadratic factor has one-dimensional
top cohomology by \cref{thm:quadratic-forest-dR}.

Its normalised top-degree projection is the Gaussian Wick functional
\[
 h(\boldsymbol y_J)
 \longmapsto
 \left.
 \exp\!\left(\frac12\Delta_{C_{J\mid I}}^{(\boldsymbol y)}\right)
 h(\boldsymbol y_J)
 \right|_{\boldsymbol y_J=0}.
\]
For general \(S\subset I\subset K\), tensoring this projection with the
identity on \(\mathcal C_{I\mid S}\) gives
\eqref{eq:face-cohomology-map}.  The determinant-line convention makes
successive contractions associative.  Taking \(I=S\) also proves the rank
assertion.

Finally, \eqref{eq:conditional-Wick-map} is the polynomial identity
\[
 \mathsf W^\Sigma_{K\to I}f(\boldsymbol x_I)
 =\E[f(X_K)\mid X_I=\boldsymbol x_I],
\]
because the conditional Gaussian distribution has mean
\(L_{J\mid I}\boldsymbol x_I\) and covariance
\(C_{J\mid I}\otimes I_d\).  The conditional-expectation tower property
proves \eqref{eq:conditional-Wick-tower}; determinant-line associativity
gives the corresponding cohomological identity.  The final statement
concerns cohomology after specialisation and follows directly from the
order in which the universal complex was formed; no inverse of an inactive
\(t_b\) occurs.
\end{proof}

\begin{corollary}[Normal jets on every Gaussian face]
\label{cor:normal-jets-face-cube}
For \(I\subset K\subset R\), let
\[
 \Phi_{P,K,\Sigma_K}(\boldsymbol\tau_K)
 =\E\exp\!\left(i\sum_{a\in K}\tau_aP(X_a)\right).
\]
If \(\alpha\in\N^{K\setminus I}\), then
\begin{align}
 &i^{-|\alpha|}
 \partial_{\boldsymbol\tau_{K\setminus I}}^\alpha
 \Phi_{P,K,\Sigma_K}(\boldsymbol\tau_I,0)
 \notag\\
 &\qquad=
 \E_{\Sigma_{II}}\!\left[
  \mathsf W^{\Sigma_K}_{K\to I}
  \left(\prod_{b\in K\setminus I}
        P(x_b)^{\alpha_b}\right)(X_I)
  \exp\!\left(i\sum_{a\in I}\tau_aP(X_a)\right)
 \right].
 \label{eq:all-face-normal-jet}
\end{align}
In particular,
\begin{equation}
 M_{P,r}(\Sigma)
 =\mathsf W^\Sigma_{R\to\varnothing}
  \left(\prod_{a=1}^rP(x_a)\right).
 \label{eq:Torelli-deepest-face}
\end{equation}
On every tame Morse chamber of the active face, the right side of
\eqref{eq:all-face-normal-jet} has the exact factorisation
\begin{equation}
 Z_{\Sigma_{II}}\,
 C_{\mathrm{Betti}}^I
 \mathcal P_{\mathrm{th}}^I
 B_{\mathrm{Jac}}^I
 v_I\!\left(
  \mathsf W^{\Sigma_K}_{K\to I}
  \prod_{b\in K\setminus I}P(x_b)^{\alpha_b}
 \right),
 \label{eq:face-normal-jet-factorisation}
\end{equation}
where
\[
 Z_{\Sigma_{II}}
 =(2\pi)^{-d|I|/2}(\det\Sigma_{II})^{-d/2}
\]
and \(v_I(f)\) is the exact Jacobi coordinate vector of
\([f\operatorname{vol}_I]\).  For \(I=\varnothing\), all three matrices in
\eqref{eq:face-normal-jet-factorisation} have rank one and are normalised
to \(1\).
\end{corollary}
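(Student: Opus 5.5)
The argument splits into three parts: the jet formula \eqref{eq:all-face-normal-jet}, its specialisation \eqref{eq:Torelli-deepest-face}, and the chamberwise factorisation \eqref{eq:face-normal-jet-factorisation}.

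\emph{Jet formula.} Put \(J=K\setminus I\). The integrand \(\exp(i\sum_a\tau_aP(X_a))\) has modulus one, and any \(\tau\)-derivative produces polynomials in \(X\), which have all Gaussian moments. Dominated differentiation therefore applies. Differentiating \(\alpha_b\) times in \(\tau_b\) for \(b\in J\) and then setting \(\boldsymbol\tau_J=0\) gives
\[
 i^{|\alpha|}\,
 \E\!\left[\prod_{b\in J}P(X_b)^{\alpha_b}
 \exp\!\left(i\sum_{a\in I}\tau_aP(X_a)\right)\right].
\]
Next condition on \(X_I\). The second factor is \(X_I\)-measurable, so the tower property turns this into
\[
 i^{|\alpha|}\,
 \E\!\left[\E\!\left[\prod_{b\in J} P(X_b)^{\alpha_b}\,\middle|\,X_I\right]
 \exp\!\left(i\sum_{a\in I}\tau_aP(X_a)\right)\right].
\]
The proof of \cref{thm:gaussian-face-cube} established the polynomial identity \(\mathsf W^{\Sigma_K}_{K\to I}f(\boldsymbol x_I)=\E[f(X_K)\mid X_I=\boldsymbol x_I]\). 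The marginal law of \(X_I\) has covariance \(\Sigma_{II}\otimes I_d\). Together these give \eqref{eq:all-face-normal-jet}.

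\emph{Deepest face.} Take \(I=\varnothing\), \(K=R\) and \(\alpha=(1,\dots,1)\). Then \(L_{R\mid\varnothing}=0\) and \(C_{R\mid\varnothing}=\Sigma\), so \(\mathsf W^\Sigma_{R\to\varnothing}\) is the full Wick functional \(\exp(\tfrac12\Delta_{\Sigma\otimes I_d})(\cdot)|_{\boldsymbol x=0}\). This equals the expectation, and comparing with \eqref{eq:intro-period-jet} yields \eqref{eq:Torelli-deepest-face}.

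\emph{Factorisation.} Write \(g=\mathsf W^{\Sigma_K}_{K\to I}\prod_b P(x_b)^{\alpha_b}\), which is a polynomial in \(\boldsymbol x_I\). Writing the Gaussian density explicitly, the right side of \eqref{eq:all-face-normal-jet} becomes
\[
 Z_{\Sigma_{II}}\int_{\R^{d|I|}} g\,e^{F_{I\mid I}}\operatorname{vol}_I ,
\]
where the real contour is a rapid-decay cycle for the twisted complex \(\mathcal C_{I\mid I}\). By \cref{cor:replicated-jacobi-transfer} in \(d|I|\) variables (with couplings \(\tau_a\), and on the chamber where all are nonzero), Stokes' theorem kills \(\nabla\)-exact terms on rapid-decay cycles. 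Hence the pairing depends only on the class \([g\operatorname{vol}_I]\). By \eqref{eq:exact-jacobi-normal-form} this class equals its normal form, with Jacobi coordinate vector \(v_I(g)\). Pairing Jacobi classes with the chosen contour is then \(C^I_{\mathrm{Betti}}\mathcal P^I_{\mathrm{th}}B^I_{\mathrm{Jac}}\), by the chamberwise factorisation \eqref{eq:intro-period-factorisation} (\cref{thm:thimble-factorisation}), which I take as available. For \(I=\varnothing\) the complex is the ground field, all frames are one-dimensional, and the normalisations make every factor equal to \(1\).

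\emph{Main obstacle.} The delicate step is justifying that the real contour is a legitimate rapid-decay cycle when some active \(\tau_a\) are real and nonzero, so that the oscillatory integral is only conditionally decaying in the \(P\)-directions. Here the Gaussian factor \(e^{-q/2}\) already gives absolute convergence, so exact forms \(\nabla\beta\) with polynomial \(\beta\) integrate to zero by ordinary integration by parts. I would argue directly from this rather than through thimble deformation, and invoke the Betti decomposition only to express that real contour in the thimble basis on the chosen chamber.
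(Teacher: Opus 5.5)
Your proposal is correct and follows the paper's own route. You use dominated differentiation under the Gaussian expectation, conditioning on \(X_I\) via the identity \(\mathsf W^{\Sigma_K}_{K\to I}f=\E[f(X_K)\mid X_I]\), specialisation to \(I=\varnothing\), and then the chamberwise factorisation \eqref{eq:correct-global-factorisation} applied to the reduced polynomial with the density constant \(Z_{\Sigma_{II}}\) made explicit. Your remark that the Gaussian factor makes the real contour a rapid-decay cycle, so that exact forms integrate to zero, is the content of \cref{cor:gaussian-replica-betti}; it spells out a step the paper leaves implicit.
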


\begin{proof}
Differentiate under the Gaussian expectation and then condition on \(X_I\).
The formula follows from \eqref{eq:conditional-Wick-map}.  Taking
\(I=\varnothing\), \(K=R\), and every \(\alpha_b=1\) gives
\eqref{eq:Torelli-deepest-face}.  The factorisation
\eqref{eq:face-normal-jet-factorisation} is
\eqref{eq:correct-global-factorisation} applied after the exact face
reduction; the density normalisation is written explicitly.
\end{proof}

The compatible reductions above describe exactly what remains when couplings
are set to zero.  To resolve a simultaneous approach to zero, one must next
place the Gaussian saddle and the escaping nonzero saddles on the same scale.
The balance is derived directly before the general radial construction below.
One-variable rank calculations, the confluence-safe residue pairing, and the
one-replica radial precursor are collected in
\cref{sec:one-variable-refinements}; they illustrate the mechanism but are
not used in the multireplica proof.
\subsection{The forced radial scale and the Gaussian-origin branch}
\label{subsec:toric-Rees-Wick}

At a nonzero critical point, the leading critical equation gives
\(x\asymp t^{-1/(m-2)}\).  Thus the common substitution
\[
 t_a=\rho^{m-2}\lambda_a,\qquad x_a=\rho^{-1}u_a
\]
is forced, up to a common reparametrisation, by the balance between the
Gaussian quadratic term and \(P_m\).  We now apply it simultaneously on
every active-coupling stratum.  The rank changes when a coupling vanishes,
but on each fixed active set the radial Gauss--Manin connection and the
formal summand attached to the critical branch issuing from the origin can
be constructed exactly.

Fix \(K\subset R\), \(S\subset K\), and
\(\boldsymbol\lambda\in(\C^\times)^S\).  Put
\(\lambda_a=0\) for \(a\notin S\), and for \(0\leq j\leq m\) set
\[
 H_{S,j}(\boldsymbol u)
 =i\sum_{a\in S}\lambda_aP_{m-j}(u_a).
\]
Define
\begin{align}
 G_{K\mid S}(\boldsymbol u)
 &=-\frac12\sum_{a,b\in K}
   (\Sigma_K^{-1})_{ab}\langle u_a,u_b\rangle
   +H_{S,0}(\boldsymbol u),
 \label{eq:toric-leading-phase}\\
 \Psi_{K\mid S}(\rho,\boldsymbol u)
 &=G_{K\mid S}(\boldsymbol u)
   +\sum_{j=1}^{m}\rho^jH_{S,j}(\boldsymbol u).
 \label{eq:toric-Psi}
\end{align}
We say that \(G_{K\mid S}\) is \emph{centrally nonresonant} if
\begin{equation}
 \operatorname{Crit}(G_{K\mid S})
 \cap G_{K\mid S}^{-1}(0)
 =\{0\}.
 \label{eq:central-nonresonance}
\end{equation}
Since \(G_{K\mid S}(0)=0\), this condition excludes only nonzero critical
points of zero action; coincidences among nonzero critical values are
allowed.
Under
\[
 t_a=\rho^{m-2}\lambda_a,
 \qquad x_a=\rho^{-1}u_a,
\]
one has the exact equality
\begin{equation}
 F_{\Sigma_K,\boldsymbol t}(\rho^{-1}\boldsymbol u)
 =\rho^{-2}\Psi_{K\mid S}(\rho,\boldsymbol u).
 \label{eq:toric-rescaled-phase}
\end{equation}
Let \(N_K=d|K|\), and set
\begin{equation}
 \mathfrak D_{K\mid S}
 =\rho^2\mathrm d_{\boldsymbol u}
  +\mathrm d_{\boldsymbol u}\Psi_{K\mid S}\wedge
 \label{eq:toric-Rees-differential}
\end{equation}
on the \(\rho\)-adically completed polynomial forms.

The Gaussian-normalised gauge in the radial connection is forced by the volume
rescaling.  In the pure Gaussian test, one has
\[
 \int_{\R^{N_K}}
 \exp\!\left(
  -\frac{\langle\boldsymbol u,
  (\Sigma_K^{-1}\otimes I_d)\boldsymbol u\rangle}{2\rho^2}
 \right)\,\mathrm d\boldsymbol u
 \propto \rho^{N_K};
\]
the factor \(\rho^{-N_K}\) coming from
\(\boldsymbol x=\rho^{-1}\boldsymbol u\) makes the normalised period
constant.  Its logarithmic derivative is exactly the term
\(-N_K/\rho\) below.  Likewise, the universal upper bound of at most three
is not an extra regularisation: it comes from the leading order of
\(\partial_\rho(\rho^{-2}\Psi)\).

\begin{theorem}[Toric Rees--Jacobi lattice and radial connection]
\label{thm:toric-corner-Rees}
Assume \eqref{eq:principal-isolated}.  For every fixed
\(\boldsymbol\lambda\) of support \(S\), the following assertions hold.

\begin{enumerate}[label=\textup{(\roman*)}]
\item The Jacobian algebra of \(G_{K\mid S}\) has dimension
\begin{equation}
 \dim_\C\mathcal J(G_{K\mid S})
 =\mu^{|S|}.
 \label{eq:toric-face-Jacobi-rank}
\end{equation}

\item The cohomology of
\((\Omega^\bullet_{\mathrm{pol}}[[\rho]],
\mathfrak D_{K\mid S})\) is concentrated in degree \(N_K\), and
\begin{equation}
 \mathcal L_{K\mid S}
 :=
 H^{N_K}\!\left(
  \Omega^\bullet_{\mathrm{pol}}[[\rho]],
  \mathfrak D_{K\mid S}
 \right)
 \simeq
 \mathcal J(G_{K\mid S})[[\rho]]
 \label{eq:toric-Rees-lattice}
\end{equation}
is free of rank \(\mu^{|S|}\) over \(\C[[\rho]]\).  On every
standard-monomial parameter stratum the construction is algebraic in
\(\boldsymbol\lambda\) and these lattices form a locally free family.

\item On \(\rho\neq0\), the radial Gauss--Manin connection in the
Gaussian-normalised volume gauge is
\begin{equation}
 \nabla_{\rho,K}^{\mathrm{phys}}
 =
 \partial_\rho+
 \partial_\rho\!\left(\rho^{-2}\Psi_{K\mid S}\right)
 -\frac{N_K}{\rho}.
 \label{eq:physical-radial-connection}
\end{equation}
It has pole order at most three and
\begin{equation}
 \rho^3\nabla_{\rho,K}^{\mathrm{phys}}
 \mathcal L_{K\mid S}
 \subset
 \mathcal L_{K\mid S}.
 \label{eq:radial-connection-lattice}
\end{equation}
Modulo \(\rho\), the induced endomorphism is
\begin{equation}
 \rho^3\nabla_{\rho,K}^{\mathrm{phys}}\bmod\rho
 =
 -2m_{G_{K\mid S}}
 \quad\text{on }\mathcal J(G_{K\mid S}),
 \label{eq:leading-radial-endomorphism}
\end{equation}
where \(m_G\) denotes multiplication by \(G\).

\item Suppose in addition that \(G_{K\mid S}\) is Morse.  The formal
cohomological radial connection, and likewise its dual period system, admit
a block decomposition indexed by the distinct critical values \(\gamma\) of
\(G_{K\mid S}\).  The block with leading action \(\gamma\) has rank
\[
 \#\{c\in\operatorname{Crit}(G_{K\mid S}):G_{K\mid S}(c)=\gamma\}.
\]
If \(G_{K\mid S}\) satisfies the central nonresonance condition
\eqref{eq:central-nonresonance}, then the zero-action block is a canonical
rank-one direct factor attached to \(c=0\).  Since
\[
 G_{K\mid S}(0)=H_{S,1}(0)=0,
\]
its normalised horizontal period has no negative exponential power.  We call
it the central Gaussian level.

If, more strongly, all critical values are pairwise distinct, every block is
rank one and the horizontal period solution indexed by a critical point
\(c\) has exponential factor
\begin{equation}
 \exp\!\left(
  \frac{G_{K\mid S}(c)}{\rho^2}
  +\frac{H_{S,1}(c)}{\rho}
  +O(1)
 \right).
 \label{eq:toric-formal-levels}
\end{equation}
\end{enumerate}
\end{theorem}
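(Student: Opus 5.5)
The plan is to treat the four assertions in order, reusing the Schur splitting from the proof of \cref{thm:gaussian-face-cube} to separate the inactive Gaussian directions from the active ones.

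\textbf{Part (i).} Under the shear \(\boldsymbol y_J=\boldsymbol u_J-L_{J\mid S}\boldsymbol u_S\) with \(J=K\setminus S\), the phase splits as
\[
 G_{K\mid S}
 =\Bigl(-\tfrac12\langle\boldsymbol u_S,(\Sigma_{SS}^{-1}\otimes I_d)\boldsymbol u_S\rangle
 +i\sum_{a\in S}\lambda_aP_m(u_a)\Bigr)
 -\tfrac12\langle\boldsymbol y_J,(C_{J\mid S}^{-1}\otimes I_d)\boldsymbol y_J\rangle .
\]
The nondegenerate quadratic factor contributes a one-dimensional Jacobian algebra.

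For the active factor, filter by degree, with top-degree part \(i\sum_{a\in S}\lambda_aP_m(u_a)\). Its gradient is the regular sequence \((\lambda_a\,\partial P_m(u_a))_{a\in S}\), so its Jacobian algebra has dimension \(\mu^{|S|}\). The quadratic terms have lower degree (since \(m\ge3\)). By the standard fact that a polynomial whose leading form has an isolated critical point has total Milnor number equal to that of the leading form, the active Jacobian algebra has dimension \(\mu^{|S|}\). One can also see this through the Koszul perturbation argument of \cref{thm:jacobi-normal-form-transfer}, applied with \(\rho=1\). This gives \eqref{eq:toric-face-Jacobi-rank}.

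\textbf{Part (ii).} I would work \(\rho\)-adically. Modulo \(\rho\), the differential \(\mathfrak D_{K\mid S}\) reduces to \(\mathrm dG_{K\mid S}\wedge\). The preceding paragraph shows the sequence \(\partial G_{K\mid S}\) is regular: it is a finite quotient of polynomials of the expected dimension. Hence the Koszul cohomology is concentrated in top degree and equals \(\mathcal J(G_{K\mid S})\operatorname{vol}\).

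Next, choose a contraction \((\iota,\pi,h)\) for \(\mathrm dG\wedge\), exactly as in \cref{thm:jacobi-normal-form-transfer}. Treat
\[
 \eta=\rho^2\mathrm d_{\boldsymbol u}+\sum_{j\ge1}\rho^j\,\mathrm dH_{S,j}\wedge
\]
as a perturbation divisible by \(\rho\). Then \(\eta h\) is topologically nilpotent, so the perturbation-lemma series converges \(\rho\)-adically. This gives concentration in top degree and a free module isomorphic to \(\mathcal J(G)[[\rho]]\).

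Finiteness on polynomial forms in each \(\rho\)-degree is a point to check. The homotopy for an inhomogeneous \(G\) need not lower degree, so I would instead build \(h\) from a Gröbner division for the regular sequence \(\partial G\); on a standard-monomial stratum this is algebraic in \(\boldsymbol\lambda\). That gives the locally free family.

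\textbf{Part (iii).} Differentiate the rescaled integral representation \eqref{eq:toric-rescaled-phase}, including the volume factor \(\rho^{-N_K}\); this yields \eqref{eq:physical-radial-connection}. Then compute
\[
 \rho^3\partial_\rho(\rho^{-2}\Psi)=-2G+\rho\bigl(-H_{S,1}\bigr)+O(\rho^2),
\]
a polynomial in \(\boldsymbol u\) and \(\rho\). For the derivative term, note that \(\rho^3\partial_\rho\) of a class \([f\,\mathrm{vol}]\) with \(f\) \(\rho\)-independent contributes no \(\rho\)-derivative; the remaining \(-N_K\rho^2\) term is harmless. Hence \(\rho^3\nabla\) preserves \(\mathcal L_{K\mid S}\), and modulo \(\rho\) it acts on \(\mathcal J(G)\) as multiplication by \(-2G\). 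This is \eqref{eq:radial-connection-lattice} and \eqref{eq:leading-radial-endomorphism}.

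\textbf{Part (iv).} When \(G\) is Morse, \(\mathcal J(G)\cong\bigoplus_c\C_c\) by localisation at the critical points, and \(m_G\) is diagonal with eigenvalue \(G(c)\) at \(c\). The leading term \(-2m_G\) of the pole-order-three connection therefore has eigenvalues \(-2\gamma\), and these are distinct for distinct critical values \(\gamma\).

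The standard formal block-diagonalisation (splitting lemma / Hukuhara–Turrittin) for \(\rho^3\partial_\rho+A_0+\rho A_1+\cdots\) splits the lattice canonically into generalised eigenblocks of \(A_0\), over \(\C[[\rho]]\). This is the step I expect to be the main obstacle. The splitting itself is routine; the work is in the bookkeeping of the \(\rho\)-derivative against the lattice, which I would handle by a recursive solution of the Sylvester equations \([A_0,X_k]=\cdots\), solvable since the blocks have disjoint spectra.

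Central nonresonance says precisely that the eigenvalue \(0\) has multiplicity one, coming from \(c=0\). This yields the canonical rank-one factor. Its horizontal section has no exponential part, because \(G(0)=0\) and \(H_{S,1}(0)=0\), the latter since \(P_{m-1}\) is homogeneous of positive degree.

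When all critical values are distinct, every block is rank one. Within a rank-one block the next term \(A_1\) gives the \(\rho^{-1}\) exponent, equal to \(H_{S,1}(c)\) by evaluating the \(\rho\)-linear part at \(c\). Integrating \(\rho^{-3}(2G(c)+\rho H_{S,1}(c))\) then gives the exponential factor \eqref{eq:toric-formal-levels}; the signs should be confirmed against the convention \(\exp(\rho^{-2}\Psi)\).
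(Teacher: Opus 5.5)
Your treatment of (i), (ii) and (iv) follows the paper's route: the Schur shear with a B\'ezout/filtered-Koszul count, a $\rho$-adic perturbation of a contraction for $\mathrm dG_{K\mid S}\wedge$, and recursive block splitting with divisors $-2(\gamma-\gamma')$. The gap is in (iii), where you do not show that $\rho^3\nabla^{\mathrm{phys}}_{\rho,K}$ is well defined on $\mathcal L_{K\mid S}$.

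The lattice $\mathcal L_{K\mid S}$ is the quotient of $\Omega^{N_K}_{\mathrm{pol}}[[\rho]]$ by $\operatorname{im}\mathfrak D_{K\mid S}$. You only check that
\[
 \rho^3\nabla^{\mathrm{phys}}_{\rho,K}=\rho^3\partial_\rho+\rho\,\partial_\rho\Psi_{K\mid S}-2\Psi_{K\mid S}-N_K\rho^2
\]
sends unlocalised polynomial top forms to unlocalised polynomial top forms. That does not make it descend to the quotient. Because $\mathfrak D_{K\mid S}$ depends on $\rho$, $\partial_\rho$ alone does not preserve exact forms:
\[
 \partial_\rho\mathfrak D_{K\mid S}\eta=\mathfrak D_{K\mid S}\partial_\rho\eta+\bigl(2\rho\,\mathrm d_{\boldsymbol u}+\mathrm d_{\boldsymbol u}(\partial_\rho\Psi_{K\mid S})\wedge\bigr)\eta .
\]
Your remark that a class with a $\rho$-independent representative "contributes no $\rho$-derivative" presupposes that the action on classes does not depend on the representative, which is exactly what must be proved. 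Treating the multiplication part as just "a polynomial in $\boldsymbol u$ and $\rho$" cannot work either. A generic multiplication operator does not preserve $\operatorname{im}\mathfrak D_{K\mid S}$; only the specific term $\rho\partial_\rho\Psi_{K\mid S}-2\Psi_{K\mid S}$ cancels the $\rho$-dependence of the differential.

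The missing step is the commutator identity the paper uses:
\[
 [\rho^3\nabla^{\mathrm{phys}}_{\rho,K},\mathfrak D_{K\mid S}]=2\rho^2\mathfrak D_{K\mid S}.
\]
It follows formally from $\mathfrak D_{K\mid S}=\rho^2e^{-\phi}\mathrm d_{\boldsymbol u}e^{\phi}$ and $\nabla^{\mathrm{phys}}_{\rho,K}=e^{-\phi}\partial_\rho e^{\phi}-N_K/\rho$, with $\phi=\rho^{-2}\Psi_{K\mid S}$. It gives
\[
 \rho^3\nabla^{\mathrm{phys}}_{\rho,K}\,\mathfrak D_{K\mid S}\eta=\mathfrak D_{K\mid S}\bigl(\rho^3\nabla^{\mathrm{phys}}_{\rho,K}\eta+2\rho^2\eta\bigr),
\]
with the bracket again unlocalised, so boundaries go to boundaries inside $\Omega_{\mathrm{pol}}[[\rho]]$. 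Only after this do \eqref{eq:radial-connection-lattice} and the reduction \eqref{eq:leading-radial-endomorphism} on $\mathcal L_{K\mid S}/\rho\mathcal L_{K\mid S}\simeq\mathcal J(G_{K\mid S})$ follow from your computation.

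Two smaller points.
\begin{enumerate}
\item In (ii) no degree-lowering homotopy and no Gr\"obner construction is needed. Since $\eta$ is divisible by $\rho$, the $\rho^n$-coefficient of $\sum_k(-\eta h)^k$ involves only $k\le n$ and is automatically a finite polynomial expression.
\item In (iv) the integrand has the wrong sign. It should be $-\rho^{-3}\bigl(2G(c)+\rho H_{S,1}(c)\bigr)$, whose primitive is $G(c)/\rho^2+H_{S,1}(c)/\rho$, as in \eqref{eq:toric-formal-levels}.
\end{enumerate}
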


\begin{proof}
Let \(J=K\setminus S\).  The Schur shear from
\eqref{eq:face-Schur-splitting} gives
\[
 G_{K\mid S}
 =
 G_{S\mid S}(\boldsymbol u_S)
 -\frac12
 \langle\boldsymbol v_J,
 C_{J\mid S}^{-1}\boldsymbol v_J\rangle.
\]
The inactive derivatives eliminate \(\boldsymbol v_J\) linearly.  The
leading homogeneous parts of the remaining derivatives are
\[
 i\lambda_a\partial_\nu P_m(u_a),
 \qquad a\in S,\quad 1\leq\nu\leq d.
\]
They form the direct sum of \(|S|\) regular sequences by the
isolated-leading-part hypothesis.  B\'ezout's length, or equivalently the filtered Koszul
argument of \cref{thm:jacobi-normal-form-transfer}, gives
\eqref{eq:toric-face-Jacobi-rank}.

At \(\rho=0\), the differential
\eqref{eq:toric-Rees-differential} is
\(\mathrm dG_{K\mid S}\wedge\).  Choose a contraction of this Koszul
complex onto
\(\mathcal J(G_{K\mid S})[-N_K]\).  The remaining operator
\[
 \rho^2\mathrm d_{\boldsymbol u}
 +\sum_{j=1}^m\rho^j\mathrm dH_{S,j}\wedge
\]
is divisible by \(\rho\).  The basic perturbation series converges
\(\rho\)-adically and retracts the full complex onto
\(\mathcal J(G_{K\mid S})[[\rho]][-N_K]\).  The transferred differential
vanishes because the target is concentrated in one degree.  This proves
\eqref{eq:toric-Rees-lattice}; the same finite standard-monomial
elimination gives the asserted parameterwise local freeness.

Before the Gaussian density normalisation is inserted, the radial operator is
\[
 \nabla_\rho^{\mathrm{alg}}
 =\partial_\rho+
  \partial_\rho(\rho^{-2}\Psi_{K\mid S}).
\]
Since
\(\mathrm d\boldsymbol x_K=\rho^{-N_K}\mathrm d\boldsymbol u_K\),
differentiation of the pulled-back density adds \(-N_K/\rho\), proving
\eqref{eq:physical-radial-connection}.  On polynomial forms,
\begin{equation}
 \rho^3\nabla_{\rho,K}^{\mathrm{phys}}
 =
 \rho^3\partial_\rho
 +\rho\,\partial_\rho\Psi_{K\mid S}
 -2\Psi_{K\mid S}
 -N_K\rho^2.
 \label{eq:rescaled-radial-operator}
\end{equation}
This preserves the \(\rho\)-adic module and satisfies the exact
commutator identity
\begin{equation}
 [\rho^3\nabla_{\rho,K}^{\mathrm{phys}},
   \mathfrak D_{K\mid S}]
 =2\rho^2\mathfrak D_{K\mid S}.
 \label{eq:radial-Rees-commutator}
\end{equation}
Thus it sends cycles to cycles and, if
\(\omega=\mathfrak D_{K\mid S}\eta\), then
\[
 \rho^3\nabla_\rho\omega
 =
 \mathfrak D_{K\mid S}
 \bigl(\rho^3\nabla_\rho\eta+2\rho^2\eta\bigr).
\]
It therefore descends to the lattice.  Reduction of
\eqref{eq:rescaled-radial-operator} modulo \(\rho\) gives
\eqref{eq:leading-radial-endomorphism}.

If \(G_{K\mid S}\) is Morse, its Jacobian algebra is the direct sum of its
local one-dimensional algebras:
\[
 \mathcal J(G_{K\mid S})
 =\bigoplus_{c\in\operatorname{Crit}(G_{K\mid S})}\C e_c,
 \qquad
 m_{G_{K\mid S}}e_c=G_{K\mid S}(c)e_c.
\]
For every distinct critical value \(\gamma\), let
\[
 E_\gamma
 =\bigoplus_{G_{K\mid S}(c)=\gamma}\C e_c.
\]
The leading endomorphism \eqref{eq:leading-radial-endomorphism} is scalar on
each \(E_\gamma\), and its eigenvalues on \(E_\gamma\) and
\(E_{\gamma'}\) differ when \(\gamma\neq\gamma'\).  The standard recursive
formal gauge therefore removes all off-block terms by dividing only by the
nonzero numbers \(-2(\gamma-\gamma')\).  This gives the asserted block
decomposition, with block rank \(\dim E_\gamma\).

Under \eqref{eq:central-nonresonance}, one has \(E_0=\C e_0\).  At each
order, the off-block homological equation has a unique solution because the
spectrum on \(E_0\) is disjoint from that on its complement.  Hence the
formal invariant lift of \(E_0\) is unique, and the zero-action block is a
canonical rank-one direct factor.  Moreover,
\[
 \operatorname{Hess}G_{K\mid S}(0)
 =-\Sigma_K^{-1}\otimes I_d,
\]
because \(m\geq3\), and \(H_{S,1}(0)=0\).  Thus its normalised horizontal
period has no negative exponential power.  If all critical values are
pairwise distinct, every \(E_\gamma\) is one-dimensional.  Passing to the
dual period connection and integrating the first two diagonal terms gives
\[
 \partial_\rho\!\left(
  \frac{G_{K\mid S}(c)}{\rho^2}
  +\frac{H_{S,1}(c)}{\rho}
 \right)
 =
 -\frac{2G_{K\mid S}(c)}{\rho^3}
 -\frac{H_{S,1}(c)}{\rho^2},
\]
which proves \eqref{eq:toric-formal-levels}.
\end{proof}

\begin{proposition}[Conditional Rees face maps]
\label{prop:conditional-Rees-face-maps}
Let \(S\subset T\subset K\), and set the direction parameters indexed by
\(K\setminus S\) equal to zero before localising those indexed by \(S\).
With the conditional blocks from \eqref{eq:conditional-blocks}, define
\begin{equation}
 \mathsf W^\rho_{K\to T}f(\boldsymbol u_T)
 =
 \left.
 \exp\!\left(
  \frac{\rho^2}{2}
  \Delta_{C_{K\setminus T\mid T}}^{(\boldsymbol v)}
 \right)
 f\bigl(
  \boldsymbol u_T,
  L_{K\setminus T\mid T}\boldsymbol u_T+\boldsymbol v
 \bigr)
 \right|_{\boldsymbol v=0}.
 \label{eq:conditional-Rees-Wick}
\end{equation}
Then \eqref{eq:conditional-Rees-Wick} induces an isomorphism
\[
 \mathbf W^{\rho,S}_{K\to T}:
 \mathcal L_{K\mid S}
 \xrightarrow{\ \simeq\ }
 \mathcal L_{T\mid S},
 \qquad
 [f\operatorname{vol}_K]
 \longmapsto
 [\mathsf W^\rho_{K\to T}f\operatorname{vol}_T],
\]
where determinant orientation lines are identified as in
\cref{thm:gaussian-face-cube}.  For \(S\subset T\subset K\subset L\), the
polynomial operators satisfy strictly
\begin{equation}
 \mathsf W^\rho_{K\to T}
 \circ\mathsf W^\rho_{L\to K}
 =
 \mathsf W^\rho_{L\to T}.
 \label{eq:Rees-Wick-tower}
\end{equation}
and, with the determinant-line convention above, so do the induced lattice
maps.  Moreover, after localisation at \(\rho\), these maps intertwine the
cohomological radial connections in the Gaussian-normalised gauge:
\begin{equation}
 \mathbf W^{\rho,S}_{K\to T}
 \nabla_{\rho,K}^{\mathrm{phys}}
 =
 \nabla_{\rho,T}^{\mathrm{phys}}
 \mathbf W^{\rho,S}_{K\to T}.
 \label{eq:physical-face-horizontality}
\end{equation}
Thus \eqref{eq:physical-face-horizontality} is an identity from
\(\mathcal L_{K\mid S}[\rho^{-1}]\) to
\(\mathcal L_{T\mid S}[\rho^{-1}]\).  Equivalently, after multiplication
by \(\rho^3\), it is an identity of the unlocalised lattices.
\end{proposition}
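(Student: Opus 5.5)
The plan is to copy the proof of \cref{thm:gaussian-face-cube} at the Rees level, keeping track of the powers of \(\rho\). Fix \(S\subset T\subset K\) and put \(J=K\setminus T\). The first step is to apply the Schur shear \(\boldsymbol v_J=\boldsymbol u_J-L_{J\mid T}\boldsymbol u_T\). This shear has determinant one and commutes with \(\rho\). Because every \(\lambda_b\) with \(b\in J\) is set to zero before anything is localised, no \(H_{S,j}\) involves \(\boldsymbol u_J\). The analogue of \eqref{eq:face-Schur-splitting} then reads
\[
 \Psi_{K\mid S}=\Psi_{T\mid S}(\rho,\boldsymbol u_T)
 -\tfrac12\langle\boldsymbol v_J,C_{J\mid T}^{-1}\boldsymbol v_J\rangle .
\]
Consequently \(\mathfrak D_{K\mid S}\) splits as a completed tensor product of \(\mathfrak D_{T\mid S}\) with the Rees Gaussian complex \(\rho^2\mathrm d_{\boldsymbol v}+\mathrm d_{\boldsymbol v}q\wedge\), where \(q=-\frac12\langle\boldsymbol v,C^{-1}\boldsymbol v\rangle\).

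For this Gaussian factor I will redo the conjugation of \cref{thm:quadratic-forest-dR} with \(B=C^{-1}\) and \(\Delta\) replaced by \(\rho^2\Delta_C\). The identity \([\rho^2\Delta_C,(Bv)_j]=2\rho^2\partial_{v_j}\) shows that \(\exp(\mp\frac{\rho^2}{2}\Delta_C)\) conjugates the Koszul differential \(\mathrm d q\wedge\) into the Rees differential, up to the sign \((-1)^p\). It follows that the cohomology of the Gaussian factor is free of rank one over \(\C[[\rho]]\), sits in top degree, and has normalised projection
\[
 h\mapsto \exp\!\left(\tfrac{\rho^2}{2}\Delta_C\right)h\big|_{\boldsymbol v=0}.
\]
Both exponentials are finite on polynomials, and on \(\rho\)-adically completed forms they converge degreewise in \(\rho\). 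A Künneth argument, using that one factor is concentrated in a single degree and is free, gives the isomorphism \(\mathcal L_{K\mid S}\simeq\mathcal L_{T\mid S}\). Under the determinant-line identification \(\mathfrak o_K\simeq\mathfrak o_T\otimes\mathfrak o_{K/T}\) this isomorphism is exactly \([f\operatorname{vol}_K]\mapsto[\mathsf W^\rho_{K\to T}f\operatorname{vol}_T]\). As a check, rank \(\mu^{|S|}\) on both sides agrees with \cref{thm:toric-corner-Rees}(ii).

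The tower identity \eqref{eq:Rees-Wick-tower} will be derived from \eqref{eq:conditional-Wick-tower}. The operator \(\mathsf W^\rho\) is obtained from \(\mathsf W^\Sigma\) by replacing the conditional covariance \(C\) with \(\rho^2C\) while leaving the conditional mean \(L\) unchanged. This is the conditional expectation for the Gaussian with covariance \(\rho^2\Sigma\), because the regression coefficient \(L\) is invariant under scaling the covariance. The tower property therefore holds as an identity of polynomials in \(\rho^2\), hence formally in \(\rho\). The lattice statement then follows from associativity of determinant lines.

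I expect horizontality \eqref{eq:physical-face-horizontality} to be the main obstacle, because \(\mathsf W^\rho\) depends on \(\rho\) and a chain-level commutation can fail. I would first argue analytically. For real \(\rho>0\) and a suitable \(\boldsymbol\lambda\), say purely imaginary so that the integrals decay, the Gaussian-normalised period of \([f\operatorname{vol}_K]\) over \(\Gamma_T\times\R^{d|J|}\) equals the \(T\)-period of \(\mathsf W^\rho_{K\to T}f\). Indeed, integrating over \(\boldsymbol v\) with density \(\propto\rho^{-d|J|}e^{q/\rho^2}\) is exactly the conditional expectation above. The prefactor \(\rho^{-N_K}\) splits as \(\rho^{-N_T}\rho^{-d|J|}\), and the second factor is precisely the Gaussian normalisation. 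So the \(-N_K/\rho\) term in \eqref{eq:physical-radial-connection} turns into \(-N_T/\rho\), and differentiating in \(\rho\) commutes with the fibre integral. This gives equality of \(\nabla^{\mathrm{phys}}\) on periods for a family of cycles. To get the cohomological identity, I would either show that such periods separate classes on a chamber, using the invertible factors of \cref{cor:replicated-jacobi-transfer}, or, more cleanly, verify directly that
\[
 \partial_\rho\circ\mathsf W^\rho-\mathsf W^\rho\circ\partial_\rho
 =\rho\,\Delta_C\circ\mathsf W^\rho
\]
is matched by the \(\boldsymbol v\)-part of \(\partial_\rho(\rho^{-2}\Psi)-d|J|/\rho\) modulo \(\mathfrak D\)-exact terms. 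That matching is Gaussian integration by parts, i.e.\ the Rees cohomological Wick rule applied to \(\langle\boldsymbol v,C^{-1}\boldsymbol v\rangle\). Multiplying by \(\rho^3\) and using \eqref{eq:radial-connection-lattice} gives the unlocalised form.
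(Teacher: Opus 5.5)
Your proposal is correct and follows essentially the paper's route. The paper uses the same three ingredients: the Schur shear splitting off a Gaussian fibre of covariance \(\rho^2C_{K\setminus T\mid T}\), the conditional-expectation tower for covariance \(\rho^2\Sigma\), and horizontality by matching \(\partial_\rho\mathsf W^\rho-\mathsf W^\rho\partial_\rho\) against \(\rho^{-3}\langle\boldsymbol v,C^{-1}\boldsymbol v\rangle-n/\rho\) through Gaussian integration by parts. The paper obtains that last identity by differentiating the explicit fibre integral at fixed \(\boldsymbol u_T\), which is your algebraic alternative, so the detour through periods over \(\Gamma_T\times\R^{d|J|}\) is unnecessary. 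Your explicit conjugation by \(\exp(\mp\frac{\rho^2}{2}\Delta_C)\) over \(\C[[\rho]]\) is a useful addition: it makes precise the paper's brief appeal to the quadratic Wick--de Rham contraction.

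There are two small slips. First, the commutator should read \(\rho\,\mathsf W^\rho\circ\Delta_C^{(\boldsymbol v)}\), not \(\rho\,\Delta_C\circ\mathsf W^\rho\), since \(\mathsf W^\rho f\) depends only on \(\boldsymbol u_T\). Second, no condition on \(\boldsymbol\lambda\) is needed, because \(\boldsymbol\lambda\) does not enter the \(\boldsymbol v\)-integral; on real contours it is real \(\boldsymbol\lambda\), not purely imaginary \(\boldsymbol\lambda\), that gives decay.
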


\begin{proof}
After the Schur shear, the variables in \(K\setminus T\) carry the
quadratic phase
\[
 -\frac1{2\rho^2}
 \langle\boldsymbol v,
 C_{K\setminus T\mid T}^{-1}\boldsymbol v\rangle.
\]
Their conditional covariance is therefore
\(\rho^2C_{K\setminus T\mid T}\otimes I_d\), which gives precisely
\eqref{eq:conditional-Rees-Wick}.  More explicitly, if
\(n=d(|K|-|T|)\), \(C=C_{K\setminus T\mid T}\), and
\(L=L_{K\setminus T\mid T}\), then, for \(\rho>0\),
\begin{align}
 \mathsf W^\rho_{K\to T}f(\boldsymbol u_T)
 &=
 \frac{(2\pi)^{-n/2}\rho^{-n}}{(\det C)^{d/2}}
 \int_{\R^n}
 e^{-\langle\boldsymbol v,C^{-1}\boldsymbol v\rangle/(2\rho^2)}
 f(\boldsymbol u_T,L\boldsymbol u_T+\boldsymbol v)\,
 \dd\boldsymbol v,
 \label{eq:conditional-Rees-integral}
\end{align}
and the resulting moment identity extends formally in \(\rho\).  Gaussian
integration by parts
shows that this functional kills boundaries in the quadratic fibre.  The
quadratic Wick--de Rham contraction therefore induces the displayed
cohomological isomorphism on the common active support \(S\).  Applying the
ordinary conditional-expectation tower to covariance
\(\rho^2\Sigma\) proves \eqref{eq:Rees-Wick-tower}; determinant-line
associativity gives the same identity on the lattices.

It remains to check the connection, including the volume gauge.  The Schur
splitting is the exact identity
\begin{equation}
 \rho^{-2}\Psi_{K\mid S}
 (\rho,\boldsymbol u_T,L\boldsymbol u_T+\boldsymbol v)
 =
 \rho^{-2}\Psi_{T\mid S}(\rho,\boldsymbol u_T)
 -\frac{\langle\boldsymbol v,C^{-1}\boldsymbol v\rangle}{2\rho^2}.
 \label{eq:conditional-Rees-phase-splitting}
\end{equation}
Differentiating \eqref{eq:conditional-Rees-integral} at fixed
\(\boldsymbol u_T\) gives
\begin{align}
 \partial_\rho\mathsf W^\rho_{K\to T}f
 &=\mathsf W^\rho_{K\to T}(\partial_\rho f)
   -\frac n\rho\mathsf W^\rho_{K\to T}f\notag\\
 &\quad+
 \mathsf W^\rho_{K\to T}\!\left(
  \frac{\langle\boldsymbol v,C^{-1}\boldsymbol v\rangle}{\rho^3}f
 \right).
 \label{eq:conditional-Rees-radial-derivative}
\end{align}
Using \eqref{eq:conditional-Rees-phase-splitting} and
\(N_K=N_T+n\), the last two terms in
\eqref{eq:conditional-Rees-radial-derivative} are exactly the difference
between the source and target phase derivatives and Gaussian-volume
corrections.  Hence, on top-degree representatives,
\[
 \mathsf W^\rho_{K\to T}
 \left(
  \partial_\rho+
  \partial_\rho(\rho^{-2}\Psi_{K\mid S})-\frac{N_K}{\rho}
 \right)
 =
 \left(
  \partial_\rho+
  \partial_\rho(\rho^{-2}\Psi_{T\mid S})-\frac{N_T}{\rho}
 \right)
 \mathsf W^\rho_{K\to T}.
\]
Since the cohomology is concentrated in top degree, this proves
\eqref{eq:physical-face-horizontality} on the localised lattices.
Multiplication by \(\rho^3\), which preserves both lattices by
\eqref{eq:radial-connection-lattice}, gives the equivalent unlocalised
identity.
\end{proof}

\begin{proposition}[Formal period of the Gaussian-origin branch]
\label{prop:marked-Gaussian-level}
For every support \(S\subset K\), the critical point \(u=0\) of
\(G_{K\mid S}\) extends uniquely to a formal critical section
\[
 u_{\mathrm G}(\rho,\boldsymbol\lambda)
 \in\rho^{m-1}\C[\boldsymbol\lambda][[\rho]]^{N_K}.
\]
Its critical action has no negative power:
\begin{equation}
 \rho^{-2}\Psi_{K\mid S}
 \bigl(\rho,u_{\mathrm G}(\rho,\boldsymbol\lambda)\bigr)
 \in\rho^{m-2}\C[\boldsymbol\lambda][[\rho]].
 \label{eq:central-critical-action}
\end{equation}
After Gaussian density normalisation, the formal period of this local branch
is
\begin{equation}
 \widehat\Phi^{\mathrm G}_{P,K,\Sigma_K}
 (\rho,\boldsymbol\lambda)
 =
 \left.
 \exp\!\left(\frac12\Delta_{\Sigma_K\otimes I_d}\right)
 \exp\!\left(
  i\rho^{m-2}\sum_{a\in K}\lambda_aP(x_a)
 \right)
 \right|_{\boldsymbol x=0}.
 \label{eq:central-Gaussian-Wick-series}
\end{equation}
More generally, if
\(f(\rho,\boldsymbol u)\in
\C[\boldsymbol u,\boldsymbol\lambda][[\rho]]\), define
\begin{equation}
 \ell_{K,\rho}
 \bigl([f(\rho,\boldsymbol u)\operatorname{vol}_{\boldsymbol u}]\bigr)
 =
 \left.
 \exp\!\left(\frac12\Delta_{\Sigma_K\otimes I_d}\right)
 \left(
  f(\rho,\rho\boldsymbol x)
  \exp\!\left(
   i\rho^{m-2}\sum_{a\in K}\lambda_aP(x_a)
  \right)
 \right)
 \right|_{\boldsymbol x=0}.
 \label{eq:central-Gaussian-functional}
\end{equation}
This functional is well defined on \(\mathcal L_{K\mid S}\) and is
horizontal for the dual of the Gaussian-normalised radial connection.  Its value on
the constant class is \eqref{eq:central-Gaussian-Wick-series}; in
particular that series begins with \(1\) and is compatible with all maps
\eqref{eq:conditional-Rees-Wick}.  On the Morse locus satisfying the
central nonresonance condition \eqref{eq:central-nonresonance}, it is the
horizontal formal period of the canonical rank-one zero-action block attached
to the origin.

If \(P\) has real coefficients, \(\rho\) is real, and
\(\boldsymbol\lambda\) ranges in a fixed compact subset of \(\R^K\),
\eqref{eq:central-Gaussian-Wick-series} is the asymptotic expansion of the
period over the real Gaussian contour.  As a series in
\(t=\rho^{m-2}\), it is at most Gevrey of order \((m-2)/2\).
\end{proposition}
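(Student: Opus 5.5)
The proof proceeds in four steps, following the structure of the statement: the formal critical section, the well-definedness and horizontality of \(\ell_{K,\rho}\), the identification with the zero-action block, and the real-contour asymptotics.

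\emph{Step 1: the formal critical section.} The equation is \(\mathrm d_{\boldsymbol u}\Psi_{K\mid S}(\rho,\boldsymbol u)=0\). Its linearisation at \(\rho=0\), \(\boldsymbol u=0\) is \(\operatorname{Hess}G_{K\mid S}(0)=-\Sigma_K^{-1}\otimes I_d\), which is invertible because \(m\geq3\). The formal implicit function theorem over \(\C[\boldsymbol\lambda][[\rho]]\) therefore gives a unique solution \(u_{\mathrm G}\) with \(u_{\mathrm G}(0)=0\). To locate its leading order:
\begin{itemize}
\item Every \(H_{S,j}\) with \(j\leq m-2\) vanishes to order \(\geq2\) at the origin.
\item Only \(H_{S,m-1}\) contributes a linear term, namely \(i\sum\lambda_a\,\mathrm dP_1\).
\end{itemize}
Hence the forcing term is \(\rho^{m-1}\,\mathrm dH_{S,m-1}(0)\), so \(u_{\mathrm G}\in\rho^{m-1}(\cdots)\). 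Substituting, the quadratic part contributes \(O(\rho^{2m-2})\). The \(\rho^jH_{S,j}\) terms contribute \(\rho^{m}H_{S,m}(0)\) plus terms of order \(\geq\rho^{m+1}\). Dividing by \(\rho^2\) gives \eqref{eq:central-critical-action}.

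\emph{Step 2: well-definedness and horizontality of \(\ell_{K,\rho}\).} The cleanest route is to read \eqref{eq:central-Gaussian-functional} as the formal Gaussian integral. For \(\rho>0\), the substitution \(\boldsymbol u=\rho\boldsymbol x\) turns \(\exp(\rho^{-2}G)\) into the Gaussian density with covariance \(\Sigma_K\otimes I_d\), up to the normalisation \(\rho^{-N_K}\). The remaining factor \(\exp(\rho^{-2}\sum_{j\geq1}\rho^jH_{S,j})\) becomes \(\exp(i\rho^{m-2}\sum\lambda_aP(x_a))\) by homogeneity, exactly as in \eqref{eq:toric-rescaled-phase}. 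The Wick operator \(\exp(\frac12\Delta)|_{0}\) is the moment functional of that Gaussian. The argument then goes as follows.
\begin{itemize}
\item The exponential, expanded in \(\rho\), has coefficients polynomial in \(\boldsymbol x\). Each coefficient of \(\rho^n\) is a finite Wick sum, so \(\ell\) is well defined on representatives.
\item Killing of \(\mathfrak D_{K\mid S}\)-boundaries is Gaussian integration by parts: \(\int\mathrm d\bigl(\eta\,e^{\rho^{-2}\Psi}\bigr)=0\) termwise in \(\rho\). Equivalently, I would use \cref{thm:quadratic-forest-dR}, which says \(\exp(\frac12\Delta_C)|_0\) annihilates the image of \(\nabla_B\), applied after moving the non-Gaussian exponential factor into the representative. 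This reduces to a check at each fixed \(\rho\)-order, where everything is polynomial.
\item Horizontality for the dual of \(\nabla^{\mathrm{phys}}_{\rho,K}\) is differentiation under the integral, with the \(-N_K/\rho\) term absorbing the \(\rho^{-N_K}\) volume factor. This is the same computation as \eqref{eq:conditional-Rees-radial-derivative} with \(T=\varnothing\).
\item Compatibility with \eqref{eq:conditional-Rees-Wick} is the Gaussian tower property, as in \cref{prop:conditional-Rees-face-maps}.
\item The value on the constant class is immediate and starts with \(1\).
\end{itemize}

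\emph{Step 3: identification with the canonical rank-one zero-action block.} I expect this to be the main obstacle. The plan is to show that \(\ell\) is a horizontal functional whose exponential type is zero. It involves no \(\exp(\gamma/\rho^2)\) with \(\gamma\neq0\): it is a genuine power series in \(\rho\) with constant term \(1\). By the block decomposition of \cref{thm:toric-corner-Rees}(iv), horizontal formal dual sections split along critical values. A dual section of pure power-series type must then be supported on the \(\gamma=0\) block. Under \eqref{eq:central-nonresonance}, that block is rank one and attached to \(c=0\). To make the step rigorous I would proceed in two moves.
\begin{itemize}
\item Leading order: reduce \(\ell\) modulo \(\rho\). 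It becomes evaluation of the Jacobian class at \(\boldsymbol u=0\), suitably normalised, which pairs to zero with every \(e_c\) for \(c\neq0\).
\item Higher orders: apply the recursive gauge uniqueness. Solving the off-block homological equation requires dividing by \(-2(\gamma-0)\neq0\), so the lift is unique.
\end{itemize}

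\emph{Step 4: real-contour asymptotics and Gevrey bound.} For real data, I would prove that \eqref{eq:central-Gaussian-Wick-series} is the asymptotic expansion of the period over the real Gaussian contour. Expand \(e^{itP}\) with Taylor remainder \(|R_n|\leq|tP|^{n+1}/(n+1)!\), and use that all Gaussian moments are finite. The Gevrey order comes from bounding the coefficient of \(t^n\) by \(\E|P(X)|^n/n!\). This quantity is \(\lesssim C^n(mn/2)!/n!\sim C'^n n!^{(m-2)/2}\).
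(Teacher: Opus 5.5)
Your proposal is correct and follows essentially the same route as the paper. You use the formal implicit-function theorem with first forcing $i\rho^{m-1}\sum_a\lambda_a\,\mathrm dP_1$, and the substitution $\boldsymbol u=\rho\boldsymbol x$ reading $\ell_{K,\rho}$ as a normalised formal Gaussian integral: integration by parts gives descent, differentiation including the $\rho^{-N_K}$ factor gives horizontality, and the conditional tower gives face compatibility. The Taylor-remainder and Stirling bounds then give the asymptotics and Gevrey order.

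Your Step~3 makes explicit the identification with the zero-action block, which the paper's proof leaves to \cref{thm:toric-corner-Rees}. It closes cleanly: on a block with $\gamma\neq0$, a $\C[[\rho]]$-valued horizontal functional satisfies $\rho^3y'=M(\rho)y$ with $M(0)=-2\gamma\Id$ invertible, so it vanishes. The only slip is that $\exp(\rho^{-2}G_{K\mid S})$ is not purely Gaussian after rescaling, because its $H_{S,0}$ term supplies the $P_m$ part of $\exp\bigl(i\rho^{m-2}\sum_a\lambda_aP(x_a)\bigr)$.
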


\begin{proof}
The Hessian of \(G_{K\mid S}\) at the origin is
\(-\Sigma_K^{-1}\otimes I_d\), so the formal implicit-function theorem
gives a unique critical branch.  In the critical equation, all homogeneous
terms of degree at least two vanish at the origin; the first possible
constant forcing is
\(i\rho^{m-1}\sum_a\lambda_a\,\mathrm dP_1\).
Hence \(u_{\mathrm G}=O(\rho^{m-1})\).  Substitution into
\eqref{eq:toric-Psi}, including the term
\(i\rho^m\sum_a\lambda_aP_0\), proves
\eqref{eq:central-critical-action}.

The local formal stationary expansion is computed by returning only in
this central germ to \(u=\rho x\).  The quadratic part becomes the
normalised Gaussian density, while every homogeneous component satisfies
\[
 \rho^{m-k}P_k(\rho x)/\rho^2
 =\rho^{m-2}P_k(x).
\]
The quadratic Wick contraction therefore gives
\eqref{eq:central-Gaussian-Wick-series} without reference to a global
cycle or to a sectorial decomposition.

We verify the asserted descent and horizontality with the Gaussian-normalised gauge
kept visible.  Formally, \eqref{eq:central-Gaussian-functional} is the
normalised integral
\begin{equation}
 (2\pi)^{-N_K/2}(\det\Sigma_K)^{-d/2}\rho^{-N_K}
 \int_{\R^{N_K}}
 f(\rho,\boldsymbol u)
 \exp\!\left(\rho^{-2}\Psi_{K\mid S}
  (\rho,\boldsymbol u)\right)\dd\boldsymbol u,
 \label{eq:central-formal-integral}
\end{equation}
expanded at the marked critical point; the equality with
\eqref{eq:central-Gaussian-functional} follows from
\(\boldsymbol u=\rho\boldsymbol x\).  Gaussian integration by parts gives
zero on \(\mathfrak D_{K\mid S}\)-boundaries, because under this change of
variables \(\mathfrak D_{K\mid S}\) becomes \(\rho^2\) times the ordinary
twisted de Rham differential.  Thus \(\ell_{K,\rho}\) descends to
\(\mathcal L_{K\mid S}\).  Differentiating
\eqref{eq:central-formal-integral} at fixed \(\boldsymbol u\), including
both the phase and the factor \(\rho^{-N_K}\), gives
\begin{equation}
 \partial_\rho\bigl(\ell_{K,\rho}([\omega])\bigr)
 =
 \ell_{K,\rho}\!\left(
  \nabla_{\rho,K}^{\mathrm{phys}}[\omega]
 \right).
 \label{eq:dual-period-horizontality}
\end{equation}
Equivalently,
\[
 (\nabla_{\rho,K}^{\vee}\ell)([\omega])
 :=
 \partial_\rho\ell([\omega])
 -\ell\!\left(\nabla_{\rho,K}^{\mathrm{phys}}[\omega]\right)
 =0.
\]
This proves horizontality.  Finally, conditioning the Gaussian vector
\(\rho X_K\) on \(\rho X_T\) gives exactly
\eqref{eq:conditional-Rees-Wick}; the conditional-expectation tower proves
compatibility of the functionals with every face map.

For the asymptotic estimate, put
\(Y_{\boldsymbol\lambda}=\sum_a\lambda_aP(X_a)\).  The integral remainder
for the exponential gives, for real \(t\),
\[
 \left|
 \E e^{itY_{\boldsymbol\lambda}}
 -\sum_{n=0}^{N}\frac{(it)^n}{n!}
  \E Y_{\boldsymbol\lambda}^n
 \right|
 \leq
 \frac{|t|^{N+1}}{(N+1)!}
 \E|Y_{\boldsymbol\lambda}|^{N+1}.
\]
Polynomial growth and Gaussian moments imply, uniformly for
\(\boldsymbol\lambda\) in a compact set,
\[
 \E|Y_{\boldsymbol\lambda}|^n
 \leq C A^n n^{mn/2}.
\]
Stirling's inequalities then give
\[
 \frac{\E|Y_{\boldsymbol\lambda}|^n}{n!}
 \leq C A_1^n(n!)^{(m-2)/2},
\]
which proves the Gevrey assertion and the stated asymptotic expansion.
\end{proof}

\begin{remark}[Exact scope of the toric corner]
\label{rem:toric-corner-scope}
The construction is a stratified Rees cube, not a vector bundle of
constant rank over all direction parameters.  Indeed, the rank changes
from \(\mu^{|S|}\) to \(\mu^{|S|-1}\) when one active direction vanishes,
and the disappearing critical points escape to infinity.  The rank-one
marked Gaussian factor and its normal jets do extend, by
\cref{prop:conditional-Rees-face-maps,prop:marked-Gaussian-level}.

Nor is the zero-action block automatically rank one without the central
nonresonance condition \eqref{eq:central-nonresonance}.  For example, take
\[
 d=2,\qquad m=6,\qquad
 P_6(x,y)=\frac{x^6+y^6}{6},\qquad\Sigma=I_2.
\]
If \(a^4=-i\), then \(c=(a,ia)\neq0\) is a Morse critical point of
\[
 G=-\frac{x^2+y^2}{2}+iP_6(x,y)
\]
and \(G(c)=G(0)=0\).  The origin remains a geometrically marked local
Morse factor, but the zero-action Stokes block can have larger rank.

Finally, the common radial blow-up records approaches
\(\tau_a=\rho^{m-2}\lambda_a\) with fixed ratios on each support.  A path
such as
\[
 \tau_1=\rho^{m-2},
 \qquad
 \tau_2=\rho^{2(m-2)}
\]
contains two escape scales and requires an iterated, multiweight toric
compactification.  No claim about such anisotropic escaping saddles is
used in the Gaussian-origin coefficient theorem below.
\end{remark}

\section{Real-contour periods and recovery of the finite certificate}
\label{sec:real-contour-periods}
\label{subsec:physical-betti}

An algebraic de Rham frame does not determine which linear combination of
thimbles represents the continued real Gaussian contour.  That missing datum
is topological rather than algebraic and changes only through
Picard--Lefschetz transformations.  The matrix \(C_{\mathrm{Betti}}\) records
these integral cycle coordinates; it is determined without evaluating a
period.  We use the
rapid-decay pairing and thimble formalism in the sense of
\cite{Pham,BlochEsnault,Hien}.  Let
\(\mathfrak C\) be a simply connected chamber on which the holomorphic
polynomial phase \(F_\lambda\) is cohomologically tame, has exactly
\(\mu\) nondegenerate critical points with pairwise distinct critical
values, and admits a fixed admissible direction.  We also fix a
distinguished system of vanishing paths and orientations.  Write
\[
 H^-_\lambda
 =
 H_d\bigl(\C^d,\{\Re F_\lambda\ll0\};\mathbb Z\bigr),
 \qquad
 H^+_\lambda
 =
 H_d\bigl(\C^d,\{\Re F_\lambda\gg0\};\mathbb Z\bigr).
\]
The two groups are paired by oriented intersection.  Let
\[
 \Gamma_1,\ldots,\Gamma_\mu\in H^-_\lambda,
 \qquad
 \check\Gamma_1,\ldots,\check\Gamma_\mu\in H^+_\lambda
\]
be descending and ascending thimbles, oriented so that
\begin{equation}
 \bigl\langle\Gamma_\alpha,\check\Gamma_\beta\bigr\rangle_{\mathrm{int}}
 =
 \delta_{\alpha\beta}.
 \label{eq:dual-thimble-normalisation}
\end{equation}
Both bases are transported horizontally inside \(\mathfrak C\).
In the remainder of this section, an \emph{admissible chamber} means a
chamber with precisely these tameness, Morse, distinct-critical-value,
direction, path, and orientation data.

\begin{definition}[Cycle-coordinate matrix]
\label{def:physical-betti-matrix}
Let \(\gamma_1,\ldots,\gamma_N\) be horizontal integral rapid-decay
classes obtained by continuation of chosen oriented contours.  Their
cycle-coordinate matrix is
\begin{equation}
 C^{\mathrm{Betti}}_{u\alpha}
 :=
 \bigl\langle\gamma_u,\check\Gamma_\alpha\bigr\rangle_{\mathrm{int}}.
 \label{eq:physical-betti-matrix}
\end{equation}
Scalar factors normalising Gaussian densities are placed in the
differential forms, not in the cycles.
\end{definition}

\begin{theorem}[Stokes--Betti coordinate formula]
\label{thm:stokes-betti-reconstruction}
Under the preceding hypotheses,
\begin{equation}
 [\gamma_u]
 =
 \sum_{\alpha=1}^{\mu}
 C^{\mathrm{Betti}}_{u\alpha}[\Gamma_\alpha].
 \label{eq:physical-cycle-intersection-expansion}
\end{equation}
Consequently
\[
 C_{\mathrm{Betti}}\in
 \operatorname{Mat}_{N\times\mu}(\mathbb Z)
\]
is locally constant on \(\mathfrak C\).  If the representatives are
transverse, then
\begin{equation}
 C^{\mathrm{Betti}}_{u\alpha}
 =
 \#_{\mathrm{or}}
 \bigl(\gamma_u\pitchfork\check\Gamma_\alpha\bigr),
 \label{eq:physical-signed-intersection}
\end{equation}
the finite signed intersection count.  Its rank, integral kernel,
cokernel, and elementary divisors are therefore computed exactly by the
Smith normal form.

Across a Stokes wall, write the change of integral descending basis as
\[
 \boldsymbol\Gamma^+
 =
 S_{\mathrm{top}}\boldsymbol\Gamma^-,
 \qquad
 S_{\mathrm{top}}\in\operatorname{GL}_\mu(\mathbb Z).
\]
Then
\begin{equation}
 C_{\mathrm{Betti}}^+
 =
 C_{\mathrm{Betti}}^-S_{\mathrm{top}}^{-1}.
 \label{eq:Betti-wall-crossing}
\end{equation}
Along a generic path crossing successively walls with matrices
\(S_1,\ldots,S_k\),
\begin{equation}
 C_{\mathrm{Betti}}(\lambda_1)
 =
 C_{\mathrm{Betti}}(\lambda_0)
 S_1^{-1}\cdots S_k^{-1}.
 \label{eq:finite-Betti-transport}
\end{equation}

If an analytically normalised thimble frame is related to the integral
frame by
\[
 \boldsymbol\Gamma^{\mathrm{an}}
 =
 G\,\boldsymbol\Gamma^{\mathrm{top}},
 \qquad
 G\in\operatorname{GL}_\mu(\C),
\]
then its selected-cycle coordinate matrix is
\begin{equation}
 C_{\mathrm{sel}}
 =
 C_{\mathrm{Betti}}G^{-1}.
 \label{eq:analytic-physical-gauge}
\end{equation}
Thus nonintegral multipliers caused by analytic normalisations do not
contradict the integral Picard--Lefschetz coefficients.
\end{theorem}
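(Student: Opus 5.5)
The plan is to deduce the whole statement from one fact: the descending thimbles \(\Gamma_1,\ldots,\Gamma_\mu\) form a \(\mathbb Z\)-basis of \(H^-_\lambda\), and the ascending thimbles form the dual basis of \(H^+_\lambda\) under the intersection pairing. On a cohomologically tame chamber with \(\mu\) Morse critical points, the Lefschetz-thimble theorem for rapid-decay homology \cite{Pham,BlochEsnault,Hien} gives that \(H^-_\lambda\) is free of rank \(\mu\), with basis the descending thimbles along the chosen distinguished paths. The intersection pairing \(H^-_\lambda\times H^+_\lambda\to\mathbb Z\) is perfect by Poincaré--Lefschetz duality for the pair \((\C^d,\{\Re F\ll0\})\) against \((\C^d,\{\Re F\gg0\})\). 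With the normalisation \eqref{eq:dual-thimble-normalisation}, \(\check\Gamma_\beta\) is then exactly the dual basis. I would check this normalisation directly: a descending and an ascending thimble of the same critical point meet transversally in that single point. Thimbles of different critical points are disjoint, provided the admissible direction separates the distinct critical values.

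Granting this, the expansion \eqref{eq:physical-cycle-intersection-expansion} is linear algebra. Write \([\gamma_u]=\sum_\alpha a_{u\alpha}[\Gamma_\alpha]\) with \(a_{u\alpha}\in\mathbb Z\), and pair both sides with \(\check\Gamma_\beta\); this gives \(a_{u\beta}=C^{\mathrm{Betti}}_{u\beta}\). Local constancy follows because both the \(\gamma_u\) and the thimble bases are transported horizontally in the Gauss--Manin local system over the simply connected chamber \(\mathfrak C\). The pairing is preserved by this transport, so the integer matrix is locally constant. For transverse representatives, the homological intersection number equals the signed count of intersection points. This needs compactness of \(\gamma_u\cap\check\Gamma_\alpha\), which holds because \(\Re F\) is bounded below on one cycle and above on the other away from compact sets, and tameness keeps the intersection in a compact region. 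This gives \eqref{eq:physical-signed-intersection}. The Smith normal form statement is then immediate for an integer matrix.

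For wall crossing, substitute \(\boldsymbol\Gamma^-=S_{\mathrm{top}}^{-1}\boldsymbol\Gamma^+\) into \(\boldsymbol\gamma=C^-\boldsymbol\Gamma^-\). The classes \(\gamma_u\) themselves do not jump, since they are horizontal across the wall, where only the thimble basis is redefined. Uniqueness of coordinates in a basis then gives \eqref{eq:Betti-wall-crossing}, and induction along the path gives \eqref{eq:finite-Betti-transport}. \(S_{\mathrm{top}}\) is integral and unimodular because both sides are \(\mathbb Z\)-bases of the same lattice; Picard--Lefschetz makes it unipotent triangular. The analytic gauge formula \eqref{eq:analytic-physical-gauge} is the same substitution, \(\boldsymbol\Gamma^{\mathrm{top}}=G^{-1}\boldsymbol\Gamma^{\mathrm{an}}\), now over \(\C\).

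The main obstacle is the first step, establishing the basis and duality property globally in the tame, noncompact setting. I would not reprove it. Instead I would cite the rapid-decay/moderate-growth duality and the thimble basis theorem from \cite{BlochEsnault,Hien}, and verify only the diagonal normalisation and the vanishing of the off-diagonal terms from the distinct-critical-value and admissible-direction hypotheses. I would also note that \(\mu\) here denotes the number of critical points in \(d\) variables, matching the rank in \cref{thm:jacobi-normal-form-transfer}.
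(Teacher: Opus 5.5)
Your proposal is correct and follows essentially the paper's route. The expansion \eqref{eq:physical-cycle-intersection-expansion} is linear algebra against the dual-thimble normalisation \eqref{eq:dual-thimble-normalisation}; integrality, finiteness of the signed count and local constancy come from tameness, the opposite rapid-decay conditions and horizontal isotopy; and the wall-crossing and gauge formulas are change-of-basis substitutions.

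The differences are cosmetic. The paper pairs \([\gamma_u]-\sum_\alpha C^{\mathrm{Betti}}_{u\alpha}[\Gamma_\alpha]\) with every \(\check\Gamma_\beta\) and invokes perfectness, which implicitly requires the \(\check\Gamma_\beta\) to span \(H^+_\lambda\) rationally. You instead expand first in the thimble \(\mathbb Z\)-basis and read off the coefficients, which needs only the basis property. Similarly, the paper transforms the dual basis by \((S_{\mathrm{top}}^{-1})^{\mathsf T}\), while you substitute directly in the descending basis.
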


\begin{proof}
Rapid-decay duality \cite{Hien} gives a perfect pairing between \(H^-_\lambda\) and
\(H^+_\lambda\).  Pair
\[
 [\gamma_u]
 -
 \sum_\alpha
 \bigl\langle\gamma_u,\check\Gamma_\alpha\bigr\rangle_{\mathrm{int}}
 [\Gamma_\alpha]
\]
with every \(\check\Gamma_\beta\).  By
\eqref{eq:dual-thimble-normalisation}, all pairings vanish.  Perfectness
gives \eqref{eq:physical-cycle-intersection-expansion}.

Integral oriented cycles have integral intersection numbers.  Tameness
and the opposite rapid-decay conditions place every transverse
intersection in a common compact set, so
\eqref{eq:physical-signed-intersection} is finite.  Horizontal isotopy
preserves this signed count, proving local constancy.  The assertions
about rank and elementary divisors are the Smith normal-form theorem.

The dual basis transforms as
\[
 \check{\boldsymbol\Gamma}^{+}
 =
 (S_{\mathrm{top}}^{-1})^{\mathsf T}
 \check{\boldsymbol\Gamma}^{-}.
\]
Taking intersections with the fixed horizontal cycle classes gives
\eqref{eq:Betti-wall-crossing}; iteration gives
\eqref{eq:finite-Betti-transport}.  Finally,
\(\boldsymbol\Gamma^{\mathrm{top}}
=G^{-1}\boldsymbol\Gamma^{\mathrm{an}}\), which proves
\eqref{eq:analytic-physical-gauge}.
\end{proof}

\begin{corollary}[Gaussian replica Betti data]
\label{cor:gaussian-replica-betti}
Let \(P\) be real,
\(\Sigma\in\operatorname{Sym}_r^{++}(\R)\), and
\(\boldsymbol\tau\in(\R\setminus\{0\})^r\).  For
\[
 F_{\Sigma,\boldsymbol\tau}(\boldsymbol x)
 =
 -\frac12
 \sum_{a,b=1}^r
 (\Sigma^{-1})_{ab}\langle x_a,x_b\rangle
 +
 i\sum_{a=1}^r\tau_aP(x_a),
\]
the oriented real contour
\(\gamma_{\R}=\R^{rd}\) is a rapid-decay cycle, because
\[
 \Re F_{\Sigma,\boldsymbol\tau}(\boldsymbol x)
 =
 -\frac12
 \bigl\langle\boldsymbol x,
  (\Sigma^{-1}\otimes I_d)\boldsymbol x\bigr\rangle
 \leq-c_\Sigma|\boldsymbol x|^2.
\]
On every admissible chamber, the integral row of the real Gaussian contour is
therefore
\begin{equation}
 C_\alpha^{\mathrm{Betti}}
 =
 \#_{\mathrm{or}}
 \bigl(\R^{rd}\pitchfork\check\Gamma_\alpha\bigr).
 \label{eq:real-gaussian-betti-row}
\end{equation}
At a block-diagonal base point, with product orientations,
\begin{equation}
 C_{\mathrm{Betti}}^{(r)}
 =
 C_{\mathrm{Betti}}^{(1)}
 \otimes\cdots\otimes
 C_{\mathrm{Betti}}^{(1)}.
 \label{eq:replica-Betti-Kunneth}
\end{equation}
For correlated covariances in the same tame component, the row is obtained
from this anchor by the finite transport
\eqref{eq:finite-Betti-transport}.
\end{corollary}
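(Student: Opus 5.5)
The corollary has three assertions, and I will prove them in order. First, the real contour is a rapid-decay cycle. Second, its Betti row is the signed intersection count \eqref{eq:real-gaussian-betti-row}. Third, the row factorises as a tensor product at block-diagonal points and is transported by wall-crossing elsewhere.

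\textbf{Rapid decay.} For real \(P\), real \(\boldsymbol\tau\) and real \(\boldsymbol x\), the term \(i\sum_a\tau_aP(x_a)\) is purely imaginary. Hence \(\Re F_{\Sigma,\boldsymbol\tau}\) restricted to \(\R^{rd}\) is exactly the negative definite form \(-\frac12\langle\boldsymbol x,(\Sigma^{-1}\otimes I_d)\boldsymbol x\rangle\). This form is bounded above by \(-c_\Sigma|\boldsymbol x|^2\), where \(c_\Sigma=\frac12\lambda_{\min}(\Sigma^{-1})>0\). So outside a large ball \(\R^{rd}\) lies in \(\{\Re F\ll0\}\), and the oriented real contour defines a class in \(H^-_\lambda\). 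The class is horizontal, because the same estimate holds uniformly as \(\boldsymbol\tau\) moves in a compact subset of \((\R\setminus\{0\})^r\) and \(\Sigma\) moves in a compact subset of \(\Sym_r^{++}\). The row formula then follows by applying \cref{thm:stokes-betti-reconstruction} with \(N=1\) and \(\gamma_1=\R^{rd}\). After a small compactly supported perturbation making \(\R^{rd}\) transverse to the ascending thimbles, \eqref{eq:physical-signed-intersection} gives \eqref{eq:real-gaussian-betti-row}. Integrality and local constancy come with that theorem.

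\textbf{Künneth factorisation.} At a block-diagonal base point, say \(\Sigma=\operatorname{diag}(\sigma_1,\dots,\sigma_r)\) (or \(\Sigma=I_r\)), the phase splits as \(F=\sum_aF^{(a)}(x_a)\), with each summand a one-replica phase. For a sum of tame Morse phases with pairwise distinct critical values in each factor, I choose the admissible chamber data as products. The critical points of \(F\) are tuples of critical points of the factors, and the descending and ascending thimbles of \(F\) can be taken to be products \(\Gamma_{\alpha_1}\times\dots\times\Gamma_{\alpha_r}\) and \(\check\Gamma_{\beta_1}\times\dots\times\check\Gamma_{\beta_r}\) with product orientations. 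This is the Thom–Sebastiani/Künneth description of rapid-decay homology. Product orientations make the intersection pairing multiplicative, so \eqref{eq:dual-thimble-normalisation} holds for the product bases. The real contour is also a product, \(\R^{rd}=\prod_a\R^d\). Multiplicativity of oriented intersection numbers for transverse products then gives
\[
\#_{\mathrm{or}}(\R^{rd}\pitchfork\check\Gamma_{\boldsymbol\beta})=\prod_a\#_{\mathrm{or}}(\R^d\pitchfork\check\Gamma_{\beta_a}),
\]
which is \eqref{eq:replica-Betti-Kunneth}. Transport to a correlated covariance then follows by joining the base point to \(\Sigma\) by a generic path inside the tame component. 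The real contour stays a horizontal rapid-decay class along the path, by the uniform estimate above. Applying \eqref{eq:finite-Betti-transport} along the path gives the claimed row.

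\textbf{Main obstacle.} The delicate point is the product step. If two factors have critical values whose sums coincide, the product thimble basis need not meet the pairwise-distinct-critical-value requirement of an admissible chamber. The admissible direction and the vanishing paths must also be chosen compatibly with the product. I would handle this by perturbing the base point within the block-diagonal locus, for instance by taking \(\tau_a\) generic with distinct moduli, so that the sums of critical values become distinct. Any remaining discrepancy between the product frame and the chamber's distinguished frame is an integral change of basis, which I would absorb into a Stokes matrix in \eqref{eq:Betti-wall-crossing}. With that convention, \eqref{eq:replica-Betti-Kunneth} is read in the product frame, and the transport statement covers every other chamber.
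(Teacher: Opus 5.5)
Your proposal is correct and follows the paper's own route: the Gaussian bound for rapid decay, \eqref{eq:real-gaussian-betti-row} from \cref{thm:stokes-betti-reconstruction}, external products at a block-diagonal point, and then \eqref{eq:finite-Betti-transport}. Your extra care about distinct critical values at the product point (or, as your fallback, reading the formula in the product frame) fills in something the paper's three-line proof leaves implicit.

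One bookkeeping caution concerns signs. Oriented intersection of external products carries the Koszul sign \((-1)^{d\,r(r-1)/2}\). So, contrary to what you state, product-oriented ascending thimbles are not literally normalised as in \eqref{eq:dual-thimble-normalisation}, and intersection counts are not literally multiplicative. The two signs cancel, so your conclusion stands. It is cleaner to expand \([\R^{rd}]=[\R^d]\times\cdots\times[\R^d]\) in the product thimble basis via \eqref{eq:physical-cycle-intersection-expansion}, where no sign arises.
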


\begin{proof}
The displayed Gaussian estimate proves rapid decay.  At a block-diagonal
point, the phase, real contour, descending and ascending thimbles, and
their intersection pairing are external products.  Formula
\eqref{eq:physical-betti-matrix} therefore becomes the Kronecker product
\eqref{eq:replica-Betti-Kunneth}.  The last assertion is
\cref{thm:stokes-betti-reconstruction}.
\end{proof}

In one variable, the coordinates of an end-to-end contour are the signed
edges along a unique path in a spanning tree of decay sectors.  This gives an
explicit matrix with entries in \(\{0,\pm1\}\), and tensor products give the
corresponding split-phase formula.  The complete topological proof and the
monomial Coxeter-rank calculation are given in
\cref{sec:one-dimensional-stokes-trees}.  We now return to the sectorial
behaviour needed for the general factorisation.
\begin{proposition}[Sectorial stability at the irregular boundary]
\label{prop:sectorial-Betti-stability}
Assume the hypotheses of
\cref{thm:ramified-Brieskorn-lattice}, with \(G\) Morse and with distinct
critical values.  Let \(K\) be a closed angular subsector whose directions
remain a positive distance from the finitely many principal rays
\[
 \Im\!\left(
  \frac{G(c_\alpha)-G(c_\beta)}{s^2}
 \right)=0,
 \qquad \alpha\neq\beta,
\]
viewed as equations for \(\arg s\).  For \(|s|\) sufficiently small, the
cycle-coordinate matrix of every sectorially continued contour is constant
on \(K\).  Its two lateral values across an actual Stokes wall satisfy
\[
 C_{\mathrm{Betti}}^+
 =
 C_{\mathrm{Betti}}^-S_{\mathrm{top}}^{-1}.
\]
For a real polynomial, in a sector anchored at \(s>0\) by the oriented
Gaussian contour \(\R^d\), let
\(c_0(s)=O(s^{m-1})\) be the critical branch issuing from the
point \(u=0\) of \(G\).  Its local Morse group occurs with
intersection multiplicity of absolute value one.  After orienting the dual
ascending thimble by the normalisation
\eqref{eq:dual-thimble-normalisation}, this coefficient is \(+1\).  Thus
the ordinary Gaussian contribution is the origin branch; all
other coordinates are sectorial Stokes data.
\end{proposition}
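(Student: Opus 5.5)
The argument splits into three parts: the sectorial constancy, the wall-crossing rule, and the identification of the origin coefficient.

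\emph{Sectorial constancy.} Work in the rescaled variables \(u=s x\) attached to \cref{thm:ramified-Brieskorn-lattice}. The phase is \(s^{-2}\Psi(s,u)\) and its critical points are \(c_\alpha(s)=c_\alpha+O(s)\). The critical values are
\[
 s^{-2}G(c_\alpha)+s^{-1}H_1(c_\alpha)+O(1).
\]
For \(\arg s\) in \(K\) and \(|s|\) small, the differences \(\Im\bigl((G(c_\alpha)-G(c_\beta))/s^2\bigr)\) stay bounded away from zero relative to \(|s|^{-2}\). They therefore dominate the \(O(|s|^{-1})\) corrections, so no two critical values become horizontally aligned in the fixed admissible direction. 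Consequently no descending thimble of one saddle can hit another saddle, and the distinguished system of vanishing paths deforms isotopically over \(K\). Because the thimble frame is transported horizontally without Picard--Lefschetz jumps, the dual ascending frame is transported the same way. Since a continued contour is a horizontal class, \cref{thm:stokes-betti-reconstruction} gives that the intersection numbers \eqref{eq:physical-betti-matrix} are locally constant integers. Hence they are constant on the connected set \(K\). When a principal ray carrying an actual wall is crossed, the descending basis changes by \(S_{\mathrm{top}}\), and \eqref{eq:Betti-wall-crossing} is exactly the formula already proved there.

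\emph{Origin coefficient.} The critical branch \(c_0(s)=O(s^{m-1})\) is obtained as in \cref{prop:marked-Gaussian-level}. Its Hessian is \(-\Sigma^{-1}\otimes I_d+O(s)\), in the \(u\)-variables and after the \(s^{-2}\) scaling. For real \(s>0\) and real \(P\), with the real coupling convention, the real part of the phase on \(\R^d\) is the negative-definite Gaussian term. Near \(c_0\) the real contour is thus a steepest-descent-type cycle through \(c_0\): \(\Re\) of the phase restricted to \(\R^d\) has a nondegenerate maximum at a point close to \(c_0\). I will show that the ascending thimble \(\check\Gamma_0\), which is tangent to the positive-definite directions of \(-\operatorname{Hess}\) and is totally real, meets \(\R^d\) transversally in exactly one point near \(c_0\). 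Then I will show that no other intersections occur. This uses the fact that \(\Re\) of the phase is at least its value at \(c_0\) along \(\check\Gamma_0\), while it is strictly smaller on \(\R^d\) away from a small ball around \(c_0\). The latter follows from the quadratic domination \(\Re F\le -c|x|^2\) of \cref{cor:gaussian-replica-betti}, transferred to the \(u\)-scale. The result is a single transverse point, so the intersection multiplicity has absolute value one.

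\emph{Sign and conclusion.} The sign is fixed by orienting \(\check\Gamma_0\) via \eqref{eq:dual-thimble-normalisation}. The descending thimble \(\Gamma_0\) is isotopic near \(c_0\) to the oriented real Gaussian cell, so the local intersection of \(\R^d\) with \(\check\Gamma_0\) equals that of \(\Gamma_0\) with \(\check\Gamma_0\), which is \(+1\). The remaining coordinates vary only by the \(S_{\mathrm{top}}\) jumps, which is the sense of ``sectorial Stokes data''.

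The main obstacle is making the uniqueness of the intersection point rigorous in the \(s\to0\) regime. The other saddles escape like \(|u|\asymp1\), and the \(O(s)\) perturbations must not create spurious intersections near infinity. I would first try to handle this with tameness plus the Gaussian bound, which places all intersections in a compact set, and then apply a Morse-lemma comparison on that set uniformly in \(s\).
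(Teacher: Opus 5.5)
Your proposal is correct and follows the paper's proof essentially step for step. Principal-term dominance keeps walls out of \(K\), integrality and isotopy invariance give constancy, and \eqref{eq:Betti-wall-crossing} gives the lateral relation. The origin coefficient is the single transverse local intersection of \(\R^d\) with \(\check\Gamma_0\), and monotonicity of \(\Re\) along the ascending thimble excludes any other intersection.

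Two small corrections. First, \(\check\Gamma_0\) is tangent to \(i\R^d\), where \(-\operatorname{Hess}\) is negative definite. Your phrase ``positive-definite directions of \(-\operatorname{Hess}\)'' therefore has the sign reversed; it describes the descending cell, which is tangent to \(\R^d\) rather than transverse to it.

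Second, your stated main obstacle is already settled by the monotonicity argument once you add the estimate the paper uses. The real part of the central critical action is \(O(s^{2m-4})\), while \(\Re F_{P,s^{m-2}}(s^{-1}u)=-|u|^2/(2s^2)\) on the rescaled real contour. Together these confine every intersection to \(|u|=O(s^{m-1})\), well inside a Morse neighbourhood of fixed size. No separate compactness or uniform Morse-lemma argument is needed.
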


\begin{proof}
The critical points and thimbles continue on a sufficiently small sector
by the implicit function theorem and tameness.  The difference of two
critical actions has the form
\[
 \frac{G(c_\alpha)-G(c_\beta)}{s^2}
 +\frac{i\bigl(P_{m-1}(c_\alpha)-P_{m-1}(c_\beta)\bigr)}s
 +O(1).
\]
Hence every exact wall is asymptotic to one of the finitely many principal
rays, and no wall enters \(K\) once \(|s|\) is small enough.
The intersection formula
\eqref{eq:physical-betti-matrix} is integer-valued and invariant under
sectorial horizontal isotopy, so it is constant.  The lateral relation is
\eqref{eq:Betti-wall-crossing}.

At \(s=0\), the Hessian of
\(G(u)=-\sum_ju_j^2/2+iP_m(u)\) at \(u=0\) is \(-I_d\).  The branch
\(c_0(s)\) therefore exists and is nondegenerate.  Since every homogeneous
term of degree at least two has zero gradient at the origin, the first
possible constant forcing in its critical equation is
\(is^{m-1}\mathrm dP_1\); hence \(c_0(s)=O(s^{m-1})\).  Its local descending
Gaussian cell is tangent to \(\R^d\), while its dual ascending cell is
tangent to \(i\R^d\).  Orient the central descending thimble at the
positive-real anchor so that this tangent \(\R^d\) carries the standard
orientation of the real Gaussian contour; then orient its dual by
\eqref{eq:dual-thimble-normalisation}.  Their transverse local intersection
is consequently \(+1\).  Excision identifies this number with the projection
of the real Gaussian contour onto the local Morse group at the origin.  To see that no
additional intersection contributes,
take \(s>0\).  On the rescaled real contour one has exactly
\[
 \Re F_{P,s^{m-2}}(s^{-1}u)=-\frac{|u|^2}{2s^2},
\]
whereas the real part of the central critical action is
\(O(s^{2m-4})\): indeed, in the original coordinate its critical branch is
\(x_0(t)=O(t)\) and
\(F_{P,t}(x_0(t))=itP(0)+O(t^2)\), with \(t=s^{m-2}\).
The real part of the phase is nondecreasing along the
dual ascending thimble.  Hence any of its intersections with the real
contour lies in \(|u|=O(s^{m-1})\).  In this shrinking Morse neighbourhood
the implicit-function theorem and transversality give precisely the single
local intersection above.  Thus the global central Betti coordinate is
\(+1\), and horizontal transport preserves it throughout the anchored
sector.
\end{proof}

\begin{remark}[Topological and analytic normalisations]
\label{rem:Airy-Betti-normalisation}
The multiplier \(i\) in \eqref{eq:h3-exact-stokes-jump} belongs to the
canonically normalised Airy solution frame.  In an oriented integral
thimble frame the Picard--Lefschetz coefficients are integral.  Formula
\eqref{eq:analytic-physical-gauge} is the exact change of gauge between
the two statements.
\end{remark}

\begin{remark}[Why there is no universal coefficient formula]
\label{rem:no-universal-Betti-formula}
The intersection matrix is integer-valued and locally constant, but jumps
across real Stokes walls while the polynomial phase remains analytically
regular.  It therefore cannot be represented by one rational or
holomorphic formula in the coefficients of a general polynomial.  The
universal answer is exactly the finite intersection matrix
\eqref{eq:physical-signed-intersection}, together with the
Picard--Lefschetz transport \eqref{eq:finite-Betti-transport}.  Numerical
entries beyond the split and one-dimensional formulas necessarily depend
on the chosen family and chamber.
\end{remark}

\subsection{Algebraic, analytic, and topological factors}

On an admissible chamber where \(B_{\mathrm{Jac}}\) and
\(\mathcal P_{\mathrm{th}}\) are invertible, any kernel of the selected-cycle
period matrix comes from the coordinate map of the chosen cycles to the
thimble basis.  The factorisation below makes this localisation exact and prevents a
topological loss of observability from being misidentified as an algebraic
or analytic degeneracy.

Let \(P\) satisfy \eqref{eq:principal-isolated}, and fix a tame Morse
chamber \(\mathfrak C\) for the phases
\(F_{P,t}=-q/2+itP\) as in \cref{subsec:physical-betti}, contained in the
regular locus of the rational frames used below.  Choose an
admissible direction, distinguished vanishing paths, and an oriented
integral thimble basis \(\Gamma_1,\ldots,\Gamma_\mu\), where
\(\mu=(m-1)^d\).  Let \(\omega_1,\ldots,\omega_\mu\) be a rational de Rham
frame obtained from \cref{thm:jacobi-normal-form-transfer}.

\begin{theorem}[Invertible thimble factor and selected-cycle rank]
\label{thm:thimble-factorisation}
Under the isolated-leading-part, tameness, Morse, and frame hypotheses of the
preceding paragraph, the period matrix
\begin{equation}
 \mathcal P_{\mathrm{th}}(P,t)
 =
 \left(
  \int_{\Gamma_\alpha}
  e^{F_{P,t}}\omega_\beta
 \right)_{1\leq\alpha,\beta\leq\mu}
 \label{eq:thimble-period-matrix}
\end{equation}
is holomorphic and invertible on \(\mathfrak C\).  If
\(\gamma_1,\ldots,\gamma_N\) are selected rapid-decay cycles, set
\(C_{\mathrm{Betti}}\) to be their coordinate matrix in the integral
frame, as in \cref{def:physical-betti-matrix}.  Then the matrix of their periods
against a Jacobi normal-form frame is
\begin{equation}
 \boxed{
 \Pi_{\mathrm{sel}}
 =
 C_{\mathrm{Betti}}\,
 \mathcal P_{\mathrm{th}}\,
 B_{\mathrm{Jac}}.}
 \label{eq:correct-global-factorisation}
\end{equation}
Consequently
\begin{equation}
 \operatorname{rank}\Pi_{\mathrm{sel}}
 =
 \operatorname{rank}C_{\mathrm{Betti}}.
 \label{eq:physical-rank-exact}
\end{equation}

Across a Stokes ray, if
\(\Gamma^+=S\Gamma^-\), then
\[
 \mathcal P_{\mathrm{th}}^+
 =
 S\mathcal P_{\mathrm{th}}^-,
 \qquad
 C_{\mathrm{Betti}}^+
 =
 C_{\mathrm{Betti}}^-S^{-1}.
\]
Hence \(\Pi_{\mathrm{sel}}\) and its rank are unchanged.  Confluence may
produce poles or ramification in the analytic period factors at the
boundary of \(\mathfrak C\), but never in the integral Betti matrix.
At a finite confluence the Morse basis itself ceases to exist; its two
lateral coordinate matrices remain integral and are related by
Picard--Lefschetz monodromy.
\end{theorem}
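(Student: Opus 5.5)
The plan is to separate the three ingredients and combine them by linearity of integration over cycles. First, holomorphy of \(\mathcal P_{\mathrm{th}}\) on \(\mathfrak C\) follows from horizontal transport of the thimbles: locally in \(\mathfrak C\) each \(\Gamma_\alpha\) can be represented by a fixed cycle, isotoped only inside the rapid-decay region. Tameness gives uniform exponential decay of \(e^{F_{P,t}}\omega_\beta\) on the rapid-decay ends. Differentiation under the integral sign is therefore justified, and each entry is holomorphic.

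Invertibility is the main step. I would use the perfect rapid-decay pairing between \(H^-_\lambda\) and the top twisted de Rham cohomology, of rank \(\mu\) by \eqref{eq:exact-jacobi-rank}, citing \cite{Hien,BlochEsnault}. Since \(\Gamma_1,\dots,\Gamma_\mu\) is a basis of \(H^-_\lambda\otimes\C\) and \(\omega_1,\dots,\omega_\mu\) is a basis of the de Rham side, the matrix of a perfect pairing between bases is invertible. As a check that does not rely only on the citation, the Morse stationary-phase asymptotics give \(\int_{\Gamma_\alpha}e^{F}\omega_\beta\sim e^{F(c_\alpha)}(2\pi)^{d/2}\det(\mathrm{Hess})^{-1/2}\,\omega_\beta(c_\alpha)\). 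Because \(\det(\omega_\beta(c_\alpha))\neq0\) (the Jacobi frame evaluates to an invertible matrix on the Morse points), the determinant is nonzero in an asymptotic regime. Combined with holomorphy and the identity theorem, this is only generic nonvanishing, so the pairing argument is what I rely on for invertibility everywhere on \(\mathfrak C\). The main obstacle is making sure the frame \(\omega_\beta\) is regular throughout \(\mathfrak C\); this is built into the hypothesis that \(\mathfrak C\) lies in the regular locus of the rational frames.

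For the factorisation, take a Jacobi normal-form element \(e_\beta\) and write \(\iota\) of it, in the de Rham frame, as \(\sum_\gamma (B_{\mathrm{Jac}})_{\gamma\beta}\omega_\gamma\) using \cref{cor:jacobi-comparison-determinant}. Exact forms integrate to zero over rapid-decay cycles, by Stokes with decay at the ends. Hence \(\int_{\gamma_u}e^F\iota e_\beta=\sum_\gamma(B_{\mathrm{Jac}})_{\gamma\beta}\int_{\gamma_u}e^F\omega_\gamma\). Substituting \([\gamma_u]=\sum_\alpha C^{\mathrm{Betti}}_{u\alpha}[\Gamma_\alpha]\) from \cref{thm:stokes-betti-reconstruction} gives \eqref{eq:correct-global-factorisation}. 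The rank identity \eqref{eq:physical-rank-exact} then follows because \(\mathcal P_{\mathrm{th}}\) and \(B_{\mathrm{Jac}}\) are invertible square matrices.

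For the Stokes ray statement, \(\Gamma^+=S\Gamma^-\) and linearity of integration give \(\mathcal P^+_{\mathrm{th}}=S\mathcal P^-_{\mathrm{th}}\). Meanwhile \(C^+_{\mathrm{Betti}}=C^-_{\mathrm{Betti}}S^{-1}\) is \eqref{eq:Betti-wall-crossing}. The product \(C\mathcal P_{\mathrm{th}}B_{\mathrm{Jac}}\) is therefore unchanged, as it must be, since it is the period of the fixed horizontal cycles. The confluence remarks are qualitative: \(C_{\mathrm{Betti}}\) is integral and defined by intersection numbers, so it has no poles or ramification. The two lateral Morse bases at a confluence are related by integral Picard--Lefschetz transformations, so their coordinate matrices differ by integral monodromy as in \eqref{eq:finite-Betti-transport}.
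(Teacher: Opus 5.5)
Your proposal is correct and is essentially the paper's proof: invertibility of \(\mathcal P_{\mathrm{th}}\) from Hien's perfect rapid-decay pairing between two rank-\(\mu\) spaces, holomorphy by horizontal transport, the factorisation from \eqref{eq:physical-cycle-intersection-expansion} combined with the rational Jacobi-to-de Rham change (you also make explicit a step the paper leaves implicit, that exact forms integrate to zero over rapid-decay cycles), and cancellation of \(S^{-1}S\) across a Stokes ray. Your stationary-phase aside is unnecessary, and at fixed \(t\) it is not meaningful as stated, because the pointwise values \(\omega_\beta(c_\alpha)\) are not invariants of de Rham classes and it would need the radial Rees rescaling; since you rely only on the pairing argument, this does not affect the proof.
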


\begin{proof}
For a cohomologically tame polynomial phase, integration induces a perfect
pairing between twisted de Rham cohomology and rapid-decay homology
\cite{Hien}.
Both spaces have dimension \(\mu\) by
\cref{thm:jacobi-normal-form-transfer}.  A thimble basis and a de Rham basis
therefore have an invertible pairing matrix, proving the first assertion.
Horizontal transport makes its entries holomorphic on the simply connected
chamber.

The reconstruction
\eqref{eq:physical-cycle-intersection-expansion}, followed by the rational
change from the Jacobi normal-form frame to the de Rham frame, gives
\eqref{eq:correct-global-factorisation}.  Since both square factors on the
right are invertible, multiplication by them preserves rank and gives
\eqref{eq:physical-rank-exact}.  The Stokes transformation formulas are
the change-of-basis rule for cycles; their two factors cancel in
\eqref{eq:correct-global-factorisation}.  The final assertion follows
because the determinant of a fundamental period matrix cannot vanish at
an ordinary point.
\end{proof}

\begin{remark}[Rational and transcendental comparison factors]
\label{rem:corrected-B}
There is no single rational matrix carrying periods over the selected
contours in general.
Already the Airy entries in \cref{thm:h3-airy-stokes} are transcendental.
The correct decomposition is
\[
 B_{\mathrm{global}}
 =
 \mathcal P_{\mathrm{th}}B_{\mathrm{Jac}},
\]
where \(B_{\mathrm{Jac}}\) is rational and
\(\mathcal P_{\mathrm{th}}\) is holomorphic and sectorial.  Accordingly,
nondegeneracy splits into two statements:
\cref{cor:jacobi-comparison-determinant} and
\cref{thm:thimble-factorisation}.
\end{remark}

\begin{remark}[The two left factors are different]
\label{rem:Wick-versus-cycle-matrix}
The evaluation matrix \(A_r\) used in
\cref{prop:finite-gram} and the cycle-coordinate matrix
\(C_{\mathrm{Betti}}\) in
\eqref{eq:correct-global-factorisation} act on different spaces.
The first evaluates a finite vector of complete Wick contractions on
covariance grids; the second writes rapid-decay cycles in a thimble basis.
They must not be identified.  The two exact statements are
\[
 M_{\mathrm{probe}}=A_{\mathrm{Wick}}c(P)
\]
for the finite contraction vector and
\[
 \Pi_{\mathrm{sel}}
 =
 C_{\mathrm{Betti}}\mathcal P_{\mathrm{th}}B_{\mathrm{Jac}}
\]
on the rapid-decay module.  Their meeting point is the Gaussian-point jet
identity \eqref{eq:physical-mixed-jet}, not a rational equality between
\(A_{\mathrm{Wick}}\) and \(C_{\mathrm{Betti}}\).
\end{remark}

\paragraph{No cyclicity assumption.}

The rank identity \eqref{eq:physical-rank-exact} does not imply that one
continued real Gaussian contour is cyclic in the complete irregular direct
image, and the proof of orbit reconstruction never requires this.  It uses
the finite measurement vector \(\mathbf J_{d,m}\), whose Gram matrix is
positive definite.  The general cyclicity criterion and the quartic parity
counterexample are recorded in \cref{sec:selected-contour-cyclicity}.  We now
identify the finite measurements themselves inside the Gaussian-origin
formal branch.

\subsection{Recovery of the finite measurement vector}

The following corollary records the derivative identity announced in
\eqref{eq:intro-period-jet}.  It is elementary and adds no second
reconstruction mechanism; its role is to express the already complete
measurement vector in period coordinates.

\begin{corollary}[Mixed moments as derivatives at zero coupling]
\label{thm:physical-period-torelli}
For \(P\in\mathcal P_{d,m}\), \(r\geq1\), and
\(\Sigma\in\operatorname{Sym}_r^{++}(\R)\), define
\[
 \Phi_{P,r,\Sigma}(\boldsymbol\tau)
 =
 \E\exp\!\left(
   i\sum_{a=1}^r\tau_aP(X_a^\Sigma)
 \right),
 \qquad \boldsymbol\tau\in\R^r.
\]
Then \(\Phi_{P,r,\Sigma}\) is \(C^\infty\) on \(\R^r\) and
\begin{equation}
 i^{-r}
 \partial_{\tau_1}\cdots\partial_{\tau_r}
 \Phi_{P,r,\Sigma}(0)
 =
 M_{P,r}(\Sigma).
 \label{eq:physical-mixed-jet}
\end{equation}
For \(r_*=r_*(d,m)\) given by the explicit universal cutoff
\eqref{eq:closed-replica-cutoff} and for the explicit rational grids
\eqref{eq:intro-explicit-spd-grid}, the following are equivalent for
\(P,Q\in\mathcal P_{d,m}\):
\begin{enumerate}[label=\textup{(\roman*)}]
\item \(Q=P\circ U\) for some \(U\in O(d)\);
\item for every \(r\leq r_*\) and \(\Sigma\in\mathcal S_r\),
\[
 \partial_{\tau_1}\cdots\partial_{\tau_r}
 \Phi_{P,r,\Sigma}(0)
 =
 \partial_{\tau_1}\cdots\partial_{\tau_r}
 \Phi_{Q,r,\Sigma}(0).
\]
\end{enumerate}
Moreover, if \(c_r(P)\) is the contraction vector from
\cref{prop:finite-gram}, then
\begin{align}
 \mathfrak E_{\mathrm{probe}}(P,Q)
 &=
 \sum_{r=1}^{r_*}\sum_{\Sigma\in\mathcal S_r}
 \left|
 i^{-r}\partial_{\tau_1}\cdots\partial_{\tau_r}
 \bigl(\Phi_{P,r,\Sigma}-\Phi_{Q,r,\Sigma}\bigr)(0)
 \right|^2
 \notag\\
 &=
 \sum_{r=1}^{r_*}
 \bigl(c_r(P)-c_r(Q)\bigr)^{\mathsf T}
 K_r
 \bigl(c_r(P)-c_r(Q)\bigr).
 \label{eq:physical-torelli-energy}
\end{align}
Thus \(\mathfrak E_{\mathrm{probe}}(P,Q)=0\) exactly on a common orthogonal
orbit.
\end{corollary}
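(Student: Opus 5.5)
The plan has three parts: smoothness, the derivative identity, and the transfer of the orbit criterion and energy identity from \cref{thm:intro-torelli} and \cref{prop:finite-gram}.

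\textbf{Smoothness.} For \(\boldsymbol\tau\in\R^r\), the integrand \(\exp(i\sum_a\tau_aP(X_a))\) has modulus one. Any \(\tau\)-derivative of multi-order \(\beta\) equals
\[
 i^{|\beta|}\prod_aP(X_a)^{\beta_a}\exp\!\left(i\sum_a\tau_aP(X_a)\right).
\]
This is dominated, uniformly in \(\boldsymbol\tau\), by \(\prod_a|P(X_a)|^{\beta_a}\). That bound is integrable because a centred Gaussian vector in \((\R^d)^r\) with covariance \(\Sigma\otimes I_d\) has moments of all orders, and \(|P|\) grows polynomially. Repeated dominated differentiation therefore shows that \(\Phi_{P,r,\Sigma}\) is \(C^\infty\) on \(\R^r\), with each derivative given by differentiating under \(\E\).

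\textbf{The derivative identity.} Take \(\beta=(1,\ldots,1)\) and set \(\boldsymbol\tau=0\). The derivative becomes \(i^r\,\E\prod_aP(X_a)=i^rM_{P,r}(\Sigma)\), which is \eqref{eq:physical-mixed-jet}.

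\textbf{Equivalence and energy identity.} By \eqref{eq:physical-mixed-jet}, condition (ii) says exactly that \(M_{P,r}(\Sigma)=M_{Q,r}(\Sigma)\) for all \(r\leq r_*\) and \(\Sigma\in\cS_r\). The factor \(i^r\) is nonzero, so this is \(\mathbf J_{d,m}(P)=\mathbf J_{d,m}(Q)\). \Cref{thm:intro-torelli}, via (i)\(\Leftrightarrow\)(iii), then gives the equivalence with (i). For the energy, substitute \eqref{eq:physical-mixed-jet} termwise into the definition \eqref{eq:finite-probe-energy}; this gives the first line of \eqref{eq:physical-torelli-energy}. Summing the Gram identity \eqref{eq:finite-probe-gram} of \cref{prop:finite-gram} over \(r\leq r_*\) gives the second line. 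Since each \(K_r\) is positive definite, the energy vanishes exactly when all \(c_r(P)=c_r(Q)\). This is equivalent to equality of the probe vectors, and hence, by the equivalence just proved, to \(P\) and \(Q\) lying on a common orthogonal orbit.

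The only point needing any care is the uniform integrable domination that justifies differentiating under the expectation. It reduces to finiteness of Gaussian polynomial moments, so I do not expect a genuine obstacle; everything else is bookkeeping on top of \cref{thm:intro-torelli} and \cref{prop:finite-gram}.
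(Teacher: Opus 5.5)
Your proposal is correct and follows the paper's proof essentially step for step. Dominated differentiation, using finiteness of Gaussian polynomial moments, gives smoothness and the jet identity; the equivalence is read off from \cref{thm:intro-torelli}; and the energy identity is \eqref{eq:finite-probe-gram} summed over \(r\). Your added remark that positive definiteness of \(K_r\) turns vanishing energy into equality of the probe vectors just makes explicit the paper's final sentence.
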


\begin{proof}
Every derivative of the integrand is a polynomial in the Gaussian vector
times a complex number of modulus one.  Gaussian moments of every order
are finite, so dominated differentiation gives smoothness and
\[
 \partial_{\tau_1}\cdots\partial_{\tau_r}
 \Phi_{P,r,\Sigma}(0)
 =
 i^r\E\prod_{a=1}^rP(X_a^\Sigma).
\]
This is \eqref{eq:physical-mixed-jet}.  The equivalence follows from
\cref{thm:intro-torelli}, and
\eqref{eq:physical-torelli-energy} is exactly
\eqref{eq:finite-probe-gram} summed over \(r\).
\end{proof}

A period over the continued real Gaussian contour near zero coupling
generally contains several saddle contributions.  Since the rank jumps at
\(\rho=0\), the contribution of the origin cannot be obtained by direct
substitution or by taking limits of the three comparison factors separately.
One must first assemble the complete scalar period and then project its
formal expansion to the summand attached to the critical branch issuing from
the origin.

\begin{definition}[Projection to the Gaussian-origin contribution]
\label{def:marked-central-regular-part}
Call a radial chamber \emph{origin-preserving} if its cycle is obtained from
the positive-real Gaussian contour by sectorial continuation while its germ
in a fixed Morse ball around the origin remains in the same local
relative-homology class.  Assume that the leading rescaled phase
\(G_{K\mid S}\) is Morse and satisfies the central nonresonance condition
\eqref{eq:central-nonresonance}.  Let \(\mathcal P_K^\vee\) be the localised
formal dual period module.  By \cref{thm:toric-corner-Rees}, it contains a
canonical rank-one zero-action summand \(\mathcal E_0^\vee\) attached to the
critical branch issuing from the origin; write
\[
 \pi_0^\vee:\mathcal P_K^\vee\longrightarrow\mathcal E_0^\vee
\]
for the corresponding formal projection.  The horizontal generator of
\(\mathcal E_0^\vee\) is the Gaussian covector
\(\ell_{K,\rho}\) of \eqref{eq:central-Gaussian-functional}, normalised by
\begin{equation}
 \ell_{K,\rho}([1])
 =\widehat\Phi^{\mathrm G}_{P,K,\Sigma_K}
 =1+O(\rho^{m-2}).
 \label{eq:central-dual-normalisation}
\end{equation}
Whenever a complete scalar period is canonically presented as the
evaluation \(\Lambda_\rho(v)\) of a dual period covector on a cohomology
class, define
\begin{equation}
 \operatorname{Reg}^{\mathrm G}_0\{\Lambda_\rho(v)\}
 :=(\pi_0^\vee\Lambda_\rho)(v),
 \label{eq:marked-central-projection}
\end{equation}
expressed in the normalisation \eqref{eq:central-dual-normalisation} and
then expanded stationarily.  Equivalently, this extracts the contribution
of the origin branch in the formal exponential decomposition of the complete
scalar period.  It is not evaluation at \(\rho=0\).  When
the period is written as a matrix product, that product defines
\(\Lambda_\rho\) and must be formed before \(\pi_0^\vee\) is applied; no
factorwise limit is part of the definition.
\end{definition}

\begin{theorem}[Finite measurements in the Gaussian-origin formal branch]
\label{thm:normal-symbol-torelli}
Let \(m\geq3\), let \(P\in\mathcal P_{d,m}\) be real,
\(r\geq1\), and
\(\Sigma\in\operatorname{Sym}_r^{++}(\R)\).  Define
\(\widehat\Phi^{\mathrm G}_{P,r,\Sigma}\) by the Wick series
\eqref{eq:central-Gaussian-Wick-series}, which is meaningful for every
polynomial \(P\), and put
\[
 Z_\Sigma=(2\pi)^{-rd/2}(\det\Sigma)^{-d/2}.
\]
Then
\begin{equation}
 \boxed{
 M_{P,r}(\Sigma)
 =
 i^{-r}
 [\rho^{(m-2)r}\lambda_1\cdots\lambda_r]\,
 \widehat\Phi^{\mathrm G}_{P,r,\Sigma}
 (\rho,\boldsymbol\lambda).}
 \label{eq:normal-symbol-Torelli}
\end{equation}

Assume in addition \eqref{eq:principal-isolated}.  Then this Wick series is
the formal period of the marked local Gaussian branch constructed in
\cref{prop:marked-Gaussian-level}.  If the leading rescaled phase is Morse
and satisfies \eqref{eq:central-nonresonance}, this branch is the canonical
rank-one zero-action summand of the full formal period system.  On every
origin-preserving radial chamber for which the rescaled phase is tame, Morse,
and has pairwise distinct critical values, the same identity is
\begin{equation}
 M_{P,r}(\Sigma)
 =
 i^{-r}
 [\rho^{(m-2)r}\lambda_1\cdots\lambda_r]\,
 \operatorname{Reg}^{\mathrm G}_0
 \left\{
 Z_\Sigma
 C_{\mathrm{Betti}}^{(r)}
 \mathcal P_{\mathrm{th}}^{(r)}
 B_{\mathrm{Jac}}^{(r)}e_{[1]}
 \right\}.
 \label{eq:factorised-normal-symbol-Torelli}
\end{equation}
Here \(\operatorname{Reg}^{\mathrm G}_0\) means projection of the complete
scalar period over the continued Gaussian contour in the sense of
\cref{def:marked-central-regular-part}.  The three matrices inside the
braces are multiplied before this projection.
The coefficient is extracted only after the origin summand has been
identified with its canonical polynomial formal extension
\(\widehat\Phi^{\mathrm G}_{P,r,\Sigma}
\in\C[\boldsymbol\lambda][[\rho]]\); it is not a Taylor coefficient of the
three separate chamber matrices at \(\boldsymbol\lambda=0\).
Here \(e_{[1]}\) denotes the coordinate vector of the constant Jacobi class
\([1]\) with its unnormalised volume form.
In rescaled coordinates, the Gaussian volume vector is
\[
 Z_\Sigma\rho^{-rd}e_{[1]}.
\]

Equivalently, for
\[
 \phi_{\boldsymbol\varepsilon}(t)
 =\Phi_{P,r,\Sigma}(t\boldsymbol\varepsilon),
 \qquad
 \boldsymbol\varepsilon\in\{-1,1\}^r,
\]
one has the finite radial polarisation formula
\begin{equation}
 M_{P,r}(\Sigma)
 =
 \frac{i^{-r}}{2^rr!}
 \sum_{\boldsymbol\varepsilon\in\{-1,1\}^r}
 \left(\prod_{a=1}^r\varepsilon_a\right)
 \phi_{\boldsymbol\varepsilon}^{(r)}(0).
 \label{eq:radial-polarisation}
\end{equation}
Suppose that every signed radial direction below is represented by an
origin-preserving chamber for which the rescaled phase is tame, Morse, and
has pairwise distinct critical values, and set
\[
 \bigl(
 C_{\boldsymbol\varepsilon},
 \mathcal P_{\boldsymbol\varepsilon},
 B_{\boldsymbol\varepsilon}
 \bigr)(\rho)
 =
 \bigl(
 C_{\mathrm{Betti}}^{(r)},
 \mathcal P_{\mathrm{th}}^{(r)},
 B_{\mathrm{Jac}}^{(r)}
 \bigr)(\rho^{m-2}\boldsymbol\varepsilon).
\]
After \(t=\rho^{m-2}\), the polarisation formula then becomes
\begin{equation}
 M_{P,r}(\Sigma)
 =
 \frac{i^{-r}}{2^r}
 \sum_{\boldsymbol\varepsilon\in\{-1,1\}^r}
 \left(\prod_{a=1}^r\varepsilon_a\right)
 [\rho^{(m-2)r}]
 \operatorname{Reg}^{\mathrm G}_0
 \left\{
  Z_\Sigma C_{\boldsymbol\varepsilon}
  \mathcal P_{\boldsymbol\varepsilon}
  B_{\boldsymbol\varepsilon}e_{[1]}
 \right\}.
 \label{eq:radial-factorised-symbol}
\end{equation}

For \(1\leq r\leq r_*(d,m)\) and
\(\Sigma\in\mathcal S_r\), the finite collection of coefficients in
\eqref{eq:normal-symbol-Torelli} is exactly the measurement vector
\(\mathbf J_{d,m}(P)\) defined in \eqref{eq:intro-finite-probe-map}.  Hence,
for arbitrary real \(P,Q\in\mathcal P_{d,m}\), equality of these finite
coefficient vectors is equivalent to \(Q=P\circ U\) for some \(U\in O(d)\).
Under \eqref{eq:central-nonresonance}, they are coefficients of the
canonical rank-one Gaussian-origin formal factor; under the stronger chamber
hypotheses above, they are recovered from the projected factorised period.
\end{theorem}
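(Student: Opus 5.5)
The proof separates into a purely formal coefficient identity, valid for every polynomial, and the period-theoretic identifications, which only repackage results already proved. I would start with \eqref{eq:normal-symbol-Torelli}. Put \(t=\rho^{m-2}\). Expand \(\exp(it\sum_a\lambda_aP(x_a))\) as a power series in \(t\). The coefficient of \(t^r\lambda_1\cdots\lambda_r\) comes only from the \(r\)-th order term, and inside it only from the cross term \(r!\prod_aP(x_a)/r!\). That coefficient is therefore \(i^r\prod_aP(x_a)\).

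The operator \(\exp(\frac12\Delta_{\Sigma\otimes I_d})\) followed by evaluation at \(\boldsymbol x=0\) acts coefficientwise in \((t,\boldsymbol\lambda)\) and is finite on each polynomial coefficient. By \eqref{eq:Torelli-deepest-face}, equivalently the Wick rule \eqref{eq:cohomological-wick-rule} with covariance \(\Sigma\otimes I_d\), it sends \(\prod_aP(x_a)\) to \(\E\prod_aP(X_a)=M_{P,r}(\Sigma)\). Since \(t^r=\rho^{(m-2)r}\), this gives \eqref{eq:normal-symbol-Torelli} with no hypothesis on \(P_m\).

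The radial polarisation \eqref{eq:radial-polarisation} is a separate elementary step. I would apply the chain rule to \(\phi_{\boldsymbol\varepsilon}^{(r)}(0)=\sum_{a_1,\ldots,a_r}\varepsilon_{a_1}\cdots\varepsilon_{a_r}\partial_{a_1}\cdots\partial_{a_r}\Phi(0)\). Then I would average against \(\prod_a\varepsilon_a\) over \(\{\pm1\}^r\). This kills every multi-index except those in which each \(a\) appears exactly once. There are \(r!\) such orderings, and each contributes \(\partial_{\tau_1}\cdots\partial_{\tau_r}\Phi(0)\), so \eqref{eq:physical-mixed-jet} gives the formula.

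Under \eqref{eq:principal-isolated}, \cref{prop:marked-Gaussian-level} already identifies the Wick series with the normalised horizontal period \(\ell_{K,\rho}([1])\) of the origin branch, and \cref{thm:toric-corner-Rees}(iv) makes it the canonical rank-one zero-action factor under central nonresonance. The factorised identity \eqref{eq:factorised-normal-symbol-Torelli} then comes in three steps.

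\begin{enumerate}[label=\textup{(\arabic*)}]
\item By \cref{cor:replicated-jacobi-transfer} and \cref{thm:thimble-factorisation}, \(Z_\Sigma C\mathcal P_{\mathrm{th}}B_{\mathrm{Jac}}e_{[1]}\) is the complete real-contour period \(\Phi_{P,r,\Sigma}(\rho^{m-2}\boldsymbol\lambda)\).
\item Write this period as \(\Lambda_\rho([1])\), where \(\Lambda_\rho=\sum_\alpha C_\alpha\Lambda_{\Gamma_\alpha}\) is the covector of the continued contour.
\item Projecting by \(\pi_0^\vee\) retains only the origin-thimble component. \cref{prop:sectorial-Betti-stability} shows its Betti coefficient is \(+1\) on an origin-preserving chamber. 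The stationary expansion of the central thimble covector is the horizontal generator \(\ell_{K,\rho}\), normalised by \eqref{eq:central-dual-normalisation}. The projected period is therefore \(\widehat\Phi^{\mathrm G}\).
\end{enumerate}

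Extracting the coefficient then reproduces \eqref{eq:normal-symbol-Torelli}. Formula \eqref{eq:radial-factorised-symbol} follows by substituting \(\boldsymbol\lambda=\boldsymbol\varepsilon\) and combining with the polarisation step. Here I would note that \([\rho^{(m-2)r}]\) applied along \(\boldsymbol\lambda=\boldsymbol\varepsilon\) equals \(\sum_{|\beta|=r}\boldsymbol\varepsilon^\beta[\lambda^\beta]\), and the sign average again selects the multilinear monomial. The last assertion is \cref{thm:intro-torelli} applied to the coordinates of \(\mathbf J_{d,m}\).

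The main obstacle is step (3): showing that the projection of the complete assembled period onto the zero-action summand is exactly \(\ell_{K,\rho}\) with coefficient one. Two things must be excluded. First, the other thimbles must contribute nothing to this summand. For this I would use that their formal expansions carry exponentials \(e^{G(c)/\rho^2}\) with \(G(c)\neq0\) by nonresonance, and that the block decomposition is unique. Second, the normalisation of the central thimble covector must match \(\ell_{K,\rho}\). For this I would compare its stationary expansion with \eqref{eq:central-formal-integral} in the shrinking Morse ball, where the two coincide.
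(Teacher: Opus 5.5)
Your proposal is correct and follows essentially the same route as the paper. It uses the same coefficient extraction from the expanded Wick series, the same sign-averaging for the polarisation formula, and the same identification of the projected factorised period via the $+1$ central Betti coefficient and the normalised Gaussian covector $\ell_{K,\rho}$.

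The one point the paper makes more explicit concerns \cref{prop:sectorial-Betti-stability}, which is stated for a single replica with the standard quadratic form. The paper therefore re-derives the $+1$ coefficient in $rd$ variables, orienting the central thimble by transporting the orientation of $\R^{rd}$ through the positive square root of $\Sigma^{-1}\otimes I_d$.
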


\begin{proof}
Expanding the central section
\eqref{eq:central-Gaussian-Wick-series} gives
\begin{align}
 \widehat\Phi^{\mathrm G}_{P,r,\Sigma}
 &=
 \sum_{n\geq0}
 \frac{i^n\rho^{(m-2)n}}{n!}
 \left.
 e^{\Delta_{\Sigma\otimes I_d}/2}
 \left(\sum_{a=1}^r\lambda_aP(x_a)\right)^n
 \right|_{\boldsymbol x=0}\notag\\
 &=
 \sum_{n\geq0}
 \frac{i^n\rho^{(m-2)n}}{n!}
 \E\left(\sum_{a=1}^r\lambda_aP(X_a^\Sigma)\right)^n.
 \label{eq:central-series-expanded}
\end{align}
The monomial \(\lambda_1\cdots\lambda_r\) can occur in radial degree
\(n=r\) only, and its coefficient in the \(r\)-th power is
\[
 r!\prod_{a=1}^rP(X_a^\Sigma).
\]
This cancels the exponential factorial and proves
\eqref{eq:normal-symbol-Torelli}.

Under the central nonresonance condition
\eqref{eq:central-nonresonance}, \cref{thm:toric-corner-Rees} identifies the
marked local branch issuing from the origin with the canonical rank-one
zero-action summand.  Now fix an origin-preserving chamber for which the
rescaled phase is tame, Morse, and has pairwise distinct critical values.  At
the positive-real radial base point, the rescaled Gaussian contour is
\(\R^{rd}\).  Its local descending
cell and the dual ascending cell have tangent spaces \(\R^{rd}\) and
\(i\R^{rd}\) for the quadratic form
\(\Sigma^{-1}\otimes I_d\).  Their local intersection has absolute value
one.  Orient the descending thimble by transporting the standard
orientation of \(\R^{rd}\) through the positive square root of
\(\Sigma^{-1}\otimes I_d\); its determinant is positive, so this preserves
the real orientation.  The dual-thimble normalisation
\eqref{eq:dual-thimble-normalisation} then fixes the intersection to \(+1\).
Excision gives
the coefficient of the central local Morse group, and sectorial horizontal
transport keeps this integer equal to \(1\) throughout the
chamber; this is also the last assertion of
\cref{prop:sectorial-Betti-stability}, applied in \(rd\) variables with
this orientation-preserving real normalisation.  Hence projection of the
continued-contour row to the origin summand has precisely the Gaussian normalisation in
\cref{def:marked-central-regular-part}.  The global factorisation
\eqref{eq:correct-global-factorisation}, with the displayed density and
volume factors, then gives
\eqref{eq:factorised-normal-symbol-Torelli}.

For a smooth function \(f\) on \(\R^r\), put
\(D_{\boldsymbol\varepsilon}
=\sum_a\varepsilon_a\partial_{\tau_a}\).  Expanding the \(r\)-th power and
summing signs gives the operator identity
\[
 \sum_{\boldsymbol\varepsilon}
 \left(\prod_a\varepsilon_a\right)
 D_{\boldsymbol\varepsilon}^r
 =
 2^rr!\,\partial_{\tau_1}\cdots\partial_{\tau_r}.
\]
Apply it to \(\Phi_{P,r,\Sigma}\) at the origin and use
\eqref{eq:physical-mixed-jet}; this proves
\eqref{eq:radial-polarisation}.  Coefficient extraction after
\(t=\rho^{m-2}\) proves \eqref{eq:radial-factorised-symbol}.  The last
assertion is \cref{thm:intro-torelli}.
\end{proof}

\begin{remark}[Scope of the finite jet]
\label{rem:finite-jet-scope}
The theorem is a Torelli statement inside the class of Gaussian
polynomials of fixed dimension and degree.  It does not assert that a
finite moment list, or even the full moment sequence, determines an
arbitrary probability measure, and no moment-determinacy statement is used.
Its content is that the finite-dimensional orthogonal orbit has already
been recovered before any flat Stokes ambiguity can arise.
\end{remark}

\section{Scope and conclusion}
\label{sec:scope-conclusion}

The paper is organised around one finite datum:
\[
 \mathbf J_{d,m}(P)
 =
 \bigl(M_{P,r}(\Sigma)\bigr)_{
       1\leq r\leq r_*(d,m),\ \Sigma\in\mathcal S_r}.
\]
Both the cutoff \(r_*(d,m)\) and the rational covariance grids
\(\mathcal S_r\) are explicit.  Finite differences recover from this vector
all complete contractions needed by invariant theory, and these contractions
separate \(O(d)\)-orbits.  Thus
\(\mathbf J_{d,m}(P)=\mathbf J_{d,m}(Q)\) holds exactly when \(P\) and \(Q\)
are orthogonally equivalent.  On coefficient balls the same argument, joined
to an effective \L ojasiewicz inequality, gives the quantitative
H\"older estimate of \cref{thm:quantitative-torelli}.  None of these
statements requires a critical-point or Stokes hypothesis.

The period theory realises this same vector; it does not introduce a second
reconstruction invariant.  When the leading homogeneous part has an isolated
critical point, the active-coupling subset \(I\) has Jacobian rank
\(\mu^{|I|}\), and conditional Gaussian integration gives compatible maps
between the corresponding faces.  On a fixed-support radial chart, the
marked Gaussian-origin local branch is always defined; when the leading
rescaled phase is Morse and centrally nonresonant, it is the canonical
rank-one zero-action factor of the full formal system.  The mixed derivatives
of that branch are exactly the entries of
\(\mathbf J_{d,m}(P)\), by
\eqref{eq:physical-mixed-jet}--\eqref{eq:normal-symbol-Torelli}.

Away from zero coupling, fix a chamber in which the phase is tame and Morse,
the critical values are distinct, and the rapid-decay direction and thimble
data have been chosen.  The comparison then factors into three transparent
changes of coordinates: rational reduction in the Jacobian algebra,
sectorial thimble periods, and the integral coordinates of the selected
contours.  Each factor has a separate role, and their product is the period
matrix in \eqref{eq:correct-global-factorisation}.  One continued real
Gaussian contour need not be cyclic in the whole rapid-decay module; the
orbit theorem only uses the finite vector \(\mathbf J_{d,m}\).

The boundary of the result is correspondingly precise.  Coalescing or
resonant critical values may enlarge a Stokes graded piece, and a single
radial scaling does not describe approaches with several escape rates.
Those phenomena call for resonant or multiweight refinements of the period
geometry, but they do not affect the unconditional finite reconstruction
and stability theorem.  The cubic and quartic calculations, together with
the one-variable rank and cycle analyses, are collected in the appendices so
that the main proof remains visible without suppressing these checks.
\appendix

The following appendices contain exact model computations and compatibility
extensions.  They are retained because they calibrate the Stokes
normalisations and delimit the scope of the general theorem, but none is a
premise of the orbit-reconstruction argument.

\section{Cubic calibration: the real Gaussian contour and the Airy jump}
\label{sec:cubic}

The aim is not to rederive the Airy connection formula in isolation, but to
identify its three canonical solutions with the actual Gaussian contour and
its two lateral dominant continuations.  This fixes the scalar gauge in
which the physical Stokes multiplier is stated.  Without this calibration,
an abstract Stokes matrix would still depend on the normalisation of the
formal and sectorial bases, so its numerical entry could not yet be called
the multiplier carried by the Gaussian cycle.  We use the standard Airy
integral, connection and asymptotic formulae in the conventions of
\cite{DLMF}.

\begin{theorem}[Airy uniformisation and cubic Stokes multiplier]
\label{thm:h3-airy-stokes}
Let
\[
 H_3(x)=x^3-3x,
 \qquad
 \varphi_3(t)
 =
 \frac1{\sqrt{2\pi}}
 \int_{\R}
 \exp\!\left(-\frac{x^2}{2}+itH_3(x)\right)\dd x .
\]
Then \(\varphi_3\) satisfies
\begin{equation}
 27t^3(18t^2+1)y''
 +(486t^4+99t^2+1)y'
 +6t(18t^2+1)^2y=0.
\label{eq:h3-airy-ode}
\end{equation}

On a simply connected sector at \(t=0\) containing the positive real
axis, choose the branches for which \(t^{1/3}>0\) when \(t>0\), and put
\begin{align}
 z(t)
 &=
 \frac{1-36t^2}
 {12\,3^{1/3}t^{4/3}},
 \label{eq:h3-airy-z}\\
 C(t)
 &=
 \sqrt{2\pi}\,
 (3t)^{-1/3}
 \exp\!\left(\frac1{108t^2}-\frac12\right),
 \label{eq:h3-airy-C}\\
 \zeta(t)
 &=
 \frac23 z(t)^{3/2}
 =
 \frac{(1-36t^2)^{3/2}}{108t^2}.
 \label{eq:h3-airy-zeta}
\end{align}
Then the real Gaussian cycle is exactly
\begin{equation}
 \varphi_3(t)=F_0(t):=C(t)\Ai(z(t)),
 \qquad t>0.
\label{eq:h3-exact-Ai}
\end{equation}
In particular, the local solution space of \eqref{eq:h3-airy-ode} is the
pullback, with gauge \(C(t)\), of the Airy equation
\[
 w''(z)=zw(z).
\]

Let \(\omega=e^{2\pi i/3}\) and define
\begin{align}
 F_+(t)
 &=
 C(t)e^{i\pi/6}\Ai(\omega z(t)),
 \label{eq:h3-Fplus}\\
 F_-(t)
 &=
 C(t)e^{-i\pi/6}\Ai(\omega^2 z(t)).
 \label{eq:h3-Fminus}
\end{align}
The functions \(F_+\) and \(F_-\) are the two lateral sectorial
realisations of the same dominant formal solution at the positive real
Stokes ray, and
\begin{equation}
 F_+(t)-F_-(t)=iF_0(t).
\label{eq:h3-exact-stokes-jump}
\end{equation}
Consequently, for
\[
 \mathcal Y_-=(F_0,F_-),
 \qquad
 \mathcal Y_+=(F_0,F_+),
\]
one has
\begin{equation}
 \mathcal Y_+
 =
 \mathcal Y_-
 \begin{pmatrix}
 1&i\\
 0&1
 \end{pmatrix}.
\label{eq:h3-stokes-matrix}
\end{equation}
Thus the multiplier selected by the real Gaussian cycle is nonzero and,
in the normalisation \eqref{eq:h3-Fplus}--\eqref{eq:h3-Fminus}, equals
\(i\).  Reversing the orientation replaces \(i\) by \(-i\).
\end{theorem}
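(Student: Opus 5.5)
The plan is to compute everything from the integral, without invoking the general period theory. The starting point is the identity $t^{1/3}$-rescaling plus completing the cube. Write the phase as $-x^2/2+it(x^3-3x)$ and translate $x=y+a$ with $a=1/(6it)=-i/(6t)$, which removes the quadratic term. A direct expansion gives
\[
 -\frac{x^2}{2}+itH_3(x)=ity^3-\Bigl(3it+\frac{1}{12it}\Bigr)y+\kappa(t),
\]
where the constant is $\kappa(t)=a^2/2\cdot(-1)+\dots$; the paper's normalisation predicts $\kappa(t)=\frac1{108t^2}-\frac12$. For $t>0$ the shift $\R\mapsto\R+a$ is justified by Cauchy's theorem: the integrand decays like $e^{-x^2/2}$ in horizontal strips. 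Next substitute $y=-s/(3t)^{1/3}$, oriented so that the contour becomes $\R$ with the orientation of $\int e^{i(s^3/3+zs)}\dd s$. Then $ity^3=-is^3/3$; after the reflection $s\mapsto-s$ this is the Airy phase, and the linear coefficient matches $z(t)$ in \eqref{eq:h3-airy-z}. Using $\Ai(z)=\frac1{2\pi}\int_\R e^{i(s^3/3+zs)}\dd s$ gives $\varphi_3=C(t)\Ai(z(t))$ with exactly $C$ of \eqref{eq:h3-airy-C}. The factor $\sqrt{2\pi}$ comes from $2\pi/\sqrt{2\pi}$, and $(3t)^{-1/3}$ comes from the Jacobian. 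I will check the constants $z$ and $\kappa$ by explicit algebra. Convergence of the Airy integral over $\R$ is conditional, so I will tilt the contour slightly into $\Im s>0$, where it is absolutely convergent.

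The ODE \eqref{eq:h3-airy-ode} then follows by pulling back $w''=zw$. If $y=C\,w(z(t))$, then $y'=C'w+Cz'w'$ and $y''$ is linear in $w,w'$ after substituting $w''=zw$. Eliminating $w,w'$ gives a second-order equation with coefficients rational in $C'/C$, $z'$, $z''$ and $z$. Since $C'/C=-\frac1{3t}-\frac1{54t^3}$ and $z$ involves $t^{-4/3}$, the fractional powers cancel in the combination $z'^2 z$. Clearing denominators should produce \eqref{eq:h3-airy-ode}. This is routine but the algebra is heavy. Because the local solution space of the pullback is two-dimensional and contains $\varphi_3$, the uniformisation statement follows.

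For the Stokes part, use the connection formula $\Ai(z)+\omega\Ai(\omega z)+\omega^2\Ai(\omega^2 z)=0$. Multiply it by $C(t)$ and by a suitable constant: with $e^{i\pi/6}\omega=e^{5i\pi/6}$, the aim is to rewrite it as $F_+-F_-=iF_0$. Compute $\omega e^{-i\pi/6}\cdot(\text{stuff})$ explicitly, and if the phases differ by an overall factor, absorb it using the DLMF identity $\Ai(\omega^{\pm1}z)=\tfrac12 e^{\pm i\pi/3}(\Ai(z)\mp i\Bi(z))$. The $\Bi$ terms cancel in $F_+-F_-$ and leave $\tfrac i{\cdot}\Ai$; checking the prefactor gives exactly $i$. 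The matrix identity \eqref{eq:h3-stokes-matrix} is then a restatement. Reversing the orientation of $\R$ negates $F_0$, which replaces $i$ by $-i$.

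The main obstacle is the claim that $F_\pm$ are the two lateral sectorial realisations of one dominant formal solution. For this I will use the asymptotics $\Ai(\omega^{\pm1}z)\sim\frac{e^{\pm i\pi/6}}{2\sqrt\pi}z^{-1/4}e^{\zeta}$, valid in the sectors $|\arg(\omega^{\pm}z)|<\pi$. These sectors overlap along $\arg z=0$ and both contain the positive ray. With the chosen phases $e^{\pm i\pi/6}$, both $F_\pm$ have the same expansion $C(t)\frac{z^{-1/4}}{2\sqrt\pi}e^{\zeta}(1+O(\zeta^{-1}))$, and this is exponentially dominant over $F_0\sim e^{-\zeta}$. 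Hence $F_+$ and $F_-$ share one formal series, and they differ only by the recessive solution, which is exactly the jump computed above. The delicate point is tracking branches of $z^{1/4}$ and $\omega$ so that the prefactors really agree.
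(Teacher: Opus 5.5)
Your route for the closed form and the Stokes jump is the paper's: the same translation $x=u-\frac{i}{6t}$ and completed cubic with constant $\frac1{108t^2}-\frac12$, the same rescaling (your two reflections compose to the paper's $v=(3t)^{1/3}u$), the rotation relation, and a comparison of lateral expansions. The $\mathrm{Bi}$ detour is harmless but unnecessary. Multiplying the rotation relation by $i$ and using $-i\omega=e^{i\pi/6}$, $-i\omega^2=-e^{-i\pi/6}$ gives the jump directly.

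The genuine difference is the ODE. The paper derives it first, independently of the closed form, by Gaussian integration by parts. The recurrence $I_{n+1}=nI_{n-1}+3it(I_{n+2}-I_n)$ closes on $(I_0,I_1)$, and eliminating $I_1$ gives the equation for every real $t\neq0$. That first-order system is the Gauss--Manin connection in the Jacobi frame $([1],[x])$, and the cubic determinant corollary reuses it. The paper keeps your pullback of $w''=zw$ only as a check. Your route does close, and it gives the ``in particular'' statement directly. In normalised form the $y'$- and $y$-coefficients are
\[
 \frac3t+\frac1{27t^3}-\frac{36t}{1+18t^2},
 \qquad
 \frac{2(18t^2+1)}{9t^2},
\]
and multiplying by $27t^3(18t^2+1)$ gives exactly $486t^4+99t^2+1$ and $6t(18t^2+1)^2$. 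It proves the equation only where the closed form is proved, namely $t>0$. For $t<0$, use $\varphi_3(-t)=\overline{\varphi_3(t)}$ and the invariance of this real equation under $t\mapsto-t$.

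Two steps are wrong as written.

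\emph{The contour shift.} The integrand does not decay like $e^{-x^2/2}$ in the strip you cross. On $x=s+ic$ its modulus is
\[
 \exp\Bigl(-\bigl(\tfrac12+3tc\bigr)s^2+\tfrac{c^2}2+tc^3+3tc\Bigr).
\]
So the Gaussian rate falls to zero as $c$ decreases to $-\frac1{6t}$. On the target line the modulus is the constant $e^{1/(108t^2)-1/2}$, which is exactly why the Airy integral you land on is only conditionally convergent. The shift is still true, but it needs the paper's estimate. Writing $w=\frac12+3tc\in[0,\frac12]$, each vertical side is bounded by a constant times $\int_0^{1/2}e^{-wR^2}\dd w=O(R^{-2})$. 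In the paper's variables this is $\Re[i(v^3/3+zv)]=-R^2y+y^3/3-zy$ on $v=\pm R+iy$.

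Simpler, and in line with your own tilting remark, do not shift at all. In the variable $v$ the real $x$-axis is already the line $\R+ih$ with $h=(3t)^{1/3}/(6t)>0$. On that line the integrand decays like $e^{-hs^2}$, and $\frac1{2\pi}\int_{\R+ih}e^{i(v^3/3+zv)}\dd v=\Ai(z)$ for every $h>0$.

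\emph{The asymptotic phases.} These have the wrong sign. For $z>0$ and principal branches, $(\omega^{\pm1}z)^{1/4}=e^{\pm i\pi/6}z^{1/4}$ and $\frac23(\omega^{\pm1}z)^{3/2}=-\zeta$. Hence
\[
 \Ai(\omega^{\pm1}z)\sim\frac{e^{\mp i\pi/6}}{2\sqrt\pi}\,z^{-1/4}e^{\zeta}\sum_{k\geq0}u_k\zeta^{-k}.
\]
With your $e^{\pm i\pi/6}$, the normalised $F_\pm$ would carry the different constants $e^{\pm i\pi/3}$ and would not realise one formal solution. With the correct sign, the factors $e^{\pm i\pi/6}$ in $F_\pm$ cancel exactly. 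The full series, not just the leading term $1+O(\zeta^{-1})$, then also agree. Once $F_-$ has the dominant expansion, the exact relation $F_+-F_-=iF_0$ forces $F_+$ to have the same full expansion, because $F_0=O(e^{-\zeta})$ is recessive on the overlap $|\arg z|<\pi/3$.
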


\begin{proof}
Set
\[
 I_n(t)=\E\!\left[X^ne^{itH_3(X)}\right].
\]
Gaussian integration by parts gives
\[
 I_{n+1}
 =
 nI_{n-1}+3it(I_{n+2}-I_n),
 \qquad n\geq0.
\]
For \(n=0,1,2\), this yields
\[
 I_2=I_0+\frac{I_1}{3it},
 \qquad
 I_3=\left(1-\frac1{9t^2}\right)I_1,
 \qquad
 I_4=I_0+\frac{i}{27t^3}I_1.
\]
Since
\[
 I_n'=i(I_{n+3}-3I_{n+1}),
\]
the functions \(y_0=I_0\) and \(y_1=I_1\) satisfy
\[
 y_0'
 =
 -i\left(2+\frac1{9t^2}\right)y_1,
\]
and
\[
 y_1'
 =
 -2iy_0-
 \left(\frac1t+\frac1{27t^3}\right)y_1.
\]
Eliminating \(y_1\) gives \eqref{eq:h3-airy-ode}.

We now evaluate the physical period.  For \(t>0\), make the translation
\[
 x=u-\frac{i}{6t}.
\]
A direct completion of the cubic gives
\begin{equation}
 -\frac{x^2}{2}+it(x^3-3x)
 =
 itu^3
 +i\left(\frac1{12t}-3t\right)u
 +\frac1{108t^2}-\frac12.
\label{eq:h3-completed-cubic}
\end{equation}
Put \(v=(3t)^{1/3}u\).  Then
\[
 itu^3+
 i\left(\frac1{12t}-3t\right)u
 =
 i\left(\frac{v^3}{3}+z(t)v\right).
\]
The image of the real \(x\)-axis is the horizontal line
\[
 \R+\frac{i(3t)^{1/3}}{6t}
\]
in the \(v\)-plane.  This line may be moved to the real axis.  Indeed, on
either vertical side \(v=\pm R+iy\), with
\(0\leq y\leq h=(3t)^{1/3}/(6t)\), one has
\[
 \Re\!\left[
 i\left(\frac{v^3}{3}+z(t)v\right)
 \right]
 =
 -R^2y+\frac{y^3}{3}-z(t)y.
\]
The two vertical integrals are therefore \(O(R^{-2})\).  Letting
\(R\to\infty\) and using the oscillatory Airy representation gives
\[
 \begin{split}
 \varphi_3(t)
 &=
 \frac{e^{1/(108t^2)-1/2}}
 {\sqrt{2\pi}(3t)^{1/3}}
 \int_{\R}e^{i(v^3/3+z(t)v)}\dd v\\
 &=
 \sqrt{2\pi}(3t)^{-1/3}
 e^{1/(108t^2)-1/2}\Ai(z(t)),
 \end{split}
\]
which is \eqref{eq:h3-exact-Ai}.  Direct differentiation using
\(\Ai''(z)=z\Ai(z)\) recovers \eqref{eq:h3-airy-ode}.

The three rotated Airy solutions obey
\begin{equation}
 \Ai(z)+\omega\Ai(\omega z)+\omega^2\Ai(\omega^2z)=0.
\label{eq:airy-rotation-relation}
\end{equation}
Indeed, the left-hand side solves \(w''=zw\), and both its value and its
first derivative vanish at \(z=0\), because
\(1+\omega+\omega^2=0\).  Equation
\eqref{eq:airy-rotation-relation} gives
\[
 e^{i\pi/6}\Ai(\omega z)
 -
 e^{-i\pi/6}\Ai(\omega^2z)
 =
 i\Ai(z).
\]
Multiplication by \(C(t)\) proves
\eqref{eq:h3-exact-stokes-jump}.

It remains only to verify that \(F_+\) and \(F_-\) have the same formal
dominant expansion.  If \(\zeta=\frac23z^{3/2}\), steepest descent gives
\[
 \Ai(z)
 \sim
 \frac{z^{-1/4}}{2\sqrt\pi}
 e^{-\zeta}\widehat a_-(\zeta^{-1}),
\]
whereas, on the two lateral branches,
\[
 e^{i\pi/6}\Ai(\omega z)
 \sim
 e^{-i\pi/6}\Ai(\omega^2z)
 \sim
 \frac{z^{-1/4}}{2\sqrt\pi}
 e^{+\zeta}\widehat a_+(\zeta^{-1}),
\]
with
\(\widehat a_\pm\in1+\zeta^{-1}\C[[\zeta^{-1}]]\).
Thus \(F_+\) and \(F_-\) have exactly the same full formal expansion,
while their exact difference is \(iF_0\).  This proves
\eqref{eq:h3-stokes-matrix}.
\end{proof}

\begin{corollary}[Cubic Jacobi--thimble determinant]
\label{cor:H3-full-period-determinant}
Put
\[
 a(t)=2+\frac1{9t^2},
 \qquad
 G_0(t)=\frac{iF_0'(t)}{a(t)},
 \qquad
 G_-(t)=\frac{iF_-'(t)}{a(t)}.
\]
Then \(G_0\) and \(G_-\) are the periods of the Jacobi class \([x]\) on
the cycles represented by \(F_0\) and \(F_-\), respectively.  In the
ordered cycle basis \((\Gamma_0,\Gamma_-)\) and cohomology basis
\(([1],[x])\), the full period matrix is
\[
 Y_{H_3}(t)
 =
 \begin{pmatrix}
  F_0(t)&G_0(t)\\
  F_-(t)&G_-(t)
 \end{pmatrix}
\]
and
\begin{equation}
 \boxed{
 \det Y_{H_3}(t)
 =
 -\frac{i}{3t}
 \exp\!\left(\frac1{54t^2}-1\right).}
 \label{eq:H3-full-period-determinant}
\end{equation}
It is nonzero for every \(t\neq0\) on the chosen sector, including the
finite critical confluence \(1-36t^2=0\).
\end{corollary}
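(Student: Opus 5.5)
The plan is to reduce everything to the two scalar functions already controlled in \cref{thm:h3-airy-stokes}: the first-order system satisfied by \(y_0=I_0\) and \(y_1=I_1\), and the exact Airy representations of \(F_0\) and \(F_-\). The determinant then becomes a Wronskian, and that Wronskian is computed from the classical Airy Wronskian.

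\emph{Identification of \(G_0\) and \(G_-\).} The relation
\[
 y_0'=-i\,a(t)\,y_1,\qquad a(t)=2+\frac1{9t^2},
\]
was derived in the proof of \cref{thm:h3-airy-stokes} from Gaussian integration by parts. Equivalently, it comes from the twisted de Rham reductions of \cref{prop:quadratic-comparison-boundary}: \(\partial_t[1]=[iH_3]\), after which \([x^3]\) and \([x]\) are reduced modulo \(D_t\). These are identities between cohomology classes. Integrating them over any rapid-decay cycle gives the same relation between the period of \([1]\) and the period of \([x]\).

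Apply this to the cycle \(\Gamma_0\), the real contour, whose \([1]\)-period is \(F_0\). Apply it also to the lateral cycle \(\Gamma_-\), whose \([1]\)-period is \(F_-\); this relies on the Airy identification, in which \(F_-\) is \(C(t)\) times an Airy integral over a rotated contour of the same integrand. In both cases the \([x]\)-period equals \(iF'/a\). This gives \(G_0\) and \(G_-\), and hence the stated matrix \(Y_{H_3}\) in the bases \((\Gamma_0,\Gamma_-)\) and \(([1],[x])\).

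\emph{Reduction to a Wronskian.} Expanding the determinant gives
\[
 \det Y_{H_3}
 =\frac{i}{a}\bigl(F_0F_-'-F_0'F_-\bigr)
 =\frac{i}{a}\,W(F_0,F_-).
\]
Both functions have the form \(C(t)\,w(z(t))\), with \(w\) an Airy solution. For such functions
\[
 W\bigl(Cw_1(z),Cw_2(z)\bigr)=C^2\,z'(t)\,W_z(w_1,w_2).
\]
The classical Wronskian \(W\{\Ai(z),\Ai(ze^{-2\pi i/3})\}=e^{i\pi/6}/(2\pi)\) \cite{DLMF} gives \(W_z(\Ai,e^{-i\pi/6}\Ai(\omega^2\cdot))=1/(2\pi)\). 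The remaining factors are
\[
 C^2=2\pi(3t)^{-2/3}e^{1/(54t^2)-1},
 \qquad
 z'(t)=-\frac{(1+18t^2)\,t^{-7/3}}{9\cdot3^{1/3}}.
\]
Multiplying these gives \(W=-(1+18t^2)\,e^{1/(54t^2)-1}/(27t^3)\). Multiplying by \(i/a=9it^2/(18t^2+1)\), the factor \(18t^2+1\) cancels and we obtain \(-\tfrac{i}{3t}e^{1/(54t^2)-1}\).

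Two cross-checks are available. Abel's formula applied to \eqref{eq:h3-airy-ode} should reproduce the same Wronskian up to a constant, and the Airy computation fixes that constant. The result is visibly nonzero for every \(t\neq0\). In particular \(Y_{H_3}\) is nonzero at \(1-36t^2=0\), where \(z=0\) and nothing degenerates, because \(C\), \(z'\) and \(a\) are all regular and nonzero there.

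\emph{Main obstacle.} The delicate step is the branch and sign bookkeeping in the Airy Wronskian. The phase \(e^{-i\pi/6}\) in \(F_-\) must cancel exactly against \(e^{i\pi/6}\), so the branch of \(\omega^2=e^{-2\pi i/3}\) must match the sign convention in \cite{DLMF}. In addition, \(t^{1/3}>0\) must be used consistently in \(C^2z'\), so that the fractional powers \(t^{-2/3}\cdot t^{-7/3}\) combine to \(t^{-3}\). I would verify the sign first at one explicit real point, or via the Abel constant, before trusting the closed form.
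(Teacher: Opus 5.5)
Your proposal is correct and follows the paper's proof essentially step for step: the Gauss--Manin relation \(F'=-ia(t)G\) gives the \([x]\)-periods, the determinant becomes \(\frac{i}{a}W_t(F_0,F_-)=\frac{i}{a}\cdot\frac{C(t)^2z'(t)}{2\pi}\), and your values of \(z'\) and \(C^2\), together with the cancellation of \(18t^2+1\), all check out. The differences are cosmetic: you quote the DLMF Wronskian \(W\{\Ai(z),\Ai(ze^{-2\pi i/3})\}=e^{i\pi/6}/(2\pi)\), whereas the paper verifies \(W_z=1/(2\pi)\) at \(z=0\) using \(\Ai(0)\Ai'(0)=-1/(2\pi\sqrt3)\) and \(e^{-i\pi/6}(\omega^2-1)=-\sqrt3\), and it combines the factors directly as \(z'/a=-(3t)^{-1/3}\).
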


\begin{proof}
The first Gauss--Manin equation in the proof of
\cref{thm:h3-airy-stokes} is
\[
 F'=-ia(t)G,
\]
which gives the formulas for \(G_0,G_-\).  With
\(W(f,g)=fg'-f'g\),
\[
 \det Y_{H_3}
 =
 \frac{i}{a(t)}W_t(F_0,F_-).
\]
The Airy Wronskian in our normalisation is
\[
 W_z\!\left(
  \Ai(z),e^{-i\pi/6}\Ai(\omega^2z)
 \right)
 =
 \frac1{2\pi}.
\]
Indeed,
\[
 \Ai(0)\Ai'(0)=-\frac1{2\pi\sqrt3},
 \qquad
 e^{-i\pi/6}(\omega^2-1)=-\sqrt3.
\]
The common gauge \(C(t)\) therefore gives
\[
 W_t(F_0,F_-)=\frac{C(t)^2z'(t)}{2\pi}.
\]
Direct differentiation of \eqref{eq:h3-airy-z} yields
\[
 \frac{z'(t)}{a(t)}=-(3t)^{-1/3},
\]
while
\[
 C(t)^2
 =
 2\pi(3t)^{-2/3}
 \exp\!\left(\frac1{54t^2}-1\right).
\]
Substitution proves \eqref{eq:H3-full-period-determinant}.
\end{proof}

\begin{remark}[Formal scales and numerical normalisation]
\label{rem:h3-formal-scales}
One has
\[
 \frac{C(t)}{2\sqrt\pi}z(t)^{-1/4}
 =
 (1-36t^2)^{-1/4}
 \exp\!\left(\frac1{108t^2}-\frac12\right),
\]
and
\begin{align*}
 \frac1{108t^2}-\frac12-\zeta(t)
 &=-\frac92t^2-27t^4+O(t^6),\\
 \frac1{108t^2}-\frac12+\zeta(t)
 &=\frac1{54t^2}-1+\frac92t^2+27t^4+O(t^6).
\end{align*}
Hence the two formal exponential parts are \(0\) and \(1/(54t^2)\).
In the Airy normalisation above, the dominant solution begins with
\[
 e^{1/(54t^2)-1}(1+O(t^2))
\]
and the multiplier is \(i\).  If the dominant formal solution is instead
normalised to start with \(e^{1/(54t^2)}\), then the multiplier becomes
\(ie\).  Nonvanishing is invariant; the numerical value always requires
the normalisation to be stated.
\end{remark}

\begin{remark}[Stokes and equal-magnitude rays]
\label{rem:h3-rays}
The leading exponential difference is
\[
 \Delta(t)=\frac1{54t^2}.
\]
Thus
\[
 \Re\Delta(t)=0
 \quad\Longleftrightarrow\quad
 \arg t=\frac\pi4+\frac{k\pi}{2};
\]
these are the equal-magnitude, or anti-Stokes, rays in the standard
exact-WKB convention.  The rays carrying the triangular Stokes matrices
satisfy
\[
 \Im\Delta(t)=0
 \quad\Longleftrightarrow\quad
 \arg t=\frac{k\pi}{2}.
\]
Some conventions interchange these two names, but the two sets must not be
conflated.
\end{remark}

\section{Quartic obstruction: parity and the Bessel connection}
\label{sec:quartic}

The cubic computation has no parity obstruction.  The first even degree
already exhibits the exact mechanism by which the real cycle sees a
proper quotient of the full Jacobi module.  The Bessel continuation
identities used below are standard \cite{DLMF,Watson}; the issue here is again the
normalisation of the physical Gaussian period and the rank of its
continuation orbit.

\begin{theorem}[Quartic Bessel connection]
\label{thm:quartic-Bessel-Stokes}
Let
\[
 \Phi_4(t)=\E\exp\!\left(\frac{it}{4}Z^4\right),
 \qquad
 z=\frac{i}{8t},
 \qquad
 A(t)=\frac1{2\sqrt\pi}\sqrt{\frac it}\,e^z,
\]
where the square root is positive on \(t=i\tau\), \(\tau>0\).  Then
\begin{equation}
 \Phi_4(t)=A(t)K_{1/4}(z).
 \label{eq:quartic-Bessel-uniformisation}
\end{equation}
It satisfies
\begin{equation}
 16t^2\Phi_4''+(32t+4i)\Phi_4'+3\Phi_4=0,
 \label{eq:quartic-ODE}
\end{equation}
whose two formal exponential levels at \(t=0\) are \(0\) and
\(i/(4t)\).

On the ray \(z>0\), put \(\nu=1/4\) and define the two lateral
normalisations of the exponential solution by
\[
 \Phi_{\exp}^{+}
 =
 A(t)\,iK_\nu(e^{i\pi}z),
 \qquad
 \Phi_{\exp}^{-}
 =
 -A(t)\,iK_\nu(e^{-i\pi}z).
\]
Then
\begin{equation}
 \Phi_{\exp}^{+}-\Phi_{\exp}^{-}
 =
 i\sqrt2\,\Phi_4.
 \label{eq:quartic-Bessel-Stokes-jump}
\end{equation}
Thus the switching rays are
\(\arg t=\pi/2\pmod\pi\), whereas the equal-magnitude rays are
\(\arg t=0\pmod\pi\).

The full quartic twisted module has Jacobi basis
\([1],[x],[x^2]\).  It splits into an even block of rank two and an odd
block of rank one.  The Gauss--Manin orbit of the real Gaussian cycle
detects the complete even block and annihilates the odd block.  For an odd rapid-decay cycle
\(\Omega\), its nonzero period
\[
 H_\Omega(t)
 =
 \frac1{\sqrt{2\pi}}
 \int_\Omega x
 \exp\!\left(-\frac{x^2}{2}+\frac{itx^4}{4}\right)\dd x
\]
satisfies
\begin{equation}
 H_\Omega'
 =
 -\frac{i+2t}{4t^2}H_\Omega,
 \qquad
 H_\Omega(t)=C_\Omega t^{-1/2}e^{i/(4t)}.
 \label{eq:quartic-odd-period}
\end{equation}
\end{theorem}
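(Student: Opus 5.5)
The proof will follow the cubic calibration of \cref{thm:h3-airy-stokes}: first an exact closed-form evaluation of the real Gaussian period, then an independent derivation of the Gauss--Manin equation by Gaussian integration by parts, and finally the Stokes jump read off from a classical connection formula.

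\textbf{Uniformisation \eqref{eq:quartic-Bessel-uniformisation}.} On the ray \(t=i\tau\), \(\tau>0\), the integrand is \(\exp(-x^2/2-\tau x^4/4)\), which is absolutely integrable. I will use the classical integral (see \cite{DLMF,Watson})
\[
 \int_0^\infty e^{-px^4-qx^2}\dd x=\frac14\sqrt{q/p}\,e^{q^2/(8p)}K_{1/4}\bigl(q^2/(8p)\bigr),
\]
with \(p=\tau/4=-it/4\) and \(q=1/2\). Then \(q^2/(8p)=1/(8\tau)=i/(8t)=z\), and \(\sqrt{q/p}=\sqrt{2/\tau}=\sqrt{2i/t}\). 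Doubling the half-line integral and dividing by \(\sqrt{2\pi}\) gives exactly \(A(t)K_{1/4}(z)\), with the stated branch of the square root. Both sides are holomorphic in \(t\) on a common sector, so analytic continuation extends the identity off the ray.

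\textbf{The ODE \eqref{eq:quartic-ODE}.} Put \(I_n=\E[X^ne^{itX^4/4}]\). Gaussian integration by parts gives
\[
 I_{n+1}=nI_{n-1}+itI_{n+3},
 \qquad
 I_n'=\tfrac i4 I_{n+4}.
\]
Restricted to even \(n\), these relations close on \((I_0,I_2)\) and give a first-order \(2\times2\) system. Eliminating \(I_2\) yields \eqref{eq:quartic-ODE}. As a cross-check, the same equation follows by substituting \(A(t)K_{1/4}(z(t))\) into Bessel's equation. The two exponential levels are read off from the characteristic equation of the leading \(t^{-2}\) part: the indicial roots of the irregular singularity are \(0\) and \(i/(4t)\), which is the difference \(-z\) between the two \(K\)-asymptotics \(e^{z}e^{\mp z}\).

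\textbf{The Stokes jump \eqref{eq:quartic-Bessel-Stokes-jump}.} I will use the continuation formula
\[
 K_\nu(ze^{\pm i\pi})=e^{\mp i\nu\pi}K_\nu(z)\mp i\pi I_\nu(z).
\]
Then
\[
 iK_\nu(e^{i\pi}z)+iK_\nu(e^{-i\pi}z)=2i\cos(\nu\pi)\,K_\nu(z),
\]
because the \(I_\nu\) terms cancel. For \(\nu=1/4\) this equals \(i\sqrt2\,K_\nu(z)\), and multiplying by \(A(t)\) gives \eqref{eq:quartic-Bessel-Stokes-jump}. Both lateral functions carry the same dominant formal expansion \(e^{2z}(\cdots)\), by the standard large-\(z\) asymptotics of \(K_\nu\) on either side of the ray. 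The switching and equal-magnitude rays are then the loci \(\Im(2z)=0\) and \(\Re(2z)=0\) respectively.

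\textbf{Parity splitting and the odd period.} The reflection \(x\mapsto-x\) preserves the phase, so the twisted module splits into even and odd parts. The Jacobi basis \([1],[x],[x^2]\) of \(x^3\) places \([1],[x^2]\) in the even block and \([x]\) in the odd block.
\begin{itemize}
\item The real contour is reflection-invariant, so it pairs to zero with every odd class.
\item It detects the whole even block because \(\Phi_4\) and \(\Phi_4'\) are linearly independent over \(\C(t)\). Otherwise \(\Phi_4\) would satisfy a rank-one equation, forcing a single exponential level, whereas the Stokes jump just proved is nonzero.
\end{itemize}
For the odd period, the exact forms
\[
 \mathrm d(e^{F})=(-x+itx^3)e^F\dd x,
 \qquad
 \mathrm d(x^2e^{F})=(2x-x^3+itx^5)e^F\dd x
\]
give \(x^3\equiv x/(it)\) and then \(x^5\equiv(-1/t^2+2i/t)\,x\). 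Hence
\[
 H'=\tfrac i4\,[x^5]=-\frac{i+2t}{4t^2}\,H,
\]
and integrating gives \(H=C_\Omega t^{-1/2}e^{i/(4t)}\).

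\textbf{Main obstacle.} The delicate step is keeping branches consistent. This means matching the \(\sqrt{i/t}\) branch and the \(e^{\pm i\pi}\) continuation directions to the lateral sectors, so that the multiplier comes out as \(+i\sqrt2\) rather than with the opposite sign or a factor of \(e^{\pm i\pi/4}\). I would fix these conventions by checking the leading asymptotics on the anchor ray \(t=i\tau\).
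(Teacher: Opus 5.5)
Apart from one step, your route is the paper's, and those steps are correct. You use the same $K_{1/4}$ integral with analytic continuation, the same integration-by-parts recurrences closing on $(I_0,I_2)$, the same continuation formulas for $K_\nu(ze^{\pm i\pi})$, and the same parity splitting and odd-period reduction $x^5\equiv(-t^{-2}+2it^{-1})x$. There is one slip: the nonzero level is $2z=i/(4t)$, not a difference $-z$, although you use $e^{2z}$ correctly later.

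The gap is the claim that the real cycle detects the whole even block. You argue that if $\Phi_4,\Phi_4'$ were dependent over $\C(t)$, then $\Phi_4$ would have a single exponential level, contradicting the nonzero Stokes jump. There is no such contradiction. The jump you proved sends the dominant exponential solution \emph{into} the line $\C\Phi_4$: the Stokes operator acts by $\Phi_{\exp}^-\mapsto\Phi_{\exp}^-+i\sqrt2\,\Phi_4$ and $\Phi_4\mapsto\Phi_4$. So it preserves $\C\Phi_4$ and is fully compatible with $\C\Phi_4$ being a rank-one subconnection. For instance, $t^2y''+(1+t)y'=0$ has the solution $y=1$, which satisfies $y'=0$. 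Yet the lateral realisations $y_\pm=e^{1/t}\int_0^{\infty e^{\pm i\epsilon}}\frac{e^{-s/t}}{1-s}\dd s$ of its exponential solution satisfy $y_+-y_-=2\pi i\neq0$. A Stokes-theoretic contradiction would need the opposite jump, that of $\Phi_4$ itself across $\arg t=-\pi/2$, which you did not compute.

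Moreover, the statement's ``Gauss--Manin orbit'' is, in the paper's proof, the analytic-continuation orbit of the real cycle. $\C(t)$-independence of $\Phi_4,\Phi_4'$ does not give that for irregular equations: every nonzero solution of Airy's equation is $\C(t)$-cyclic, yet its continuation orbit is one-dimensional. The missing idea is the paper's, and you already hold its ingredients. The continuation formula has a nonzero $\mp i\pi I_\nu(z)$ term, and $W_z(I_\nu,K_\nu)=-1/z\neq0$. So continuing $\Phi_4=AK_\nu$ produces a solution independent of $\Phi_4$; for example, once counterclockwise around $t=0$ one gets $\Phi_4\mapsto -i\,\Phi_4-i\pi\sqrt2\,A(t)I_{1/4}(z)$. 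Hence the continuation orbit spans the rank-two even solution space. This also yields the $\C(t)$-statement you wanted. The equation is singular only at $0$ and $\infty$, so any solution of a first-order equation over $\C(t)$ would be an eigenvector of the local monodromy at $0$, which $\Phi_4$ is not.
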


\begin{proof}
For \(\Re a,\Re b>0\), the standard integral representation of
\(K_{1/4}\), after the substitution \(y=x^2\), gives
\[
 \int_0^\infty e^{-ax^4-bx^2}\dd x
 =
 \frac14\sqrt{\frac ba}
 \exp\!\left(\frac{b^2}{8a}\right)
 K_{1/4}\!\left(\frac{b^2}{8a}\right).
\]
Set \(a=-it/4\) and \(b=1/2\), double the half-line integral, and divide
by \(\sqrt{2\pi}\).  Analytic continuation gives
\eqref{eq:quartic-Bessel-uniformisation}.

Let
\[
 I_k(t)
 =
 \frac1{\sqrt{2\pi}}
 \int_\R x^k
 e^{-x^2/2+itx^4/4}\dd x.
\]
Gaussian integration by parts gives
\[
 I_2=\Phi_4+itI_4=\Phi_4+4t\Phi_4',
 \qquad
 I_4=3I_2+itI_6.
\]
Differentiating the first identity and eliminating \(I_4,I_6\) yields
\eqref{eq:quartic-ODE}.  Substitution of an exponential ansatz
\(e^{c/t}\) into its leading terms gives
\(c(4c-i)=0\), hence the two stated levels.

The continuation identities
\[
 K_\nu(e^{i\pi}z)
 =
 e^{-i\pi\nu}K_\nu(z)-\pi iI_\nu(z),
 \qquad
 K_\nu(e^{-i\pi}z)
 =
 e^{i\pi\nu}K_\nu(z)+\pi iI_\nu(z)
\]
give
\[
 \Phi_{\exp}^{+}-\Phi_{\exp}^{-}
 =
 2i\cos(\pi\nu)\,A K_\nu(z)
 =
 i\sqrt2\,\Phi_4,
\]
which proves \eqref{eq:quartic-Bessel-Stokes-jump}.  The ray statements
follow from the exponential difference \(i/(4t)\).

Finally,
\(\C[x]/(x^3)=\Span\{[1],[x^2]\}\oplus\Span\{[x]\}\) by parity.
The real contour annihilates every odd amplitude.  Integration by parts
on an odd rapid-decay cycle gives the first-order equation in
\eqref{eq:quartic-odd-period}; direct integration of that equation gives
its displayed solution.  Since \(C_\Omega\neq0\) for an odd thimble, the
odd block has rank one.

It remains to justify the rank of the physical continuation orbit.  The
first continuation identity above writes a half-turn continuation of
\(K_\nu\) as a nontrivial linear combination of \(K_\nu\) and
\(I_\nu\).  These two functions are independent because
\[
 W_z(I_\nu,K_\nu)=-\frac1z\neq0.
\]
The scalar gauge \(A(t)\) is nonzero on the universal cover and therefore
does not change this span.  Hence analytic continuation of the physical
solution \(A K_\nu\) generates the full rank-two even solution space.
Together with \eqref{eq:quartic-Bessel-Stokes-jump}, this identifies the
two even Stokes modes and proves the stated orbit rank.
\end{proof}

\begin{remark}[Quartic analytic versus Betti gauges]
\label{rem:quartic-analytic-Betti}
The coefficient \(i\sqrt2\) in
\eqref{eq:quartic-Bessel-Stokes-jump} belongs to the Bessel-normalised
analytic frame.  The underlying Picard--Lefschetz transformations in an
integral thimble frame have integral coefficients.  This is the quartic
counterpart of the Airy normalisation in
\cref{rem:h3-formal-scales}.
\end{remark}

The Airy and Bessel calculations give the two model facts used in the main
text: the continued real Gaussian contour may carry a nonzero Stokes jump,
and one such contour need not detect the full Jacobian algebra.


\section{One-variable ranks, confluence, and the one-replica radial model}
\label{sec:one-variable-refinements}

This appendix records three complementary tests of the main construction:
the exact rank detected by a real contour for monomial phases, the persistence
of the residue pairing through finite confluence, and the \(r=1\) radial
lattice.  They separate three phenomena that should not be confused with the
rank jump at zero coupling.

\begin{example}[Diagonal monomials and the parity obstruction]
\label{ex:monomial-jacobi}
For
\[
 P(x)=\frac{x^m}{m}
\]
one has
\[
 \mathcal J_m(P)=\C[x]/(x^{m-1})
\]
with basis \(1,x,\ldots,x^{m-2}\).  The critical equation for
\(F_{P,t}\) is
\[
 -x+itx^{m-1}=0.
\]
It has the critical point \(0\) and the \(m-2\) solutions of
\[
 x^{m-2}=(it)^{-1},
\]
so the number of thimble generators is exactly \(m-1\), as predicted by
\eqref{eq:exact-jacobi-rank}.  At a nonzero critical point,
\[
 F_{P,t}(x)
 =
 -\frac{m-2}{2m}x^2,
\]
which displays the irregular actions explicitly.

If \(m\) is even, the real cycle annihilates every odd basis class:
\[
 \int_\R x^{2k+1}
 \exp\!\left(-\frac{x^2}{2}+it\frac{x^m}{m}\right)\dd x=0.
\]
Thus the full Jacobi and thimble matrices remain invertible, while the row
selected by the real cycle is rank deficient.  The failure is entirely in
the physical cycle coordinates, not in
\(B_{\mathrm{Jac}}\).
\end{example}

\begin{proposition}[Real-cycle jet rank for monomial phases]
\label{prop:monomial-physical-rank}
Let \(Z\sim\mathcal N(0,1)\), \(a\in\R\setminus\{0\}\), and
\[
 \Psi_u(t)=\E\!\left[Z^u e^{itaZ^m}\right],
 \qquad 0\leq u\leq m-2.
\]
The span of the jet rows
\[
 \bigl(\Psi_u^{(k)}(0)\bigr)_{0\leq u\leq m-2},
 \qquad k\geq0,
\]
has exact rank
\begin{equation}
 r_m^{\mathrm{obs}}
 =
 \begin{cases}
  m-1,&m\ \text{odd},\\[1mm]
  m/2,&m\ \text{even}.
 \end{cases}
 \label{eq:monomial-observable-rank}
\end{equation}
For even \(m\), its kernel is exactly the odd Jacobi sector.  For odd
\(m\), the real cycle is jet-cyclic on the complete Jacobi quotient.
\end{proposition}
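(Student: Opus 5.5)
The rank can be computed directly from Taylor coefficients. Expanding the exponential,
\[
 \Psi_u^{(k)}(0)=(ia)^k\,\E\bigl[Z^{u+km}\bigr],
\]
which vanishes when \(u+km\) is odd and equals \((ia)^k(u+km-1)!!\) otherwise. So the jet row indexed by \(k\) is supported on the indices \(u\in\{0,\dots,m-2\}\) with \(u\equiv km \pmod 2\). The rank of the span of all rows equals the rank of the infinite matrix \(\bigl(\E Z^{u+km}\bigr)_{k\ge0,\,0\le u\le m-2}\), since the nonzero row scalings \((ia)^k\) do not affect the span.

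\emph{Even \(m\).} Every row is supported on even \(u\), so the odd sector \(u=1,3,\dots,m-3\) lies in the kernel; this is the parity obstruction of \cref{ex:monomial-jacobi}. The even indices are \(u=2j\), \(0\le j\le m/2-1\), which gives \(m/2\) columns. Using \(\E Z^{2n}=2^n\Gamma(n+\tfrac12)/\Gamma(\tfrac12)\), I will take the \(m/2\) rows \(k=0,\dots,m/2-1\) and show the square matrix
\[
 \bigl((2j+km-1)!!\bigr)_{0\le k,\,j<m/2}
\]
is invertible. Set \(s=km/2\). Factoring \((2j-1)!!\) out of column \(j\), the remaining entry is \(\prod_{l=j}^{j+s-1}(2l+1)\), a polynomial \(p_s(j)\) of exact degree \(s\) in \(j\). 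The degrees \(s=0,m/2,m,\dots\) are distinct, but they are not consecutive, so the standard Vandermonde argument does not apply directly. I will instead use an independence argument. Suppose \(\sum_j c_j\,(2j-1)!!\,p_s(j)=0\) for all \(s\) in the chosen set. Equivalently, the discrete measure \(\sum_j c_j (2j-1)!!\,\delta_j\) annihilates the polynomials \(p_s\).

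This is the step I expect to be the main obstacle, because the chosen degrees are lacunary. My first attempt is to use \emph{all} rows \(k\ge0\) rather than only \(k<m/2\). The rank of the infinite family is at least that of any square subfamily, so it suffices to show that the span of \(\{p_{km/2}:k\ge0\}\), restricted to the \(m/2\) points \(j=0,\dots,m/2-1\), is everything. I will write \(p_s(j)=2^s\,\Gamma(j+s+\tfrac12)/\Gamma(j+\tfrac12)\). Then the vanishing relation reads
\[
 \sum_j c_j'\,\Gamma\!\bigl(j+\tfrac12+s\bigr)=0
\]
for infinitely many \(s\) in an arithmetic progression, with \(c_j'=c_j\,(2j-1)!!/\Gamma(j+\tfrac12)\) constant in \(s\). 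As \(s\to\infty\), the ratios \(\Gamma(j+\tfrac12+s)/\Gamma(\tfrac12+s)\sim s^{j}\) have distinct growth rates. The top coefficient must therefore vanish, and downward induction forces all \(c_j'=0\). This gives full column rank \(m/2\), so the kernel is exactly the odd sector.

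\emph{Odd \(m\).} The parity of \(km\) alternates with \(k\). Even \(k\) gives rows supported on even \(u\), and odd \(k\) gives rows supported on odd \(u\). The two column blocks (even \(u\) and odd \(u\)) are therefore handled separately. Each block uses rows along an arithmetic progression in \(k\) of step \(2\), so that \(s\) runs through a progression, and the same Gamma-asymptotic argument applies, now with half-integer or integer shifts according to the block. Each block has full column rank. Together the rank is \((m-1)/2+(m-1)/2=m-1\), which is jet-cyclicity on the whole Jacobi quotient \(\C[x]/(x^{m-1})\).
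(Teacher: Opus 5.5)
Your proof is correct, but its key step differs from the paper's. Both arguments start from \(\Psi_u^{(k)}(0)=(ia)^k\E Z^{mk+u}\) and split rows and columns by parity.

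The paper then works only with finite square moment matrices. For even \(m\) it takes the rows \(k<m/2\); for odd \(m\) it takes the first \(m-1\) rows, split into two parity blocks. It shows their determinants are positive by Andr\'eief's identity. The point is that on the ordered chamber \(0<y_1<\cdots<y_s\), the two integrand determinants are Vandermonde determinants in \(y_j^{m/2}\) (resp.\ \(y_j^{m}\)) and in \(y_j\), so they have the same strict sign. That argument uses only that the law of \(Z^2\) is a positive measure on \((0,\infty)\) with infinite support. It also shows that the first \(m/2\) (resp.\ \(m-1\)) jets already realise the rank.

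You instead use the explicit moments \(\E Z^{2n}=2^n\Gamma(n+\tfrac12)/\Gamma(\tfrac12)\) and a growth-rate argument along infinitely many rows. This is all the proposition asks for, since it concerns the span of all jets, and it is entirely elementary.

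The lacunarity you flagged as the main obstacle is only apparent. Read the matrix in the row variable \(\sigma=\lceil km/2\rceil\) rather than the column variable. After dividing row \(k\) by \(2^{\sigma}\Gamma(\sigma+\tfrac12)/\Gamma(\tfrac12)\) and column \(u\) by \(2^{v}\), with \(v=\lfloor u/2\rfloor\), the entry becomes the rising factorial \((\sigma+\tfrac12)(\sigma+\tfrac32)\cdots(\sigma+v-\tfrac12)\). This is a monic polynomial of degree \(v\) in \(\sigma\), and within each parity block \(v\) runs through consecutive values. Hence every square block with distinct nodes \(\sigma\) is, after positive scalings, a Vandermonde matrix times a unitriangular matrix. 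So your method also recovers the paper's finite-row statement and the positivity of the determinant, without Andr\'eief's identity.
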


\begin{proof}
One has
\[
 \Psi_u^{(k)}(0)
 =
 (ia)^k\E Z^{mk+u}.
\]
Suppose first that \(m=2s\).  Every odd column vanishes.  On the even
columns \(u=2v\), \(0\leq v<s\), take \(0\leq k<s\).  Up to nonzero row
factors, the resulting matrix is
\[
 H_{kv}=\E\!\left[(Z^2)^{sk+v}\right].
\]
Let \(\nu\) be the law of \(Z^2\) on \((0,\infty)\).  Andr\'eief's
identity \cite{Andreief} gives
\[
 \det H
 =
 \frac1{s!}
 \int
 \det(y_j^{sk})_{k,j}
 \det(y_j^v)_{v,j}
 \prod_{j=1}^s\mathrm d\nu(y_j).
\]
On the chamber \(0<y_1<\cdots<y_s\), the two determinants are
Vandermonde determinants in \(y_j^s\) and \(y_j\), so they have the same
strict sign.  The integral is positive.  Thus the even sector has rank
\(s\), proving the second line of \eqref{eq:monomial-observable-rank}.

Now let \(m=2s+1\).  Use the first \(2s\) rows and reorder rows and columns
by parity.  Mixed-parity blocks vanish.  The even--even block is, up to
nonzero row factors,
\[
 \left(
  \E[(Z^2)^{(2s+1)k+v}]
 \right)_{0\leq k,v<s},
\]
and the odd--odd block is
\[
 \left(
  \E[(Z^2)^{(2s+1)k+s+1+v}]
 \right)_{0\leq k,v<s}.
\]
The same Andr\'eief argument, with the second measure multiplied by
\(y^{s+1}\), shows that both determinants are positive.  Hence all
\(2s=m-1\) Jacobi columns are detected.
\end{proof}

\begin{corollary}[Diagonal multiparameter rank]
\label{cor:diagonal-physical-rank}
For
\[
 P_{\boldsymbol a}(x)
 =
 \sum_{j=1}^da_jx_j^{m_j}
\]
with independent parameters \(t_j\) in the exponential, the Jacobi,
thimble, Wick, and physical-jet matrices are tensor products of their
one-dimensional counterparts.  The real-cycle observable rank is
\[
 \prod_{j=1}^d r_{m_j}^{\mathrm{obs}},
\]
with \(r_m^{\mathrm{obs}}\) given by
\eqref{eq:monomial-observable-rank}.
\end{corollary}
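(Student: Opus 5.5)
The plan is to exploit the diagonal structure completely: everything in sight is an external tensor product over the coordinates \(j=1,\dots,d\). With independent parameters \(t_j\), the phase is
\[
 F=\sum_j\Bigl(-\tfrac{x_j^2}{2}+it_ja_jx_j^{m_j}\Bigr),
\]
a sum of one-variable phases in disjoint variables. Hence three things split as tensor products. First, the twisted de Rham complex is the tensor product of the one-variable complexes. Second, the Jacobian algebra is \(\bigotimes_j\C[x_j]/(x_j^{m_j-1})\), with monomial basis \(\prod_jx_j^{u_j}\), \(0\le u_j\le m_j-2\). Third, the thimbles are products of one-variable thimbles, by the same external-product argument used in \cref{cor:gaussian-replica-betti}.

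I will then check each factor in turn.

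\begin{itemize}
\item For the Jacobi transfer of \cref{thm:jacobi-normal-form-transfer}, I choose the contraction as the tensor product of one-variable contractions. The perturbation \(\eta\) is then a sum of commuting single-variable operators, so \(\pi_{P,t}\), and therefore \(B_{\mathrm{Jac}}\), is a Kronecker product.
\item The Wick operator \(\exp(\tfrac12\Delta)\) factors, since \(\Delta=\sum_j\partial_j^2\).
\item The thimble periods factor by Fubini: an integral of a product of functions of separate variables over a product contour.
\item The Betti matrix of \(\R^d\) is \(\bigotimes_jC^{(1)}\), again by \cref{cor:gaussian-replica-betti}.
\end{itemize}

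The only care needed is in orientation and ordering conventions: I will fix the lexicographic order on multi-indices so that every Kronecker product is taken consistently.

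For the observable rank, the jet rows are indexed by \(\mathbf k\in\N^d\) and the columns by \(\mathbf u\). Because \(Z_1,\dots,Z_d\) are independent standard Gaussians,
\[
 \partial_{\mathbf t}^{\mathbf k}\E\Bigl[Z^{\mathbf u}e^{i\sum_jt_ja_jZ_j^{m_j}}\Bigr]\Big|_{\mathbf t=0}
 =\prod_j(ia_j)^{k_j}\,\E Z_j^{m_jk_j+u_j}.
\]
This is exactly \(\prod_j\Psi^{(j)}_{u_j}{}^{(k_j)}(0)\). So the full jet matrix is the Kronecker product of the one-variable jet matrices of \cref{prop:monomial-physical-rank}, with rows over all \(k_j\ge0\). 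The rank of a Kronecker product is the product of the ranks. Each factor has rank \(r^{\mathrm{obs}}_{m_j}\) by that proposition, which gives \(\prod_jr^{\mathrm{obs}}_{m_j}\). Since the row space of a tensor product is the tensor product of the row spaces, the kernel is likewise the span of tensors having at least one odd factor in an even-degree coordinate.

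The main obstacle is interpretive rather than computational. The phrase "with independent parameters \(t_j\)" has to be read correctly: the jet span is taken over all multi-derivatives \(\partial_{\mathbf t}^{\mathbf k}\), not only the single-parameter derivatives along \(t_1=\cdots=t_d\). With a single common parameter the matrix would not factor, and its rank could be smaller. I would state this reading explicitly at the start of the proof. I also note that the rank-multiplicativity claim needs only finitely many rows from each factor, namely \(k_j<r^{\mathrm{obs}}_{m_j}\), as in the proof of \cref{prop:monomial-physical-rank}.
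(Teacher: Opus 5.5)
Your proposal is correct and follows the paper's route. The paper's proof observes that the phase, the twisted de Rham complex, the real contour and the monomial amplitude basis are external tensor products, so the mixed jet matrix is a Kronecker product of the one-variable matrices of \cref{prop:monomial-physical-rank} and its rank is the product of their ranks; your explicit jet formula via independence of the \(Z_j\) writes this out. Three small points need tightening:
\begin{enumerate}
\item The Kronecker structure of \(B_{\mathrm{Jac}}\) should be argued on cohomology rather than at chain level. There the perturbed projection induces the inverse of \(\iota=\bigotimes_j\iota_j\), whatever homotopy is chosen.
\item The single-variable pieces of \(\eta\) graded-commute; they do not commute.
\item The rank count uses \(a_j\neq0\), exactly as in that proposition.
\end{enumerate}
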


\begin{proof}
The phase, twisted de Rham complex, real contour, and polynomial amplitude
basis are external tensor products.  The mixed jet matrix is therefore a
Kronecker product, whose rank is the product of the ranks.
\end{proof}

Before resolving the irregular boundary \(t=0\), one must distinguish it
from an ordinary finite confluence of critical points.  A vanishing Morse
discriminant destroys a chosen critical-point frame, but it need not destroy
the underlying Jacobi pairing.  The following determinant calculation
shows this sharply in one variable.  It illustrates why the Gaussian
corner requires a Rees construction, whereas an ordinary finite confluence
may require only a change of frame rather than a genuine rank loss.

\begin{proposition}[A confluence-safe residue determinant in one variable]
\label{prop:residue-determinant}
Let
\[
 g(x)=a_\mu x^\mu+a_{\mu-1}x^{\mu-1}+\cdots+a_0,
 \qquad a_\mu\neq0,
\]
and define the global residue functional on \(\C[x]/(g)\) by
\[
 \operatorname{Res}_g(f)
 =
 -\operatorname{Res}_{x=\infty}\frac{f(x)\,\mathrm d x}{g(x)}.
\]
In the basis \(1,x,\ldots,x^{\mu-1}\), the residue Gram matrix
\[
 R_g=
 \bigl(\operatorname{Res}_g(x^{i+j})\bigr)_{0\leq i,j<\mu}
\]
satisfies
\begin{equation}
 \det R_g
 =
 (-1)^{\mu(\mu-1)/2}a_\mu^{-\mu}.
 \label{eq:residue-Gram-determinant}
\end{equation}
Thus this residue pairing remains nondegenerate when critical points
coalesce; it can lose rank only when the leading coefficient vanishes.

For \(P=H_3\), take
\[
 g_t(x)=x-itH_3'(x)
 =-3itx^2+x+3it.
\]
In the basis \(1,x\),
\begin{equation}
 R_{g_t}
 =
 \begin{pmatrix}
  0&i/(3t)\\
  i/(3t)&1/(9t^2)
 \end{pmatrix},
 \qquad
 \det R_{g_t}=\frac1{9t^2}.
 \label{eq:H3-residue-Gram}
\end{equation}
In particular the residue pairing stays nondegenerate at the finite
confluence points \(1-36t^2=0\); its only rank boundary in this family is
\(t=0\).
\end{proposition}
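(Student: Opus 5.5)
Identifying the residue pairing with the Hankel moment matrix of the polynomial-division functional gives the determinant directly. Write $s_k=\operatorname{Res}_g(x^k)$. Then $R_g=(s_{i+j})_{0\le i,j<\mu}$ is a Hankel matrix, and everything reduces to computing the $s_k$ for $k\le 2\mu-2$. Expand at infinity:
\[
 \frac{x^k}{g(x)}=\frac{x^{k-\mu}}{a_\mu}\Bigl(1+\frac{a_{\mu-1}}{a_\mu x}+\cdots\Bigr)^{-1}.
\]
The quantity $-\operatorname{Res}_{x=\infty}$ picks out the coefficient of $x^{-1}$. This gives $s_k=0$ for $k<\mu-1$ and $s_{\mu-1}=a_\mu^{-1}$. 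For $k\ge\mu$, the value $s_k$ is some polynomial in the $a_j$ and $a_\mu^{-1}$.

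Hence the Hankel matrix vanishes strictly above its anti-diagonal, i.e. entries with $i+j<\mu-1$ are zero. On the anti-diagonal $i+j=\mu-1$ every entry equals $a_\mu^{-1}$. Below the anti-diagonal the entries are arbitrary, and they do not affect the determinant. Reversing the column order turns the matrix into a triangular one with diagonal $a_\mu^{-1}$. The reversal permutation $j\mapsto\mu-1-j$ has sign $(-1)^{\mu(\mu-1)/2}$. So $\det R_g=(-1)^{\mu(\mu-1)/2}a_\mu^{-\mu}$, which is \eqref{eq:residue-Gram-determinant}. Nondegeneracy then depends only on $a_\mu\neq0$. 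The discriminant of $g$, which records coalescence of roots, never enters. This proves the confluence claim.

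For $H_3$, I compute $g_t=x-it(3x^2-3)=-3itx^2+x+3it$, so $\mu=2$ and $a_2=-3it$. The general formula gives $\det=-a_2^{-2}=-1/(-9t^2)=1/(9t^2)$. To check the displayed matrix, I need $s_0=0$ and $s_1=1/a_2=i/(3t)$. For $s_2$, write $x^2/g=a_2^{-1}\bigl(1-a_1/(a_2x)+\cdots\bigr)$. The $x^{-1}$ coefficient is $-a_1/a_2^2=-1/(-9t^2)=1/(9t^2)$, matching \eqref{eq:H3-residue-Gram}. The determinant is nonzero for all $t\neq0$, including $1-36t^2=0$, where the two roots of $g_t$ coalesce. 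At $t=0$ the leading coefficient vanishes, so the only rank boundary is $t=0$.

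The only delicate point is the sign convention: one must confirm that $-\operatorname{Res}_\infty$ equals the coefficient of $x^{-1}$ in the Laurent expansion at infinity. This holds because $\operatorname{Res}_{\infty}(x^{-1}dx)=-1$. Everything else is routine.
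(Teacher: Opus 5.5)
Your proof is correct and follows the paper's argument essentially verbatim. Both use the vanishing of $\operatorname{Res}_g(x^k)$ for $k\le\mu-2$, the anti-diagonal value $a_\mu^{-1}$, and column reversal with sign $(-1)^{\mu(\mu-1)/2}$; the only cosmetic difference is that the paper gets $\operatorname{Res}_{g_t}(x^2)=1/(9t^2)$ by reducing $x^2\equiv(x+3it)/(3it)$ modulo $g_t$, whereas you read off the $x^{-1}$ Laurent coefficient directly.
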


\begin{proof}
Expansion at infinity gives
\[
 \operatorname{Res}_g(x^k)=0
 \quad(0\leq k\leq\mu-2),
 \qquad
 \operatorname{Res}_g(x^{\mu-1})=a_\mu^{-1}.
\]
Reverse the order of the columns of \(R_g\).  The resulting matrix is
triangular, with every diagonal entry equal to \(a_\mu^{-1}\).  Column
reversal has sign \((-1)^{\mu(\mu-1)/2}\), which proves
\eqref{eq:residue-Gram-determinant}.  No assumption on the discriminant of
\(g\) was used.

For \(g_t=-3itx^2+x+3it\), the first two residue identities give the
first row of \eqref{eq:H3-residue-Gram}.  Reduction modulo \(g_t\) gives
\[
 x^2=\frac{x+3it}{3it},
\]
and hence
\[
 \operatorname{Res}_{g_t}(x^2)=\frac1{9t^2}.
\]
The determinant follows directly.
\end{proof}

The next theorem is the one-replica boundary model.  It isolates the forced
escape scale and the freeness mechanism before the combinatorics of replica
supports is introduced.  If one starts more generally with
\(x=\rho^{-a}u\) and \(t=\rho^b\), equality of the orders of
\(-q(x)/2\) and \(tP_m(x)\) imposes \(b=(m-2)a\).  The choice \(a=1\)
used below is therefore the normalised balanced parametrisation, not an
extraneous blow-up.

\begin{theorem}[Ramified relative de Rham lattice at the irregular boundary]
\label{thm:ramified-Brieskorn-lattice}
Let \(P=P_m+\cdots+P_0\) satisfy
\eqref{eq:principal-isolated} and set
\[
 t=s^{m-2},
 \qquad
 x=s^{-1}u,
 \qquad
 G(u)=-\frac{q(u)}{2}+iP_m(u).
\]
Then
\begin{equation}
 F_{P,s^{m-2}}(s^{-1}u)
 =
 \frac{G(u)}{s^2}
 +\frac{iP_{m-1}(u)}s
 +\sum_{k=0}^{m-2}i\,s^{m-2-k}P_k(u).
 \label{eq:ramified-phase}
\end{equation}
The rescaled differential
\[
 \mathfrak D_s
 =
 s^2\left(
  \mathrm d_u+
  \mathrm d_uF_{P,s^{m-2}}(s^{-1}u)\wedge
 \right)
\]
extends over \(s=0\) and has the form
\begin{equation}
 \begin{split}
 \mathfrak D_s
 &=
 \mathrm dG\wedge
 +is\,\mathrm dP_{m-1}\wedge\\
 &\quad
 +s^2\bigl(\mathrm d_u+i\,\mathrm dP_{m-2}\wedge\bigr)
 +\sum_{k=0}^{m-3}i\,s^{m-k}\mathrm dP_k\wedge.
 \end{split}
 \label{eq:Rees-differential}
\end{equation}
On the \(s\)-adic completion of polynomial forms, it deformation retracts
onto
\begin{equation}
 \mathcal J(G)[[s]][-d],
 \qquad
 \mathcal J(G)
 =
 \C[u_1,\ldots,u_d]/
 (\partial_1G,\ldots,\partial_dG),
 \label{eq:Rees-Jacobian}
\end{equation}
which is free of rank \(\mu=(m-1)^d\).  Thus
the relative \(u\)-de Rham cohomology is a free \(\C[[s]]\)-lattice of
rank \(\mu\) for the ramified rescaled model at \(s=0\).  We call it the
ramified Rees--Jacobi lattice.  No connection in the \(s\)-direction, or
stability under such a connection, is asserted here.

If \(G\) is Morse with pairwise distinct critical values and
\(c_1,\ldots,c_\mu\) are its critical points, then steepest descent on
the continued thimbles gives the leading asymptotic exponential factors
\begin{equation}
 \exp\!\left(
  \frac{G(c_\alpha)}{s^2}
  +\frac{iP_{m-1}(c_\alpha)}s
  +O(1)
 \right),
 \qquad 1\leq\alpha\leq\mu.
 \label{eq:ramified-exponential-levels}
\end{equation}
In particular, the principal asymptotic directions of the
thimble-switching walls between two distinct levels satisfy
\[
 \Im\!\left(
  \frac{G(c_\alpha)-G(c_\beta)}{s^2}
 \right)=0.
\]
The equal-magnitude rays instead satisfy
\[
 \Re\!\left(
  \frac{G(c_\alpha)-G(c_\beta)}{s^2}
 \right)=0.
\]
\end{theorem}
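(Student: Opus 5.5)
The plan is to treat this theorem as the one-replica, unstratified case of \cref{thm:toric-corner-Rees} and to run the same filtered-Koszul and basic-perturbation argument used in \cref{thm:jacobi-normal-form-transfer}.

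\emph{Step 1: the expansion \eqref{eq:ramified-phase}.} This is a direct substitution. With \(x=s^{-1}u\) one has \(-q(x)/2=-q(u)/(2s^2)\), and \(s^{m-2}P_k(s^{-1}u)=s^{m-2-k}P_k(u)\). The term \(k=m\) gives \(iP_m(u)/s^2\), which combines with the quadratic term into \(G/s^2\). The term \(k=m-1\) gives the \(1/s\) term, and the terms \(k\leq m-2\) give nonnegative powers of \(s\).

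\emph{Step 2: the differential \eqref{eq:Rees-differential}.} Multiply \(\mathrm d_u+\mathrm d_uF\wedge\) by \(s^2\). Then \(s^2\mathrm d_u\) remains, and each phase term contributes \(s^2\) times its coefficient, which gives the displayed list. Every power of \(s\) that occurs is nonnegative, so \(\mathfrak D_s\) is defined over \(\C[s]\). Its reduction modulo \(s\) is \(\mathrm dG\wedge\).

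\emph{Step 3: the retraction and freeness.} First show that \(\partial_1G,\ldots,\partial_dG\) is a regular sequence of length \(\mu\). The leading homogeneous parts of these derivatives are \(i\partial_jP_m\), which have no common zero except the origin by \eqref{eq:principal-isolated}. Hence the affine system has no solutions at infinity. By Bézout, or by the same weight-filtration argument as in \cref{thm:jacobi-normal-form-transfer}, the sequence is regular and \(\dim\mathcal J(G)=(m-1)^d\).

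Next, choose a contraction \((\iota,\pi,h)\) of \((\Omega^\bullet_{\mathrm{pol}},\mathrm dG\wedge)\) onto \(\mathcal J(G)[-d]\). The filtration by the top-degree weight \(\deg-(m-1)p\) is the main obstacle here, because \(\mathrm dG\) is not homogeneous. I would handle it as follows:
\begin{itemize}
\item Build the contraction for the homogeneous leading part \(i\,\mathrm dP_m\wedge\), exactly as in \cref{thm:jacobi-normal-form-transfer}.
\item Treat \(-\mathrm dq/2\) as a degree-lowering perturbation relative to that weight; it becomes locally nilpotent after composing with \(h\).
\item Conclude from the perturbation lemma that the resulting contraction is finite on polynomial forms.
\end{itemize}

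Then set \(\eta=\mathfrak D_s-\mathrm dG\wedge\in s\,\mathrm{End}\). The series \(\sum_k(-\eta h)^k\) converges \(s\)-adically because each factor carries at least one power of \(s\). The basic perturbation lemma \cite{GugenheimLambe} gives a contraction of the completed complex onto \(\mathcal J(G)[[s]][-d]\), with transferred differential zero since the target sits in a single degree. Cohomology is therefore free of rank \(\mu\).

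\emph{Step 4: exponential levels and walls.} Let \(G\) be Morse with distinct critical values. By the implicit function theorem, each \(c_\alpha\) continues to a critical point \(c_\alpha(s)\) of the rescaled phase. Its critical action is \(G(c_\alpha)/s^2+iP_{m-1}(c_\alpha)/s+O(1)\); the correction \(c_\alpha(s)-c_\alpha=O(s)\) only enters at order \(O(1)\), because \(\mathrm dG(c_\alpha)=0\).

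Steepest descent on the thimble through \(c_\alpha(s)\) then gives \eqref{eq:ramified-exponential-levels}, with nondegenerate Hessian \(s^{-2}\operatorname{Hess}G(c_\alpha)\). Thimble switching between \(\alpha\) and \(\beta\) happens when the imaginary parts of the two actions agree. Asymptotically this is governed by the leading term \((G(c_\alpha)-G(c_\beta))/s^2\). In the same way, equal magnitude is governed by the real part of that term.
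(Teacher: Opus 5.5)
Your proposal is correct and follows the paper's proof step by step. Both arguments use direct substitution for the phase and the rescaled differential, the regular-sequence/Bézout count giving \(\dim\mathcal J(G)=(m-1)^d\), an \(s\)-adically convergent basic perturbation series around a contraction of \((\Omega^\bullet,\mathrm dG\wedge)\), and implicit-function continuation of the critical points for the exponential levels and walls. Your extra step of building that contraction by perturbing \(i\,\mathrm dP_m\wedge\) with \(-\mathrm dq/2\wedge\) is valid, since this perturbation lowers the weight by \(m-2\geq1\), but it is unnecessary: the paper simply chooses complements over \(\C\). One small slip: \(\partial_1G,\ldots,\partial_dG\) is a regular sequence of length \(d\), not \(\mu\).
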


\begin{proof}
Substitution of \(t=s^{m-2}\) and \(x=s^{-1}u\) gives
\[
 itP_k(x)
 =
 i\,s^{m-2-k}P_k(u),
\]
while the Gaussian term is \(-s^{-2}q(u)/2\).  This proves
\eqref{eq:ramified-phase}; multiplication of its twisted differential by
\(s^2\) gives \eqref{eq:Rees-differential}.

The leading homogeneous parts of
\(\partial_1G,\ldots,\partial_dG\) are
\(i\partial_1P_m,\ldots,i\partial_dP_m\).  By
\eqref{eq:principal-isolated}, they have no common projective zero.
Hence \(\partial_1G,\ldots,\partial_dG\) form a zero-dimensional regular
sequence of degrees \(m-1\), and
\[
 \dim_\C\mathcal J(G)=(m-1)^d.
\]
Choose a contraction of the Koszul complex
\((\Omega_\C^\bullet,\mathrm dG\wedge)\) onto
\(\mathcal J(G)[-d]\).  Every remaining term in
\eqref{eq:Rees-differential} is divisible by \(s\).  The basic
perturbation series therefore converges in the \(s\)-adic topology and
produces the deformation retract
\eqref{eq:Rees-Jacobian}.  Since the target is concentrated in degree
\(d\), its transferred differential is zero, proving freeness and the
rank statement.

If \(G\) is Morse, the implicit function theorem continues each
\(c_\alpha\) to a critical point \(c_\alpha(s)=c_\alpha+O(s)\) of the
rescaled phase.  Evaluation of \eqref{eq:ramified-phase} at
\(c_\alpha(s)\) gives
\[
 F_{P,s^{m-2}}(s^{-1}c_\alpha(s))
 =
 s^{-2}G(c_\alpha)
 +is^{-1}P_{m-1}(c_\alpha)+O(1).
\]
The holomorphic Morse lemma and steepest descent on the corresponding
thimble yield
\eqref{eq:ramified-exponential-levels}.  Comparing their phases gives the
principal thimble-switching directions, while comparing their real parts
gives the equal-magnitude directions.
\end{proof}

\begin{remark}[What the lattice does not identify]
\label{rem:Rees-boundary-scope}
The lattice controls the algebraic rank of the ramified rescaled model, but
it does not turn sectorial cycle coordinates into rational functions.
Together with steepest descent, it records the leading exponential scales;
the Stokes action belongs to the separately constructed Betti local system.
The finite replica moments are coefficients of the regular physical level,
whereas the other scales may be exponentially flat or dominant.
Identifying a chosen real contour in the full thimble basis remains a
Betti-side calculation; \cref{thm:h3-airy-stokes} performs it completely
for the cubic.
\end{remark}

\section{One-dimensional cycle coordinates and Stokes trees}
\label{sec:one-dimensional-stokes-trees}

The rapid-decay topology is completely combinatorial in one variable.  The
following results give the signed-path formula, its monomial Coxeter orbit,
and the tensor-product extension to split phases.

\begin{theorem}[The one-dimensional Stokes tree]
\label{thm:one-dimensional-stokes-tree}
Let \(F\) be a tame Morse polynomial of degree \(m\) in one variable and
fix an admissible direction.  Denote by
\[
 E_0,\ldots,E_{m-1}
\]
the \(m\) decay sectors at infinity.  There is a canonical boundary
isomorphism
\begin{equation}
 \partial:
 H_1^{\mathrm{rd}}(F;\mathbb Z)
 \xrightarrow{\ \simeq\ }
 \widetilde H_0(\{E_0,\ldots,E_{m-1}\};\mathbb Z).
 \label{eq:one-dimensional-boundary-isomorphism}
\end{equation}
If \(\Gamma_1,\ldots,\Gamma_{m-1}\) is a distinguished integral thimble
basis, join two vertices whenever the corresponding thimble has its two
ends in those sectors.  The resulting oriented graph \(T\) is a spanning
tree.

Let \(\gamma\) be an admissible contour directed from \(E_p\) to \(E_q\).
Then
\begin{equation}
 [\gamma]
 =
 \sum_{e\in[p,q]_T}
 \epsilon_e(p,q)[\Gamma_e],
 \qquad
 \epsilon_e(p,q)\in\{-1,1\},
 \label{eq:Stokes-tree-path}
\end{equation}
where \([p,q]_T\) is the unique path from \(p\) to \(q\), and the sign is
positive precisely when the orientation of \(e\) agrees with that path.
Thus the physical Betti row is obtained by a finite path search and all
of its entries belong to \(\{0,\pm1\}\).  If the contour lies on a Stokes
direction, the two lateral trees give the two lateral rows, related by
\eqref{eq:Betti-wall-crossing}.
\end{theorem}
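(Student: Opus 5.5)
The plan is to reduce everything to the topology of the rapid-decay pair \((\C,\{\Re F\ll0\})\) in one variable. First I will establish the boundary isomorphism \eqref{eq:one-dimensional-boundary-isomorphism}. Because \(F\) is tame of degree \(m\), the region \(\{\Re F\le -R\}\) for large \(R\) is, up to homotopy, the disjoint union of \(m\) contractible sectors \(E_0,\ldots,E_{m-1}\), since the leading term \(a_mx^m\) has \(m\) directions of decay. The long exact sequence of the pair gives
\[
 0=H_1(\C)\to H_1(\C,\{\Re F\ll0\})\xrightarrow{\partial}\widetilde H_0(\{\Re F\ll0\})\to\widetilde H_0(\C)=0,
\]
where reduced homology is used and \(H_1(\C)=0\). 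So \(\partial\) is an isomorphism onto \(\widetilde H_0(\{E_0,\ldots,E_{m-1}\};\mathbb Z)\cong\mathbb Z^{m-1}\). This rank agrees with \(\mu=m-1\) from \cref{thm:jacobi-normal-form-transfer}. Canonicity holds because the sectors are only defined up to enlarging \(R\), and the inclusions between these choices are homotopy equivalences.

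Next I show the thimble graph \(T\) is a spanning tree. Each descending thimble \(\Gamma_e\), for the fixed admissible direction, is the union of two gradient lines of \(\Re F\) leaving a Morse critical point. Both lines run off to infinity in decay sectors, and because the critical values are distinct and the direction is admissible (no saddle connections), the two ends lie in \emph{different} sectors. Hence \(\partial[\Gamma_e]=[E_q]-[E_p]\) for the sectors \(p\ne q\) that it joins, and \(T\) has \(m\) vertices and \(m-1\) edges. The thimbles form a basis of \(H_1^{\mathrm{rd}}\), so their boundaries form a basis of \(\widetilde H_0\). A graph whose edge boundary vectors form a basis of the reduced \(0\)-chains of its vertex set has no cycles, since a cycle would give a linear relation. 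With \(m-1\) edges on \(m\) vertices, an acyclic graph is connected, so \(T\) is a spanning tree. I expect this step to be the main obstacle: I must justify carefully that each thimble really ends in two distinct sectors. Two ends in the same sector would give zero boundary, which contradicts linear independence via \(\partial\), so independence itself can be used to close that gap.

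The path formula \eqref{eq:Stokes-tree-path} then follows by applying \(\partial\). An admissible contour from \(E_p\) to \(E_q\) has \(\partial[\gamma]=[E_q]-[E_p]\). The signed sum of edges along the unique tree path \([p,q]_T\) telescopes to the same boundary, with each \(\epsilon_e=+1\) exactly when the edge orientation agrees with the path. Injectivity of \(\partial\) then gives equality in \(H_1^{\mathrm{rd}}\), so all entries lie in \(\{0,\pm1\}\), and uniqueness of tree paths makes the computation a finite search. Finally, on a Stokes direction the two lateral admissible directions give two thimble bases, related by an integral \(S_{\mathrm{top}}\) and hence by two lateral trees. Applying the formula in each gives the two rows, and their relation is \eqref{eq:Betti-wall-crossing} from \cref{thm:stokes-betti-reconstruction}, because the contour class is fixed while the basis changes.
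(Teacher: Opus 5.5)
Your proposal is correct and follows essentially the paper's own route: the long exact sequence of the pair $(\C,\{\Re F\ll0\})$ gives the boundary isomorphism, the thimble boundaries forming a basis of $\widetilde H_0$ force the thimble graph to be a spanning tree, and injectivity of $\partial$ turns the telescoping path sum into the class identity. Your acyclicity-plus-edge-count argument is the paper's ``invertible reduced incidence matrix iff tree'' criterion written out. Your treatment of the lateral rows via \eqref{eq:Betti-wall-crossing} fills in the one clause that the paper's proof leaves implicit.
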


\begin{proof}
The relative-homology description is the one-dimensional form of the
vanishing-homology model for saddle-point integrals \cite{Pham}.
For a sufficiently large disc, the rapid-decay region outside the disc
has \(m\) contractible components, one in each \(E_j\).  Excision and the
long exact sequence of the pair give
\[
 H_1^{\mathrm{rd}}(F;\mathbb Z)
 \simeq
 H_1(\C,\{E_0,\ldots,E_{m-1}\};\mathbb Z)
 \simeq
 \widetilde H_0(\{E_0,\ldots,E_{m-1}\};\mathbb Z),
\]
which proves \eqref{eq:one-dimensional-boundary-isomorphism}.  The boundary
of an oriented thimble with initial sector \(E_a\) and terminal sector
\(E_b\) is \(e_b-e_a\).  Since the \(m-1\) thimbles form a basis, their
reduced incidence matrix is invertible over \(\mathbb Z\).  A graph on
\(m\) vertices with \(m-1\) edges has invertible reduced incidence matrix
if and only if it is a tree.

The boundary of \(\gamma\) is \(e_q-e_p\).  In a tree, this vector is the
signed sum of the incidence vectors along the unique path from \(p\) to
\(q\).  Injectivity of the boundary map gives
\eqref{eq:Stokes-tree-path}.
\end{proof}

\begin{corollary}[Monomial Gaussian row]
\label{cor:monomial-Gaussian-Betti-row}
Let
\[
 F_{m,t}(x)=-\frac{x^2}{2}+\frac{it}{m}x^m,
 \qquad t>0,
\]
and orient the physical contour from \(-\infty\) to \(+\infty\).  Label
the decay sectors counterclockwise by the centres
\[
 \theta_j=\frac{\pi/2+2\pi j}{m},
 \qquad 0\leq j<m,
\]
using the Gaussian lateral prescription at the two boundary directions,
and put \(\ell=\lfloor m/2\rfloor\).  If \(L_j\) is an oriented
half-contour from a common finite base point to \(E_j\), define the
adjacent sector basis
\[
 \Gamma_j=-L_j+L_{j+1},
 \qquad 0\leq j\leq m-2.
\]
Then
\begin{equation}
 [\R]
 =
 L_0-L_\ell
 =
 -\sum_{j=0}^{\ell-1}[\Gamma_j].
 \label{eq:monomial-real-line-Betti-row}
\end{equation}
Hence in this basis the complete physical Betti row is
\[
 C_{\R}^{\mathrm{adj}}
 =
 (-1,\ldots,-1,0,\ldots,0),
\]
with exactly \(\ell\) entries equal to \(-1\).  If a distinguished
Lefschetz basis satisfies
\(\boldsymbol\Gamma^{\mathrm{th}}
=V\boldsymbol\Gamma^{\mathrm{adj}}\) with
\(V\in\operatorname{GL}_{m-1}(\mathbb Z)\), then
\begin{equation}
 C_{\R}^{\mathrm{th}}
 =
 C_{\R}^{\mathrm{adj}}V^{-1}.
 \label{eq:monomial-thimble-row}
\end{equation}
\end{corollary}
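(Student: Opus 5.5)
The plan is to apply the Stokes-tree theorem (\cref{thm:one-dimensional-stokes-tree}) to the monomial phase. There the class of an admissible contour is determined by its boundary in \(\widetilde H_0\) of the decay sectors. So it suffices to locate the two ends of the real line among the sectors \(E_j\), and to verify that the \(\Gamma_j=-L_j+L_{j+1}\) form an integral basis with the expected boundaries.

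First I determine the decay sectors. For \(|x|\to\infty\) the term \(\frac{it}{m}x^m\) dominates. With \(x=Re^{i\theta}\), its real part is \(-\frac tm R^m\sin(m\theta)\). Decay therefore requires \(\sin(m\theta)>0\), and the sector centres are exactly \(\theta_j=(\pi/2+2\pi j)/m\).

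The real axis sits on boundary directions. At \(\theta=0\) we have \(\sin(m\theta)=0\), and decay there comes only from the Gaussian term. So the Gaussian lateral prescription is needed to assign each end to a sector: perturb the end slightly into the adjacent decay sector, compatibly with the admissible direction.

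\begin{itemize}
\item The ray \(\theta=0^+\) enters \(E_0\), whose range is \((0,\pi/m)\).
\item The ray \(\theta=\pi\) satisfies \(\sin(m\pi)=0\), so it too is a boundary direction. The sector just below it, of the form \((\pi-\pi/m,\pi)\), has centre \(\theta_j\) with \(2\pi j+\pi/2\approx m\pi-\pi/2\). This gives \(j\approx(m-1)/2\), i.e. \(j=\ell\) for odd \(m\).
\item For even \(m\) one instead checks which side the lateral prescription selects. I expect it again to give \(j=\ell=m/2\) after the sign bookkeeping; the exact check of which neighbouring sector the convention picks is the main obstacle.
\end{itemize}

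Granting this, the contour runs from \(E_\ell\) (at \(-\infty\)) to \(E_0\) (at \(+\infty\)). Its boundary is \(e_0-e_\ell\), and hence \([\R]=L_0-L_\ell\).

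Next I check that the \(\Gamma_j\) form a basis. Each \(\Gamma_j\) has boundary \(e_{j+1}-e_j\), so the associated graph is the path \(0-1-\cdots-(m-1)\). This is a spanning tree, and by \cref{thm:one-dimensional-stokes-tree} the \(\Gamma_j\) form a \(\mathbb Z\)-basis, since the reduced incidence matrix is unitriangular. The telescoping sum gives
\[
 \sum_{j=0}^{\ell-1}\Gamma_j=L_\ell-L_0 .
\]
Injectivity of \(\partial\) then yields \eqref{eq:monomial-real-line-Betti-row}, and the row consists of exactly \(\ell\) entries \(-1\) followed by zeros.

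Finally, \eqref{eq:monomial-thimble-row} is a change of basis. If \(\boldsymbol\Gamma^{\mathrm{th}}=V\boldsymbol\Gamma^{\mathrm{adj}}\), then
\[
 [\R]=C^{\mathrm{adj}}\boldsymbol\Gamma^{\mathrm{adj}}=C^{\mathrm{adj}}V^{-1}\boldsymbol\Gamma^{\mathrm{th}},
\]
exactly as in \eqref{eq:analytic-physical-gauge}.
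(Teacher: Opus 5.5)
Your route is the same as the paper's. You locate the sectors containing the two ends of \(\R\), write \([\R]=L_0-L_\ell\), telescope \(\sum_{j<\ell}\Gamma_j=L_\ell-L_0\), and change basis.

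The one step you left as an expectation, the end at \(-\infty\) for even \(m\), is the only non-formal content of the corollary. It closes by a one-line sign check, not by a convention. For \(\theta=\pi+\varepsilon\) one has \(\sin(m\theta)=(-1)^m\sin(m\varepsilon)\). So for \(m=2\ell\) the leading term decays only on the side \(\varepsilon>0\), that is, in \((\pi,\pi+\pi/m)=(2\pi\ell/m,(2\ell+1)\pi/m)\). This is \(E_\ell\), with centre \(\theta_\ell=\pi+\pi/(2m)\), while \((\pi-\pi/m,\pi)\) is a growth sector. For odd \(m\) the same formula puts the decay side at \(\varepsilon<0\), which confirms your \(E_\ell\) there.

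At each end of the real axis exactly one of the two neighbouring open sectors is a decay sector. The ``Gaussian lateral prescription'' is therefore forced, and there is no choice to bookkeep. Equivalently, for even \(m\) the symmetry \(F_{m,t}(-x)=F_{m,t}(x)\) carries \(E_0\) to \(E_{m/2}\). For odd \(m\) and real \(t\), the symmetry \(F_{m,t}(-\bar x)=\overline{F_{m,t}(x)}\) carries \(E_0\) to \(E_\ell\). The paper simply asserts that the Gaussian term selects \(E_0\) at \(+\infty\) and \(E_\ell\) at \(-\infty\).

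With this computation inserted, your argument is complete. Your extra check that the \(\Gamma_j\) form a \(\mathbb Z\)-basis is sound: it follows from the boundary isomorphism, since the \(e_{j+1}-e_j\) form a basis of \(\widetilde H_0\). That uses the converse of the stated Stokes-tree direction, which is immediate because \(\partial\) is an isomorphism. The paper instead takes the basis property as part of the definition.
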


\begin{proof}
The leading exponential is decreasing in the sectors centred at the
\(\theta_j\).  At \(+\infty\), the negative Gaussian term selects the
lateral sector \(E_0\); at \(-\infty\), it selects
\(E_\ell\).  The oriented real contour therefore has boundary
\(e_0-e_\ell\), so
\[
 [\R]=L_0-L_\ell.
\]
Since
\[
 \sum_{j=0}^{\ell-1}\Gamma_j
 =
 L_\ell-L_0,
\]
\eqref{eq:monomial-real-line-Betti-row} follows.  The change-of-basis
identity gives \eqref{eq:monomial-thimble-row}.
\end{proof}

\begin{proposition}[Coxeter orbit of the monomial physical contour]
\label{prop:monomial-Coxeter-orbit}
In the adjacent basis of
\cref{cor:monomial-Gaussian-Betti-row}, let \(M_m\) be the positive
Coxeter monodromy of the \(A_{m-1}\) sector fan, fixed by
\[
 M_m\Gamma_0=-\sum_{j=0}^{m-2}\Gamma_j,
 \qquad
 M_m\Gamma_j=\Gamma_{j-1}
 \quad(1\leq j\leq m-2).
\]
Then
\[
 \chi_{M_m}(\lambda)=\frac{\lambda^m-1}{\lambda-1}.
\]
For the physical class
\[
 r_m=[\R]=-\sum_{j=0}^{\ell-1}\Gamma_j,
 \qquad \ell=\lfloor m/2\rfloor,
\]
one has
\begin{equation}
 \dim_\C\Span\{M_m^kr_m:k\in\mathbb Z\}
 =
 \begin{cases}
  m-1,&m\ \mathrm{odd},\\[1mm]
  m/2,&m\ \mathrm{even}.
 \end{cases}
 \label{eq:monomial-Betti-orbit-rank}
\end{equation}
For even \(m\), the missing dimension equals the dimension of the
parity-odd Jacobi sector.  Thus the Betti orbit rank agrees numerically
with \eqref{eq:monomial-observable-rank}; this rank count alone does not
assert a canonical identification of the two sectors.  The rank in
\eqref{eq:monomial-Betti-orbit-rank} is the rank of the monodromy orbit
ledger; the single physical row \(C_\R\) itself, of course, has matrix
rank one.
\end{proposition}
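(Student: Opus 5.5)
The plan is to exploit the fact that \(M_m\) is a companion matrix, so that the orbit span of any vector is computed by a single gcd in \(\C[\lambda]\). The natural cyclic vector is \(\Gamma_{m-2}\). From the defining relations,
\[
 M_m^k\Gamma_{m-2}=\Gamma_{m-2-k}\qquad(0\leq k\leq m-2),
\]
and therefore
\[
 M_m^{m-1}\Gamma_{m-2}=M_m\Gamma_0=-\sum_{k=0}^{m-2}M_m^k\Gamma_{m-2}.
\]

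\textbf{Step 1: the characteristic polynomial.} The vectors \(\Gamma_{m-2},M_m\Gamma_{m-2},\ldots,M_m^{m-2}\Gamma_{m-2}\) form a basis. In this basis \(M_m\) is the companion matrix of
\[
 \Phi_m(\lambda)=1+\lambda+\cdots+\lambda^{m-1}=\frac{\lambda^m-1}{\lambda-1},
\]
which gives \(\chi_{M_m}\).

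\textbf{Step 2: reduction to a gcd.} The roots of \(\Phi_m\) are the nontrivial \(m\)-th roots of unity, and they are simple. Hence \(M_m\) is diagonalisable and \(M_m^m=\Id\), so the span over \(k\in\mathbb Z\) equals the span over \(k\geq0\). The space becomes the \(\C[\lambda]\)-module \(\C[\lambda]/(\Phi_m)\), with \(\Gamma_{m-2}\mapsto1\). A class \(f(M_m)\Gamma_{m-2}\) then generates a cyclic submodule of dimension
\[
 \deg\Phi_m-\deg\gcd(\Phi_m,f).
\]

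\textbf{Step 3: the polynomial of the physical class.} Since \(\Gamma_j=M_m^{m-2-j}\Gamma_{m-2}\), we have \(r_m=f(M_m)\Gamma_{m-2}\) with
\[
 f(\lambda)=-\sum_{j=0}^{\ell-1}\lambda^{m-2-j}=-\lambda^{m-1-\ell}\,\frac{\lambda^\ell-1}{\lambda-1}.
\]
The factor \(\lambda^{m-1-\ell}\) has no common root with \(\Phi_m\). So a root \(\zeta\neq1\), \(\zeta^m=1\), is shared with \(f\) exactly when \(\zeta^\ell=1\), that is, when \(\zeta\) is a nontrivial \(\gcd(m,\ell)\)-th root of unity.

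\textbf{Step 4: the two parity cases.}
\begin{itemize}
\item For \(m\) odd, \(\ell=(m-1)/2\) and \(\gcd(m,\ell)=1\). The gcd is trivial, so the orbit span has dimension \(m-1\).
\item For \(m\) even, \(\ell=m/2\) divides \(m\). There are \(m/2-1\) common roots, so the orbit span has dimension \(m-1-(m/2-1)=m/2\).
\end{itemize}
This proves \eqref{eq:monomial-Betti-orbit-rank}.

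\textbf{Step 5: comparison with the Jacobi sector.} The odd part of the Jacobi basis \(1,x,\ldots,x^{m-2}\) is \(x,x^3,\ldots,x^{m-3}\). It has \((m-2)/2=m/2-1\) elements, which matches the missing dimension. This agrees numerically with \eqref{eq:monomial-observable-rank}, and as the statement notes, no canonical identification of the two sectors is claimed. Finally, the single row \(C_\R\) is a nonzero vector, so it has rank one.

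\textbf{Main obstacle.} The only delicate point is bookkeeping. One must check that the stated action of \(M_m\) (in particular the sign in \(M_m\Gamma_0\)) really makes \(\Gamma_{m-2}\) cyclic with minimal polynomial \(\Phi_m\), and one must track the factor \(\lambda^{m-1-\ell}\) correctly. I would first verify both on \(m=3\) and \(m=4\) by direct computation before writing the general argument.
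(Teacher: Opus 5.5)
Your proof is correct and is essentially the paper's argument in module-theoretic form. The paper reads off \(\chi_{M_m}\) from the cyclic rotation of the \(m\) sector ends modulo their sum-zero relation, and then checks which Fourier components \(-\sum_{j<\ell}\zeta^{-kj}\) of \(r_m\) vanish; your gcd computation in \(\C[\lambda]/(1+\lambda+\cdots+\lambda^{m-1})\) is the Chinese-remainder version of the same eigenspace count, so both arguments reduce to deciding when \(\zeta^\ell=1\) for a nontrivial \(m\)-th root of unity \(\zeta\). Your explicit cyclic vector \(\Gamma_{m-2}\) also justifies the ``nonzero common factor'' that the paper leaves implicit, since every eigencomponent of a cyclic vector is nonzero.
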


\begin{proof}
The displayed action cyclically rotates the \(m\) sector ends subject to
their single sum-zero relation.  Hence its eigenvalues are the nontrivial
\(m\)-th roots of unity, proving the characteristic polynomial.  Put
\(\zeta=e^{2\pi i/m}\).  In the Fourier eigenspace indexed by
\(1\leq k<m\), the component of \(r_m\), up to a nonzero common factor,
is
\[
 -\sum_{j=0}^{\ell-1}\zeta^{-kj}
 =
 -\frac{1-\zeta^{-k\ell}}{1-\zeta^{-k}}.
\]
If \(m\) is odd, then \(\gcd(\ell,m)=1\), so every component is nonzero.
If \(m=2\ell\), then
\(\zeta^{-k\ell}=(-1)^k\), and the component is nonzero exactly for odd
\(k\).  There are \(\ell=m/2\) such indices.  This proves the rank
formula.  Comparison with the jet side follows from the dimensions of the
even and odd summands of \(\C[x]/(x^{m-1})\).
\end{proof}

\begin{corollary}[Split phases]
\label{cor:split-Stokes-tree}
For a split phase
\[
 F(x_1,\ldots,x_d)=F_1(x_1)+\cdots+F_d(x_d)
\]
and a product physical contour, the Betti matrix is the Kronecker product
of the one-dimensional signed path matrices.  In particular, its entries
are in \(\{0,\pm1\}\), and its rank is the product of the ranks of the
one-dimensional physical ledgers.
\end{corollary}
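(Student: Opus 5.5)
The plan is to reduce everything to the one-dimensional Stokes-tree description by the Künneth formula, with all structure carried by external products. Write \(F=F_1\boxplus\cdots\boxplus F_d\) and let \(\gamma=\gamma_1\times\cdots\times\gamma_d\) be the product contour. If each \(F_j\) is tame Morse, then every critical point of \(F\) is a tuple \(c=(c_1,\ldots,c_d)\) with \(F(c)=\sum_jF_j(c_j)\), and a product of one-variable thimbles is a thimble of \(F\). When \(\sum_j\) of the critical values are not pairwise distinct, I will perturb the admissible direction, or work in the product of distinguished bases, which is all that is needed for a Kronecker statement.

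First I will invoke the relative Künneth isomorphism for rapid-decay homology:
\[
 H_d^{\mathrm{rd}}(F;\mathbb Z)\simeq\bigotimes_{j=1}^dH_1^{\mathrm{rd}}(F_j;\mathbb Z).
\]
This holds because each \(H_1^{\mathrm{rd}}(F_j;\mathbb Z)\) is free by \cref{thm:one-dimensional-stokes-tree}, so there are no Tor terms. It also needs the rapid-decay region of the sum to be deformable, after choosing large discs, onto the union of the products of the individual decay regions with everything else. Under this isomorphism, the products \(\Gamma_{e_1}\times\cdots\times\Gamma_{e_d}\) of one-variable thimbles form an integral basis, and the dual ascending products satisfy the normalisation \eqref{eq:dual-thimble-normalisation}. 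The reason is that oriented intersection numbers of products are products of intersection numbers, with product orientations and the standard sign convention; since all cycles are middle-dimensional, every factor has dimension one, and I will fix orientations so that the shuffle signs are absorbed.

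Next, \eqref{eq:Stokes-tree-path} writes each factor \([\gamma_j]\) as a signed path sum \(\sum_e\epsilon^{(j)}_e[\Gamma^{(j)}_e]\). Multiplying out gives
\[
 [\gamma]=\sum_{e_1,\ldots,e_d}\prod_j\epsilon^{(j)}_{e_j}\,[\Gamma_{e_1}\times\cdots\times\Gamma_{e_d}].
\]
By \eqref{eq:physical-betti-matrix}, the coefficients of this expansion are exactly the entries of the Kronecker product of the row vectors. For a family of product contours, whether a product of families or stacked rows, the Betti matrix is therefore \(C^{(1)}\otimes\cdots\otimes C^{(d)}\). Each entry is a product of numbers in \(\{0,\pm1\}\), so it again lies in \(\{0,\pm1\}\). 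The rank statement is then the standard identity \(\operatorname{rank}(A\otimes B)=\operatorname{rank}A\cdot\operatorname{rank}B\); for the monodromy ledgers I apply it to the product of the one-variable monodromies acting factorwise, which is how the orbit spans in \cref{prop:monomial-Coxeter-orbit} were defined.

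The main obstacle is the Künneth step. I must justify that the product of thimbles for the summands is a genuine basis of descending thimbles for \(F\), even though product critical values may coincide and so fail the distinct-value hypothesis of an admissible chamber. I would handle this by working in the product basis directly, since \cref{thm:stokes-betti-reconstruction} only uses perfectness of the pairing and the dual normalisation. Both of these follow from the external-product structure of the pair \((\C^d,\{\Re F\ll0\})\), which I would verify as in the proof of \cref{cor:gaussian-replica-betti}.
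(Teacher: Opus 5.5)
Your proposal is correct and is the paper's own argument, namely the K\"unneth isomorphism for rapid-decay homology combined with multiplicativity of oriented intersection numbers; your signed-path expansion of $[\gamma_1\times\cdots\times\gamma_d]$, the shuffle-sign choice for the dual ascending thimbles, and the use of the product basis when product critical values coincide simply make that one-line proof explicit. One small correction: the Kronecker identity and the rank formula hold for the full product family $\{\gamma^{(1)}_{u_1}\times\cdots\times\gamma^{(d)}_{u_d}\}$, whereas an arbitrary stacked list of product contours gives only a row-wise (face-splitting) Kronecker product, whose rank need not be the product of the factor ranks.
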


\begin{proof}
This is the K\"unneth isomorphism for rapid-decay homology together with
multiplicativity of the oriented intersection pairing.
\end{proof}

\section{Cyclicity of a selected contour}
\label{sec:selected-contour-cyclicity}

The following criterion is logically separate from the finite measurement
theorem.  It records when the monodromy span of a single rapid-decay contour
fills a prescribed subconnection and exhibits the quartic parity obstruction.

The finite replica theorem proves faithfulness on the finite contraction
quotient.  It does not imply that one fixed rapid-decay cycle is cyclic in
the entire irregular direct image.  The exact distinction can now be
read directly from \eqref{eq:physical-rank-exact}.

Let \((\mathcal H_P,\nabla)\) be a finite-rank meromorphic connection on a
connected ordinary domain \(U\), let
\(\mathcal W_P\subset\mathcal H_P\) be a monodromy-stable subspace of rank
\(R\), and fix a basis
\(\omega_1,\ldots,\omega_R\) of one fibre.  If \(\Gamma_P\) is a
rapid-decay cycle, denote the dual monodromy representation by \(\rho_P\).

\begin{proposition}[Period criterion for strong cyclicity]
\label{prop:period-cyclicity}
The cycle \(\Gamma_P\) is cyclic on \(\mathcal W_P\) if and only if there
exist loops \(\ell_1,\ldots,\ell_R\) in \(U\) such that
\begin{equation}
 \Delta_{\mathrm{per}}(P)
 =
 \det\left(
  \ip{\rho_P(\ell_i)\Gamma_P}{\omega_j}
 \right)_{1\leq i,j\leq R}
 \neq0.
 \label{eq:period-cyclicity}
\end{equation}
Equivalently, if \(\Pi_P\) denotes the matrix in
\eqref{eq:period-cyclicity}, then its Hermitian Gram matrix is positive
definite:
\[
 \Pi_P^*\Pi_P>0.
\]
On a fixed tame sectorial stratum the nonvanishing of
\(\Delta_{\mathrm{per}}\) defines an analytic-open locus.  It is not, in
general, known from this argument to be Zariski open.
\end{proposition}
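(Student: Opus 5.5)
The plan is to reduce cyclicity to a rank statement about a single matrix of periods. By definition, \(\Gamma_P\) is cyclic on \(\mathcal W_P\) when its orbit under the dual monodromy representation spans the dual of the fibre \(\mathcal W_P^\vee\). Here \(\mathcal W_P^\vee\) has dimension \(R\), and the pairing with \(\omega_1,\ldots,\omega_R\) identifies it with \(\C^R\).

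\emph{Step 1: the forward implication.} Consider the linear span \(\mathcal O=\Span\{\rho_P(\ell)\Gamma_P:\ell\in\pi_1(U)\}\), restricted as functionals to \(\mathcal W_P\). The row vectors \(\bigl(\ip{\rho_P(\ell)\Gamma_P}{\omega_j}\bigr)_j\) are the coordinates of these functionals in the dual basis. So cyclicity means exactly that these rows span \(\C^R\). If they do, one can extract \(R\) linearly independent rows, which gives loops \(\ell_1,\ldots,\ell_R\) with \(\Delta_{\mathrm{per}}(P)\neq0\).

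\emph{Step 2: the converse.} If the determinant is nonzero, the \(R\) rows are independent. Their span therefore lies in \(\mathcal O\) and has full dimension \(R\), which gives cyclicity. Monodromy stability of \(\mathcal W_P\) is what makes the restriction of \(\rho_P(\ell)\Gamma_P\) to \(\mathcal W_P\) meaningful, and it keeps the pairing independent of the transport path up to the representation. I would check this carefully, but it is formal.

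\emph{Step 3: the Gram formulation.} For a square matrix \(\Pi_P\), \(\det\Pi_P\neq0\) holds iff \(\Pi_P\) is injective. That in turn holds iff \(v^*\Pi_P^*\Pi_Pv=\|\Pi_Pv\|^2>0\) for all \(v\neq0\), i.e. iff \(\Pi_P^*\Pi_P>0\).

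\emph{Step 4: openness.} On a fixed tame sectorial stratum, fix the loops and a horizontal frame. The entries of \(\Pi_P\) are then periods that depend holomorphically on the parameters, by the same argument as in \cref{thm:thimble-factorisation}. Hence \(\Delta_{\mathrm{per}}\) is analytic, and its nonvanishing locus is open. Letting the loops vary takes a union of open sets, which is still open.

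The main obstacle, though more a matter of care than of difficulty, is this last step. Both the loops and the sectorial frames are only locally constant and may jump across Stokes walls. So I would restrict to a simply connected chamber on which \(C_{\mathrm{Betti}}\) is constant, as in \cref{thm:stokes-betti-reconstruction}. There the periods are holomorphic and openness follows. I would explicitly refrain from claiming Zariski openness, since the periods are transcendental, as \cref{rem:corrected-B} shows.
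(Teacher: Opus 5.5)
Your proposal is correct and is essentially the paper's proof. In both, cyclicity is equivalent to the monodromy orbit containing \(R\) independent functionals on \(\mathcal W_P\), which is exactly the determinant test, and analytic openness comes from holomorphic dependence of the periods on a fixed sectorial stratum, with Zariski openness not claimed because the periods are transcendental. The only cosmetic difference is the Gram step: the paper uses \(\det(\Pi_P^*\Pi_P)=|\det\Pi_P|^2\), while you use \(v^*\Pi_P^*\Pi_Pv=\|\Pi_Pv\|^2\), and both are immediate.
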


\begin{proof}
The monodromy orbit of \(\Gamma_P\) spans
\(\mathcal W_P^\vee\) precisely when it contains a basis of \(R\) vectors.
The determinant in \eqref{eq:period-cyclicity} tests exactly this condition,
and
\[
 \det(\Pi_P^*\Pi_P)=|\det\Pi_P|^2.
\]
Period functions vary holomorphically on a fixed sectorial stratum, giving
analytic openness.  Since they are generally transcendental functions of
the coefficients of \(P\), this proof alone gives no algebraic-openness
statement.
\end{proof}

\begin{example}[The real cycle need not be cyclic]
\label{ex:quartic-parity}
For \(P(x)=x^4\), consider over \(\C(t)\) the twisted quotient
\[
 \mathcal H_P
 =
 \frac{\C(t)[x]}{D_t\C(t)[x]},
 \qquad
 D_t=\partial_x-x+4itx^3.
\]
It has basis \([1],[x],[x^2]\).  Indeed,
\[
 D_t(x^{n-3})
 =(n-3)x^{n-4}-x^{n-2}+4itx^n
 \qquad(n\geq3)
\]
reduces every higher monomial, while the leading term of \(D_tf\) has
degree \(\deg(f)+3\), so no nonzero polynomial of degree at most two lies
in its image.

The connection \(\nabla_t[a]=[\partial_ta+ix^4a]\) preserves parity.  The
odd class \([x]\) is nonzero, yet the real Gaussian cycle and all its
covariant derivatives vanish on it:
\[
 \frac1{\sqrt{2\pi}}
 \int_{\R}x^{4k+1}e^{-x^2/2+itx^4}\dd x=0,
 \qquad k\geq0.
\]
Thus a universal strong-cyclicity assertion for one real cycle is false.
The observable quotient, not the full rapid-decay module, is the natural
target of Gaussian Torelli.
\end{example}

\section{Normal-crossing extension of the local Wick comparison}
\label{sec:normal-crossing-comparison}

The quadratic comparison describes tangential Gaussian directions.  In a
local normal-crossing resolution, a boundary component contributes a
logarithmic normal coordinate in addition to those tangential directions.
We isolate this independent one-dimensional factor
before tensoring it with the Wick map; this is why a connected normal root
produces a simple Mellin pole rather than a new Gaussian contraction.
This local resolved model is not needed for finite Torelli or for the global
factorisation proved above.  It is an independent extension showing that the Wick
comparison tensorises with logarithmic normal directions.

Let
\[
 R_0=\C[[h]],
 \qquad
 \mathcal A=R_0[[r]].
\]
Consider the formal logarithmic de Rham complex
\[
 \widehat{\mathsf{DR}}_{\rho,h}^\bullet:
 \qquad
 \mathcal A
 \xrightarrow{\ \nabla_{\rho,h}\ }
 \mathcal A\,\mathrm d\log r,
\]
where
\begin{equation}
 \nabla_{\rho,h}f
 =
 (r\partial_rf+hf)\,\mathrm d\log r.
\label{eq:root-log-connection}
\end{equation}
It is the local de Rham complex of the Kummer factor \(r^h\) at the
resolved root \(r=0\).  Its finite root-forest model is
\begin{equation}
 \mathsf K_{\rho,h}^\bullet:
 \qquad
 R_0\xrightarrow{\ h\ }R_0e_\rho,
\label{eq:finite-root-complex}
\end{equation}
where \(e_\rho\) has degree one.

\begin{theorem}[Connected-root deformation retract]
\label{thm:connected-root-retract}
Define
\begin{align*}
 \iota_\rho^0(a)&=a,
 &
 \iota_\rho^1(ae_\rho)&=a\,\mathrm d\log r,\\
 \pi_\rho^0\!\left(\sum_{n\geq0}a_nr^n\right)&=a_0,
 &
 \pi_\rho^1\!\left(
 \sum_{n\geq0}b_nr^n\,\mathrm d\log r
 \right)&=b_0e_\rho.
\end{align*}
These are chain maps and
\(\pi_\rho\iota_\rho=\Id\).  The degree \(-1\) operator
\[
 H_\rho
 \left(
 \sum_{n\geq0}b_nr^n\,\mathrm d\log r
 \right)
 =
 \sum_{n\geq1}\frac{b_n}{n+h}r^n,
 \qquad H_\rho|_{\mathcal A}=0,
\]
satisfies
\begin{equation}
 \nabla_{\rho,h}H_\rho+
 H_\rho\nabla_{\rho,h}
 =
 \Id-\iota_\rho\pi_\rho.
\label{eq:root-homotopy}
\end{equation}
Therefore \(\mathsf K_{\rho,h}^\bullet\) is a strong deformation retract
of \(\widehat{\mathsf{DR}}_{\rho,h}^\bullet\), and
\[
 H^0(\widehat{\mathsf{DR}}_{\rho,h})=0,
 \qquad
 H^1(\widehat{\mathsf{DR}}_{\rho,h})\simeq R_0/(h).
\]
\end{theorem}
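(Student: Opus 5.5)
The plan is a direct verification; everything reduces to coefficient comparison in \(r\). First I check that \(\iota_\rho\) and \(\pi_\rho\) are chain maps. For \(\iota_\rho\) this means \(\nabla_{\rho,h}\iota^0_\rho(a)=ha\,\mathrm d\log r=\iota^1_\rho(ha\,e_\rho)\). This holds because \(r\partial_r\) kills constants. For \(\pi_\rho\), write \(f=\sum a_nr^n\). Then
\[
 \nabla_{\rho,h}f=\sum_{n\geq0}(n+h)a_nr^n\,\mathrm d\log r,
\]
whose constant coefficient is \(ha_0\). Hence \(\pi^1_\rho\nabla_{\rho,h}=h\,\pi^0_\rho\). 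The identity \(\pi_\rho\iota_\rho=\Id\) is immediate in both degrees.

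Next I verify the homotopy formula \eqref{eq:root-homotopy} degree by degree. The key point is that \(H_\rho\) is well defined over \(R_0=\C[[h]]\). For \(n\geq1\), the element \(n+h\) has nonzero constant term \(n\), so it is a unit in \(\C[[h]]\). The coefficientwise definition also makes sense \(r\)-adically, so \(H_\rho\) maps \(\mathcal A\,\mathrm d\log r\) into \(\mathcal A\).

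In degree \(0\), \(H_\rho\) vanishes on \(\mathcal A\), so only \(H_\rho\nabla_{\rho,h}f\) contributes. It equals \(\sum_{n\geq1}\frac{(n+h)a_n}{n+h}r^n=f-a_0=(\Id-\iota_\rho\pi_\rho)f\).

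In degree \(1\), only \(\nabla_{\rho,h}H_\rho\omega\) contributes, because there are no degree-\(2\) forms. For \(\omega=\sum b_nr^n\,\mathrm d\log r\) it gives \(\sum_{n\geq1}(n+h)\frac{b_n}{n+h}r^n\,\mathrm d\log r=\omega-b_0\,\mathrm d\log r\), which is \((\Id-\iota_\rho\pi_\rho)\omega\).

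Cohomology then comes from the retract. A strong deformation retract induces isomorphisms on cohomology, so it suffices to compute the cohomology of \(\mathsf K^\bullet_{\rho,h}\), the two-term complex \(R_0\xrightarrow{h}R_0\). Multiplication by \(h\) is injective on the domain \(\C[[h]]\), so \(H^0=0\), and \(H^1=R_0/(h)\cong\C\). If a \emph{strong} retract is required, I also check the side conditions directly: \(H_\rho\iota_\rho=0\) because \(H_\rho\) kills constant coefficients; \(\pi_\rho H_\rho=0\) because the image of \(H_\rho\) has no \(r^0\) term; and \(H_\rho^2=0\) for degree reasons.

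The only genuine subtlety is the invertibility of \(n+h\) for \(n\geq1\) and the absence of an inverse for \(h\) itself. This is exactly why the \(n=0\) mode survives and produces the simple pole (the \(R_0/(h)\) torsion). Beyond that the argument is routine.
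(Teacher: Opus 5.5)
Your proposal is correct and is essentially the paper's argument: a mode-by-mode check in \(r\), using that \(n+h\) is a unit of \(\C[[h]]\) for \(n\geq1\), followed by reducing the cohomology to the two-term complex \(R_0\xrightarrow{h}R_0\). Your explicit check of the side conditions \(H_\rho\iota_\rho=0\), \(\pi_\rho H_\rho=0\) and \(H_\rho^2=0\) fills in what the paper leaves implicit when it asserts a strong deformation retract.
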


\begin{proof}
The constant coefficient of \((r\partial_r+h)f\) is \(hf(0)\), so
\(\iota_\rho\) and \(\pi_\rho\) are chain maps.  For every \(n\geq1\),
\(n+h\) is a unit of \(\C[[h]]\), hence \(H_\rho\) is well-defined.  On
\(r^n\) and \(r^n\,\mathrm d\log r\), \(n\geq1\), the two sides of
\eqref{eq:root-homotopy} both equal the identity.  On constant modes,
both vanish after subtracting \(\iota_\rho\pi_\rho\).  This proves the
homotopy identity and reduces the cohomology to
\(R_0\xrightarrow{h}R_0\).
\end{proof}

\begin{proposition}[Several roots and toroidal refinement]
\label{prop:several-roots}
For \(s\) normal variables and
\(R_s=\C[[h_1,\ldots,h_s]]\), tensoring the preceding retracts gives a
quasi-isomorphism.  Set
\begin{align*}
 \mathsf K_{\mathbf h}^\bullet
 &:=
 \left(
 R_s\otimes\bigwedge^\bullet\langle e_1,\ldots,e_s\rangle,
 \left(\sum_{j=1}^sh_je_j\right)\wedge
 \right),\\
 \widehat{\mathsf{DR}}_{\mathbf r,\mathbf h}^\bullet
 &:=
 \left(
 R_s[[r_1,\ldots,r_s]]
 \otimes\bigwedge^\bullet
 \langle\mathrm d\log r_1,\ldots,\mathrm d\log r_s\rangle,
 d_r+\sum_jh_j\mathrm d\log r_j\wedge
 \right).
\end{align*}
Then there is a natural quasi-isomorphism
\begin{equation}
 \mathsf K_{\mathbf h}^\bullet
 \xrightarrow{\ \simeq\ }
 \widehat{\mathsf{DR}}_{\mathbf r,\mathbf h}^\bullet.
\label{eq:NC-forest-dR}
\end{equation}
The basis vectors \(e_I\) are indexed by subsets of the normal-crossing
boundary hypersurfaces, hence by the local nested forests.

If
\[
 r_i=\prod_{a=1}^{s}u_a^{M_{ia}},
 \qquad
 M\in\operatorname{GL}_s(\mathbb Z)
 \cap\operatorname{Mat}_{s\times s}(\mathbb N),
\]
then
\[
 e_i\longmapsto\sum_aM_{ia}e'_a,
 \qquad
 h'_a=\sum_iM_{ia}h_i
\]
defines an isomorphism of the finite Koszul complexes, and
\eqref{eq:NC-forest-dR} commutes with logarithmic pullback.  An arbitrary
matrix in \(\operatorname{GL}_s(\mathbb Z)\) gives the same statement on
the algebraic torus, but need not define a substitution of formal power
series at the boundary because negative exponents may occur.
\end{proposition}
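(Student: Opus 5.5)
The plan is to reduce everything to the one-root retract of \cref{thm:connected-root-retract} by a Künneth argument, and then to treat the toroidal refinement by direct computation with logarithmic forms.

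\emph{Step 1: the complex is a tensor product.} First I observe that
\(\widehat{\mathsf{DR}}_{\mathbf r,\mathbf h}^\bullet\) is the completed tensor product over \(R_s\) of the \(s\) one-root complexes \(\widehat{\mathsf{DR}}_{r_j,h_j}^\bullet\). Here each factor is base-changed from \(\C[[h_j]]\) to \(R_s\). The differential \(d_r+\sum_jh_j\,\mathrm d\log r_j\wedge\) is the Koszul sign-twisted sum of the factor differentials \((r_j\partial_{r_j}+h_j)\,\mathrm d\log r_j\wedge\).

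\emph{Step 2: base change of the one-root data.} Base change along \(\C[[h_j]]\to R_s\) preserves the data \((\iota_\rho,\pi_\rho,H_\rho)\). The coefficients \(1/(n+h_j)\), \(n\ge1\), remain units in \(R_s\), so the identity \eqref{eq:root-homotopy} survives verbatim.

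\emph{Step 3: the tensor trick.} I then apply the standard tensor trick for strong deformation retracts:
\[
\iota=\bigotimes_j\iota_j,\qquad \pi=\bigotimes_j\pi_j,\qquad H=\sum_j \iota_1\pi_1\otimes\cdots\otimes\iota_{j-1}\pi_{j-1}\otimes H_j\otimes\Id\otimes\cdots\otimes\Id.
\]
With Koszul signs, these satisfy \(\nabla H+H\nabla=\Id-\iota\pi\) and \(\pi\iota=\Id\). On the small side, \(\bigotimes_j\mathsf K_{\rho_j,h_j}\) is exactly \(R_s\otimes\bigwedge^\bullet\langle e_1,\ldots,e_s\rangle\) with differential \(\sum_j h_je_j\wedge\), so \(\mathsf K_{\mathbf h}^\bullet\) is recovered. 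The map \(\iota\) sends \(e_I\mapsto \mathrm d\log r_I\), and it is therefore a quasi-isomorphism.

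\emph{Step 4: the completion, which is the main obstacle.} The one real check is that \(H\) is well defined on the completed power series ring rather than only on the algebraic tensor product. Each summand acts coefficientwise on monomials \(r^{\mathbf n}\). The coefficients only involve factors \(1/(n_j+h_j)\) with \(n_j\ge1\), which are units, and the \(\iota\pi\) factors kill positive powers. The formula therefore extends termwise and is \((r)\)-adically continuous. The indexing of the \(e_I\) by subsets of boundary components, and hence by local nested forests, is then tautological.

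\emph{Step 5: the toroidal refinement.} For \(r_i=\prod_a u_a^{M_{ia}}\) one has
\[
\mathrm d\log r_i=\sum_a M_{ia}\,\mathrm d\log u_a,\qquad \sum_ih_i\,\mathrm d\log r_i=\sum_a h'_a\,\mathrm d\log u_a .
\]
So logarithmic pullback sends the twisted differential in \(r\) to the one in \(u\) with parameters \(h'\). Because the entries of \(M\) are natural numbers, this pullback maps \(R_s[[\mathbf r]]\) into \(R_s[[\mathbf u]]\).

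On Koszul complexes, \(e_i\mapsto\sum_aM_{ia}e'_a\) extends multiplicatively to exterior algebras. It intertwines \(\sum h_ie_i\wedge\) with \(\sum h'_ae'_a\wedge\), and it is an isomorphism because \(\det M=\pm1\). The square with \eqref{eq:NC-forest-dR} commutes on generators \(e_I\), since both routes give \(\bigwedge_{i\in I}\sum_aM_{ia}\,\mathrm d\log u_a\). Commutation then extends by \(R_s\)-linearity. The change \(h\mapsto h'\) is an automorphism of \(R_s\), because \(M\) is invertible over \(\mathbb Z\).

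\emph{Step 6: arbitrary integer matrices.} For arbitrary \(M\in\operatorname{GL}_s(\mathbb Z)\), the same formulas hold on Laurent monomials, that is, on the torus. Here I would simply point out that a negative exponent sends \(r_i\) outside the power series ring, as stated.
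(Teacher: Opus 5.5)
Your proposal is correct and follows the same route as the paper. You tensor the one-root strong deformation retracts of \cref{thm:connected-root-retract} over the \(s\) normal variables, then check that \(\mathrm d\log r_i\mapsto\sum_aM_{ia}\,\mathrm d\log u_a\) and \(e_i\mapsto\sum_aM_{ia}e'_a\) intertwine the two differentials. Your explicit tensor-trick homotopy, the coefficientwise check that it is defined on \(R_s[[r_1,\ldots,r_s]]\), and the commuting square on the generators \(e_I\) make explicit what the paper's three-line proof leaves implicit. One detail you should add: the power-series substitution \(r_i=\prod_au_a^{M_{ia}}\) is defined only if every row of \(M\) is nonzero, and invertibility of \(M\) guarantees this.
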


\begin{proof}
Tensor the strong deformation retract of
\cref{thm:connected-root-retract} over the \(s\) variables.  The displayed
monomial substitution sends
\(\mathrm d\log r_i\) to
\(\sum_aM_{ia}\mathrm d\log u_a\), and the same integral matrix acts on
the finite generators.  The two differentials are therefore intertwined.
\end{proof}

\begin{corollary}[Normal root with a Gaussian tangential fibre]
\label{cor:root-quadratic-comparison}
Let \(B\in\Sym_d(R_0)\) be invertible.  Form
\begin{align*}
 \mathsf F_{\rho,B}^\bullet
 &=
 \Tot\left(
 \mathsf K_{\rho,h}^\bullet
 \widehat\otimes_{R_0}
 (\Omega_{R_0}^\bullet,\delta_B)
 \right),\\
 \mathsf{DR}_{\rho,B}^\bullet
 &=
 \Tot\left(
 \widehat{\mathsf{DR}}_{\rho,h}^\bullet
 \widehat\otimes_{R_0}
 (\Omega_{R_0}^\bullet,\nabla_B)
 \right).
\end{align*}
Then
\[
 \Theta_{\rho,B}
 =
 \iota_\rho\widehat\otimes\Theta_B
 :
 \mathsf F_{\rho,B}^\bullet
 \longrightarrow
 \mathsf{DR}_{\rho,B}^\bullet
\]
is a quasi-isomorphism.  Its cohomology is concentrated in degree \(d+1\),
where
\[
 H^{d+1}(\mathsf{DR}_{\rho,B}^\bullet)\simeq R_0/(h).
\]
For a top form,
\begin{equation}
 \left[
 a(r,x)\,\mathrm d\log r\wedge\operatorname{vol}_x
 \right]
 =
 \left[
 \exp\!\left(\frac12\Delta_C\right)a(0,x)
 \right]_{x=0}
 \left[
 \mathrm d\log r\wedge\operatorname{vol}_x
 \right].
\label{eq:root-wick-residue}
\end{equation}
Thus normal restriction at the connected root and complete tangential Wick
contraction are the two factors of one de Rham reduction.

When \(\Re h>0\) and \(\Re B>0\), the physical period is
\begin{equation}
 (2\pi)^{-d/2}
 \int_0^1\int_{\R^d}
 r^h e^{-x^{\mathsf T}Bx/2}
 \frac{\mathrm d r}{r}\,\mathrm d x
 =
 \frac{\det(B)^{-1/2}}{h}.
\label{eq:root-gaussian-period}
\end{equation}
Hence a connected root produces exactly one simple pole and its residue is
the Gaussian forest amplitude.
\end{corollary}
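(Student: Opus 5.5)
The plan is to tensor the chain-level statements already proved, following the corollary's own order: quasi-isomorphism first, then the cohomology computation, then the top-form identity, and finally the period.

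\textbf{Quasi-isomorphism.} \Cref{thm:connected-root-retract} makes \(\iota_\rho\) part of a strong deformation retract \((\iota_\rho,\pi_\rho,H_\rho)\) of \(\widehat{\mathsf{DR}}_{\rho,h}^\bullet\) onto \(\mathsf K_{\rho,h}^\bullet\). \Cref{thm:quadratic-forest-dR}, applied over \(R=R_0\), makes \(\Theta_B\) an isomorphism of complexes. I would write \(\Theta_{\rho,B}=(\Id\otimes\Theta_B)\circ(\iota_\rho\otimes\Id)\). The second factor is an isomorphism of total complexes, provided the tensor signs are taken with the Koszul convention. For the first factor, the tensored homotopy \(H_\rho\otimes\Id\) satisfies the homotopy identity on the completed tensor product. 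Here one point needs checking: \(H_\rho\) is \(R_0\)-linear and continuous for the \(r\)-adic topology, so it extends to \(\widehat\otimes\). This makes \(\iota_\rho\otimes\Id\) a homotopy equivalence, and hence \(\Theta_{\rho,B}\) is a quasi-isomorphism.

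\textbf{Cohomology.} It then suffices to compute the cohomology of \(\mathsf F_{\rho,B}^\bullet\). Over \(R_0\), the factor \((\Omega^\bullet,\delta_B)\) is the Koszul complex of the regular sequence \((Bx)_j\). It retracts \(R_0\)-linearly onto \(R_0\) in degree \(d\), via evaluation at \(x=0\) together with the standard Koszul homotopy for a linear regular sequence. Tensoring with \(R_0\xrightarrow{h}R_0e_\rho\) gives a complex quasi-isomorphic to \(R_0\xrightarrow{h}R_0\), placed in degrees \(d,d+1\). Because \(h\) is a nonzerodivisor in \(\C[[h]]\), the cohomology is \(0\) in degree \(d\) and \(R_0/(h)\) in degree \(d+1\).

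\textbf{Top-form identity.} To prove \eqref{eq:root-wick-residue}, I apply the homotopy inverse \(\pi_\rho\otimes\Theta_B^{-1}\) to the top form and use the fact that it induces the inverse map on cohomology. Here \(\pi_\rho^1\) picks out the constant coefficient \(a(0,x)\). Then \(\Theta_B^{-1}\) acts by \(\exp(\tfrac12\Delta_C)\) up to sign, and the Koszul quotient evaluates at \(x=0\), exactly as in \eqref{eq:cohomological-wick-rule}. The step I expect to be the main obstacle is sign bookkeeping: the \((-1)^p\) in \(\Theta_B^p\) and the Koszul sign of the total differential must cancel when the identity is stated as an equality of classes relative to \([\mathrm d\log r\wedge\operatorname{vol}_x]\). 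I would fix these signs by evaluating both sides on \(a=1\), where the identity is tautological.

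\textbf{Period.} The period \eqref{eq:root-gaussian-period} factorises by Fubini into two elementary integrals. The radial factor is \(\int_0^1r^{h-1}\dd r=1/h\) for \(\Re h>0\). The Gaussian factor is \((2\pi)^{-d/2}\int e^{-x^{\mathsf T}Bx/2}\dd x=\det(B)^{-1/2}\), first for real positive definite \(B\) and then for \(\Re B>0\) by analytic continuation with the principal branch. Its residue at \(h=0\) is therefore the Gaussian normalisation, whose amplitude expansion is the forest sum of \cref{prop:wick-forest-expansion}.
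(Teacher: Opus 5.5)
Your proposal is correct and follows the paper's own proof. You tensor the strong deformation retract of Theorem~\ref{thm:connected-root-retract} with the chain isomorphism \(\Theta_B\), combine constant-term projection in \(r\) with \eqref{eq:cohomological-wick-rule} to get \eqref{eq:root-wick-residue}, and evaluate the period as the Mellin factor \(1/h\) times the Gaussian determinant; your explicit reduction of the cohomology to \(R_0\xrightarrow{h}R_0\) in degrees \(d,d+1\), and your remark that the sign in the top-form rule is one global constant fixed at \(a=1\), spell out details the paper leaves implicit.
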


\begin{proof}
The first factor is a strong deformation retract by
\cref{thm:connected-root-retract}; the second is an isomorphism by
\cref{thm:quadratic-forest-dR}.  Their completed tensor product is a
quasi-isomorphism.  Formula \eqref{eq:root-wick-residue} combines
constant-term projection in \(r\) with
\eqref{eq:cohomological-wick-rule}.  Finally,
\eqref{eq:root-gaussian-period} is the product of the Mellin integral
\(1/h\) and the Gaussian determinant.
\end{proof}

\section*{Declarations}

\noindent\textbf{Funding.}
This research did not receive any specific grant from funding agencies in
the public, commercial, or not-for-profit sectors.

\medskip
\noindent\textbf{Competing interests.}
The author declares no competing interests.

\medskip
\noindent\textbf{Data availability.}
No data were used for the research described in this article.

\section*{Disclosure of AI Use}

Large language models, ChatGPT (OpenAI), were used initially to help relate calculations developed by the author to relevant mathematical frameworks and references. ChatGPT also identified a problem in an earlier version of one theorem concerning the separation of critical values, which led to the distinction between central nonresonance and the stronger pairwise-distinct-critical-values assumption used in the final version. It was subsequently used to assist with the organisation and critical review of the manuscript. The author takes full responsibility for all mathematical content. The core ideas, theoretical framework, and key derivations are entirely due to the author.


\bigskip
\footnotesize
\begin{thebibliography}{99}

\bibitem{Andreief}
C.~Andr\'eief,
\emph{Note sur une relation entre les int\'egrales d\'efinies des produits
des fonctions},
M\'em. Soc. Sci. Phys. Nat. Bordeaux (3) \textbf{2} (1886), 1--14.

\bibitem{BalserJurkatLutz}
W.~Balser, W.~B. Jurkat, and D.~A. Lutz,
\emph{Birkhoff invariants and Stokes' multipliers for meromorphic linear
differential equations},
J. Math. Anal. Appl. \textbf{71} (1979), no.~1, 48--94.
\url{https://doi.org/10.1016/0022-247X(79)90217-8}

\bibitem{BlochEsnault}
S.~Bloch and H.~Esnault,
\emph{Homology for irregular connections},
J. Th\'eor. Nombres Bordeaux \textbf{16} (2004), no.~2, 357--371.
\url{https://doi.org/10.5802/jtnb.450}

\bibitem{BochnakCosteRoy}
J.~Bochnak, M.~Coste, and M.-F.~Roy,
\emph{Real Algebraic Geometry},
Ergebnisse der Mathematik und ihrer Grenzgebiete (3), vol.~36,
Springer, Berlin, 1998.
\url{https://doi.org/10.1007/978-3-662-03718-8}

\bibitem{Brieskorn}
E.~Brieskorn,
\emph{Die Monodromie der isolierten Singularit\"aten von Hyperfl\"achen},
Manuscripta Math. \textbf{2} (1970), 103--161.
\url{https://doi.org/10.1007/BF01155695}

\bibitem{Broughton}
S.~A. Broughton,
\emph{Milnor numbers and the topology of polynomial hypersurfaces},
Invent. Math. \textbf{92} (1988), no.~2, 217--241.
\url{https://doi.org/10.1007/BF01404452}

\bibitem{DAgnoloHienMorandoSabbah}
A.~D'Agnolo, M.~Hien, G.~Morando, and C.~Sabbah,
\emph{Topological computation of some Stokes phenomena on the affine line},
Ann. Inst. Fourier (Grenoble) \textbf{70} (2020), no.~2, 739--808.
\url{https://doi.org/10.5802/aif.3323}

\bibitem{DAgnoloKashiwara}
A.~D'Agnolo and M.~Kashiwara,
\emph{Riemann--Hilbert correspondence for holonomic (D)-modules},
Publ. Math. Inst. Hautes \'Etudes Sci. \textbf{123} (2016), 69--197.
\url{https://doi.org/10.1007/s10240-015-0076-y}

\bibitem{DerksenBounds}
H.~Derksen,
\emph{Polynomial bounds for rings of invariants},
Proc. Amer. Math. Soc. \textbf{129} (2001), no.~4, 955--963.
\url{https://doi.org/10.1090/S0002-9939-00-05698-7}

\bibitem{DerksenKemper}
H.~Derksen and G.~Kemper,
\emph{Computational Invariant Theory},
second enlarged ed., Encyclopaedia of Mathematical Sciences, vol.~130,
Springer, 2015.
\url{https://doi.org/10.1007/978-3-662-48422-7}

\bibitem{DimcaSaito}
A.~Dimca and M.~Saito,
\emph{Algebraic Gauss--Manin systems and Brieskorn modules},
Amer. J. Math. \textbf{123} (2001), no.~1, 163--184.
\url{https://doi.org/10.1353/ajm.2001.0001}

\bibitem{DLMF}
F.~W.~J. Olver, D.~W. Lozier, R.~F. Boisvert, and C.~W. Clark (eds.),
\emph{NIST Handbook of Mathematical Functions},
Cambridge University Press and NIST, 2010.
\url{https://dlmf.nist.gov/}

\bibitem{DouaiSabbah}
A.~Douai and C.~Sabbah,
\emph{Gauss--Manin systems, Brieskorn lattices and Frobenius structures (I)},
Ann. Inst. Fourier (Grenoble) \textbf{53} (2003), no.~4, 1055--1116.
\url{https://doi.org/10.5802/aif.1974}

\bibitem{GugenheimLambe}
V.~K.~A.~M. Gugenheim and L.~A. Lambe,
\emph{Perturbation theory in differential homological algebra. I},
Illinois J. Math. \textbf{33} (1989), no.~4, 566--582.
\url{https://doi.org/10.1215/ijm/1255988571}

\bibitem{GugenheimLambeStasheff}
V.~K.~A.~M. Gugenheim, L.~A. Lambe, and J.~D. Stasheff,
\emph{Perturbation theory in differential homological algebra. II},
Illinois J. Math. \textbf{35} (1991), no.~3, 357--373.
\url{https://doi.org/10.1215/ijm/1255987784}

\bibitem{HelmerHongHong}
M.~Helmer, D.~Hong, and H.~Hong,
\emph{Certificate for orthogonal equivalence of real polynomials by
polynomial-weighted principal component analysis},
J. Symbolic Comput. \textbf{138} (2027), article 102586;
published online 2026.
\url{https://doi.org/10.1016/j.jsc.2026.102586}

\bibitem{Hien}
M.~Hien,
\emph{Periods for flat algebraic connections},
Invent. Math. \textbf{178} (2009), no.~1, 1--22.
\url{https://doi.org/10.1007/s00222-009-0185-7}

\bibitem{HienRoucairol}
M.~Hien and C.~Roucairol,
\emph{Integral representations for solutions of exponential
Gau{\ss}--Manin systems},
Bull. Soc. Math. France \textbf{136} (2008), no.~4, 505--532.
\url{https://doi.org/10.24033/bsmf.2564}

\bibitem{Hohl}
A.~Hohl,
\emph{(D)-modules of pure Gaussian type and enhanced ind-sheaves},
Manuscripta Math. \textbf{167} (2022), 435--467.
\url{https://doi.org/10.1007/s00229-021-01281-y}

\bibitem{Isserlis}
L.~Isserlis,
\emph{On a formula for the product-moment coefficient of any order of a
normal frequency distribution in any number of variables},
Biometrika \textbf{12} (1918), no.~1/2, 134--139.
\url{https://doi.org/10.1093/biomet/12.1-2.134}

\bibitem{Janson}
S.~Janson,
\emph{Gaussian Hilbert Spaces},
Cambridge Tracts in Mathematics, vol.~129,
Cambridge University Press, Cambridge, 1997.
\url{https://doi.org/10.1017/CBO9780511526169}

\bibitem{KurdykaSpodzieja}
K.~Kurdyka and S.~Spodzieja,
\emph{Separation of real algebraic sets and the \L ojasiewicz exponent},
Proc. Amer. Math. Soc. \textbf{142} (2014), no.~9, 3089--3102.
\url{https://doi.org/10.1090/S0002-9939-2014-12061-2}

\bibitem{Malgrange}
B.~Malgrange,
\emph{Int\'egrales asymptotiques et monodromie},
Ann. Sci. \'Ecole Norm. Sup. (4) \textbf{7} (1974), no.~3, 405--430.
\url{https://doi.org/10.24033/asens.1274}

\bibitem{Milnor}
J.~Milnor,
\emph{Singular Points of Complex Hypersurfaces},
Annals of Mathematics Studies, vol.~61,
Princeton University Press, Princeton, NJ, 1968.

\bibitem{NemethiZaharia}
A.~N\'emethi and A.~Zaharia,
\emph{Milnor fibration at infinity},
Indag. Math. (N.S.) \textbf{3} (1992), no.~3, 323--335.
\url{https://doi.org/10.1016/0019-3577(92)90039-N}

\bibitem{PeccatiTaqqu}
G.~Peccati and M.~S. Taqqu,
\emph{Wiener Chaos: Moments, Cumulants and Diagrams: A Survey with
Computer Implementation},
Bocconi \& Springer Series, vol.~1, Springer, 2011.

\bibitem{Pham}
F.~Pham,
\emph{Vanishing homologies and the \(n\)-variable saddlepoint method},
in \emph{Singularities, Part 2 (Arcata, Calif., 1981)},
Proc. Sympos. Pure Math., vol.~40, part~2,
American Mathematical Society, 1983, pp.~319--333.

\bibitem{ProcesiSchwarz}
C.~Procesi and G.~Schwarz,
\emph{Inequalities defining orbit spaces},
Invent. Math. \textbf{81} (1985), 539--554.
\url{https://doi.org/10.1007/BF01388587}

\bibitem{SabbahStokes}
C.~Sabbah,
\emph{Introduction to Stokes Structures},
Lecture Notes in Mathematics, vol.~2060,
Springer, Heidelberg, 2013.
\url{https://doi.org/10.1007/978-3-642-31695-1}

\bibitem{SabbahTwisted}
C.~Sabbah,
\emph{On a twisted de Rham complex},
Tohoku Math. J. (2) \textbf{51} (1999), no.~1, 125--140.
\url{https://doi.org/10.2748/tmj/1178224856}

\bibitem{Schwarz}
G.~W. Schwarz,
\emph{Smooth functions invariant under the action of a compact Lie group},
Topology \textbf{14} (1975), no.~1, 63--68.
\url{https://doi.org/10.1016/0040-9383(75)90036-1}

\bibitem{Watson}
G.~N. Watson,
\emph{A Treatise on the Theory of Bessel Functions},
second ed., Cambridge University Press, Cambridge, 1944.

\bibitem{Weyl}
H.~Weyl,
\emph{The Classical Groups: Their Invariants and Representations},
Princeton University Press, 1939.

\end{thebibliography}
\end{document}